\date{}
\title{Sharp Phase Transitions in Estimation with Low-Degree Polynomials}
\author[1]{Youngtak Sohn\thanks{Email: \textit{youngtak\textunderscore sohn@brown.edu}.}}
\author[2]{Alexander S.\ Wein\thanks{Email: \textit{aswein@ucdavis.edu}. Partially supported by an Alfred P.\ Sloan Research Fellowship and NSF CAREER Award CCF-2338091.}}
\affil[1]{Division of Applied Mathematics, Brown University}
\affil[2]{Department of Mathematics, UC Davis}
\begin{document}

\maketitle

\begin{abstract}
High-dimensional planted problems, such as finding a hidden dense subgraph within a random graph, often exhibit a gap between statistical and computational feasibility.  While recovering the hidden structure may be statistically possible, it is conjectured to be computationally intractable in certain parameter regimes.  A powerful approach to understanding this hardness involves proving lower bounds on the efficacy of low-degree polynomial algorithms.  We introduce new techniques for establishing such lower bounds, leading to novel results across diverse settings: planted submatrix, planted dense subgraph, the spiked Wigner model, and the stochastic block model. Notably, our results address the estimation task --- whereas most prior work is limited to hypothesis testing --- and capture sharp phase transitions such as the ``BBP'' transition in the spiked Wigner model (named for Baik, Ben Arous, and P\'{e}ch\'{e}) and the Kesten--Stigum threshold in the stochastic block model. Existing work on estimation either falls short of achieving these sharp thresholds or is limited to polynomials of very low (constant or logarithmic) degree. In contrast, our results rule out estimation with polynomials of degree $n^{\delta}$ where $n$ is the dimension and $\delta > 0$ is a constant, and in some cases we pin down the optimal constant $\delta$. Our work resolves open problems posed by Hopkins \& Steurer (2017) and Schramm \& Wein (2022), and  provides rigorous support within the low-degree framework for conjectures by Abbe \& Sandon (2018) and Lelarge \& Miolane (2019). 

\end{abstract}

\tableofcontents

\newpage

\section{Introduction}
\label{sec:intro}

The task of discovering a hidden ``signal'' of interest buried in a large noisy dataset, is central to modern statistics and data science. In addition to the statistical question of discerning the weakest possible signal, the high-dimensionality of these problems also poses the computational challenge of finding an algorithm of practical runtime. As a testbed for studying the fundamental limitations of what is achievable in these settings, we focus on the following four canonical models for planted signal in a random matrix or random graph.

\begin{itemize}
    \item {\bf Planted Submatrix}: For a sparsity parameter $\rho \in [0,1]$ and a signal-to-noise parameter $\lambda \ge 0$, we observe the $n \times n$ matrix $Y = \lambda \theta\theta^\top + Z$ where $\theta \in \RR^n$ has i.i.d.\ $\Ber(\rho)$ entries and $Z \in \RR^{n \times n}$ is a symmetric matrix with $\{Z_{ij}\}_{i \le j}$ i.i.d.\ $\cN(0,1)$. Given $Y$, the goal is to estimate $\theta$.
    \item {\bf Planted Dense Subgraph}: For a sparsity parameter $\rho \in [0,1]$ and edge probabilities $0 \le p_0 \le p_1 \le 1$, we observe a random graph on $n$ vertices whose adjacency matrix $Y = (Y_{ij})_{1 \le i < j \le n} \in \{0,1\}^{\binom{n}{2}}$ is generated as follows. First draw $\theta \in \RR^n$ with i.i.d.\ $\Ber(\rho)$ entries. Conditioned on $\theta$, draw $Y_{ij} \sim \Ber(p_0 + (p_1-p_0) \theta_i \theta_j)$ independently for each $i < j$. Thus, edges within the planted subgraph have probability $p_1$ while others have probability $p_0$. Given $Y$, the goal is to estimate $\theta$.
    \item {\bf Spiked Wigner Model}: This is a canonical model for a low-rank matrix corrupted by additive Gaussian noise. Let $U \in \RR^{n \times m}$ have entries i.i.d.\ from some prior $\pi$ with mean 0 and variance 1. We observe the $n \times n$ matrix $Y = X + Z$ where $Z \in \RR^{n \times n}$ is symmetric with $\{Z_{ij}\}_{i \le j}$ i.i.d.\ $\cN(0,1)$, and $X = \sqrt{\lambda/n} \, UU^\top$ for a signal-to-noise parameter $\lambda \ge 0$. Given $Y$, the goal is to estimate $X$. Unlike most prior work, we allow $m$ to potentially grow with $n$.
    \item {\bf Stochastic Block Model (SBM)}: This is a canonical model for community detection in random graphs. To generate an $n$-vertex graph, first each vertex $i \in [n]$ is independently assigned a community label $\sigma_i^\star$, drawn uniformly from $[q]$ where $q$ is the number of communities. For edge probabilities $0 \le p_0 \le p_1 \le 1$, we observe the graph with adjacency matrix $Y = (Y_{ij})_{1 \le i < j \le n} \in \{0,1\}^{\binom{n}{2}}$ generated as follows. Independently for each $i < j$, draw $Y_{ij} \sim \Ber(p_0 + (p_1 - p_0)\one_{\sigma_i^\star = \sigma_j^\star})$. Thus, within-community edges have probability $p_1$ while cross-community edges have probability $p_0$. Given $Y$, the goal is to estimate whether two given vertices (say, vertices $1,2$) are in the same community. We will also consider the more general SBM where the communities can have different relative sizes and each pair of communities can have a different connection probability.
\end{itemize}
\noindent In each of these models we will be assuming an asymptotic regime where $n \to \infty$, and the other parameters (such as $\rho, p_0, p_1, m$) may scale with $n$ in some prescribed way or may be designated as fixed ``constants'' (such as $\pi,q$) that do not depend on $n$. Asymptotic notation such as $O(\cdot)$, $o(\cdot)$, $\Omega(\cdot)$, $\omega(\cdot)$, $\Theta(\cdot)$ will always pertain to this limit. The objective will be to \emph{estimate} (a.k.a.\ \emph{recover}) the planted signal (such as $\theta)$ to some desired accuracy, which may be measured in terms of mean squared error, or in terms of achieving some success metric \emph{with high probability}, i.e., success probability $1-o(1)$ as $n \to \infty$. We assume the parameters of the model (such as $\lambda,\rho,\pi$) are known to the statistician, but not the latent variables such as $\theta, U$.

The models defined above are all well studied, and we defer a thorough literature review to Section~\ref{sec:main-results}. Some prior work focuses on determining the \emph{statistical limits}, that is, for what values of the parameters is it possible versus impossible to succeed, with no restrictions on the estimator. Other work focuses on finding estimators that can be computed \emph{efficiently} (say, in polynomial time). Notably, all the above models appear to exhibit \emph{statistical-computational gaps}, meaning there is a ``possible-but-hard'' regime of parameters where some estimator is known to succeed via ``brute-force'' search, yet no polynomial-time algorithm is known to succeed. In such cases, it is desirable to understand whether this hardness is inherent: does there really not exist a poly-time algorithm, or do we just need to work harder to find one? Our focus in this work will be on identifying this transition between ``easy'' (poly-time solvable) and ``hard.'' Notably, the best known algorithms for the above models have ``sharp'' phase transitions in their behavior, where the problem abruptly becomes easy once a signal-to-noise parameter passes a certain threshold. Our aim will be to prove matching lower bounds, showing hardness below this precise threshold.

A key challenge in this endeavor is that classical notions of complexity such as NP-hardness are not applicable here, since we are dealing with \emph{average-case} problems where the goal is to succeed for (not all but) ``most'' random inputs from a particular distribution. Instead, a popular and versatile approach for rigorously vindicating the type of phase transitions we are interested in, is to study the behavior of \emph{low-degree polynomial estimators}~\cite{SW-estimation} (see~\cite{ld-survey} for a survey). To this end, we consider the task of estimating a particular scalar quantity, denoted $x$. For instance, in the planted submatrix problem, a natural choice is $x := \theta_1$, the first entry of the signal vector. The class of algorithms we will consider are multivariate polynomials $f \in \RR[Y]$ in the input variables $Y_{ij}$, of degree (at most) some parameter $D$ (which may scale with $n$). The best performance over all such algorithms is measured by the \emph{degree-$D$ minimum mean squared error},
\begin{equation}\label{eq:def:mmse}
\MMSE_{\le D} := \inf_{\substack{f \in \RR[Y] \\ \deg(f) \le D}} \EE[(f(Y)-x)^2],
\end{equation}
where the expectation is over the joint distribution of $(x,Y)$ as specified by the model. This scalar MMSE can generally be directly related to its vector analogue; see Section~\ref{sec:main-subm}. If the quantity $\MMSE_{\le D}$ is ``small'' (appropriately defined), we will say degree-$D$ polynomials succeed at the estimation task, and if $\MMSE_{\le D}$ is ``large'' then we will say degree-$D$ polynomials fail. The degree $D$ serves as a measure of an algorithm's complexity, and the above framework allows us to quantify the difficulty of an estimation task by the degree that is required to solve it. Our main results will establish that in the conjectured ``hard'' regimes for each of the four models, $\MMSE_{\leq D}$ does not even beat the ``trivial'' estimator $f(Y) \equiv \EE[x]$, that is, $\MMSE_{\leq D}\geq (1-o(1))\MMSE_{\leq 0}$, for some $D$ scaling as $n^\delta$ for a constant $\delta > 0$.

The above notion of ``degree complexity'' has conjectural connections to the more traditional notion of time complexity (runtime). For the style of problems we are considering, polynomials of degree $O(\log n)$ tend to be powerful enough to capture the best known poly-time algorithms, so if we can prove failure of super-logarithmic polynomials, this is considered an indication that there is no poly-time algorithm for the task (see~\cite[Conjecture~2.2.4]{hopkins-thesis}). For this reason, ``low'' degree typically means $O(\log n)$, unless stated otherwise. More generally, polynomials of degree $D$ are expected to have the same power as algorithms of runtime $n^{\tilde{O}(D)}$ where $\tilde{O}(\cdot)$ hides a $\mathrm{polylog}(n)$ factor (see~\cite[Hypothesis~2.1.5]{hopkins-thesis}), so for example, degree $n^\delta$ corresponds to time $\exp(n^{\delta \pm o(1)})$ for fixed $\delta > 0$. We refer the reader to~\cite{ld-survey} and references therein for further discussion of this framework. We emphasize that while these heuristics have a good track record for a particular style of problems, they certainly do not hold for arbitrary distributions; see, for instance,~\cite{ld-false,fails-subspace}. For the four models studied in this work, we will give low-degree \emph{upper bounds}, confirming that degree-$O(\log n)$ polynomials succeed at estimation in the ``easy'' regime where poly-time algorithms are known. This shows that low-degree estimators are a meaningful class of algorithms to consider here.

It is worth noting that we will depart from most prior work on low-degree complexity: the original work on this topic considered the setting of \emph{hypothesis testing} (a.k.a.\ \emph{detection}) rather than estimation~\cite{HS-bayesian,sos-detect} (see~\cite{hopkins-thesis,ld-notes,ld-survey} for exposition), and there have been numerous follow-up works in this setting, some of which are discussed in Section~\ref{sec:main-results}. The case of estimation that we consider here has also received attention~\cite{SW-estimation,LZ-tensor,KM-tree,tensor-decomp-LD,MW-amp,graphon,dense-cycle,HM-tree,tensor-cumulants,opt-bot} but it is more difficult to analyze and our mathematical toolbox is less complete. While results on testing can sometimes shed light on hardness of estimation, this is not the case for many problems --- including some that we study here --- due to gaps between the testing and estimation thresholds. For this reason it is important to have tools that directly address the estimation task. Existing lower bounds for low-degree estimation either do not reach the type of sharp thresholds that we aim for in this work, or are limited to very low degree (falling short of the super-logarithmic standard discussed above). Some examples of sharp thresholds at very low degree appear in~\cite{SW-estimation,MW-amp,HM-tree}. Lower bounds at very low degree are already interesting but do not necessarily give a reliable prediction for the true computational threshold, as discussed in, e.g.,~\cite{counting-stars}. Our work is the first to simultaneously capture sharp thresholds and rule out super-logarithmic degree in estimation. Specifically, our results will rule out polynomials of degree $n^\delta$ for a constant $\delta > 0$, sometimes for the \emph{optimal} constant $\delta$. Being the first results of this type, our work gives further credibility to the low-degree polynomial framework by demonstrating its ability to capture sharp estimation thresholds that coincide with the suspected computational limits. We note that concurrent and independent work of~\cite{opt-bot} also proves hardness at degree $n^\delta$ up to a sharp threshold, namely the Kesten--Stigum threshold in the broadcast tree model, using techniques that are rather different from ours.

We remark that there are other frameworks for explaining statistical-computational gaps, including average-case reductions (see~\cite{secret-leakage}), the statistical query model (see~\cite{sq-ld}), the sum-of-squares hierarchy (see~\cite{sos-survey}), and the overlap gap property (see~\cite{ogp-survey}). However, none of these appear able to capture the sharp estimation thresholds that we study here, at least with current techniques. One exception is the methods based on statistical physics (see~\cite{ZF-phys-survey,KZ-phys-notes}), which tend to be non-rigorous but do have an extraordinary track record of predicting sharp thresholds in estimation problems, such as the Kesten--Stigum (KS) threshold in the stochastic block model (SBM)~\cite{decelle} which we will also study here. These methods essentially postulate the optimality of specific algorithms such as \emph{belief propagation (BP)} or \emph{approximate message passing (AMP)}, which tend to be captured by the low-degree class (see~\cite{MW-amp,sos-amp-robustly,fourier-iter,alg-universality}). Since physics-style methods are quite different from low-degree methods, it is valuable to corroborate those predictions with rigorous evidence from low-degree approaches. This is especially true because low-degree methods appear to make more reliable predictions for a wider range of problems and scaling regimes, with \emph{tensor PCA}~\cite{RM-tensor-pca} being a prime example of a setting where the physics methods struggle (see~\cite{kikuchi}).

Finally, we note that concurrent and independent work~\cite{sbm-reduction} gives a different form of low-degree evidence for one of the threshold phenomena that we study, namely the KS threshold in the SBM. Rather than our approach of giving unconditional bounds on $\MMSE_{\le D}$, their work assumes a conjecture on a certain optimality of low-degree polynomials for hypothesis testing, and uses this to deduce hardness of non-trivial recovery by $\exp(n^{0.99})$-time algorithms. We refer to this type of argument as a \emph{detection-to-recovery reduction}. We discuss the comparison to our work further in Section~\ref{sec:main-sbm}. Another recent independent work is~\cite{alg-contig}, which gives evidence for sharp recovery thresholds in certain graph matching models, also using a detection-to-recovery reduction. More recently, these types of arguments are refined further by~\cite{alg-contig-2}.

Before presenting the specifics of our contributions, we briefly summarize the key conceptual advances of our paper, particularly given that our results span four distinct models.

\begin{enumerate}

    \item In one of the very first papers introducing the low-degree polynomial framework, Hopkins and Steurer showed a sharp phase transition for low-degree \emph{detection} at the KS threshold in the SBM~\cite[Section~1.4]{HS-bayesian}. They posed the question of showing the same for \emph{estimation}. We resolve this long-standing open problem affirmatively, and show similar sharp thresholds in other classical models.
    
    \item Some of the sharp thresholds we study, including the KS transition, were first predicted by statistical physics methods. We confirm these conjectures in the low-degree estimation framework, which we view as a stronger and more rigorous form of hardness. Furthermore, our results uncover new phase transitions in regimes where the physics predictions do not seem applicable or may not be reliable. These regimes include the spiked Wigner model with growing rank, and the planted dense subgraph model with sublinear subgraph size.

    \item We introduce a general framework for proving low-degree hardness of estimation, as outlined in Section~\ref{sec:pf-techniques}. Our approach generalizes the cumulant expansion technique of~\cite{SW-estimation}. In one model (spiked Wigner), the more basic cumulant expansion will suffice to prove our results, but even here we will need to introduce new ideas for bounding the cumulants. Since the initial posting of our paper, our techniques have already played a key role in a few different follow-up works~\cite{CMSW25,alg-contig,EGV:25}, and we anticipate these techniques will be broadly applicable to other high-dimensional planted problems.
    
\end{enumerate}

\subsection{Our Contributions}

\subsubsection{Example: Planted Submatrix}

We now illustrate our results in more detail, focusing on the planted submatrix problem. To recap the setting, a principal submatrix of size roughly $\rho n \times \rho n$ with elevated mean $\lambda$ is planted in an $n \times n$ symmetric Gaussian matrix. The goal is to estimate $\theta_1$, the indicator for whether vertex $1$ belongs to the planted submatrix. We state (a simplification of) our main result for this setting.

\begin{theorem}[See Theorem~\ref{thm:main-subm}]
Consider the planted submatrix model with $\rho = o(1)$. For any constant $\eps > 0$ there exists a constant $C \equiv C(\eps) > 0$ such that the following holds for all sufficiently large $n$. If
\[ \lambda \le (1-\eps)(\rho\sqrt{en})^{-1} \qquad \text{and} \qquad D \le \lambda^{-2}/C \]
then
\[ \MMSE_{\le D} := \inf_{\substack{f \in \RR[Y] \\ \deg(f) \le D}} \EE[(f(Y)-\theta_1)^2] \ge \rho - C \rho^2. \]
\end{theorem}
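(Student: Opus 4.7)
Since the trivial estimator $f\equiv \rho$ achieves mean-squared error $\Var(\theta_1)=\rho-\rho^2$, the task is to exclude any $f$ of degree $\le D$ from improving on this by more than $O(\rho^2)$. Because adding a constant does not change the degree, the plan is to work with the centered signal $\bar\theta_1:=\theta_1-\rho$, for which the objective becomes
\[
\sum_{|S|\le D}\tilde g_S^2 \;\le\; C\rho^2, \qquad \tilde g_S := \EE_\PP[\bar\theta_1\, H_S(Y)],
\]
where $H_S(Y):=\prod_{i\le j}H_{S_{ij}}(Y_{ij})/\sqrt{S_{ij}!}$ is the multivariate normalized Hermite basis, orthonormal under the null $\QQ$ in which $Y_{ij}\stackrel{\mathrm{iid}}{\sim}\cN(0,1)$. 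Converting such a Fourier-coefficient bound into an actual MMSE bound additionally requires $\EE_\PP[f^2]$ to be comparable to $\EE_\QQ[f^2]$ on polynomials of degree $\le D$; achieving this without losing sharp constants is the point at which the new technical machinery of Theorem~\ref{thm:main-subm} must enter, since naive contiguity-type reductions are lossy at the sharp threshold.

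At the Fourier level the computation is classical. Using $Y_{ij}=\lambda\theta_i\theta_j+Z_{ij}$ with $Z_{ij}\sim\cN(0,1)$, the Hermite mean identity $\EE[H_k(\mu+Z)]=\mu^k$, and $\theta_i^a=\theta_i$ for $\theta_i\in\{0,1\}$ and $a\ge 1$, one computes
\[
\EE_\PP[\theta_1 H_S(Y)] = \rho^{|V(S)\cup\{1\}|}\,\frac{\lambda^{|S|}}{\sqrt{\prod_{i\le j}S_{ij}!}}, \qquad \EE_\PP[H_S(Y)] = \rho^{|V(S)|}\,\frac{\lambda^{|S|}}{\sqrt{\prod_{i\le j}S_{ij}!}},
\]
where $V(S)=\{i:\exists j,\ S_{ij}+S_{ji}\ge 1\}$ is the vertex support of $S$ regarded as a multigraph. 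The crucial cancellation produced by centering is
\[
\tilde g_S \;=\; \EE_\PP[\theta_1 H_S]-\rho\,\EE_\PP[H_S] \;=\; \begin{cases}(1-\rho)\,\rho^{|V(S)|}\lambda^{|S|}/\sqrt{\prod_{i\le j}S_{ij}!}, & 1\in V(S),\\ 0, & 1\notin V(S).\end{cases}
\]
So only multigraphs incident to vertex $1$ contribute, and this is the mechanism yielding the $\rho^2$ saving over the trivial MMSE.

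I would then estimate $\sum_{|S|\le D}\tilde g_S^2$ by organizing multi-indices by their vertex count $k:=|V(S)|\ge 2$ and their edge multiplicity pattern. The extremal configurations are \emph{spanning trees} on $k$ labeled vertices containing $1$, with all multiplicities equal to $1$. By Cayley's formula there are $k^{k-2}$ such trees, the vertex set is chosen in $\binom{n-1}{k-1}$ ways, and each contributes $\rho^{2k}\lambda^{2(k-1)}$. Applying Stirling, $\binom{n-1}{k-1} k^{k-2}/(k-1)!\sim (en)^{k-1}/k^{3/2}$, so the total tree contribution is
\[
\rho^2 \sum_{k\ge 2} \frac{(en\rho^2\lambda^2)^{k-1}}{k^{3/2}},
\]
and under the hypothesis $\lambda\rho\sqrt{en}\le 1-\epsilon$ the ratio $en\rho^2\lambda^2\le(1-\epsilon)^2<1$ makes this a convergent series of size $O_\epsilon(\rho^2)$. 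This is exactly where the sharp constant $\sqrt{e}$ originates, tracing back to the $e^{k-1}$ factor in $k^{k-2}/(k-1)!$.

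Two obstacles remain. The first, combinatorial, is to control non-tree multi-indices --- those with cycles or edge multiplicities $\ge 2$. Each edge beyond a spanning tree contributes $\lambda^2$ weighted by at most $\binom{k}{2}$ slots, and each multiplicity $s\ge 2$ contributes $1/s!$. The assumption $D\le \lambda^{-2}/C$ caps the number of such excess edges, and a careful ``spanning tree $+$ excess edges'' decomposition should yield an aggregate correction at most $\exp(O(k^2/C))$, absorbable into the tree series once $C$ is chosen large enough in $\epsilon$. The second and deeper obstacle is the initial MMSE-to-Fourier reduction: precisely at the sharp threshold the degree-$\le D$ likelihood ratio develops nontrivial $L^2(\QQ)$ mass, so naive conditioning or contiguity arguments lose constants and cannot pin the transition at $\lambda\rho\sqrt{en}=1$. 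Overcoming this is the main conceptual contribution the paper must deliver, and I expect it to rely on a non-trivial identity or decomposition adapted to the interplay between the planted and null measures on the space of degree-$\le D$ polynomials.
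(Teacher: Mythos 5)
Your Hermite-coefficient computations are correct --- they reproduce the paper's $c_\alpha$ formula --- and the identification of trees through vertex $1$, with Cayley's formula plus Stirling supplying the $e^{k-1}$ factor that produces the sharp $\sqrt{e}$ in the threshold, is exactly the geometry the argument exploits. But you do not have a proof. As you yourself flag, what is missing is a lower bound on $\EE[f(Y)^2]$ under the planted measure that is quantitatively comparable to the null Hermite norm $\sum_S\hat f_S^2$, and that comparison is simply not available: writing $G_{S,T}=\EE[H_S(Y)H_T(Y)]$, the quadratic form $\hat f^\top G\hat f$ is not $\ge\sum_S\hat f_S^2$ in general, so no Cauchy--Schwarz step converts the coefficient estimate $\sum_S\tilde g_S^2 = O(\rho^2)$ into a bound on $\MMSE_{\le D}$. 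The sentence in your proposal that this is ``the point at which the new technical machinery must enter'' is accurate; it is not a detail but the theorem.

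The paper resolves this by abandoning the output-space Hermite basis as the inner-product structure. It instead projects $f(Y)$ onto a hand-built orthonormal family $\psi_{\beta\gamma}(Z,\theta) = H_\beta(Z)\bigl(\tfrac{\theta-\rho}{\sqrt{\rho(1-\rho)}}\bigr)^\gamma$ in the independent \emph{underlying} randomness $(Z,\theta)$, giving $\EE[f(Y)^2]\ge\|M\hat f\|^2$ for a rectangular matrix $M=(M_{\beta\gamma,\alpha})$, and then shows $\Corr_{\le D}\le\|u\|/\sqrt{\EE[x^2]}$ for any $u$ solving $u^\top M=c^\top$ (Proposition~\ref{prop:master}). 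The decisive construction is Lemma~\ref{lem:def-u-subm}: the explicit choice $u_{\alpha\gamma}=\bigl(-\sqrt{\rho/(1-\rho)}\bigr)^{|\gamma|}c_\alpha$ makes the recursion~\eqref{eq:u-rec} collapse by an inclusion--exclusion cancellation over $\gamma$, after which $\|u\|^2$ evaluates to exactly the tree-dominated series you wrote down (restricted to connected multigraphs through vertex $1$, after pruning ``bad'' terms via Lemma~\ref{lem:disconn}). Dropping the $\gamma$ labels --- i.e.\ using only $\psi_\beta=H_\beta(Z)$ --- recovers the earlier Schramm--Wein method, whose $u$ is then forced into a recursion that blows up and cannot hit the sharp threshold. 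That construction is the substance of the proof; your sketch stops just before it.
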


To interpret this, we restrict the following discussion to the regime $\Omega(1/\sqrt{n}) \le \rho \le o(1)$ for ease of exposition, with the general case discussed in Section~\ref{sec:main-subm}. First note that the trivial MSE achieved by $f(Y) \equiv \EE[\theta_1] = \rho$ is $\Var(\theta_1) = \rho - \rho^2$, so we have met our stated goal of showing $\MMSE_{\leq D}\geq (1-o(1))\MMSE_{\leq 0}$. 
Next, a poly-time algorithm based on \emph{approximate message passing (AMP)} is known to achieve near-perfect estimation of $\theta$ --- meaning $o(\rho n)$ misclassification errors, with high probability --- when $\lambda \ge (1+\eps) (\rho\sqrt{en})^{-1}$ for an arbitrary constant $\eps > 0$~\cite{submatrix-amp}. Our condition on $\lambda$ holds below this sharp threshold. Finally, below the AMP threshold $\lambda \le (1-\eps)(\rho\sqrt{en})^{-1}$, the best known algorithms have runtime $\exp(\tilde{O}(\lambda^{-2}))$~\cite{subexp-sparse,anytime-sparse} (so long as $\lambda \gg (\rho n)^{-1/2}$, which is information-theoretically necessary~\cite{kolar-info,BIS-info}; we use $\gg$ to hide $\polylog(n)$ factors). Recalling the heuristic correspondence between degree and runtime discussed above, our condition on $D$ coincides with this runtime $\exp(\tilde{O}(\lambda^{-2}))$. Thus, our lower bound suggests that below the sharp AMP threshold, runtime $\exp(\tilde{\Omega}(\lambda^{-2}))$ is required. This shows optimality of the existing algorithms in a strong sense, pinning down both the sharp AMP threshold and also the precise degree $\Omega(\lambda^{-2})$. This resolves two different open questions of~\cite{SW-estimation}, where coarser low-degree lower bounds were shown.

We emphasize that the hardness of estimation established above cannot be deduced from the statistical limits of estimation~\cite{kolar-info,BIS-info} nor the computational limits of hypothesis testing~\cite{MW-reduction,BBH-reduction}, since these have different thresholds; see~\cite{SW-estimation}.

\subsubsection{Other Models}

We now summarize our contributions for the other three models presented above, with more details in Section~\ref{sec:main-results}.

For planted dense subgraph, we give the same lower bound as for planted submatrix but with the substitution $\lambda^2 = (p_1 - p_0)^2/[p_0(1-p_0)]$; see Theorem~\ref{thm:main-pds}. In particular, our lower bound reaches a sharp threshold, and we also show that degree-$O(\log n)$ polynomials succeed above this threshold. However, the matching poly-time algorithm does not seem to be known. Thus, our result suggests a new phase transition phenomenon that may be achievable with an AMP-style algorithm. 

The spiked Wigner model and stochastic block model both have well-established conjectured computational thresholds for the onset of weak recovery, i.e., non-trivial estimation. These thresholds are known as the \emph{BBP transition} (after Baik, Ben Arous, and P\'{e}ch\'{e}~\cite{BBP}) and \emph{Kesten--Stigum (KS) bound}~\cite{KestenStigum:66, decelle}, respectively. Specifically, it was conjectured in \cite[Conjecture 10]{LM-wigner} (based on~\cite{LKZ-sparse,mi-rank-1}) and \cite[Conjecture 2]{AS-acyclic} (based on~\cite{decelle}) that below the BBP transition and the KS bound, respectively, non-trivial estimation is impossible in polynomial time for these models. Existing low-degree lower bounds show hardness of the associated hypothesis testing problem below these thresholds~\cite{HS-bayesian,ld-notes,spectral-planting}, but no such results were known for estimation and this was stated as an open problem by Hopkins and Steurer in one of the first papers on the low-degree framework~\cite{HS-bayesian}. We resolve this, giving estimation lower bounds at degree $n^\delta$ for a constant $\delta > 0$, below the sharp BBP and KS thresholds. We refer to Theorem~\ref{thm:main-wigner} for the spiked Wigner model, and to Theorem~\ref{thm:main-sbm} for the SBM. Altogether, our results provide rigorous evidence towards the conjectures of~\cite{LM-wigner} and~\cite{AS-acyclic} by establishing the corresponding low-degree hardness. (We do not realistically expect to resolve these conjectures outright, as this would imply a solution to the P versus NP problem.) In the spiked Wigner model, we additionally go beyond the setting of the conjecture by allowing the rank $m$ to grow with $n$.

\subsubsection{Low-Degree Upper Bounds}

In the discussion above, we have focused on low-degree \emph{lower bounds}, that is, proving failure of degree-$D$ polynomials. Such results are most meaningful when complemented by the corresponding low-degree \emph{upper bound}, that is, we would like to show $\MMSE_{\le D}$ is ``small'' for some $D = O(\log n)$ in the ``easy'' regime where efficient algorithms are known, in order to rigorously establish a phase transition in the behavior of polynomials.

Indeed, we will show for our four models of interest that degree-$O(\log n)$ polynomials succeed above the sharp thresholds appearing in our lower bounds. More specifically, for planted submatrix and planted dense subgraph we will show that degree-$O(\log n)$ polynomials achieve \emph{strong recovery} above the threshold, meaning $\MMSE_{\le D} = o(\MMSE_{\le 0})$. For spiked Wigner and SBM we will give a similar result but for \emph{weak recovery}, meaning $\MMSE_{\le D} = (1-\Omega(1)) \MMSE_{\le 0}$. The full details, along with further discussion, are presented in Section~\ref{sec:main-results}.

For planted submatrix and planted dense subgraph, our upper bound is proved by constructing a polynomial estimator that is the average of certain ``tree-structured'' monomials with a particular structure. For spiked Wigner, a sharp low-degree upper bound above the BBP transition was previously known in the rank-1 case ($m=1$), based on self-avoiding walks~\cite{HS-bayesian}. We will extend this to larger $m$. Similarly, for the SBM, a sharp low-degree upper bound above the KS threshold is known, also based on self-avoiding walks~\cite{HS-bayesian}. We will include the proof for completeness.

\subsection{Proof Techniques}
\label{sec:pf-techniques}

Here we present the main ideas in the lower bound proofs.

\subsubsection{Overview of the Proof Strategy}
\label{sec:overview-proof}

Consider for now the generic setting where the goal is to estimate some scalar $x$ given $Y$. Instead of $\MMSE_{\le D}$, it will be more convenient to work with the \emph{degree-$D$ maximum correlation},
\[ \Corr_{\le D} := \sup_{\substack{f \in \RR[Y] \\ \deg(f) \le D}} \frac{\EE[f(Y) \cdot x]}{\sqrt{\EE[f(Y)^2] \cdot \EE[x^2]}}=\sup_{\substack{f \in \RR[Y] \\ \deg(f) \le D}} \frac{\big|\EE[f(Y) \cdot x]\big|}{\sqrt{\EE[f(Y)^2] \cdot \EE[x^2]}} , \]
where the last equality holds by considering $-f$, if necessary. We have taken a normalization such that $\Corr_{\le D} \in [0,1]$, which deviates from the one used by~\cite{SW-estimation}. The following fact shows that $\Corr_{\le D}$ and $\MMSE_{\le D}$ are directly related.
\begin{fact}[\cite{SW-estimation}, Fact~1.1]
\label{fact}
$\MMSE_{\le D} = (1 - \Corr_{\le D}^2) \, \EE[x^2]$.
\end{fact}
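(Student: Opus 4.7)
The plan is to prove the identity by optimizing the squared-error objective in two stages: first over the scale of the candidate polynomial, then over its ``direction'' in $L^2$. Assume $\EE[x^2] > 0$ (otherwise both sides of the claimed identity equal $0$ and the fact is vacuous) and note that $\MMSE_{\le D} = \EE[x^2]$ is attained by $f \equiv 0$, so the infimum in \eqref{eq:def:mmse} is finite and we may restrict to polynomials $f$ with $\EE[f(Y)^2] < \infty$. We may also discard the case $f \equiv 0$ since it can only be optimal when $\Corr_{\le D} = 0$, in which case the identity is immediate.

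Next, for a polynomial $f$ with $\deg(f) \le D$ and $\EE[f(Y)^2] > 0$, write $f = \alpha g$ where $\EE[g(Y)^2] = 1$ and $\alpha \in \RR$. Then
\[
\EE[(f(Y) - x)^2] \;=\; \alpha^2 - 2\alpha \, \EE[g(Y) \cdot x] + \EE[x^2],
\]
which, as a quadratic in $\alpha$, is minimized at $\alpha^\star = \EE[g(Y) \cdot x]$, yielding the value $\EE[x^2] - \EE[g(Y) \cdot x]^2$. Since the class of $g$ arising this way is exactly $\{g \in \RR[Y] : \deg(g) \le D, \, \EE[g(Y)^2] = 1\}$, optimizing over this class gives
\[
\MMSE_{\le D} \;=\; \EE[x^2] \;-\; \sup_{\substack{g \in \RR[Y],\, \deg(g) \le D \\ \EE[g(Y)^2] = 1}} \EE[g(Y) \cdot x]^2.
\]

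Finally, I would identify the remaining supremum with $\Corr_{\le D}^2 \cdot \EE[x^2]$. By homogeneity, the constrained supremum equals
\[
\sup_{\substack{f \in \RR[Y] \\ \deg(f) \le D,\, f \not\equiv 0}} \frac{\EE[f(Y) \cdot x]^2}{\EE[f(Y)^2]},
\]
which upon dividing and multiplying by $\EE[x^2]$ is exactly $\Corr_{\le D}^2 \cdot \EE[x^2]$ by the definition of $\Corr_{\le D}$ given above. Substituting back yields $\MMSE_{\le D} = (1 - \Corr_{\le D}^2) \EE[x^2]$, as claimed.

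There is no real obstacle here; the only thing to be careful about is handling the degenerate cases (namely $\EE[x^2] = 0$, or polynomials with $\EE[f(Y)^2] = 0$) so that the normalization in the definition of $\Corr_{\le D}$ makes sense. Everything else is a one-variable quadratic minimization followed by a rescaling argument.
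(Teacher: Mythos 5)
Your proof is correct. The paper does not include its own proof of this fact (it is cited from~\cite{SW-estimation}), but the two-stage optimization you give---first minimizing the scalar multiple via a one-variable quadratic, then recognizing the remaining supremum over normalized polynomials as $\Corr_{\le D}^2 \EE[x^2]$ by homogeneity---is precisely the standard argument and matches the one in that reference.
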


\noindent We aim to give an upper bound on $\Corr_{\le D}$, and the main difficulty is the factor $\EE[f(Y)^2]$ in the denominator, since $Y$ does not have independent entries. We will make use of the underlying independent random variables from which $Y$ is generated, for instance $Z$ and $\theta$ in the planted submatrix problem.

Choose a basis $\{\phi_\alpha\}_{\alpha \in \cI}$ for $\RR[Y]_{\le D}$ (polynomials in the entries of $Y$ of degree $\le D$) so that any candidate polynomial estimator can be expanded as
\[ f(Y) = \sum_\alpha \hat{f}_\alpha \phi_\alpha(Y) \]
for some vector of (real) coefficients $\hat{f} = (\hat{f}_\alpha)_{\alpha \in \cI}$. In the settings we consider there will always be a vector $W$ of independent random variables from which $x$ and $Y$ are generated. We will need to construct an orthonormal collection $\{\psi_\beta\}_{\beta \in \cJ}$ of polynomials in (the entries of) $W$. The meaning of ``orthonormal'' here is $\langle \psi_\beta, \psi_{\beta'} \rangle := \EE[\psi_\beta(W) \cdot \psi_\beta'(W)] = \one_{\beta = \beta'}$. There is no requirement on the degrees of the polynomials $\psi_\beta$ nor on their span (they need not form a full basis), but it will be advantageous to use a ``rich" collection of polynomials. We emphasize that, unlike $\{\psi_\beta\}$, the basis $\{\phi_\alpha\}$ need not be orthogonal. Working in the Hilbert space of random variables, we can bound the norm of $f(Y)$ using its projections onto the orthonormal directions $\psi_\beta$:
\[ \EE[f(Y)^2] \ge \sum_\beta \EE[f(Y) \cdot \psi_\beta(W)]^2. \]
Equivalently, $\EE[f(Y)^2] \ge \|M \hat f\|^2$ where the matrix $M = (M_{\beta\alpha})_{\beta \in \cJ, \alpha \in \cI}$ is defined by
\[ M_{\beta\alpha} = \EE[\phi_\alpha(Y) \cdot \psi_\beta(W)]. \]
Also define the vector $c = (c_\alpha)_{\alpha \in \cI}$ by
\[ c_\alpha = \EE[\phi_\alpha(Y) \cdot x]. \]
The core of our approach will be to construct a certain vector $u$, which then implies a bound on $\Corr_{\le D}$.

\begin{proposition}
\label{prop:master}
Suppose there is a vector $u = (u_\beta)_{\beta \in \cJ}$ satisfying the linear equations $u^\top M = c^\top$, or equivalently,
\begin{equation}\label{eq:u-cond}
\sum_{\beta \in \cJ} u_\beta M_{\beta\alpha} = c_\alpha
\end{equation}
for all $\alpha \in \cI$. Then
\[ \Corr_{\le D} \le \frac{\|u\|}{\sqrt{\EE[x^2]}} := \sqrt{\frac{\sum_{\beta \in \cJ} u_\beta^2}{\EE[x^2]}}. \]
\end{proposition}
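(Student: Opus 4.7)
The plan is to reduce the bound on $\Corr_{\le D}$ to a one-line Cauchy--Schwarz argument, using the matrix $M$ and vector $c$ already set up in the paragraph preceding the statement. Concretely, for any candidate polynomial $f \in \RR[Y]_{\le D}$, expand it as $f(Y) = \sum_{\alpha} \hat{f}_\alpha \phi_\alpha(Y)$, and observe that $\EE[f(Y)\cdot x] = \sum_\alpha \hat{f}_\alpha c_\alpha = c^\top \hat f$, and that the inequality $\EE[f(Y)^2]\ge \|M\hat f\|^2$ stated in the preceding discussion gives a handle on the denominator.

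The key step is to substitute the hypothesis $u^\top M = c^\top$ into the numerator: $c^\top \hat f = u^\top M \hat f = \langle u, M\hat f\rangle$. Now apply Cauchy--Schwarz in $\ell^2(\cJ)$ to obtain $|c^\top \hat f| \le \|u\|\cdot \|M\hat f\|$, and use $\|M\hat f\| \le \sqrt{\EE[f(Y)^2]}$ to get
\[ \bigl|\EE[f(Y)\cdot x]\bigr| \le \|u\|\cdot \sqrt{\EE[f(Y)^2]}. \]
Dividing both sides by $\sqrt{\EE[f(Y)^2]\cdot \EE[x^2]}$ (assuming $\EE[f(Y)^2]>0$; the degenerate case is trivial because the correlation is then $0$) yields
\[ \frac{\bigl|\EE[f(Y)\cdot x]\bigr|}{\sqrt{\EE[f(Y)^2]\cdot \EE[x^2]}} \le \frac{\|u\|}{\sqrt{\EE[x^2]}}. \]
Taking the supremum over all $f$ of degree $\le D$ and using the equivalent expression for $\Corr_{\le D}$ as the supremum of the absolute-value form (justified in the statement of the definition) gives the claimed bound.

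There is not really an obstacle here: the only thing to verify carefully is that the inequality $\EE[f(Y)^2]\ge \|M\hat f\|^2$ from the preceding paragraph is applied correctly, which amounts to noting that $M\hat f$ is exactly the vector of inner products $(\EE[f(Y)\cdot \psi_\beta(W)])_\beta$ of $f(Y)$ against the orthonormal family $\{\psi_\beta\}$ in the $L^2$ Hilbert space of random variables, so Bessel's inequality gives the bound. The rest is bookkeeping.
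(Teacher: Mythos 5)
Your proof is correct and is essentially the same argument as the paper's: rewrite $\EE[f(Y)\cdot x] = c^\top\hat f = u^\top M\hat f$, bound via Cauchy--Schwarz by $\|u\|\cdot\|M\hat f\|$, and control $\|M\hat f\|$ by $\sqrt{\EE[f(Y)^2]}$ using Bessel's inequality. The only cosmetic difference is that you treat the degenerate case $\EE[f(Y)^2]=0$ explicitly and cite Bessel by name, which the paper leaves implicit.
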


\begin{proof}
Recalling the fact $\EE[f(Y)^2] \ge \|M\hat{f}\|^2$,
\[ \sqrt{\EE[x^2]} \cdot \Corr_{\le D} = \sup_f \frac{\EE[f(Y) \cdot x]}{\sqrt{\EE[f(Y)^2]}} \le \sup_{\hat f} \frac{c^\top \hat f}{\|M \hat f\|} = \sup_{\hat f} \frac{u^\top M \hat f}{\|M \hat f\|} \le \sup_{\hat f} \frac{\|u\| \cdot \|M \hat f\|}{\|M \hat f\|} = \|u\|. \]
\end{proof}

Provided $x$ lies in the span of $\{\psi_\beta\}$, it is not difficult to construct \emph{some} vector $u$ satisfying the constraints $u^\top M = c^\top$, as one solution is $u_\beta = \EE[\psi_\beta \cdot x]$ which can be seen from the expansion $x = \sum_{\be} \E[x \cdot \psi_{\be}] \, \psi_{\be}$. However, this gives only the trivial bound $\Corr_{\le D} \le 1$. Ideally, we would take $u$ to be the minimum-norm solution to the constraints, which can be found using the Moore--Penrose pseudoinverse: $u^\top = c^\top M^+$. The issue here is that this choice of $u$ seems difficult to work with explicitly since we do not have a closed-form expression for $M^+$. Instead, our approach will be to manually construct a more tractable solution $u$ with a small enough norm. As we will see in the subsequent sections, our proof leverages a key geometric insight that ``tree-structured'' polynomials are most informative for estimation in the planted submatrix/subgraph problems whereas ``path-structured'' polynomials are most effective for spiked Wigner and SBM. In Section~\ref{sec:ex-subm}, we illustrate our construction of $u$ for the planted submatrix model. Although this solution is explicit in that model, our approach to the stochastic block model is more complicated, where our construction of $u$ is defined recursively over a certain set of graphs (see Section~\ref{subsec:SBM:u}), necessitating a delicate analysis to control its $\ell_2$ norm.

We are hopeful that the above strategy may be useful to prove new low-degree lower bounds in settings beyond those considered in this work. This new approach has not appeared before in its current form, although it takes some inspiration from the prior works~\cite{SW-estimation,tensor-decomp-LD}\footnote{Additionally, we thank Jonathan Niles-Weed for discussions that helped inspire this approach.} which also crucially use orthogonal polynomials in the underlying independent random variables. Compared to~\cite{tensor-decomp-LD}, our approach is a generalization (the $\psi_\beta$ may not form a basis) and simplification (we only need to construct $u$ solving $u^\top M = c^\top$ rather than a left-inverse for $M$). While~\cite{tensor-decomp-LD} studies a different problem from us (tensor decomposition, which does not appear to have sharp threshold behavior),~\cite{SW-estimation} studies some of the same problems that we do, and we sharpen the bounds in a qualitative way, allowing us to capture sharp thresholds. We will give more details on the comparison to~\cite{SW-estimation} in Section~\ref{sec:ex-subm} below.

We remark that for the spiked Wigner model, our proof utilizes the cumulant upper bound from \cite{SW-estimation} (see Section~\ref{sec:cumulant}) and does not fully exploit Proposition~\ref{prop:master}. Nevertheless, within this cumulant expansion framework, we introduce a novel technique to systematically bound the cumulants. This approach enables us to attain the BBP threshold even when the rank grows. Although this contribution could have been presented separately, we chose to include it here, as it complements our broader set of techniques for handling the four canonical models in high-dimensional statistics. Indeed, shortly after the initial posting of this paper, our techniques developed for the spiked Wigner model were applied to the Gaussian mixture model in~\cite{EGV:25}. Specifically, Appendices~C and~D of that paper closely follow our methods detailed in Lemmas~\ref{lem:spiked:cumulant} and~\ref{lem:f:upper:bound}.

\subsubsection{Removing ``Bad'' Terms}

We will designate some values of $\alpha$ and $\beta$ as ``good,'' denoted $\alpha \in \hat\cI$ or $\beta \in \hat\cJ$, and all others as ``bad'' (denoted, e.g., $\alpha \notin \hat\cI$). The choice of $\hat\cI \subseteq \cI$ and $\hat\cJ \subseteq \cJ$ will be problem-specific and will depend on the estimand $x$, but intuitively the good basis elements represent those that are potentially useful, or ``informative,'' for estimation of $x$ whereas the bad ones represent those that ``uninformative.'' Under certain conditions we will be able to remove the bad terms from consideration. We refer to Sections~\ref{sec:ex-subm} and~\ref{subsec:other} for specific examples.

\begin{lemma}\label{lem:disconn}
Suppose the following conditions are met:
\begin{itemize}
\item $u_\beta = 0$ for all $\beta \notin \hat\cJ$.
\item For each $\alpha \notin \hat\cI$ there exist $\hat\alpha \in \hat\cI$ and $\mu \in \RR$ such that $c_\alpha = \mu c_{\hat\alpha}$ and $M_{\beta\alpha} = \mu M_{\beta\hat\alpha} \;\; \forall \beta \in \hat\cJ$.
\end{itemize}
If~\eqref{eq:u-cond} holds for all $\alpha \in \hat\cI$ then~\eqref{eq:u-cond} holds for all $\alpha \in \cI$.
\end{lemma}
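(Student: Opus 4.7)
The plan is to observe that this lemma is essentially a bookkeeping statement: it says that the linear constraints~\eqref{eq:u-cond} indexed by ``bad'' $\alpha \notin \hat\cI$ are automatically implied by the ``good'' constraints (those indexed by $\hat\cI$), provided $u$ is supported on $\hat\cJ$ and each bad column of $M$ is a scalar multiple of a good column (with matching scalar on $c$). So I would proceed by a direct calculation, fixing an arbitrary $\alpha \notin \hat\cI$ and showing that $\sum_{\beta \in \cJ} u_\beta M_{\beta\alpha} = c_\alpha$ follows from the hypotheses.

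Concretely, given $\alpha \notin \hat\cI$, let $\hat\alpha \in \hat\cI$ and $\mu \in \RR$ be the objects guaranteed by the second bullet. First I would use the first bullet ($u_\beta = 0$ for $\beta \notin \hat\cJ$) to restrict the sum on the left-hand side of~\eqref{eq:u-cond} to $\beta \in \hat\cJ$. Then I would substitute $M_{\beta\alpha} = \mu M_{\beta\hat\alpha}$ (valid for every $\beta \in \hat\cJ$) to rewrite the sum as $\mu \sum_{\beta \in \hat\cJ} u_\beta M_{\beta\hat\alpha}$. Re-extending this sum to all of $\cJ$ (again using $u_\beta = 0$ off $\hat\cJ$) and applying the hypothesis that~\eqref{eq:u-cond} holds at the good index $\hat\alpha$, the sum becomes $\mu c_{\hat\alpha}$, which equals $c_\alpha$ by the second bullet. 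Chaining these equalities yields~\eqref{eq:u-cond} at $\alpha$. Since~\eqref{eq:u-cond} also holds for $\alpha \in \hat\cI$ by assumption, we are done.

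There is no real obstacle; the argument is just four equalities strung together. The only thing to be slightly careful about is to verify that the same $\mu$ used to relate $c_\alpha$ to $c_{\hat\alpha}$ is indeed the same $\mu$ that appears in $M_{\beta\alpha} = \mu M_{\beta\hat\alpha}$ — but this is precisely what the second bullet of the hypothesis asserts, so it is built into the statement. Thus the proof is essentially a one-line computation of the form
\[ \sum_{\beta \in \cJ} u_\beta M_{\beta\alpha} \;=\; \sum_{\beta \in \hat\cJ} u_\beta M_{\beta\alpha} \;=\; \mu \sum_{\beta \in \hat\cJ} u_\beta M_{\beta\hat\alpha} \;=\; \mu \sum_{\beta \in \cJ} u_\beta M_{\beta\hat\alpha} \;=\; \mu c_{\hat\alpha} \;=\; c_\alpha. \]
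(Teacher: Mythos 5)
Your proof is correct and is essentially identical to the paper's own proof: the paper writes the same chain of equalities $c_\alpha = \mu c_{\hat\alpha} = \mu \sum_\beta u_\beta M_{\beta\hat\alpha} = \mu\sum_{\beta \in \hat\cJ} u_\beta M_{\beta\hat\alpha} = \sum_{\beta \in \hat\cJ} u_\beta M_{\beta\alpha} = \sum_{\beta} u_\beta M_{\beta\alpha}$, which is your displayed line read right-to-left.
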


\begin{proof}
Fix $\alpha \notin \hat\cI$ and the corresponding $\hat\alpha, \mu$. We verify~\eqref{eq:u-cond}:
\begin{align*}
c_\alpha &= \mu c_{\hat\alpha} = \mu \sum_\beta u_\beta M_{\beta\hat\alpha} = \mu\sum_{\beta \in \hat\cJ} u_\beta M_{\beta\hat\alpha} = \sum_{\beta \in \hat\cJ} u_\beta M_{\beta\alpha} = \sum_{\beta} u_\beta M_{\beta\alpha}.
\end{align*}
\end{proof}

\begin{remark}
In our applications of this framework, $\cJ$ will be a set consisting of pairs $(\beta,\gamma)$ where $\beta\in \cI$ and $\gamma$ is a certain ``coloring'' of $\beta$ that encodes the signal structure. The analogous construction applies to the sets $\hat{\cI}$ and $\hat{\cJ}$. Thus, the role of $\beta$ in Lemma~\ref{lem:disconn} will be played by a pair $(\beta,\gamma)$, abbreviated as $\beta\gamma$.
\end{remark}

\subsubsection{Notation}
\label{sec:notation}

For indexing polynomials we will often use elements $\alpha \in \{0,1\}^S$ or $\alpha \in \NN^S$ for some finite set $S$, where $\NN := \{0,1,2,\ldots\}$. For such elements $\alpha,\beta$ we define $|\alpha| := \sum_{k \in S} \alpha_k$, $\alpha! := \prod_{k \in S} \alpha_k!$, and $\binom{\alpha}{\beta} := \prod_{k \in S} \binom{\alpha_k}{\beta_k}$. Also, $\beta \le \alpha$ means $\beta_k \le \alpha_k$ for all $k \in S$; and $\beta \lneq \alpha$ means $\beta \le \alpha$ and there exists $k \in S$ where $\beta_k < \alpha_k$. For a vector $X = (X_k)_{k \in S}$, we write $X^\alpha := \prod_{k \in S} (X_k)^{\alpha_k}$.

An element $\alpha \in \{0,1\}^S$ can equivalently be viewed as a subset of $S$, specifically the subset $\{k \in S \,:\, \alpha_k = 1\}$. We may abuse notation and identify $\alpha$ with this subset. For $\alpha, \beta \in \{0,1\}^S$, the meaning of $\beta \le \alpha$ is $\beta \subseteq \alpha$, and the meaning of $\beta \lneq \alpha$ is $\beta \subsetneq \alpha$. An element $\alpha \in \NN^S$ can be viewed as a multiset, containing $\alpha_k$ copies of each $k \in S$.

One common choice of $S$ will be $[n] := \{1,2,\ldots,n\}$. Another will be either $\{(i,j) \,:\, 1 \le i < j \le n\}$ or $\{(i,j) \,:\, 1 \le i \le j \le n\}$, which we will abbreviate as $\bcn$ or $\mcn$ (``multichoose'' notation) respectively. We will also abbreviate $\alpha_{(i,j)}$ as $\alpha_{ij}$ in this case. Note that $\alpha \in \{0,1\}^{\bcn}$ can be viewed as a (simple) graph on vertex set $[n]$, namely the graph that includes edge $(i,j)$ whenever $\alpha_{ij} = 1$. Similarly $\alpha \in \NN^{\mcn}$ can be viewed as a multigraph on vertex set $[n]$ (now with self-loops and parallel edges allowed), namely the multigraph with $\alpha_{ij}$ copies of edge $(i,j)$ for all $i \le j$. With this view, we will often refer to graph-theoretic properties of $\alpha$. We use $V(\alpha) \subseteq [n]$ to denote the vertex set of $\alpha$, by which we mean the set of non-isolated vertices. In other words, $i \in V(\alpha)$ if there exists some edge $(i,j)$, possibly a self-loop $(i,i)$. We call $\alpha$ \emph{connected} if every pair of vertices in $V(\alpha)$ has a path connecting them. The empty graph $\alpha = 0$ is considered to be connected. We write $\al= \bone_{(i,j)}$ for the graph consisting of a single edge between vertices $i$ and $j$. As above, we may abuse notation and identify $\alpha$ with the associated edge set, e.g., writing $\alpha \cap \beta$ for the common edges between two graphs. We may also explicitly write $E(\alpha)$ for the edge set of $\alpha$, that is, the (multi)set of pairs $(i,j)$ with $i \le j$ corresponding to the edges of $\alpha$ (where parallel edges are included multiple times based on their multiplicity).

\subsubsection{Example: Planted Submatrix}
\label{sec:ex-subm}

We now sketch how to apply the above framework to planted submatrix, and how this differs from the prior work~\cite{SW-estimation}. We include here the main steps, with the full details deferred to Section~\ref{sec:subm}. To recap the setup, we have parameters $\lambda \ge 0$ and $\rho \in [0,1]$, and the observed data is $(Y_{ij})_{1 \le i \le j \le n}$ generated as follows: $Y = X + Z$ where $X_{ij} = \lambda \theta_i \theta_j$, $\theta \in \{0,1\}^n$ is i.i.d.\ $\Ber(\rho)$, and $\{Z_{ij}\}_{i \le j}$ are i.i.d.\ $\cN(0,1)$. The goal is to estimate $x := \theta_1$.

To construct orthogonal polynomials, we will make use of the multivariate Hermite polynomials $\{H_\alpha\}$ for $\alpha \in \NN^{\mcn}$ (see~\cite{sze-book}). These are well known to be orthogonal with respect to Gaussian measure, and we use the normalization for which they are orthonormal: $\EE[H_\alpha(Z) \cdot H_\beta(Z)] = \one_{\alpha=\beta}$. As our basis for $\RR[Y]_{\le D}$, choose $\phi_\alpha(Y) := H_\alpha(Y)$ for $\alpha \in \cI := \{\alpha \in \NN^{\mcn} \,:\, |\alpha| \le D\}$. For our orthonormal polynomials in the underlying independent random variables $W = (Z,\theta)$, choose
\[ \psi_{\beta\gamma}(Z,\theta) = H_\beta(Z) \left(\frac{\theta-\rho}{\sqrt{\rho(1-\rho)}}\right)^\gamma \]
for $\beta\gamma \in \cJ := \{(\beta,\gamma) \,:\, \beta \in \NN^{\mcn}, \, |\beta| \le D, \, \gamma \in \{0,1\}^n\}$. We will often view $\gamma$ as a subset $\gamma \subseteq [n]$ as described above. Note that $\{\psi_{\beta\gamma}\}$ is orthonormal as required. With this setup, we will compute (with details deferred to Section~\ref{sec:subm})
\[ c_\alpha = \frac{\lambda^{|\alpha|} \rho^{|V(\alpha) \cup \{1\}|}}{\sqrt{\alpha!}} \qquad \text{and} \qquad M_{\beta\gamma,\alpha} = \one_{\beta \le \alpha} \cdot \one_{\gamma \subseteq V(\alpha-\beta)} \cdot \sqrt{\frac{\beta!}{\alpha!}} \binom{\alpha}{\beta} \lambda^{|\alpha - \beta|} \rho^{|V(\alpha-\beta)|} \left(\frac{1-\rho}{\rho}\right)^{|\gamma|/2}. \]
Towards applying Lemma~\ref{lem:disconn}, we define $\hat\cI$ and $\hat\cJ$ as follows.
\begin{itemize}
    \item By convention, $0 \in \hat\cI$. For $|\alpha| \ge 1$ we include $\alpha$ in $\hat\cI$ if and only if $\alpha$ (when viewed as a multigraph) is connected with $1 \in V(\alpha)$.
    \item We include $\beta\gamma$ in $\hat\cJ$ if and only if $\beta \in \hat\cI$ and $\gamma \subseteq V(\beta) \cup \{1\}$. (The union with $\{1\}$ only matters in the case $\beta = 0$.)
\end{itemize}
\noindent Later, we will check the conditions in Lemma~\ref{lem:disconn}, allowing the ``bad'' terms $\alpha \notin \hat\cI$ and $\beta \notin \hat\cJ$ to be disregarded. This step is analogous to~\cite{SW-estimation}, where disconnected graphs were similarly removed from consideration. Now, combining Proposition~\ref{prop:master} and Lemma~\ref{lem:disconn}, our goal is to choose values $(u_{\beta\gamma})_{\beta\gamma \in \hat\cJ}$ and verify
\[ \sum_{\beta\gamma \in \hat\cJ} u_{\beta\gamma} M_{\beta\gamma,\alpha} = c_\alpha \qquad \forall \alpha \in \hat\cI. \]
Then our final bound will be
\[ \Corr_{\le D} \le \frac{\|u\|}{\sqrt{\EE[x^2]}} = \rho^{-1/2} \sqrt{\sum_{\beta\gamma \in \hat\cJ} u_{\beta\gamma}^2} \]
since we are choosing $u_{\beta\gamma} = 0$ for $\beta\gamma \notin \hat\cJ$.

Using the support structure of $M$ and the fact $M_{\alpha 0,\alpha} = 1$, our constraints can be written as
\begin{equation}\label{eq:u-rec-intro}
u_{\alpha 0} = c_\alpha - \sum_{\beta \lneq \alpha} \left( \sum_{\gamma \subseteq V(\alpha-\beta)} u_{\beta\gamma} M_{\beta\gamma,\alpha}\right) \qquad \forall \alpha \in \hat\cI.
\end{equation}
For all $\alpha\gamma \in \hat\cJ$ with $\gamma \ne 0$, we refer to the values $u_{\alpha\gamma}$ as \emph{free variables}. We are free to choose values for these variables and then the remaining values $u_{\alpha 0}$ are determined by the recurrence~\eqref{eq:u-rec-intro}. Setting all free variables to zero recovers the existing method of~\cite{SW-estimation}. This amounts to using only the orthonormal polynomials $\psi_\beta(Z) = H_\beta(Z)$, which means only the ``noise'' is being used to lower-bound $\EE[f(Y)^2]$. We will improve on this by using both the signal and noise. Specifically, we will set the free variables in such a way to zero out the term in parentheses in~\eqref{eq:u-rec-intro}. As a result, we will have simply $u_{\alpha 0} = c_\alpha$ for all $\alpha \in \hat\cI$. This avoids some difficult-to-control recursive blowup of the $u$ values that is present in~\cite{SW-estimation}. Our specific construction for $u$ is
\[ u_{\alpha\gamma} = \left(-\sqrt{\frac{\rho}{1-\rho}}\right)^{|\gamma|} \cdot c_\alpha \qquad \forall \alpha\gamma \in \hat\cJ. \]
The remaining details, including the computation of $\|u\|$, are deferred to Section~\ref{sec:subm}. The dominant contributions to $\|u\|$ come from $\alpha$ that (viewed as a multigraph) are trees containing vertex $1$.

\subsubsection{Other Models}
\label{subsec:other}
We now summarize how the arguments change for the other models we consider. For the planted dense subgraph problem, the choice for the orthonormal set $\{\psi_{\beta\gamma}\}$ is more subtle. However, we ultimately manage to use a similar construction for $u$ as above. In particular, the set of ``good'' terms and ``bad'' terms are the same as those used for the planted submatrix problem, besides the consideration of simple graphs rather than multigraphs.

The proofs for the spiked Wigner model and SBM are more involved. A key difference lies in the definition of the ``good'' terms. This difference stems, in part, from the dependence of the estimand $x$ on both vertices 1 and 2. Specifically, ``good'' $\al$ now satisfy $1,2\in V(\al)$ and every $v\in V(\al)\setminus \{1,2\}$ has degree at least $2$ (see Definition~\ref{def:good:SBM}). The intuition behind the last condition is that $\al$ that contains a leaf node which is not $1,2$ is ``uninformative'' for the estimation of $x$.

Furthermore, we cannot obtain a closed-form solution for $u$ as in the prior cases.    Consequently, the values of $u=(u_{\be \ga})$ are defined implicitly through a recursive formula: 
$(u_{\al\ga})_{\ga}$ 
is constructed based on the values of $(u_{\be\ga})_{\ga}$ for $|\be|<|\al|$. The construction is designed to minimize $\sum_{\ga} u_{\al\ga}^2$ subject to a linear constraint at each step. We refer to Section~\ref{subsec:SBM:u} for the precise construction. The key aspect of the analysis involves controlling their growth. Crucially, we show that $\sum_{\ga} u_{\alpha\ga}^2$ does not grow exponentially in $|\alpha|$ (as in the bounds from~\cite{SW-estimation}) but exponentially in $|\alpha|-|V(\alpha)|+1$ (which represents edges in excess of a tree). This turns out to be crucial to capture the sharp thresholds that we aim for at degree $n^\delta$.

For planted submatrix and planted dense subgraph, the dominant contributions to $\|u\|$ come from trees containing vertex $1$, where vertex $1$ is distinguished because we are estimating $\theta_1$. The importance of trees here appears sensible, given that the optimal algorithm is AMP, which can essentially be expressed as a sum of such ``tree-structured'' polynomials~\cite{amp-univ,MW-amp,sos-amp-robustly,fourier-iter,alg-universality}. For spiked Wigner and SBM, there are two distinguished vertices $1,2$ since we will be estimating pairwise quantities, and the dominant contributions to $\|u\|$ come from simple paths from vertex $1$ to $2$. This reflects the fact that the optimal algorithms are based on (self-avoiding or non-backtracking) paths~\cite{spectral-redemption,massoulie,nb-spectrum,HS-bayesian}.

\section{Main Results}
\label{sec:main-results}

\subsection{Correlation and Weak Recovery}
\label{sec:corr}

In all the models we consider, the observation will be called $Y$ and the scalar quantity to be estimated will be called $x$. We will state our results in terms of $\Corr_{\le D}$, as defined in Section~\ref{sec:overview-proof}:
\[ \Corr_{\le D} := \sup_{f \in \RR[Y]_{\le D}} \frac{\EE[f(Y) \cdot x]}{\sqrt{\EE[f(Y)^2] \cdot \EE[x^2]}}=\sup_{f \in \RR[Y]_{\le D}} \frac{\big|\EE[f(Y) \cdot x]\big|}{\sqrt{\EE[f(Y)^2] \cdot \EE[x^2]}} \in [0,1]. \]
To show low-degree hardness of estimation, our goal will be to rule out \emph{weak recovery}, defined as $\Corr_{\le D} = \Omega(1)$. That is, we aim to prove $\Corr_{\le D} = o(1)$ for $D=n^{\Omega(1)}$, which by Fact~\ref{fact} implies
\[ \MMSE_{\le D} \ge (1-o(1)) \EE[x^2] \ge (1-o(1)) \Var(x) = (1-o(1)) \MMSE_{\le 0}, \]
meaning degree-$D$ polynomials have no significant advantage over the trivial estimator $f(Y) \equiv \EE[x]$.

For our low-degree upper bounds, we will assume a regime where $\EE[x]^2=o(\EE[x^2])$, implying $\MMSE_{\le 0} = (1-o(1)) \EE[x^2]$. We will aim to establish either \emph{strong recovery}, meaning $\Corr_{\le D} = 1-o(1)$ or equivalently $\MMSE_{\le D} = o(\MMSE_{\le 0})$, or \emph{weak recovery}, meaning $\Corr_{\le D} = \Omega(1)$ or equivalently $\MMSE_{\le D} = (1-\Omega(1)) \MMSE_{\le 0}$.

\subsection{Planted Submatrix}
\label{sec:main-subm}

\begin{definition}[Planted Submatrix Model]
For parameters $\lambda \ge 0$ and $\rho \in [0,1]$, observe the $n\times n$ matrix $Y=\la \theta \theta^{\top}+Z$ where $\theta\in \{0,1\}^n$ is i.i.d.\ $\Ber(\rho)$, and $Z$ has entries $Z_{ij}=Z_{ji}\sim\cN(0,1)$ where $\{Z_{ij}\}$ are independent. The goal is to estimate $x := \theta_1$.
\end{definition}

\begin{theorem}\label{thm:main-subm}
Consider the planted submatrix model. For any constant $\eps > 0$ there exists a constant $C \equiv C(\eps) > 0$ for which the following holds.
\begin{enumerate}
\item[(a)] (Lower Bound) If
\[ \lambda \le (1-\eps)(\rho\sqrt{en})^{-1}\sqrt{1-\rho} \qquad \text{and} \qquad D \le \lambda^{-2}/C \]
then
\[ \Corr_{\le D} \le C \sqrt{\frac{\rho}{1-\rho}}. \]
\item[(b)] (Upper Bound) If
\[ \lambda \ge (1+\eps)(\rho\sqrt{en})^{-1}, \quad \rho = \omega(n^{-1} \log^{12} n), \quad \rho = o(\log^{-12} n) \]
then
\[ \Corr_{\le C \log n} = 1-o(1) \quad \text{as} \quad n \to \infty. \]
\end{enumerate}
\end{theorem}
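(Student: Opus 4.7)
For the lower bound (part (a)), my plan is to instantiate Proposition~\ref{prop:master} and Lemma~\ref{lem:disconn} exactly as laid out in Section~\ref{sec:ex-subm}, using the Hermite basis $\phi_\alpha = H_\alpha(Y)$ together with the orthonormal polynomials $\psi_{\beta\gamma}(Z,\theta) = H_\beta(Z)\big((\theta-\rho)/\sqrt{\rho(1-\rho)}\big)^\gamma$. First I would verify the stated closed forms for $c_\alpha$ and $M_{\beta\gamma,\alpha}$ using the Hermite shift identity $H_\alpha(Z + \lambda\theta\theta^\top) = \sum_{\beta \le \alpha}\binom{\alpha}{\beta} H_\beta(Z)(\lambda\theta\theta^\top)^{\alpha-\beta}$ together with the orthonormality of the centered Bernoulli factors $((\theta_i - \rho)/\sqrt{\rho(1-\rho)})^{\gamma_i}$. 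Next I would check the hypothesis of Lemma~\ref{lem:disconn} by noting that both $c_\alpha$ and $M_{\beta\gamma,\alpha}$ factor multiplicatively over the connected components of $\alpha$: for a ``bad'' $\alpha$, I isolate the component $\hat\alpha \in \hat\cI$ containing vertex $1$ (taking $\hat\alpha = 0$ if none does) and let $\mu$ absorb the contribution of the remaining components; the restriction $\gamma \subseteq V(\beta) \cup \{1\}$ built into $\hat\cJ$ guarantees that $\gamma$ never strays into the irrelevant components, so the required identities are satisfied.

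With the bad terms pruned, I would adopt the closed form $u_{\alpha\gamma} = (-\sqrt{\rho/(1-\rho)})^{|\gamma|} c_\alpha$. Verification of~\eqref{eq:u-rec-intro} then reduces to a sign-cancellation identity: for $\beta \lneq \alpha$ with $\alpha$ connected, the effective index set $(V(\beta) \cup \{1\}) \cap V(\alpha - \beta)$ is nonempty, so $\sum_\gamma (-1)^{|\gamma|} = 0$ and only the diagonal $\beta = \alpha$ survives, giving $u_{\alpha 0} = c_\alpha$. Applying the binomial identity $\sum_{\gamma \subseteq S}(\rho/(1-\rho))^{|\gamma|} = (1-\rho)^{-|S|}$ then yields
\[ \|u\|^2 = \sum_{\alpha \in \hat\cI} \frac{\lambda^{2|\alpha|}}{\alpha!}\left(\frac{\rho^2}{1-\rho}\right)^{|V(\alpha) \cup \{1\}|}. \]
Organizing by $v := |V(\alpha)|$ and excess $k := |\alpha| - (v-1) \ge 0$, Cayley's formula contributes $\binom{n-1}{v-1} v^{v-2}$ trees, which after Stirling collapses to a geometric series in $t := en\lambda^2\rho^2/(1-\rho) \le (1-\eps)^2$, summing to $O(\rho^2/(1-\rho))$. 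Dividing by $\sqrt{\EE[x^2]} = \sqrt\rho$ then gives the claimed bound on $\Corr_{\le D}$.

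The main obstacle is controlling the excess ($k \ge 1$) contributions. The clean handle is the multinomial identity $\sum_{|\alpha| = e} 1/\alpha! = \binom{v+1}{2}^e/e!$ for multigraphs on a fixed vertex set of size $v$: each excess edge effectively costs a factor $\lambda^2 \binom{v+1}{2}/e$ relative to the tree contribution, so the sum over $k$ is a geometric series with ratio $\sim \lambda^2 v^2/e$. The hypothesis $D \le \lambda^{-2}/C$ (with $C = C(\eps)$ sufficiently large) forces $v - 1 \le e \lesssim \lambda^{-2}$, keeping this ratio uniformly bounded below $1$ and making the excess sum absorbable into the tree contribution. This factorial suppression is the key reason for choosing the Hermite basis; without it the excess sum would diverge precisely at $D \sim \lambda^{-2}$, matching the sharpness of the degree threshold.

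For the upper bound (part (b)), the plan is to exhibit a degree-$O(\log n)$ tree-structured polynomial estimator in line with the discussion in Section~\ref{subsec:other}: an average of signed tree monomials over self-avoiding trees rooted at vertex $1$ of size $O(\log n)$, with each edge factor $Y_{ij}$ suitably centered so that the trivial estimator $f \equiv \EE[\theta_1]$ is subtracted out. A first-moment computation then shows that the per-edge signal strength $\sqrt{en}\lambda\rho \ge 1+\eps$ compounds along each tree, making $\EE[f(Y)\theta_1]$ much larger than $\sqrt{\EE[f(Y)^2]\EE[\theta_1^2]}$. The main obstacle is the second-moment bound: one must enumerate pairs of trees by their shared edges and show that edge-disjoint pairs dominate, with all other overlap patterns contributing at lower order. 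The polylogarithmic conditions $\rho = \omega(n^{-1}\log^{12} n)$ and $\rho = o(\log^{-12} n)$ are exactly what is needed to absorb subleading corrections from vertex-overlap multiplicities (the exponent $12$ tracking the product of two degree-$6$ polylogarithmic tree sizes), yielding $\Corr_{\le C \log n} = 1 - o(1)$ and hence strong recovery.
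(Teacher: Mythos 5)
For the lower bound (part (a)), your plan coincides with the paper's proof through the construction of $u$ and the resulting formula
\[ \|u\|^2 = \sum_{\alpha \in \hat\cI} \frac{\lambda^{2|\alpha|}}{\alpha!}\left(\frac{\rho^2}{1-\rho}\right)^{|V(\alpha)\cup\{1\}|}, \]
so the hard part is agreed. The only divergence is how you close the sum over excess edges: you invoke the multinomial identity $\sum_{|\alpha|=e}1/\alpha! = \binom{v+1}{2}^e/e!$, whereas the paper bounds $1/\alpha!\le 1$ and counts multigraphs with $v$ vertices and $v-1+k$ edges via Cayley's formula and stars-and-bars $\binom{\tfrac{v(v+1)}{2}+k-1}{k}$, then splits the $k$-sum at $k=v^2$. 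Your multinomial ratio $\lambda^2\binom{v+1}{2}/e$ is the right heuristic, but be careful: applied as stated to all multigraphs with $e$ edges on a $v$-slot set, it overcounts the tree stratum $e=v-1$ by a factor $\approx (e/2)^{v-1}$ relative to $v^{v-2}$, which would inflate the base of your geometric series past $1$. You need to decompose $\alpha = T + E$ and apply the multinomial only to the excess $E$ (using $1/\alpha!\le 1/E!$); once you do that, the route is equivalent to the paper's.

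For the upper bound (part (b)), there is a genuine gap. You propose "an average of signed tree monomials over self-avoiding trees rooted at vertex $1$," but the paper's estimator is $f(Y)=\sum_{\alpha\in\cT_k}Y^\alpha$ where $\cT_k$ is the set of trees in which \emph{vertex $1$ has exactly two neighbors and the two subtrees rooted at these neighbors each have exactly $k$ edges}. This rigid, balanced structure is not cosmetic: it drives Lemma~\ref{lem:b-ell}, which shows that any pair $\alpha,\beta\in\cT_k$ sharing an edge must satisfy either $b\ge 2$ (at least two branch points) or $\ell\ge k+1$ (one whole subtree is contained in the core). This dichotomy is exactly what makes the overlapping-pair contribution (Case 3) subdominant to the edge-disjoint pairs. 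If you average over all $D$-edge trees containing vertex $1$, the dichotomy fails — for instance, a star centered at $1$ and a path from $1$ can share a single root-incident edge with $b=1$ and $\ell=1$ — and the second-moment series in $(\ell,b)$ then has a nonvanishing $m=1,b=1$ stratum that does not get absorbed. So "edge-disjoint pairs dominate" is the right slogan but it is not automatic for arbitrary tree families; the specific choice of $\cT_k$ is the missing ingredient. A secondary note: the paper does not center $Y$ here (the Gaussian noise is already mean zero), and the exponent $12$ in the polylog hypotheses comes from unoptimized $D^6$-type factors from forest enumeration, not from "the product of two degree-$6$ tree sizes."
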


Provided $\rho = o(1)$, the thresholds for $\lambda$ in the lower and upper bounds match, and the lower bound shows $\Corr_{\le D} = o(1)$ as desired. It is natural to assume $\rho \gtrsim 1/n$ so that the planted submatrix is not empty, and in the upper bound we have assumed $1/n \ll \rho \ll 1$ with additional log factors, for convenience. We have not attempted to optimize the log factors. While the lower bound is non-asymptotic (holds for any $n$), the upper bound is an asymptotic result for $n \to \infty$. Note that since $\eps$ is designated as a constant, it is not allowed to depend on $n$.

As discussed in Section~\ref{sec:corr}, the lower bound implies $\MMSE_{\le D} \ge (1-o(1)) \MMSE_{\le 0}$, so that the degree-$D$ MMSE is essentially no better than the trivial MMSE, which in this case is $\MMSE_{\le 0} = \rho - \rho^2$. As pointed out in~\cite{SW-estimation}, the (scalar) $\MMSE_{\le D}$ that we are working with can be directly related to its vector analogue: using linearity of expectation and symmetry of the model,
\[ \inf_{f_1,\ldots,f_n \in \RR[Y]_{\le D}} \EE \, \frac{1}{n} \sum_{i=1}^n (f_i(Y) - \theta_i)^2 = \inf_{f \in \RR[Y]_{\le D}} \EE  (f(Y) - \theta_1)^2 = \MMSE_{\le D}. \]

\paragraph{Discussion.}

This model has been thoroughly studied. The statistical limits are well understood~\cite{BI-info,kolar-info,BIS-info}, as are the computational limits of hypothesis testing between the planted submatrix model and the ``null'' model $\lambda = 0$~\cite{MW-reduction,BBH-reduction}. Our focus is instead on the computational limits of estimation, which has a different threshold (see~\cite{SW-estimation}).

We will focus on the regime $\rho = n^{\gamma-1}$ for a constant $\gamma \in (0,1)$. When $\gamma > 1/2$, an algorithm based on \emph{approximate message passing (AMP)} achieves near-perfect estimation of $\theta$ --- meaning $o(\rho n)$ misclassification errors, with high probability --- provided $\lambda \ge (1+\eps) (\rho\sqrt{en})^{-1}$~\cite{submatrix-amp}. This sharp threshold represents the best known performance of any poly-time algorithm. Aside from a general belief that AMP algorithms are powerful, there were no convincing lower bounds reaching this sharp threshold prior to our work. Low-degree lower bounds were given in~\cite[Appendix~E]{SW-estimation} showing that polynomials of very low degree (a specific constant times $\log n$) fail to reach the sharp threshold, but the bound falls away from the threshold as the degree increases beyond that.

In the other regime, $\gamma < 1/2$, the AMP threshold can be beaten by a very simple algorithm: thresholding the diagonal entries of the observed matrix gives exact recovery of $\theta$ (with high probability) provided $\lambda \gg 1$ (where throughout we use $\gg$ to hide a $\polylog(n)$ factor). This regime does not have a sharp threshold but rather a smooth tradeoff between signal strength and runtime, with the best known algorithms achieving exact recovery in runtime $\exp(\tilde{O}(\lambda^{-2}))$ for all $\lambda$ in the range $(\rho n)^{-1/2} \ll \lambda \le \polylog(n)$ \cite{subexp-sparse,anytime-sparse}; estimation becomes statistically impossible when $\lambda \ll (\rho n)^{-1/2}$~\cite{kolar-info,BIS-info}. These algorithms also work for $\gamma \ge 1/2$. The low-degree lower bounds of~\cite{SW-estimation} rule out degree-$D$ polynomials when $\lambda \le \Omega(\min\{1,(\rho\sqrt{n})^{-1}\}/D^2)$, which captures the poly-time threshold at $\lambda \approx 1$ but not the specific runtime needed when $\lambda \ll 1$. Prior to our work, the only known lower bounds that capture this specific runtime $\exp(\tilde{O}(\lambda^{-2}))$ were of a rather different nature, ruling out certain Markov chains~\cite{ogp-sparse}.

Theorem~\ref{thm:main-subm} gives a comprehensive low-degree lower bound showing optimality of the algorithms above in a strong sense: if $\lambda$ lies below the AMP threshold, namely $\lambda \le (1-\eps) (\rho\sqrt{en})^{-1}$, then the polynomial degree required for estimation is $\Omega(\lambda^{-2})$, suggesting that runtime $\exp(\tilde\Omega(\lambda^{-2}))$ is necessary. This resolves two different open questions of~\cite{SW-estimation}: pinning down the sharp AMP threshold and also the precise degree $\Omega(\lambda^{-2})$.

The lower and upper bounds together establish an ``all-or-nothing'' phase transition for low-degree polynomials in the regime $\gamma > 1/2$: the low-degree MMSE jumps sharply from near-trivial to near-perfect at the AMP threshold. This can be viewed as a computational analogue of, e.g.,~\cite{all-nothing}. While the upper bound confirms that degree-$O(\log n)$ polynomials succeed above the AMP threshold, it remains open to show that degree-$\tilde{O}(\lambda^{-2})$ polynomials succeed below the AMP threshold for $(\rho n)^{-1/2} \ll \lambda \ll 1$. Success of degree-$O(\log n)$ polynomials for $\lambda \gg 1$ is established by~\cite{SW-estimation}.

The case $\rho = \Theta(1)$ has also been studied~\cite{DM-sparse-amp,LKZ-sparse,submatrix-ogp,MW-amp}. In this case the relevant scaling for $\lambda$ is $\lambda = c/\sqrt{n}$ for a constant $c$. The best known algorithm is again based on AMP~\cite{DM-sparse-amp,LKZ-sparse}, but here its MSE converges to some nontrivial constant depending on $c,\rho$ in contrast to the all-or-nothing behavior above. It has been shown that constant-degree polynomials cannot surpass the precise MSE achieved by AMP~\cite{MW-amp}, and extending this result to higher degree remains an interesting open question. The bound that we prove here applies to higher degree polynomials but does not reach the optimal MSE value. In fact, our bound $\Corr_{\le D} \le C\sqrt{\rho/(1-\rho)}$ becomes vacuous in the regime $\rho = \Theta(1)$ unless $\rho$ is a very small constant.

\subsection{Planted Dense Subgraph}

\begin{definition}[Planted Dense Subgraph Model]
For parameters $\rho \in [0,1]$ and $p_0,p_1\in [0,1]$, we observe $Y=(Y_{ij})\in \{0,1\}^{\bcn}$ generated as follows. 
\begin{itemize}
    \item A planted signal $\theta=(\theta_i)_{1\leq i\leq n}\in \{0,1\}^n$ is drawn with i.i.d.\ $\Ber(\rho)$ entries.
    \item Conditioned on $\theta$, $Y_{ij}\sim \Ber(p_0+(p_1-p_0)\theta_i \theta_j)$ is sampled independently for each $i<j$. 
\end{itemize}
The goal is to estimate $x := \theta_1$.
\end{definition}

\noindent Without loss of generality we will assume $p_0\leq p_1$, since otherwise one can consider the complement graph instead. The main result is as follows. 
\begin{theorem}\label{thm:main-pds}
Consider the planted dense subgraph model with $0<p_0 \le p_1 \le 1$, and define
\begin{equation}\label{eq:lambda-sub}
\lambda := \frac{p_1-p_0}{\sqrt{p_0(1-p_0)}}.
\end{equation}
For any constant $\eps>0$, there exists a constant $C\equiv C(\eps)>0$ for which the following holds.
\begin{enumerate}
\item[(a)] (Lower Bound) If
\[ \lambda \le (1-\eps)(\rho\sqrt{en})^{-1}\sqrt{1-\rho} \qquad \text{and} \qquad D \le \lambda^{-2}/C \]
then
\[ \Corr_{\le D} \le C \sqrt{\frac{\rho}{1-\rho}}. \]
\item[(b)] (Upper Bound) If
\[ \lambda \ge (1+\eps)(\rho\sqrt{en})^{-1}, \quad \rho = \omega(n^{-1} \log^{12} n), \quad \rho = o(\log^{-12} n), \quad p_0 = \omega(n^{-1} \log^{24} n) \]
then
\[ \Corr_{\le C \log n} = 1-o(1) \quad \text{as} \quad n \to \infty. \]
\end{enumerate}
\end{theorem}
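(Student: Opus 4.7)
The plan is to apply the framework of Section~\ref{sec:ex-subm} essentially verbatim, with the basis and orthonormal collection adapted to Bernoulli noise in place of Gaussian. As a basis of $\RR[Y]_{\le D}$ I would take the simple-graph family $\phi_\alpha(Y)=\prod_{(i,j)\in\alpha}(Y_{ij}-p_0)/\sqrt{p_0(1-p_0)}$ for $\alpha\in\{0,1\}^{\bcn}$ with $|\alpha|\le D$ (since each $Y_{ij}\in\{0,1\}$ makes multigraphs redundant). The underlying independent randomness is $\theta$ together with the conditionally independent Bernoullis $Y_{ij}\mid\theta$, and for the orthonormal collection I would use the conditional Fourier basis
\[
\psi_{\beta\gamma}(Y,\theta)\;=\;\prod_{(i,j)\in\beta}\frac{Y_{ij}-p_{ij}^\theta}{\sqrt{p_{ij}^\theta(1-p_{ij}^\theta)}}\;\cdot\;\prod_{k\in\gamma}\frac{\theta_k-\rho}{\sqrt{\rho(1-\rho)}},
\]
with $p_{ij}^\theta=p_0+(p_1-p_0)\theta_i\theta_j$; orthonormality follows by conditioning on $\theta$. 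The crucial and (as the paper hints) ``subtle'' point is that the normalizer in $\psi$ must be the $\theta$-dependent quantity $\sqrt{p_{ij}^\theta(1-p_{ij}^\theta)}$ rather than a $\theta$-independent one.

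A direct computation gives $c_\alpha=\lambda^{|\alpha|}\rho^{|V(\alpha)\cup\{1\}|}$, mirroring the planted submatrix formula (with $\alpha!=1$). The matrix $M_{\beta\gamma,\alpha}$ vanishes unless $\beta\subseteq\alpha$, and for $\beta\subseteq\alpha$ the common edges generate an extra $\theta$-dependent factor $\prod_{(i,j)\in\beta}(1+(\nu-1)\theta_i\theta_j)$ with $\nu:=\sqrt{p_1(1-p_1)/(p_0(1-p_0))}$. Expanding in subsets $S\subseteq\beta$ weighted by $(\nu-1)^{|S|}$ yields an $S$-sum formula generalizing the planted submatrix one. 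I would then define $\hat{\cI}$ and $\hat{\cJ}$ exactly as in Section~\ref{sec:ex-subm} (simple graphs in place of multigraphs) and verify the hypotheses of Lemma~\ref{lem:disconn} so that disconnected and dangling terms can be discarded.

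Remarkably, the same closed-form ansatz
\[
u_{\alpha\gamma}\;=\;\big(-\sqrt{\rho/(1-\rho)}\big)^{|\gamma|}\,c_\alpha\qquad\forall(\alpha,\gamma)\in\hat\cJ
\]
from the planted submatrix proof solves $u^\top M=c^\top$ \emph{exactly}. The reason is that for any $S\subseteq\beta$ with $\beta\lneq\alpha$ (both in $\hat\cI$), the relevant inner $\gamma$-sum $\sum_\gamma(-1)^{|\gamma|}$ runs over the nonempty index set $[V(\alpha\setminus\beta)\cup V(S)]\cap[V(\beta)\cup\{1\}]$ and therefore vanishes --- nonemptiness uses that $V(\alpha\setminus\beta)\cap V(\beta)\ne\emptyset$ whenever $\alpha,\beta$ are both connected and contain vertex~$1$. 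The only surviving contribution to the constraint comes from $\beta=\alpha$, $S=\emptyset$, which delivers exactly $c_\alpha$; the $(\nu-1)^{|S|}$ corrections cancel without any inductive adjustment. The squared norm then becomes $\|u\|^2=\sum_\alpha \lambda^{2|\alpha|}(\rho^2/(1-\rho))^{|V(\alpha)\cup\{1\}|}$, dominated by $\alpha$ that are trees containing vertex~$1$; using Cayley's count of roughly $(en)^{k-1}/k$ such trees of size $k$, the hypothesis $\lambda\rho\sqrt{en/(1-\rho)}\le 1-\eps$ turns the $k$-sum into a convergent geometric series and gives $\|u\|^2\le C\rho^2/(1-\rho)$. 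Each excess edge beyond a tree costs a factor $\lambda^2$, and $D\le\lambda^{-2}/C$ controls the cyclic contributions. Proposition~\ref{prop:master} with $\EE[x^2]=\rho$ then yields $\Corr_{\le D}\le C\sqrt{\rho/(1-\rho)}$.

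For part (b), following Section~\ref{subsec:other}, I would construct a polynomial estimator as a properly weighted average of tree-structured monomials $\prod_{(i,j)\in T}(Y_{ij}-p_0)$ over labeled trees $T$ containing vertex~$1$ of size $k=\Theta(\log n)$. The conditional mean under $\theta$ is $(p_1-p_0)^{|T|}\prod_{i\in V(T)}\theta_i$, and after averaging with appropriate weights the estimator is strongly correlated with $\theta_1$; the assumption $p_0=\omega(n^{-1}\log^{24}n)$ provides the Bernoulli second-moment concentration needed to bound the variance, while $\rho=\omega(n^{-1}\log^{12}n)$ ensures there are enough planted vertices to propagate the signal. The main technical obstacle is on the lower-bound side: identifying the ``right'' orthonormal collection $\{\psi_{\beta\gamma}\}$. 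It is precisely the $\theta$-dependent normalization inside $\psi$ that makes the $(\nu-1)^{|S|}$ corrections in $M$ cancel out via the vanishing $\gamma$-sum; without this choice, the construction would require a non-trivial inductive modification of $u$. A secondary obstacle is a sufficiently tight counting of connected graphs with excess edges to exploit the full degree range $D\le\lambda^{-2}/C$ in the statement.
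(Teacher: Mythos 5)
Your proposal follows the paper's proof in all essentials: the $\theta$-dependent normalization in $\psi_{\beta\gamma}$ (the paper writes it as a case split on whether $\theta_i=\theta_j=1$, which is exactly your $p_{ij}^\theta$), the identical good/bad sets $\hat\cI,\hat\cJ$ carried over from the planted submatrix, and the same closed-form ansatz $u_{\alpha\gamma}=\big(-\sqrt{\rho/(1-\rho)}\big)^{|\gamma|}c_\alpha$, whose norm reduces to the planted-submatrix computation in Section~\ref{sec:subm} under the substitution $\lambda=(p_1-p_0)/\sqrt{p_0(1-p_0)}$. The only stylistic deviation is in verifying $u^\top M=c^\top$ (the paper's Lemma~\ref{lem:crucial:identity}): rather than your subset expansion of the $\nu^{\ell(\beta;\theta)}$ correction in $S\subseteq\beta$ followed by the vanishing $\gamma$-sum over the nonempty set $(V(\alpha\setminus\beta)\cup V(S))\cap(V(\beta)\cup\{1\})$, the paper swaps the $\gamma$-sum inside the expectation and collapses $\sum_{\gamma\subseteq V(\beta)\cup\{1\}}\big(-(\theta-\rho)/(1-\rho)\big)^\gamma$ into $(1-\rho)^{-|V(\beta)\cup\{1\}|}\one_{\theta\equiv 0\text{ on }V(\beta)\cup\{1\}}$, which in one move annihilates the conflicting indicator $\one_{\theta\equiv 1\text{ on }V(\alpha-\beta)}$ for $\beta\lneq\alpha$ and forces $\ell(\beta;\theta)=0$ when $\beta=\alpha$; the two arguments are equivalent and hinge on the same nonemptiness fact. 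For part (b) your sketch is directionally right but underspecified: the paper's estimator is the unweighted sum of monomials over the specific tree family $\cT_k$ (vertex $1$ with exactly two size-$k$ subtrees), and the variance bound is obtained by reusing the planted-submatrix calculation verbatim under the substitution $\eta=(p_1-p_0)/(p_0(1-p_0))$.
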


\noindent Note that this matches our result for planted submatrix (Theorem~\ref{thm:main-subm}) with the substitution~\eqref{eq:lambda-sub}, and with an additional condition $p_0 \gg 1/n$ in the upper bound, which precludes isolated vertices. (Again, we have not attempted to optimize the log factors.)

\paragraph{Discussion.}

Compared to planted submatrix, the planted dense subgraph problem has a wider variety of different regimes with different behaviors, depending on the scaling of $p_0,p_1$. Many statistical results exist for this model~\cite{AV-info,VA-info,CX-info}, as well as computational lower bounds for testing~\cite{HWX-reduction}, and positive algorithmic results~\cite{log-density,ames-convex,CX-info,one-community-sparse}. Computational limits for estimation were considered by~\cite{SW-estimation}, and our result provides a sharper refinement.

To simplify the discussion, we focus for now on the regime considered by~\cite{HWX-reduction} where $1/n^2 \ll p_0 \ll 1$ and $p_1 = c p_0$ for a constant $c > 1$, and furthermore we restrict our attention to the case $1/\sqrt{n} \ll \rho \ll 1$ which is most relevant for us due to the detection-recovery gap (see~\cite[Conjecture~2.2]{BBH-reduction}). Under these assumptions, the results of~\cite{SW-estimation} already give matching upper and lower bounds for low-degree polynomials on a coarse scale, namely, the threshold occurs at $n \rho^2 p_0 = \tilde\Theta(1)$. This is achieved by a very simple algorithm that selects vertices of large degree. Our result refines this, showing a low-degree phase transition at the sharp threshold $n \rho^2 p_0 = [e(c-1)^2]^{-1}$. To our knowledge, this threshold has not appeared before in the literature, except in one specific scaling regime considered by~\cite{one-community-sparse} where $p_0,p_1 = \Theta(1/n)$, $\rho = \Theta(1)$ and $\rho\to 0$ \emph{after} $n\to\infty$. Notably, a matching poly-time algorithm that achieves this threshold has not appeared in the literature (outside the specific regime of~\cite{one-community-sparse}). Our upper bound implies an algorithm of quasi-polynomial runtime $n^{O(\log n)}$, by directly evaluating the polynomial term-by-term. We expect it should be possible to give a poly-time algorithm by approximately evaluating this ``tree-structured'' polynomial via the ``color coding'' trick~\cite{color-coding,HS-bayesian}, which has been used by~\cite{graph-match-trees,graph-match-otter} to approximate tree-structured polynomials. We also expect that a more practical algorithm should be possible using an AMP approach similar to~\cite{clique-e,submatrix-amp}, and we leave this for future work.

Another aspect in which our result improves on~\cite{SW-estimation} is in the bound on $D$, namely, in the above scaling regime of~\cite{HWX-reduction} we show that degree $D = \Omega(1/p_0)$ is necessary below the sharp threshold. We expect this is essentially optimal: while it has not appeared in the literature, an algorithm of runtime $\exp(\tilde{O}(1/p_0))$ can be obtained for $(\rho n)^{-1} \ll p_0 \ll 1$ by a simple adaptation of the spiked Wigner results in~\cite{subexp-sparse}. The algorithm is based on a brute-force search over size-$\ell$ subsets of vertices, with $\ell = \tilde\Theta(1/p_0)$.

We will discuss a few other popular scaling regimes. While not a focus of our work, a notable special case of the planted dense subgraph model is the well-known \emph{planted clique} problem, where $p_1 = 1$ and $p_0 = 1/2$. Here, the testing and estimation problems coincide in difficulty, both transitioning from hard to easy at $\rho \approx 1/\sqrt{n}$. More precisely, there is an AMP algorithm for exact recovery above $\rho = 1/\sqrt{en}$~\cite{clique-e}, but there are in fact better poly-time algorithms that reach $\rho = \eps/\sqrt{n}$ for an arbitrary constant $\eps > 0$~\cite{alon-clique}; so there is not actually a sharp threshold here but rather a smooth tradeoff between runtime and clique size. Our result, as it should, does not indicate a sharp threshold here, as it becomes limited to constant degree in the planted clique regime. Stronger low-degree lower bounds for planted clique estimation were known previously, reaching the ``correct'' degree $D \approx \log^2 n$~\cite{SW-estimation}, and the analogous result for testing was known even earlier (see~\cite{hopkins-thesis}). However, we note that planting a clique with $1/\sqrt{n}\ll \rho\ll 1$ in a dense graph with $p_0=1-c/(n\rho^2)$, or equivalently planting an independent set of size $n\rho$ with $p_0=c/(n\rho^2)$, exhibits a sharp threshold at $c=1/e$ for estimation by low-degree polynomials, as we establish in Theorem~\ref{thm:main-pds}. To our knowledge, this sharp phase transition has not appeared in the previous literature.

Another interesting regime of planted dense subgraph that we do not focus on is the \emph{log-density} regime where $\rho \ll 1/\sqrt{n}$ and $p_1 = n^{-\alpha}$, $p_0 = n^{-\beta}$ for constants $0 < \alpha < \beta$~\cite{log-density}. We do not expect a sharp threshold here, and the low-degree limits for testing have been characterized using a somewhat delicate conditioning argument~\cite{subhyper}. The known algorithms for estimation match the lower bounds for testing (which is an easier problem), so we expect the limits of estimation and testing to coincide. Still, proving optimal bounds on $\MMSE_{\le D}$ remains a difficult technical challenge, which we leave for future work.

\subsection{Spiked Wigner Model}

\begin{definition}[Spiked Wigner Model]
For a parameter $\lambda \ge 0$, let
\[ X = \sqrt{\frac{\lambda}{n}} UU^\top \]
where $U \in \RR^{n \times m}$ with entries i.i.d.\ from some prior $\pi$. Observe
\[ Y_{ij} = X_{ij} + Z_{ij} \qquad \text{for} \qquad 1 \le i \le j \le n, \]
where $Z_{ij}$ are i.i.d.\ $\cN(0,1)$. The goal is to estimate $x := X_{1,2}$.
\end{definition}

\noindent The estimand $X_{1,2}$ is representative of any off-diagonal entry of $X$, by symmetry, while the diagonal entries have negligible contribution to the matrix mean squared error $\|\hat{X} - X\|_F^2$. It is common to put $\cN(0,2)$ instead of $\cN(0,1)$ on the diagonal of $Z$, which would only make the problem harder, so our lower bound still holds (see~\cite[Claim~A.2]{SW-estimation}). The upper bound below also remains valid because it does not use the diagonal entries.

\begin{theorem}\label{thm:main-wigner}
Consider the spiked Wigner model and assume the prior $\pi$ satisfies $\EE[\pi] = 0$ and $\EE[\pi^2] = 1$.
\begin{enumerate}
\item[(a)] (Lower Bound) Suppose that for some $c,\nu>0$, $\EE |\pi|^k\leq (ck)^{\nu k}$ for any integer $k\geq 1$. There exist constants $\delta,C>0$ depending only on $c, \nu$ such that if $D \le n^\delta$ then
\[ \Corr_{\le D} \le C \sqrt{\frac{m}{n}\sum_{d=1}^{D}\la^d}.
\]
Consequently,
\begin{itemize}
\item if $\la<1$ is fixed and $D\leq n^{\delta}$, then $\Corr_{\le D}\leq C\sqrt{\frac{m\la}{n(1-\la)}}$,
\item if $\la=1$ and $D \le n^\delta$, then $\Corr_{\leq D} \leq C\sqrt{Dm/n}$, and
\item if $\la=O(1)$, $m \le n^{1-\Omega(1)}$, and $D = o(\log n)$, then $\Corr_{\leq D}=o(1)$.
\end{itemize}

\item[(b)] (Upper Bound) Suppose that $K\equiv \EE[\pi^4]<\infty$ and $\lambda \ge 1+\eta$, where $K$ and $\eta \in (0,1)$ are constants (not depending on $n$). Then there exist a constant $c \equiv c(K) > 0$ and a universal constant $C > 0$ such that if $m=m_n$ satisfies $m = o(n)$, then for $D\geq \frac{C}{\eta}\log (n/m)$ and large enough $n$,
\[ \Corr_{\le D} \ge c\eta. \]
\end{enumerate}
\end{theorem}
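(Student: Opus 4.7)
My plan is to apply Proposition~\ref{prop:master} in the framework of Section~\ref{sec:pf-techniques}. For the $Y$-basis I take $\phi_\alpha(Y) = H_\alpha(Y)$ (orthonormal Hermite polynomials) for $\alpha \in \NN^{\mcn}$ with $|\alpha| \le D$. The moment bound $\EE|\pi|^k \le (ck)^{\nu k}$ guarantees that all moments of $\pi$ are finite, so the sequence of orthonormal polynomials $\{P_d\}_{d \ge 0}$ with respect to $\pi$ exists and has well-controlled coefficients. For the $W=(Z,U)$-basis I then take
\[ \psi_{\beta\gamma}(Z,U) = H_\beta(Z)\prod_{(i,k)\in [n]\times[m]} P_{\gamma_{ik}}(U_{ik}), \qquad \beta \in \NN^{\mcn},\; |\beta|\le D,\; \gamma \in \NN^{[n]\times[m]}. \]
Since $x=X_{1,2}$ only involves $U$-entries in rows $1$ and $2$, and since a leaf $v \notin \{1,2\}$ in the multigraph $\alpha$ can be ``integrated out'' via Lemma~\ref{lem:disconn}, I would declare $\alpha \in \hat\cI$ iff $\{1,2\}\subseteq V(\alpha)$ and every $v \in V(\alpha)\setminus\{1,2\}$ has degree at least $2$, with $\hat\cJ$ given analogously by imposing a compatible support condition on $\gamma$. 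This matches the general description in Section~\ref{subsec:other}.

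\textbf{Recursive construction of $u$ and the main obstacle.} In contrast to the planted submatrix case, there is no closed-form choice of $u$ that decouples the recursion~\eqref{eq:u-cond}. Instead, I would order good $\alpha$ by $|\alpha|$ and, at each step, define $(u_{\alpha\gamma})_\gamma$ to be the minimum-$\ell_2$-norm solution of the single new linear constraint introduced at that step. The heart of the proof --- and the step I expect to be by far the hardest --- is to establish an inductive bound on $\sum_\gamma u_{\alpha\gamma}^2$ that scales with the \emph{excess edges} $|\alpha|-|V(\alpha)|+1$ rather than with $|\alpha|$ itself; schematically,
\[ \sum_\gamma u_{\alpha\gamma}^2 \;\lesssim\; K^{|\alpha|-|V(\alpha)|+1}\cdot \frac{\lambda^{|\alpha|}}{n^{|\alpha|}}\cdot m^{|V(\alpha)|-1}, \]
for a constant $K=K(\pi)$. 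Any bound that grew exponentially in $|\alpha|$ --- as in~\cite{SW-estimation} --- would lose the sharp BBP constant at $\lambda = 1$, so this separation of ``tree-like'' from ``cycle-bearing'' structure is essential. The inductive argument uses the band structure of $M$ (nonzero only for $\beta \le \alpha$), the moment bound on $\pi$ to control the norms of the $P_d$'s and hence of $M_{\beta\gamma,\alpha}$, and the leaf-removal property to prevent uncontrolled branching in the recursion.

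\textbf{Summation for part (a).} With the norm bound in hand, I would sum $\|u\|^2 = \sum_{\alpha\gamma\in\hat\cJ}u_{\alpha\gamma}^2$ by grouping good $\alpha$ according to $(|V(\alpha)|,|E(\alpha)|)$. Simple paths from $1$ to $2$ of length $d$ have excess $0$, there are $\sim n^{d-1}$ of them, and each contributes $\sim m^2\lambda^{d+1}/n^{d+1}$ (after accounting for the compatible colorings $\gamma$ and the ``monochromatic'' $U^2$-pairing that dominates), giving a total path contribution $\sim m^2 \lambda^{d+1}/n^2$ at length $d$. Good multigraphs with excess $\ge 1$ each cost an additional factor $K/n$ and are subdominant provided $D = n^\delta$ for $\delta$ small enough. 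Dividing by $\EE[x^2]=m\lambda/n$ yields $\Corr_{\le D}^2 \lesssim (m/n)\sum_{d=1}^D \lambda^d$; the three stated consequences follow from the corresponding geometric, linear, and logarithmic sums.

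\textbf{Upper bound, part (b).} For the upper bound I would use the self-avoiding-walk estimator $f(Y) = \sum_P \prod_{e \in E(P)} Y_e$, the sum taken over self-avoiding paths $P$ from $1$ to $2$ of length $D$, as in~\cite{HS-bayesian}. The signal correlation $\EE[f(Y)\cdot X_{1,2}]$ is dominated by the contribution in which every edge of $P$ is paired with a signal factor and the natural monochromatic coloring (all $X$-factors contracted via the same $k \in [m]$) gives $\sim m\lambda^{(D+1)/2}/n^{(D+1)/2}$ per path, times $\sim n^{D-1}$ paths. For $\EE[f(Y)^2]$ one expands into pairs of paths and groups by the shape of their union via standard walk-counting identities; extending the rank-$1$ analysis of~\cite{HS-bayesian} to general $m=o(n)$ requires tracking which of the $m$ colors is used on each edge, but the $m$-dependence is benign because $m/n \to 0$. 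The spectral-gap condition $\lambda \ge 1+\eta$ makes the relevant geometric ratios in $1/\lambda$ converge, and the depth $D \ge (C/\eta)\log(n/m)$ is exactly what is needed to drive the noise cross-terms below the signal and yield $\Corr_{\le D}\gtrsim \eta$.
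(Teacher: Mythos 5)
For part (a), your plan is to run the full framework of Section~\ref{sec:overview-proof} with orthonormal polynomials $\{P_d\}$ for $\pi$ on the $U$-coordinates and the SBM-style recursive minimum-norm construction of $u$. This is a genuinely different route from the paper's: the paper explicitly states at the top of Section~\ref{sec:wigner} that the full framework is not needed for spiked Wigner, and instead applies the cumulant bound $\EE[x^2]\cdot\Corr_{\le D}^2 \le \sum_\alpha \kappa_\alpha^2/\alpha!$ of \cite{SW-estimation}. The new work there is a recursive estimate on $\kappa_\alpha$ that factors into a moment term $M(\delta(\bar\alpha))$ and a purely combinatorial quantity $f(\alpha) \le (2|\alpha|)^{|\alpha|-|V(\alpha)|+1}$ tracking excess degree, after which general $m$ falls out immediately from cumulant additivity, $\kappa^{(m)}_\alpha = m\,\kappa^{(1)}_\alpha$. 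Your route could in principle be carried out, but it takes on significantly more bookkeeping (e.g.\ controlling coefficients of $P_d$ under only a moment growth assumption on $\pi$) and, more importantly, the step you yourself flag as ``by far the hardest'' --- the inductive bound on $\sum_\gamma u_{\alpha\gamma}^2$ --- is left open and is internally inconsistent as written: your schematic $K^{|\alpha|-|V(\alpha)|+1}(\lambda/n)^{|\alpha|}\,m^{|V(\alpha)|-1}$ gives $m^d(\lambda/n)^d$ for a length-$d$ path, contradicting the $m^2(\lambda/n)^{d+1}$ per path that you (correctly) use in the summation paragraph. The $m^{|V(\alpha)|-1}$ scaling is not a cosmetic slip: summed over the $\sim n^{d-1}$ length-$d$ paths and divided by $\EE[x^2]=m\lambda/n$ it gives a contribution of order $(m\lambda)^{d-1}$, which diverges for every $m\ge 2$ at $\lambda=1$ and therefore cannot yield the stated bound. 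The correct $m$-dependence is a fixed power of $m$ uniform over the shape of $\alpha$, and the clean way to get it (which your proposal does not mention) is exactly the cumulant additivity trick. So I do not regard the central inductive step as closed.

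For part (b), your plan matches the paper's: the self-avoiding-path estimator of \cite{HS-bayesian} extended to $m=o(n)$ by tracking which of the $m$ colors pairs each edge, with the relevant geometric series in $1/\lambda$ converging under $\lambda\ge 1+\eta$ and the choice $D \gtrsim \eta^{-1}\log(n/m)$ suppressing the subleading terms. This outline is correct.
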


\paragraph{Discussion.}

Most prior work pertains to the case $m=O(1)$ (and often $m=1$), and we focus on this case for now. We also assume that $\lambda$ and the spike prior $\pi$ are fixed, i.e., not depending on $n$. The spiked Wigner model was first studied in random matrix theory, where a sharp phase transition at $\lambda=1$ was discovered in the eigenvalues and eigenvectors of $Y$~\cite{peche,FP-wigner,maida,CDF-wigner,BN-eigenvec}. This is known as the ``BBP'' transition, by analogy to the similar phase transition discovered by Baik, Ben Arous, and P{\'e}ch{\'e} in the related \emph{spiked Wishart} model~\cite{BBP}. These results immediately imply an algorithm, based on the $m$ leading eigenvectors, for weak recovery when $\lambda > 1$. Here, weak recovery means $\langle \hat{X},X \rangle/(\|\hat{X}\|_F \|X\|_F)$ converges to some positive constant depending on $\lambda$, where the estimator $\hat{X}$ is obtained from $Y$ by truncating the eigendecomposition to the $m$ leading terms. When $\lambda \le 1$, the $m$ leading eigenvectors fail to achieve weak recovery.

Later, a number of works asked the statistical question of whether \emph{any} method can estimate the signal below the BBP threshold~\cite{FR-amp,MMSE,proof-replica,LM-wigner,short-replica} (see~\cite{miolane-survey} for a survey), or even detect its presence~\cite{MRZ,BMVVX,opt-subopt,fund-limits-wigner}. Statistically speaking, the answer turns out to be ``yes'' for some spike priors $\pi$, including sufficiently sparse priors but not $\cN(0,1)$ or $\Unif(\{\pm 1\})$. However, regardless of the prior (as long as it does not depend on $n$), no \emph{poly-time} algorithm is known to achieve weak recovery below $\lambda = 1$. It has been conjectured that this hardness is inherent, on the basis of statistical physics methods, namely AMP and the associated free energy barriers~\cite{MMSE} (see Conjecture~10 in~\cite{LM-wigner}). Low-degree lower bounds show hardness for hypothesis testing against the ``null'' model ($\lambda = 0$) whenever $\lambda < 1$~\cite{ld-notes,spectral-planting}, providing indirect evidence that the seemingly more difficult estimation task should also be hard. We note that our focus is on weak recovery because strong recovery, meaning $\langle \hat{X},X \rangle/(\|\hat{X}\|_F \|X\|_F) \to 1$, is information-theoretically impossible; see~\cite{LM-wigner,pbm24}.

Our lower bound shows low-degree hardness of weak recovery when $\lambda \le 1$, notably covering the critical case $\lambda = 1$. This holds for all priors $\pi$ satisfying a mild moment condition (which, for instance, covers all priors with bounded support). We rule out polynomials of degree $n^\delta$ for a particular constant $\delta > 0$ depending only on $c,\nu$. This resolves the conjecture mentioned above, within the low-degree framework. The testing results~\cite{spectral-planting} suggest that this hardness can be extended to any $\delta < 1$, and this remains an open problem.

Another consequence of our lower bound is that logarithmic degree is required, even in the easy regime. Combined with the upper bound, we know that when $\lambda > 1$ is fixed and $m \le n^{1-\Omega(1)}$, the degree complexity of weak recovery is exactly on the order $D = \Theta(\log n)$.

Our results also extend to the case of growing $m$ (a.k.a.\ symmetric matrix factorization), which has received recent attention~\cite{meso,bun-rotational,extensive-rank,pbm24,semerjian,multiscale-cavity}. We show that the above phenomenon persists, that is, the low-degree threshold for weak recovery remains at $\lambda = 1$ as long as $m \ll n$. To our knowledge, no prior work has explored computational hardness in the growing-$m$ regime, so our results uncover a new phase transition that was not previously substantiated. As $m$ grows, the testing and estimation thresholds separate, so this new phenomenon could not have been uncovered by studying the testing problem.

Our low-degree upper bound is an extension of~\cite{HS-bayesian}, which handles the $m=1$ case using self-avoiding walks. We expect that our estimator can be made into a poly-time algorithm using the ``color coding'' trick~\cite{color-coding}, as in~\cite{HS-bayesian}. Alternatively, poly-time weak recovery based on eigenvectors might be deduced from the spectral analysis by~\cite{meso}. More sophisticated poly-time algorithms that aim to optimize the precise mean squared error are discussed in~\cite{bun-rotational,pbm24,semerjian}.

Our lower bound becomes vacuous once $m = \Omega(n)$, and for good reason: in this regime, the degree-1 estimator $f(Y)=Y_{1,2}$ achieves correlation $\sqrt{\frac{m\la/n}{m\la/n+1}}$, which gives weak recovery as long as $\la$ is of constant order.

\subsection{Stochastic Block Model}
\label{sec:main-sbm}

\begin{definition}[Stochastic Block Model]
\label{def:sbm}
Let $q\geq 2$ be the number of communities. Let $\pi=(\pi_k)_{k\in [q]}\in \R_{> 0}^q$ be a vector whose entries sum to 1, representing a probability distribution over $[q]$. Let $Q\in \R_{>0}^{q\times q}$ be a symmetric matrix with positive entries. Observe $Y \in \{0,1\}^{\bcn}$, the adjacency matrix for a graph, generated as follows. The community labels $\sigma^\star=(\sigma^\star_i)_{1\le i\le n}\in [q]^n$ for the $n$ vertices are drawn as $\sigma^\star_i\iid \pi$. Conditional on $\sigma^\star$, $Y_{ij}\sim \Ber(Q_{\sigma^\star_i,\sigma^\star_j}/n)$ is sampled independently for each $i<j$. The goal is to estimate $x := Q_{\sigma^\star_1,\sigma^\star_2}-\EE[Q_{\sigma^\star_1,\sigma^\star_2}]$.
\end{definition}

\noindent Equivalently, the probability of having an edge between $i$ and $j$, conditioned on $\sigma^\star$, is $Q_{\sigma_i^\star,\sigma_j^\star}/n$. We will mostly focus on the sparse regime with a fixed number of communities, i.e.\ regard $q, \pi,Q$ as fixed and consider the limit $n\to\infty$, although our proof for the low-degree lower bound applies more generally, e.g.\ when $q\ll n^{1/8}$ in the \emph{symmetric} SBM where $\pi_k\equiv 1/q$ and the diagonal (resp.\ off-diagonal) entries of $Q$ are the same (see Remark~\ref{rmk:general:SBM}). In the sparse regime, we cannot hope to achieve strong recovery, even information-theoretically, due to the presence of isolated vertices, and thus we focus on weak recovery.

We are interested in recovering the membership matrix $(Q_{\sigma^\star_i,\sigma^\star_j})_{1 \le i < j \le n}$. We have chosen $x$ so that $\Corr_{\le D}$ is directly related to the MMSE
\begin{equation}\label{eq:sbm:mmse:edge}
\MMSE_{\le D} = \inf_{f \in \RR[Y]_{\leq D}} \EE[(f(Y)-Q_{\sigma^\star_1,\sigma^\star_2})^2],
\end{equation}
as described in Section~\ref{sec:corr}. Here, note that the centering term $\EE[Q_{\sigma^\star_1,\sigma^\star_2}]$ that appears in $x$ has been omitted from~\eqref{eq:sbm:mmse:edge} because it does not affect $\MMSE_{\le D}$. Note that in the \emph{symmetric} SBM, estimating $Q_{\sigma^\star_1,\sigma^\star_2}$ is equivalent to estimating $\one_{\sigma^\star_1=\sigma^\star_2}$. In fact, our lower bound also rules out estimation of $(\one_{\sigma^\star_1=k}-\pi_k)(\one_{\sigma^\star_2=\ell}-\pi_\ell)$ for \emph{any} $k,\ell\in[q]$; see Theorem~\ref{thm:sbm}.

Define $d := \E[Q_{\sigma^\star_1,\sigma^\star_2}]>0$, which is (asymptotically) the average degree of the observed graph. Assume that the average degree of each vertex is the same regardless of its community label:
\begin{equation}\label{eq:degree:condition}
\sum_{\ell=1}^{q}Q_{k,\ell}\pi_\ell=d \qquad\forall k \in [q].
\end{equation}
Violation of condition~\eqref{eq:degree:condition} allows for weak recovery, regardless of $q,\pi, Q$ as long as these are fixed as $n\to\infty$, via simple degree counting (see e.g.~\cite[Proposition~4.1]{mss-weak}).  Thus, we may assume the condition~\eqref{eq:degree:condition} without loss of generality (see also~\cite{BMNN, AS-acyclic, HS-bayesian}). Define the stochastic matrix
\begin{equation}\label{eq:T}
    T:=\frac{1}{d}\diag(\pi)Q
\end{equation}
and let $1=\la_1(T)\geq |\la_2(T)|\geq \ldots\geq |\lambda_q(T)|$ denote the eigenvalues of $T$ in decreasing order of magnitude. A central role will be played by the parameter
\begin{equation*}
    \la := |\la_2(T)|.
\end{equation*}
\begin{theorem}\label{thm:main-sbm}
    Consider the stochastic block model with parameters $q,\pi,Q$ such that Eq.~\eqref{eq:degree:condition} holds.
    \begin{enumerate}
        \item[(a)] (Lower Bound) There exist constants $\delta,C>0$ depending only on $q,\pi,Q$ such that if $D\leq n^\delta$ then
        \[
        \Corr_{\leq D} \leq \sqrt{\frac{C}{n}\sum_{t=1}^{D}(d\la^2)^{t}}.
        \]
        Consequently, if $d\la^2 \le 1$ and $D \le n^\delta$, then $\Corr_{\le D} = o(1)$.
        \item[(b)] (Upper Bound~\cite{AS-acyclic,HS-bayesian}) If $q,\pi,Q$ are fixed with $d\la^2>1$, then for large enough $n$ we have $\Corr_{\leq C \log n} \ge \eta$ for some constants $C \equiv C(q,\pi,Q)>0$ and $\eta \equiv \eta(q,\pi,Q)>0$.
    \end{enumerate}
\end{theorem}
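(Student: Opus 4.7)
The plan is to prove part (a) via Proposition~\ref{prop:master} with a carefully constructed vector $u$; part (b) is the self-avoiding walk construction of~\cite{HS-bayesian,AS-acyclic}, which we simply invoke. For part (a) we apply the framework of Section~\ref{sec:pf-techniques} with the edge-monomial basis $\phi_\alpha(Y) = \prod_{(i,j)\in E(\alpha)} Y_{ij}$ indexed by simple graphs $\alpha$ on $[n]$ with $|E(\alpha)|\le D$, and an orthonormal family $\{\psi_{\beta\gamma}(W)\}$ in which the vertex-coloring component $\gamma$ is built from eigenfunctions of the transition matrix $T$ defined in~\eqref{eq:T}. With these choices, each entry $M_{\beta\gamma,\alpha}=\EE[\phi_\alpha(Y)\psi_{\beta\gamma}(W)]$ reduces to a combinatorial sum in which every edge of $\alpha$ contributes a factor of the form $(d/n)\la_j(T)$ determined by the eigenvalue indices $j$ that $\gamma$ assigns at its endpoints, and in particular $|\la_j(T)|\le \la$ whenever $j\ne 1$.

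Following Definition~\ref{def:good:SBM}, $\alpha\in\hat\cI$ is declared \emph{good} if $1,2\in V(\alpha)$ and every other vertex has degree at least $2$, and the corresponding condition on $(\beta,\gamma)\in\hat\cJ$ requires $\beta\in\hat\cI$ together with $\gamma$ supported on $V(\beta)$ taking only nontrivial eigenvalue indices there. The bad-term removal via Lemma~\ref{lem:disconn} is enabled by a \emph{leaf-pruning} identity: if $\alpha$ has a leaf $v\notin\{1,2\}$ attached by edge $(u,v)$, then marginalizing over the coloring at $v$ either annihilates or pulls out a factor $d/n$ relative to $\hat\alpha:=\alpha\setminus\{(u,v)\}$, giving the proportionality $c_\alpha=\mu c_{\hat\alpha}$, $M_{\beta\gamma,\alpha}=\mu M_{\beta\gamma,\hat\alpha}$ with $\mu=d/n$ required by Lemma~\ref{lem:disconn}.

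The hard part is the construction of $u$ itself. Since no explicit formula zeroes out the recurrence as in the planted submatrix case, we build $(u_{\alpha\gamma})_\gamma$ recursively on $|E(\alpha)|$, at each level taking the \emph{minimum-$\ell_2$-norm} vector satisfying the single linear equation from~\eqref{eq:u-cond} indexed by $\alpha$, given the $u$-values already chosen at smaller edge counts (see Section~\ref{subsec:SBM:u}). The crucial estimate is a refined inductive norm bound of the form
\[
\sum_{\gamma} u_{\alpha\gamma}^2 \;\le\; C^{\,|E(\alpha)|-|V(\alpha)|+1}\,(d/n)^{|E(\alpha)|}\,\la^{2|E(\alpha)|},
\]
with exponential growth \emph{only} in the cyclomatic number $|E(\alpha)|-|V(\alpha)|+1$, not in the total edge count. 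This refinement is exactly what is missing from~\cite{SW-estimation} and is essential for reaching the sharp Kesten--Stigum constant $d\la^2=1$: a looser $C^{|E(\alpha)|}$ would lose a multiplicative factor per edge and blur the threshold. Maintaining this bound through the recursion requires a careful edge-by-edge decomposition in which tree branches are absorbed into the $\la^{2|E(\alpha)|}$ factor while each cycle-closing edge costs one extra $C/n$, and this is where the delicate combinatorial bookkeeping concentrates.

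Given the norm control, $\|u\|^2=\sum_{\alpha\gamma\in\hat\cJ} u_{\alpha\gamma}^2$ is dominated by simple paths from $1$ to $2$: a length-$t$ such path has cyclomatic number $0$, contributes at most $C(d/n)^t\la^{2t}$ to the inner sum, and the number of such paths in $[n]$ is at most $n^{t-1}$. Summing yields $\|u\|^2\le \tfrac{C}{n}\sum_{t=1}^D(d\la^2)^t$, while all non-tree good $\alpha$ pay at least one extra $C/n$ and contribute lower-order terms. Dividing by $\EE[x^2]=\Theta(1)$ and invoking Proposition~\ref{prop:master} gives the stated bound. The restriction $D\le n^\delta$ is forced by the need to keep sub-leading corrections (notably good graphs in which two would-be simple paths share internal vertices) under control throughout the induction.
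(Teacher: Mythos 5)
Your high-level blueprint is the same as the paper's (Proposition~\ref{prop:master} plus good/bad decomposition via Lemma~\ref{lem:disconn}, then a recursive minimum-$\ell_2$-norm construction of $u$ as in Section~\ref{subsec:SBM:u}, with the punchline that the growth of $\sum_\gamma u_{\alpha\gamma}^2$ is controlled by the cyclomatic number $|\alpha|-|V(\alpha)|+1$ rather than by $|\alpha|$). But the specific technical choices you sketch differ from the paper's and do not obviously close the argument.

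First, the paper uses the \emph{centered} basis $\phi_\alpha(Y)=(Y-d/n)^\alpha$, not $\phi_\alpha(Y)=\prod_{(i,j)\in E(\alpha)}Y_{ij}$. This matters: for a graph $\alpha$ with a leaf $v\notin\{1,2\}$, the degree-balance condition~\eqref{eq:degree:condition} makes $\E[Y_{uv}-d/n\mid \sigma^\star_u]=0$ exactly, so in the paper's Proposition~\ref{prop:good:sbm} the pruning constant is $\mu=0$, and both $c_\alpha$ and $M_{\beta\gamma,\alpha}$ vanish for \emph{all} good $\beta\gamma$ in one step. With your uncentered $\phi_\alpha$, pruning pulls out $\mu=d/n$ as you say, but a single prune generally does not land in $\hat\cI$, so you must iterate — and then the requirement of Lemma~\ref{lem:disconn} that $M_{\beta\gamma,\alpha}=\mu M_{\beta\gamma,\hat\alpha}$ hold for every good $\beta\gamma$ is no longer automatic when $\beta$ touches the pruned branch. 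So the centering is not cosmetic; it is what makes the bad-term removal a clean one-shot argument.

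Second, and more substantively, you assert that with $\gamma$ built from eigenfunctions of $T$ each edge of $\alpha$ contributes a clean factor $(d/n)\lambda_j(T)$ to $M_{\beta\gamma,\alpha}$. This per-edge factorization does not hold in the form stated: the $Y_{ij}$ are Bernoulli, not Gaussian, so the algebra that gives multiplicative factors per edge in Gaussian or non-backtracking-operator settings does not transfer to products of these low-degree polynomials. The paper deliberately keeps $\gamma\in[q]^{V(\beta)}$ as explicit color labels (with $\psi_{\beta\gamma}$ built from the indicator $\one(\sigma^\star_{V(\beta)}=\gamma)$) and only introduces the eigenvalue structure later, via the matrix $B=\diag(\pi)^{-1/2}(T-\pi\mathbf 1^\top)\diag(\pi)^{1/2}$, through the nontrivial estimate Proposition~\ref{prop:sbm:crucial:estimate}: a uniform bound on $\max_\tau|\E[\phi_\alpha(Y)\mid\sigma^\star_W=\tau]|$ that is proved first for trees (by splitting along branch vertices and using $\|B^t\|_\infty\le\lambda^t$) and then for general graphs by a cycle-cutting induction that trades one excess edge for a fresh vertex. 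That proposition is the technical heart of the lower bound; it is what yields the factors $(d\lambda/n)^{|\alpha|}(q/\pi_{\min})^{O(\Delta)}$ in Lemmas~\ref{lem:sbm:c:cycle} and~\ref{lem:sbm:bound:M}, and without it — or something of comparable strength — your claimed inductive norm bound $\sum_\gamma u_{\alpha\gamma}^2\lesssim C^{\Delta}(d\lambda^2/n)^{|\alpha|}$ does not follow. (Also, the bound the paper actually proves has an extra $f(\alpha)^2\le (2|\alpha|)^{2\Delta}$ factor, polynomial in $D$ per excess edge rather than a constant, which is exactly why the restriction $D\le n^\delta$ is needed; your $C^\Delta$ is slightly too strong as written.) Finally, the paper's definition of good $\alpha$ also requires $\bar\alpha=\alpha+\one_{(1,2)}$ to be connected, and there is a preliminary reduction of the estimand $Q_{\sigma^\star_1,\sigma^\star_2}-d$ to the quantities $x_{k,\ell}=(\one_{\sigma^\star_1=k}-\pi_k)(\one_{\sigma^\star_2=\ell}-\pi_\ell)$; both are omitted from your sketch but are straightforward to add.
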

\begin{remark}
\label{rmk:general:SBM}
    Although Theorem~\ref{thm:main-sbm} is stated only for the sparse SBM with a constant number of communities, our proof reveals that if Condition~\eqref{eq:degree:condition} is satisfied, the low-degree lower bound remains valid when $\|Q\|_{\infty}\leq n^{1-\eps}$ and $\left(\frac{q}{\pi_{\min}}\right)^4\frac{d}{Q_{\min}}\leq n^{1-\eps}$ for a constant $\eps>0$
    where $\pi_{\min}:=\min_{k \in [q]}\pi_k$ and $Q_{\min}:=\min_{k,\ell\in [q]} Q_{k,\ell}$. Namely, if $d\la^2\leq 1-\eta$ for a constant $\eta>0$ then $\Corr_{\leq n^{\delta}}=o(1)$ holds for some $\delta=\delta(\eps,\eta)>0$.
\end{remark}

Note that in Definition~\ref{def:sbm}, we assumed $\pi\in \R_{> 0}^{q}$ and $Q\in \R_{>0}^{q\times q}$. We may assume the former without loss of generality since otherwise we can remove the empty communities. On the other hand, the latter condition that the connectivity matrix $Q$ has positive entries is likely a proof artifact, and we leave the question of resolving the low-degree hardness when some entries of $Q$ are $0$ as an open problem.

The upper bound (b) for $d\la^2>1$ follows from the upper bounds derived in the works~\cite{AS-acyclic,HS-bayesian}, which we include for completeness. The main contribution of this work is the lower bound (a) for $d\la^2\leq 1$.

\paragraph{Discussion.}
The stochastic block model (SBM) is a special case of 
inhomogeneous random graphs~\cite{BoJaRi:07} that has been extensively studied as a model for communities in statistics and social sciences, see e.g.~\cite{HoLaLe:83, SnijdersNowicki:97, BC:09, RCY:11}, and for analyzing clustering algorithms in computer science, see e.g.~\cite{DyerFrieze:89,JerrumSorkin:98,CondonKarp:01,McSherry:01,CojaOghlan:10}. See \cite{moore-survey-sbm,abbe-survey-sbm} for survey articles. 

Our focus is on the sparse regime where the edge probabilities are proportional to $1/n$, the number of communities $q$ is held constant, and the objective is weak recovery. In the sparse regime, the landmark work~\cite{decelle} first predicted a sharp computational phase transition at the so-called \emph{Kesten--Stigum (KS) threshold} $d\la^2=1$ based on a heuristic analysis of the \emph{belief propagation (BP)} algorithm. First identified by Kesten and Stigum \cite{KestenStigum:66} in the context of multi-type branching processes, the KS threshold $d\la^2=1$ has since played an important role in other areas, including phylogenetic reconstructions~\cite{DaMoRo:11,MoRoSl:11,RochSly:17}.

To simplify the discussion, we focus on the symmetric SBM where the diagonal (resp.\ off-diagonal) entries of $Q$ are the same. A sequence of works has established that poly-time algorithms achieve weak recovery above the KS threshold $d\la^2>1$ for $q=2$~\cite{massoulie,mns-alg,nb-spectrum} and for general $q\geq 3$~\cite{AS-acyclic}. Below the KS threshold $d\la^2\leq 1$, weak recovery is information-theoretically impossible for $q=2$ \cite{mns-impossible} or $q=3,4$ and $d$ large enough \cite{exact-phase-sbm}. For $q \ge 5$, a statistical-computational gap appears: no known poly-time algorithm succeeds below the KS threshold, yet it is information-theoretically possible to do so~\cite{AS-crossing,BMNN, CKPZ:18}. However, regardless of the prior $\pi$ or the probability matrix $Q$, it has been conjectured by~\cite{AS-acyclic} based on the prediction by~\cite{decelle} that no poly-time algorithm can achieve weak recovery below $d\la^2<1$. Low-degree lower bounds for hypothesis testing support the presumed hardness below the KS threshold~\cite{HS-bayesian,spectral-planting}, and the concurrent work~\cite{sbm-reduction} makes this precise with a detection-to-recovery reduction. However, proving bounds on $\MMSE_{\le D}$ remained a difficult technical challenge that was posed as an open question by Hopkins and Steurer in one of the first papers on the low-degree framework~\cite{HS-bayesian}. We resolve this, proving that degree-$n^\delta$ polynomials fail to achieve weak recovery below the sharp KS threshold $d\la^2\leq 1$. We show this for a particular constant $\delta > 0$ but we expect the result to hold for any constant $\delta < 1$, as suggested by~\cite{sbm-reduction}, and we leave this as an open problem.

Prior to our work, coarser low-degree lower bounds for estimation were obtained by~\cite{graphon}, who additionally studied the case of a growing number of communities and the related task of graphon estimation. Our proof of Theorem~\ref{thm:main-sbm}(a) reveals that, in the symmetric SBM where the average degree $d$ is of order constant, community detection is low-degree hard below the KS threshold as long as $q \ll n^{1/8}$ (see Remark~\ref{rmk:general:SBM}). The follow-up work~\cite{CMSW25} extends our techniques to show that the KS bound remains the threshold for low-degree recovery as long as $q\ll n^{1/2}$, and this is tight in the sense that the KS threshold can be surpassed in polynomial time when $q\gg n^{1/2}$. See also~\cite{sbm-above-sqn} for a lower bound in the regime $q\gg n^{1/2}$. The concurrent work~\cite{sbm-reduction} also argues hardness of weak recovery below the KS bound for slowly growing $q=n^{o(1)}$, leveraging a connection to testing, and the subsequent work~\cite{alg-contig-2} further pushes this approach to $q \ll n^{1/8}$.

\subsection*{Organization}

The remaining sections are devoted to the proofs of our main results. Section~\ref{sec:subm} considers the planted submatrix model, Section~\ref{sec:pds} considers the planted dense subgraph model, Section~\ref{sec:wigner} considers the spiked Wigner model, and Section~\ref{sec:sbm} considers the stochastic block model. All the low-degree upper bounds are in Section~\ref{sec:upper}.

\section{Planted Submatrix}
\label{sec:subm}

This section is devoted to the proof of Theorem~\ref{thm:main-subm}(a). We will follow the strategy from Section~\ref{sec:pf-techniques}. The main steps were already outlined in Section~\ref{sec:ex-subm} and we fill in the remaining details here.

\subsection{Orthonormal Basis}

Let $\{H_\alpha\}$ denote the multivariate Hermite polynomials, normalized so that $\EE[H_\alpha(Z) \cdot H_\beta(Z)] = \one_{\alpha=\beta}$. Here $\alpha \in \NN^{\mcn}$, where we use ${\mcn}$ to abberviate ${\{(i,j) \,:\, 1 \le i \le j \le n\}}$ (see Section~\ref{sec:notation}). We will use $\{h_k\}_{k \in \NN}$ for the corresponding univariate Hermite polynomials. As our basis for $\RR[Y]_{\le D}$, choose $\phi_\alpha(Y) := H_\alpha(Y)$ for $\alpha \in \cI := \{\alpha \in \NN^{\mcn} \,:\, |\alpha| \le D\}$. For our orthonormal polynomials in the underlying independent random variables $W = (Z,\theta)$, choose
\[ \psi_{\beta\gamma}(Z,\theta) := H_\beta(Z) \left(\frac{\theta-\rho}{\sqrt{\rho(1-\rho)}}\right)^\gamma \]
for $\beta\gamma \in \cJ := \{(\beta,\gamma) \,:\, \beta \in \NN^{\mcn}, \, |\beta| \le D, \, \gamma \in \{0,1\}^n\}$. We will often view $\gamma$ as a subset $\gamma \subseteq [n]$ as described in Section~\ref{sec:notation}. Note that $\{\psi_{\beta\gamma}\}$ is orthonormal as required.

Apply the standard expansion (see~\cite[Proposition~3.1]{SW-estimation})
\begin{align*}
H_\alpha(Y) &= \prod_{i \le j} h_{\alpha_{ij}}(X_{ij} + Z_{ij}) \\
&= \prod_{i \le j} \sum_{k = 0}^{\alpha_{ij}} \sqrt{\frac{k!}{\alpha_{ij}!}} \binom{\alpha_{ij}}{k} X_{ij}^{\alpha_{ij} - k} h_k(Z_{ij}) \\
&= \sum_{0 \le \beta \le \alpha} \sqrt{\frac{\beta!}{\alpha!}} \binom{\alpha}{\beta} X^{\alpha-\beta} H_\beta(Z).
\end{align*}

\noindent Compute
\[ c_\alpha = \EE[H_\alpha(Y) \cdot \theta_1] = \frac{1}{\sqrt{\alpha!}} \EE[X^\alpha \cdot \theta_1] = \frac{\lambda^{|\alpha|} \rho^{|V(\alpha) \cup \{1\}|}}{\sqrt{\alpha!}} \]
and
\begin{align*}
M_{\beta\gamma,\alpha} &= \EE[H_\alpha(Y) \cdot \psi_{\beta\gamma}(Z,v)] \\
&= \one_{\beta \le \alpha} \cdot \sqrt{\frac{\beta!}{\alpha!}} \binom{\alpha}{\beta} \EE\left[X^{\alpha-\beta}\left(\frac{\theta-\rho}{\sqrt{\rho(1-\rho)}}\right)^\gamma\right] \\
&= \one_{\beta \le \alpha} \cdot \one_{\gamma \subseteq V(\alpha-\beta)} \cdot \sqrt{\frac{\beta!}{\alpha!}} \binom{\alpha}{\beta} \lambda^{|\alpha - \beta|} \sqrt{\rho(1-\rho)}^{|\gamma|} \rho^{|V(\alpha-\beta) \setminus \gamma|} \\
&= \one_{\beta \le \alpha} \cdot \one_{\gamma \subseteq V(\alpha-\beta)} \cdot \sqrt{\frac{\beta!}{\alpha!}} \binom{\alpha}{\beta} \lambda^{|\alpha - \beta|} \rho^{|V(\alpha-\beta)|} \left(\frac{1-\rho}{\rho}\right)^{|\gamma|/2}.
\end{align*}

\subsection{Removing ``Bad'' Terms}

Towards applying Lemma~\ref{lem:disconn}, we define $\hat\cI$ and $\hat\cJ$ as follows.

\begin{definition}[``Good'' graphs for planted submatrix]
\,
\begin{itemize}
    \item By convention, $0 \in \hat\cI$. For $|\alpha| \ge 1$ we include $\alpha$ in $\hat\cI$ if and only if $\alpha$ (when viewed as a multigraph) is connected with $1 \in V(\alpha)$.
    \item We include $\beta\gamma$ in $\hat\cJ$ if and only if $\beta \in \hat\cI$ and $\gamma \subseteq V(\beta) \cup \{1\}$. (The union with $\{1\}$ only matters in the case $\beta = 0$.)
\end{itemize}
\end{definition}

\noindent We need to check the conditions in Lemma~\ref{lem:disconn}. We will set $u_{\beta\gamma} = 0$ for all $\beta\gamma \notin \hat\cJ$. For the second condition, fix $\alpha \notin \hat\cI$. The corresponding $\hat\alpha$ is the connected component of vertex 1 in $\alpha$. (If vertex 1 is isolated in $\alpha$ then $\hat\alpha = 0$.) The value $\mu$ is equal to $\EE[H_{\alpha - \hat\alpha}(Y)]$. Using independence between components,
\[ c_\alpha = \EE[H_{\hat\alpha}(Y) \cdot H_{\alpha - \hat\alpha}(Y) \cdot x] = \EE[H_{\alpha - \hat\alpha}(Y)] \cdot \EE[H_{\hat\alpha}(Y) \cdot x] = \mu c_{\hat\alpha}. \]
If $\beta\gamma \in \hat\cJ$ is such that $\beta$ shares no vertices with $\alpha-\hat\alpha$ we can again argue by independence,
\[ M_{\beta\gamma,\alpha} = \EE[H_{\hat\alpha}(Y) \cdot H_{\alpha-\hat\alpha}(Y) \cdot \psi_{\beta\gamma}(Z,v)] = \EE[H_{\alpha-\hat\alpha}(Y)] \cdot \EE[H_{\hat\alpha}(Y) \cdot \psi_{\beta\gamma}(Z,v)] = \mu M_{\beta\gamma,\hat\alpha}. \]
Otherwise, $\beta$ shares a vertex with $\alpha-\hat\alpha$. Since $\beta$ is connected with $1 \in V(\beta)$, there must be an edge $(i,j)$ where $\alpha_{ij} = 0$ but $\beta_{ij} \ge 1$. This implies $M_{\beta\gamma,\alpha} = M_{\beta\gamma,\hat\alpha} = 0$, recalling that the expression for $M_{\beta\gamma,\alpha}$ is zero unless $\beta \le \alpha$. This verifies the conditions of Lemma~\ref{lem:disconn}.

\subsection{Constructing $u$}

Combining Proposition~\ref{prop:master} and Lemma~\ref{lem:disconn}, and setting $u_{\beta\gamma} = 0$ for $\beta\gamma \notin \hat\cJ$, our goal is now to choose values $(u_{\beta\gamma})_{\beta\gamma \in \hat\cJ}$ and verify
\[ \sum_{\beta\gamma} u_{\beta\gamma} M_{\beta\gamma,\alpha} = c_\alpha \qquad \forall \alpha \in \hat\cI. \]
Using the support structure of $M$ and the fact $M_{\alpha 0,\alpha} = 1$, our objective can be written as
\begin{equation}\label{eq:u-rec}
u_{\alpha 0} = c_\alpha - \sum_{\beta \lneq \alpha} \left( \sum_{\gamma \subseteq V(\alpha-\beta)} u_{\beta\gamma} M_{\beta\gamma,\alpha}\right) \qquad \forall \alpha \in \hat\cI.
\end{equation}
For all $\alpha\gamma \in \hat\cJ$ with $\gamma \ne 0$, we refer to the values $u_{\alpha\gamma}$ as \emph{free variables}. We are free to choose values for these variables and then the remaining values $u_{\alpha 0}$ are determined by the recurrence~\eqref{eq:u-rec}. Setting all free variables to zero recovers the existing method of~\cite{SW-estimation}. We will instead set the free variables in such a way to zero out the term $\left(\cdots\right)$ in~\eqref{eq:u-rec}. As a result, we will have simply $u_{\alpha 0} = c_\alpha$ for all $\alpha \in \hat\cI$.

\begin{lemma}\label{lem:def-u-subm}
The choice
\[ u_{\alpha\gamma} = \left(-\sqrt{\frac{\rho}{1-\rho}}\right)^{|\gamma|} \cdot c_\alpha \qquad \forall \alpha\gamma \in \hat\cJ \]
and $u_{\alpha\gamma} = 0$ for $\alpha\gamma \notin\hat\cJ$, satisfies~\eqref{eq:u-rec}. 
\end{lemma}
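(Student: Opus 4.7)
The plan is to substitute the proposed formula directly into the recurrence~\eqref{eq:u-rec} and verify it term-by-term. Since the formula gives $u_{\alpha 0} = (-\sqrt{\rho/(1-\rho)})^{0} \cdot c_\alpha = c_\alpha$, verifying~\eqref{eq:u-rec} amounts to showing
\[ \sum_{\beta \lneq \alpha} \sum_{\gamma \subseteq V(\alpha-\beta)} u_{\beta\gamma} \, M_{\beta\gamma,\alpha} \;=\; 0 \qquad \text{for every } \alpha \in \hat\cI. \]
The only $\beta$ that contribute are those with $\beta \in \hat\cI$, because $u_{\beta\gamma} = 0$ otherwise; and for such $\beta$, membership in $\hat\cJ$ further restricts $\gamma$ to $\gamma \subseteq V(\beta) \cup \{1\}$.

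Next, I would substitute the explicit formulas for $u_{\beta\gamma}$ and $M_{\beta\gamma,\alpha}$ into the double sum. The $\gamma$-dependent factors combine cleanly: the factor $(-\sqrt{\rho/(1-\rho)})^{|\gamma|}$ from $u$ and the factor $((1-\rho)/\rho)^{|\gamma|/2}$ from $M$ multiply to exactly $(-1)^{|\gamma|}$. Every remaining quantity in the summand depends on $\alpha$ and $\beta$ but not on $\gamma$, so it can be pulled out of the inner sum. Writing $v(\alpha,\beta) := V(\alpha-\beta) \cap (V(\beta) \cup \{1\})$ for the allowed support of $\gamma$ (the intersection of the $M$-support constraint with the $\hat\cJ$-constraint), the inner sum collapses to $\sum_{\gamma \subseteq v(\alpha,\beta)} (-1)^{|\gamma|}$, which vanishes whenever $v(\alpha,\beta) \ne \emptyset$ by binomial expansion.

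The last step, and the one place where the graph-theoretic structure enters, is to verify that $v(\alpha,\beta) \ne \emptyset$ for every $\beta \lneq \alpha$ with $\beta \in \hat\cI$. If $\beta = 0$, then $V(\beta) \cup \{1\} = \{1\}$ and $\alpha \in \hat\cI$ forces $1 \in V(\alpha) = V(\alpha-\beta)$, so $1 \in v(\alpha,0)$. If $|\beta| \ge 1$ with $\beta \in \hat\cI$, then $1 \in V(\beta)$, and I argue that $V(\alpha-\beta) \cap V(\beta) \ne \emptyset$ using connectedness of $\alpha$: if $V(\alpha) = V(\beta)$, any edge of $\alpha-\beta$ (which exists since $\beta \lneq \alpha$) already has both endpoints in $V(\beta)$; otherwise a shortest path in $\alpha$ from some vertex of $V(\alpha) \setminus V(\beta)$ to $V(\beta)$ ends in an edge of $\alpha-\beta$ incident to a vertex of $V(\beta)$. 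Either way $v(\alpha,\beta) \supseteq V(\alpha-\beta) \cap V(\beta) \ne \emptyset$, the inner sum vanishes, and~\eqref{eq:u-rec} is verified. The conceptual point, which I view as the main content of the construction (rather than any single hard step), is that the free variables $\{u_{\alpha\gamma}\}_{\gamma \ne 0}$ are calibrated precisely so that the contributions from all proper subgraphs $\beta \lneq \alpha$ cancel exactly, leaving the clean identity $u_{\alpha 0} = c_\alpha$ and avoiding the recursive blow-up present in~\cite{SW-estimation}.
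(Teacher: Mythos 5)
Your proposal is correct and follows essentially the same route as the paper's proof: factor out the $\gamma$-dependence of $u_{\beta\gamma} M_{\beta\gamma,\alpha}$ to get a pure $\sum_\gamma (-1)^{|\gamma|}$ over $\gamma \subseteq V(\alpha-\beta) \cap (V(\beta)\cup\{1\})$, which vanishes by binomial cancellation once that set is shown non-empty. The only difference is that you spell out the non-emptiness argument (splitting into $\beta=0$ and $\beta\ne 0$, and using connectedness of $\alpha$), whereas the paper asserts it in one line; your elaboration is accurate.
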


\begin{proof}
Fix $\alpha \in \hat\cI$. To verify~\eqref{eq:u-rec} it suffices to show that the term in parentheses is zero. For $\beta \notin \hat\cI$ this is immediate since each $u_{\beta\gamma}$ is zero, so fix $\beta \in \hat\cI$ with $\beta \lneq \alpha$. From the expression for $M$ we have for any $\gamma \subseteq V(\alpha-\beta)$ the identity $M_{\beta\gamma,\alpha} = \left(\frac{1-\rho}{\rho}\right)^{|\gamma|/2} M_{\beta 0,\alpha}$. Also recall that, since $\beta \in \hat\cI$, we have $\beta\gamma \in \hat\cJ$ if and only if $\gamma \subseteq V(\beta) \cup \{1\}$. Our quantity of interest is now
\begin{align*}
\sum_{\gamma \subseteq V(\alpha-\beta)} u_{\beta\gamma} M_{\beta\gamma,\alpha} &= M_{\beta 0,\alpha} \sum_{\gamma \subseteq V(\alpha-\beta)} u_{\beta\gamma} \left(\frac{1-\rho}{\rho}\right)^{|\gamma|/2} \\
&= M_{\beta 0,\alpha} \sum_{\gamma \subseteq V(\alpha-\beta) \cap (V(\beta) \cup \{1\})} c_\beta \left(-\sqrt{\frac{\rho}{1-\rho}}\right)^{|\gamma|} \left(\frac{1-\rho}{\rho}\right)^{|\gamma|/2} \\
&= c_\beta M_{\beta 0,\alpha} \sum_{\gamma \subseteq V(\alpha-\beta) \cap (V(\beta) \cup \{1\})} (-1)^{|\gamma|}.
\end{align*}
This is zero so long as the set $V(\alpha-\beta) \cap (V(\beta) \cup \{1\})$ is non-empty. And indeed this set is non-empty since $\alpha,\beta \in \hat\cI$ with $\beta \lneq \alpha$.
\end{proof}

\subsection{Putting it Together}

We have
\begin{align*}
\rho \cdot \Corr_{\le D}^2 \le \|u\|^2 &= \sum_{\alpha\gamma \in \hat\cJ} \left(\frac{\rho}{1-\rho}\right)^{|\gamma|} c_\alpha^2 \\
&= \sum_{\alpha \in \hat\cI} c_\alpha^2 \sum_{\gamma \subseteq V(\alpha) \cup \{1\}} \left(\frac{\rho}{1-\rho}\right)^{|\gamma|} \\
&= \sum_{\alpha \in \hat\cI} c_\alpha^2 \left(1 + \frac{\rho}{1-\rho}\right)^{|V(\alpha) \cup \{1\}|} \\
&= \sum_{\alpha \in \hat\cI} \frac{1}{\alpha!} \lambda^{2|\alpha|} \rho^{2|V(\alpha) \cup \{1\}|} \left(1 + \frac{\rho}{1-\rho}\right)^{|V(\alpha) \cup \{1\}|}.
\end{align*}

\begin{lemma}
For $v \ge 1$ and $k \ge 0$, the number of multigraphs $\alpha \in \hat\cI$ with $|V(\alpha) \cup \{1\}| = v$ and $|\alpha| = v-1+k$ is at most
\[ \binom{n-1}{v-1} v^{v-2} \binom{\frac{v(v+1)}{2} + k - 1}{k}. \]
\end{lemma}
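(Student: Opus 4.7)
The plan is to decompose the count into two independent choices: first the vertex set, then the edge structure on that vertex set. Since $\alpha \in \hat\cI$ has $1 \in V(\alpha)$, we have $V(\alpha) \cup \{1\} = V(\alpha)$, so the condition $|V(\alpha) \cup \{1\}| = v$ simply says $|V(\alpha)| = v$. The vertex set is determined by picking the $v-1$ non-isolated vertices other than vertex $1$ from $[n] \setminus \{1\}$, contributing a factor $\binom{n-1}{v-1}$.

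Next, I would fix a vertex set $S \subseteq [n]$ with $|S| = v$ containing vertex $1$ and bound the number of connected multigraphs on $S$ (in the sense of $\NN^{\binom{[n]}{\le 2}}$, allowing both self-loops and parallel edges) with exactly $v-1+k$ edges. The encoding I would use is: specify first a spanning tree $T$ of the underlying simple graph of $\alpha$, and then specify the remaining multiset of $k$ edges $\alpha - T$. Since $T$ is an honest labeled tree on $v$ vertices (no self-loops or multiple edges inside the tree), Cayley's formula gives at most $v^{v-2}$ choices for $T$, with the conventions that the $v=1$ case contributes $1$ (just the one-vertex tree) and that for $v \ge 2$ every connected multigraph on $S$ admits such a spanning tree.

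The second factor counts multisets of size $k$ drawn from the $\binom{v}{2} + v = \tfrac{v(v+1)}{2}$ possible edge positions in $\mcn$ restricted to $S$ (the $\binom{v}{2}$ unordered pairs plus the $v$ self-loops). The standard stars-and-bars count gives $\binom{\tfrac{v(v+1)}{2} + k - 1}{k}$ such multisets. The encoding $(T, \alpha - T) \mapsto \alpha$ is surjective onto the set of connected multigraphs on $S$ with $v-1+k$ edges, so we obtain the desired upper bound $v^{v-2}\binom{\tfrac{v(v+1)}{2} + k - 1}{k}$ per vertex set.

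The main subtlety (rather than obstacle) is the non-injectivity of this encoding: a given connected multigraph typically has many spanning trees, so we are overcounting. This is harmless because we only need an upper bound. Multiplying the vertex-set factor by the multigraph factor yields the claimed bound $\binom{n-1}{v-1} v^{v-2} \binom{\tfrac{v(v+1)}{2} + k - 1}{k}$.
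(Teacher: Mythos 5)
Your proof is correct and takes essentially the same route as the paper: decompose the count into a vertex-set choice of $\binom{n-1}{v-1}$, a spanning tree via Cayley's formula contributing $v^{v-2}$, and a stars-and-bars count of the $k$ excess (multi)edges among the $\tfrac{v(v+1)}{2}$ positions including self-loops. Your remark that the spanning-tree encoding is surjective but not injective, and hence only yields an upper bound, is exactly the point the paper leaves implicit.
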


\begin{proof}
Recall that $\alpha \in \hat\cI$ means (if $\alpha \ne 0$) that $\alpha$ must be connected and contain vertex $1$. The first factor $\binom{n-1}{v-1}$ counts the possible choices for $V(\alpha) \cup \{1\}$. The next factor $v^{v-2}$ is Cayley's tree formula, the number of spanning trees on $v$ (labeled) vertices. The final factor counts, using the ``stars and bars'' method, the number of ways to add $k$ additional edges that do not span any new vertices. Note that there are $\frac{v(v+1)}{2}$ possible edges (including self-loops) that span the existing $v$ vertices, and we may add multiple copies of each.
\end{proof}

\begin{proof}[Proof of Theorem~\ref{thm:main-subm}(a)]
Letting
\[ \tilde\rho := \rho \sqrt{1+\frac{\rho}{1-\rho}} = \frac{\rho}{\sqrt{1-\rho}} \]
and combining the results above,
\begin{align*}
\Corr_{\le D}^2 &\le \rho^{-1} \sum_{\alpha \in \hat\cI} \lambda^{2|\alpha|} \tilde\rho^{2|V(\alpha) \cup \{1\}|} \\
&\le \rho^{-1} \sum_{v = 1}^{D+1} \sum_{k = 0}^D \binom{n-1}{v-1} v^{v-2} \binom{\frac{v(v+1)}{2} + k - 1}{k} \lambda^{2(v-1+k)} \tilde\rho^{2v} \\
&\le \rho^{-1} \tilde\rho^2 \sum_{v = 1}^{D+1} \binom{n-1}{v-1} v^{v-2} \lambda^{2(v-1)} \tilde\rho^{2(v-1)} \sum_{k = 0}^D \binom{v^2 + k}{k} \lambda^{2k}.
\end{align*}
Our goal is to bound everything aside from the initial factor of $\rho^{-1} \tilde\rho^2 = \rho/(1-\rho)$ by a constant depending only on $\eps$. Focus first on the terms that do not involve $k$. We will use the standard bound $\binom{n}{k} \le \left(\frac{en}{k}\right)^k$ for all $k \ge 1$, as well as the assumption $\lambda \le (1-\eps)(\tilde\rho\sqrt{en})^{-1}$. For $v \ge 2$,
\[ b_v := \binom{n-1}{v-1} v^{v-2} \lambda^{2(v-1)} \tilde\rho^{2(v-1)}
\le \left(\frac{e(n-1)}{v-1}\right)^{v-1} (v\lambda^2\tilde\rho^2)^{v-1} 
\le \left((1-\eps)^2 \cdot \frac{v}{v-1}\right)^{v-1}, \]
and $b_1 = 1$. As a result,
\begin{itemize}
\item $b_v \le 2^{v-1}$ for all $v \ge 1$, and
\item $b_v \le (1-\eps)^{v-1}$ for all sufficiently large $v$ (i.e., $v$ exceeding some $\eps$-dependent constant).
\end{itemize}
Now focus on the sum over $k$:
\begin{align*}
\sum_{k = 0}^D \binom{v^2 + k}{k} \lambda^{2k} &\le 1 + \sum_{k=1}^D \left(\frac{e(v^2+k)}{k}\right)^k \lambda^{2k} \\
&\le 1 + \sum_{1 \le k \le v^2} \left(e\lambda^2 \left(1 + \frac{v^2}{k}\right)\right)^k + \sum_{v^2 \le k \le D} \left(e\lambda^2 \left(1 + \frac{v^2}{k}\right)\right)^k \\
&\le 1 + v^2 \sup_{k \in (0,\infty)} \left(\frac{2e\lambda^2 v^2}{k}\right)^k + \sum_{v^2 \le k \le D} (2e\lambda^2)^k.
\end{align*}
For $D \ge 1$ we have $1 \le v \le D+1 \le 2D \le 2\lambda^{-2}/C$, using the assumption on $D$. Choose $C$ large enough so that $2e\lambda^2 \le 1/2$, and so the final term above is bounded by 1. The supremum over $k \in (0,\infty)$ is attained at $k = 2\lambda^2 v^2$ and has value $\exp(2\lambda^2 v^2) \le \exp(4v/C)$. Putting everything together, we now conclude
\[ \Corr_{\le D}^2 \le \frac{\rho}{1-\rho} \sum_{v=1}^\infty b_v [2 + v^2 \exp(4v/C)]. \]
To complete the proof, we claim that the sum above (excluding the $\rho/(1-\rho)$ factor) is bounded by an $\eps$-dependent constant. To see this, recall $b_v \le (1-\eps)^{v-1}$ for all sufficiently large $v$, and by choosing $C$ large enough we can arrange $2+v^2 \exp(4v/C) \le (1+\eps/2)^v$ for all sufficiently large $v$.
\end{proof}

\section{Planted Dense Subgraph}
\label{sec:pds}

This section is devoted to the proof of Theorem~\ref{thm:main-pds}(a).

\subsection{Orthonormal Basis}

For a graph $\alpha\in \{0,1\}^{\bcn}$, choose
\begin{equation*}
    \phi_{\alpha}(Y)=(Y-p_0)^{\alpha}.
\end{equation*}
Moreover, for $\beta \in \{0,1\}^{\bcn}$ and $\gamma\equiv (\gamma_i) \in\{0,1\}^n$, consider 
\begin{equation*}
    \psi_{\beta \gamma}(Y,\theta)=\prod_{i,j:\theta_i=\theta_j=1}\left(\frac{Y_{ij}-p_1}{\sqrt{p_1(1-p_1)}}\right)^{\beta_{ij}}\prod_{i,j:\theta_i \theta_j=0}\left(\frac{Y_{ij}-p_0}{\sqrt{p_0(1-p_0)}}\right)^{\beta_{ij}}\prod_{i=1}^{n}\left(\frac{\theta_i-\rho}{\sqrt{\rho(1-\rho)}}\right)^{\gamma_i}.
\end{equation*}
\begin{remark}[$p_1=1$]
In the case $p_1=1$, we incur a division by zero above. However, our final result remains true by a continuity argument: for any $f \in \RR[Y]$, the mean squared error $\EE[(f(Y)-x)^2]$ is a continuous function of $p_1$. We thus assume $p_1<1$ for the rest of this section.
\end{remark}
We first prove that $\{\psi_{\beta\gamma}\}$ is orthonormal.
\begin{lemma}
    $\{\psi_{\beta\gamma}\}_{\beta\in\{0,1\}^{\bcn},\gamma\in \{0,1\}^n}$ is orthonormal.
\end{lemma}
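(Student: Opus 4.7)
The plan is to condition on $\theta$ and exploit the fact that, given $\theta$, the entries $Y_{ij}$ are independent Bernoulli with parameter $p_1$ if $\theta_i\theta_j=1$ and $p_0$ otherwise. First I would fix two indices $(\beta,\gamma),(\beta',\gamma')\in\{0,1\}^{\bcn}\times\{0,1\}^n$ and use the tower property to write
\[
\EE[\psi_{\beta\gamma}\psi_{\beta'\gamma'}]=\EE\bigl[\EE[\psi_{\beta\gamma}\psi_{\beta'\gamma'}\mid\theta]\bigr].
\]
The product $\psi_{\beta\gamma}\psi_{\beta'\gamma'}$ factors, given $\theta$, as a purely $\theta$-dependent piece times a product over edges of $a_{ij}(\theta)^{\beta_{ij}+\beta'_{ij}}$, where
\[
a_{ij}(\theta)\;:=\;\begin{cases}(Y_{ij}-p_1)/\sqrt{p_1(1-p_1)} & \text{if }\theta_i\theta_j=1,\\[2pt] (Y_{ij}-p_0)/\sqrt{p_0(1-p_0)} & \text{if }\theta_i\theta_j=0.\end{cases}
\]
In either case $a_{ij}(\theta)$ is a centered-and-scaled Bernoulli, so $\EE[a_{ij}(\theta)\mid\theta]=0$ and $\EE[a_{ij}(\theta)^2\mid\theta]=1$, and different edges are conditionally independent given $\theta$.

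Since $\beta_{ij},\beta'_{ij}\in\{0,1\}$ the exponent $\beta_{ij}+\beta'_{ij}$ lies in $\{0,1,2\}$, precisely the range where only the zeroth and second moments above are needed. If $\beta\neq\beta'$, then some edge has $\beta_{ij}+\beta'_{ij}=1$, and the conditional expectation of that factor is $0$, so $\EE[\psi_{\beta\gamma}\psi_{\beta'\gamma'}]=0$. If $\beta=\beta'$, each edge-factor is either $1$ or $a_{ij}(\theta)^2$, with conditional expectation $1$ in both cases, so the edge product collapses and we are left with
\[
\EE[\psi_{\beta\gamma}\psi_{\beta\gamma'}]=\EE\!\left[\prod_{i=1}^{n}\left(\frac{\theta_i-\rho}{\sqrt{\rho(1-\rho)}}\right)^{\gamma_i+\gamma'_i}\right].
\]
Using independence of the $\theta_i$ and applying the same centered-Bernoulli moment calculation coordinatewise (again with $\gamma_i+\gamma'_i\in\{0,1,2\}$), this product of expectations equals $\one_{\gamma=\gamma'}$. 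Combining the two cases yields $\EE[\psi_{\beta\gamma}\psi_{\beta'\gamma'}]=\one_{(\beta,\gamma)=(\beta',\gamma')}$, which is orthonormality.

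I do not anticipate a genuine obstacle: the argument is essentially bookkeeping, and the crucial combinatorial input is simply that $\beta\in\{0,1\}^{\bcn}$ and $\gamma\in\{0,1\}^n$, so every exponent encountered lives in $\{0,1,2\}$, exactly the range controlled by the two nontrivial Bernoulli moments. The only mild nuisance is the degenerate case $p_1=1$ (or $p_0=0$), in which the normalization $\sqrt{p_1(1-p_1)}$ vanishes; this is already handled by the continuity remark preceding the lemma, so throughout the proof we may assume $p_0,p_1\in(0,1)$ without loss of generality.
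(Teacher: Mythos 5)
Your proof is correct and takes essentially the same approach as the paper: condition on $\theta$, use conditional independence of the $Y_{ij}$ given $\theta$ to show the edge-product has conditional expectation $\one_{\beta=\beta'}$, and then reduce the remaining $\theta$-expectation to the same centered-Bernoulli moment computation to get $\one_{\gamma=\gamma'}$. The paper's proof is a slightly more compact version of the same argument, and it similarly defers the $p_1=1$ degeneracy to the continuity remark preceding the lemma.
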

\begin{proof}
We argue that $\E[\psi_{\beta\gamma}\psi_{\beta^\prime\gamma^\prime}]=\one(\beta=\beta^\prime, \gamma=\gamma^\prime)$ holds for arbitrary $\beta,\beta^\prime \in \{0,1\}^{\bcn}$ and $\gamma,\gamma^\prime\in \{0,1\}^{n}$. Recall that conditioned on $\theta$, we have independently for each pair $i<j$ that $Y_{ij}\sim \Ber (p_1)$ if $\theta_i=\theta_j=1$ and $Y_{ij}\sim \Ber(p_0)$ if $\theta_i \theta_j=0$. Thus,
\begin{equation*}
\E\Big[\prod_{\theta_i=\theta_j=1}\left(\frac{Y_{ij}-p_1}{\sqrt{p_1(1-p_1)}}\right)^{\beta_{ij}+\beta^\prime_{ij}}\prod_{\theta_i \theta_j=0}\left(\frac{Y_{ij}-p_0}{\sqrt{p_0(1-p_0)}}\right)^{\beta_{ij}+\beta^\prime_{ij}}\Bgiven \theta\Big]=\one(\beta=\beta^\prime). 
\end{equation*}
By the tower property, it follows that
\begin{equation*}
\E[\psi_{\beta\gamma}\psi_{\beta^\prime\gamma^\prime}]=\one(\beta=\beta^\prime)\E\prod_{i=1}^{n}\left(\frac{\theta_i-\rho}{\sqrt{\rho(1-\rho)}}\right)^{\gamma_i+\gamma^\prime_i}=\one(\beta=\beta^\prime,\gamma=\gamma^\prime),
\end{equation*}
where the last equality holds since $(\theta_i)\iid \Ber(\rho)$.
\end{proof}
Let us now compute $c_{\al}=\E[\phi_{\al}\cdot \theta_1]$ and $M_{\beta\gamma, \al}=\E[\psi_{\be \ga}\cdot \phi_{\al}]$. First,
\begin{equation}\label{eq:c:pds}
c_{\al}=\E\left[\theta_1\E\left[\phi_{\al}\bgiven \theta\right]\right]=(p_1-p_0)^{|\al|}\,\P\left(\theta_i=1,~\forall i\in V(\al)\cup \{1\}\right)=\rho^{|V(\alpha) \cup \{1\}|}(p_1-p_0)^{|\al|}.
\end{equation}
To compute $M_{\be\ga,\al}$, note that
\begin{equation*}
\begin{split}
    &\E\bigg[\prod_{\theta_i=\theta_j=1}\left\{\left(\frac{Y_{ij}-p_1}{\sqrt{p_1(1-p_1)}}\right)^{\beta_{ij}}(Y_{ij}-p_0)^{\alpha_{ij}}\right\}\prod_{\theta_i \theta_j=0}\left\{\left(\frac{Y_{ij}-p_0}{\sqrt{p_0(1-p_0)}}\right)^{\beta_{ij}}(Y_{ij}-p_0)^{\alpha_{ij}}\right\}\bbgiven \theta\bigg]\\
    &=\one_{\be \le \al}\cdot \one_{\theta_i=1,\forall i \in V(\al-\be)}\cdot (p_1-p_0)^{|\al|-|\be|}\left(p_1(1-p_1)\right)^{\frac{\ell(\beta;\theta)}{2}}\left(p_0(1-p_0)\right)^{\frac{|\beta|-\ell(\beta;\theta)}{2}},
\end{split}
\end{equation*}
where $\ell(\be;\theta)$ denotes the number of common edges between $\be$ and the planted graph:
\begin{equation*}
    \ell(\be;\theta)=\Big|\big\{(i,j)\in E(\beta) \,:\, \theta_i=\theta_j=1\big\}\Big|.
\end{equation*}
Thus, it follows that
\begin{equation}\label{eq:M:pds}
\begin{split}
    M_{\be\ga,\al}
    &=\one_{\be\le\al}\cdot (p_1-p_0)^{|\al|-|\be|}\left(p_0(1-p_0)\right)^{|\be|/2}\\
    &\quad\quad\quad\quad\quad \times \E\left[\left(\frac{p_1(1-p_1)}{p_0(1-p_0)}\right)^{\ell(\be;\theta)/2}\left(\frac{\theta-\rho}{\sqrt{\rho(1-\rho)}}\right)^{\ga}\one_{\theta_i=1,\forall i \in V(\alpha-\beta)}\right].
    \end{split}
\end{equation}

\subsection{Removing ``Bad'' Terms}
We now apply Lemma~\ref{lem:disconn}. We define $\hat\cI$ and $\hat\cJ$ the same as in the planted submatrix model (except we are now working with simple graphs rather than multigraphs), which we recall for the reader's convenience:
\begin{definition}[``Good'' graphs for planted dense subgraph]
\,
\begin{itemize}
    \item By convention, $0 \in \hat\cI$. For $|\alpha| \ge 1$ we include $\alpha$ in $\hat\cI$ if and only if $\alpha$ (when viewed as a graph) is connected with $1 \in V(\alpha)$.
    \item We include $\beta\gamma$ in $\hat\cJ$ if and only if $\beta \in \hat\cI$ and $\gamma \subseteq V(\beta) \cup \{1\}$. (The union with $\{1\}$ only matters in the case $\beta = 0$.)
\end{itemize}
\end{definition}
\noindent We now check the conditions in Lemma~\ref{lem:disconn}. We will set $u_{\beta\gamma} = 0$ for all $\beta\gamma \notin \hat\cJ$. For the second condition, fix $\alpha \notin \hat\cI$. The corresponding $\hat\alpha$ is the connected component of vertex 1 in $\alpha$. (If vertex 1 is isolated in $\alpha$ then $\hat\alpha = 0$.) The value $\mu$ is equal to $\E[\phi_{\alpha - \hat\alpha}]$. Using independence between components,
\[ c_\alpha = \E[\phi_{\hat\alpha}\cdot \phi_{\alpha - \hat\alpha} \cdot x] = \E[\phi_{\alpha - \hat\alpha}] \cdot \E[\phi_{\hat\alpha}\cdot x] = \mu c_{\hat\alpha}. \]
Here, we used the fact that $V(\al-\hat\al)\cap (V(\hat\al)\cup\{1\})=\emptyset$ in the second equality.

If $\beta\gamma \in \hat\cJ$ is such that $\beta$ shares no vertices with $\alpha-\hat\alpha$ we can again argue by independence,
\[ M_{\beta\gamma,\alpha} = \EE[\phi_{\hat\alpha}\cdot \phi_{\alpha-\hat\alpha} \cdot \psi_{\beta\gamma}] = \EE[\phi_{\alpha-\hat\alpha}] \cdot \EE[\phi_{\hat\alpha} \cdot \psi_{\beta\gamma}] = \mu M_{\beta\gamma,\hat\alpha}. \]
Otherwise, $\beta$ shares a vertex with $\alpha-\hat\alpha$. Since $\beta$ is connected with $1 \in V(\beta)$, there must be an edge $(i,j)$ where $\alpha_{ij} = 0$ but $\beta_{ij} \ge 1$. This implies $M_{\beta\gamma,\alpha} = M_{\beta\gamma,\hat\alpha} = 0$ as seen in Eq.~\eqref{eq:M:pds}. This verifies the conditions of Lemma~\ref{lem:disconn}.

\subsection{Constructing $u$}

We proceed as in the planted submatrix problem. In light of Lemma~\ref{lem:disconn}, we set $u_{\be\ga}=0$ for $\be\ga\notin \hat\cJ$, and our goal is to construct $(u_{\be\ga})_{\be\ga\in \hat\cJ}$ such that
\begin{equation*}
    \sum_{\be\ga \in \hat\cJ} u_{\be\ga}M_{\be\ga,\al}=c_{\al} \qquad \forall\al\in \hat\cI
\end{equation*}
and such that $\|u\|$ is ``small enough.'' Note that the situation for planted dense subgraph is more complicated than planted submatrix since $M_{\be \ga,\al}$ for the planted dense subgraph problem in Eq.~\eqref{eq:M:pds} is not explicit. Nevertheless, the following proposition shows that we can construct $u$ that resembles the one constructed for the planned submatrix problem.
\begin{proposition}\label{prop:u:pds}
    The choice
    \[
    u_{\alpha\gamma} = \left(-\sqrt{\frac{\rho}{1-\rho}}\right)^{|\gamma|} \cdot \frac{c_\alpha}{\left(p_0(1-p_0)\right)^{|\al|/2}} \qquad \forall \alpha\gamma \in \hat\cJ
    \]
    satisfies $\sum_{\be\ga \in \hat\cJ} u_{\be\ga}M_{\be\ga,\al}=c_{\al}$ for all $\al\in \hat\cI$.
\end{proposition}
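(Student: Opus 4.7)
The plan is to mimic the proof of Lemma~\ref{lem:def-u-subm}: substitute the ansatz for $u$ into $\sum_{\be\ga\in \hat\cJ} u_{\be\ga}M_{\be\ga,\al}$, carry out the inner sum over $\ga\subseteq V(\be)\cup\{1\}$ explicitly, and show that the resulting expression vanishes for all $\be\lneq \al$ while giving exactly $c_\al$ when $\be=\al$. The extra complication relative to the submatrix case is that $M_{\be\ga,\al}$ now contains a $\theta$-dependent factor $\left(\tfrac{p_1(1-p_1)}{p_0(1-p_0)}\right)^{\ell(\be;\theta)/2}$ coming from the Bernoulli noise, so the $\ga$-sum cannot be pulled out of the expectation as cleanly.

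The key algebraic step is the following computation for any fixed $\be\in\hat\cI$ with $\be\leq\al$. Factor $M_{\be\ga,\al}$ as in~\eqref{eq:M:pds} and separate the $\ga$-dependent piece $\left(\tfrac{\theta-\rho}{\sqrt{\rho(1-\rho)}}\right)^{\ga}$. Using $u_{\be\ga}=(-\sqrt{\rho/(1-\rho)})^{|\ga|}\cdot c_\be/(p_0(1-p_0))^{|\be|/2}$ and the telescoping identity
\[
\sum_{\ga\subseteq V(\be)\cup\{1\}}\left(-\sqrt{\tfrac{\rho}{1-\rho}}\right)^{|\ga|}\prod_{i\in\ga}\frac{\theta_i-\rho}{\sqrt{\rho(1-\rho)}}
=\prod_{i\in V(\be)\cup\{1\}}\left(1-\frac{\theta_i-\rho}{1-\rho}\right)
=\prod_{i\in V(\be)\cup\{1\}}\frac{1-\theta_i}{1-\rho},
\]
one obtains
\[
\sum_{\ga\subseteq V(\be)\cup\{1\}} u_{\be\ga}M_{\be\ga,\al}
=\frac{c_\be\cdot K(\be,\al)}{(p_0(1-p_0))^{|\be|/2}}\cdot\E\left[\left(\tfrac{p_1(1-p_1)}{p_0(1-p_0)}\right)^{\ell(\be;\theta)/2}\one_{\theta_i=1,\forall i\in V(\al-\be)}\prod_{i\in V(\be)\cup\{1\}}\frac{1-\theta_i}{1-\rho}\right],
\]
where $K(\be,\al)=(p_1-p_0)^{|\al|-|\be|}(p_0(1-p_0))^{|\be|/2}$.

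For $\be\lneq\al$ with $\be\in\hat\cI$, the indicator forces $\theta_i=1$ on $V(\al-\be)$ while the trailing product requires $\theta_i=0$ on $V(\be)\cup\{1\}$; these are incompatible provided $(V(\be)\cup\{1\})\cap V(\al-\be)\neq\emptyset$. This non-emptiness is a purely combinatorial fact that follows from the connectedness of $\al$ and $\be$ together with $1\in V(\be)\cap V(\al)$: if the intersection were empty then the vertex sets $V(\be)$ and $V(\al-\be)$ would be disjoint subsets of $V(\al)$ whose union is $V(\al)$, making $\al$ disconnected unless one is empty, which contradicts either $\be\in\hat\cI$ (giving $1\in V(\be)$) or $\be\lneq\al$ (giving $V(\al-\be)\neq\emptyset$). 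The argument is essentially the same as in the planted submatrix case.

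For the diagonal term $\be=\al$, the indicator $\one_{\theta_i=1,\forall i\in V(\al-\be)}$ becomes trivial and the product $\prod_{i\in V(\al)\cup\{1\}}\frac{1-\theta_i}{1-\rho}$ is non-zero only when $\theta_i=0$ for all $i\in V(\al)$. But on that event $\ell(\al;\theta)=0$, so the $p_0,p_1$ prefactor equals $1$, and a direct computation gives $\E\left[\prod_{i\in V(\al)}\frac{1-\theta_i}{1-\rho}\right]=1$. The resulting $\be=\al$ contribution is exactly $c_\al$, completing the verification. I expect the main bookkeeping hurdle to be a careful handling of the boundary case $\be=0$ (where $V(\be)\cup\{1\}=\{1\}$) and of $\al$ consisting of a single self-loop or edge, but both are essentially automatic once the telescoping identity above is in hand.
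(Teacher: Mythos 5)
Your proof is correct and follows essentially the same route as the paper's: the paper packages the core computation into a separate lemma (Lemma~\ref{lem:crucial:identity}, stating $\sum_{\ga\subseteq V(\be)\cup\{1\}}(-\sqrt{\rho/(1-\rho)})^{|\ga|}M_{\be\ga,\al}=\one_{\be=\al}(p_0(1-p_0))^{|\al|/2}$), which relies on exactly the telescoping identity and the same observation that the indicator $\one_{\theta_i=1,\forall i\in V(\al-\be)}$ and the factor $\one_{\theta_i=0,\forall i\in V(\be)\cup\{1\}}$ are incompatible unless $\be=\al$, with $\ell(\be;\theta)=0$ on the surviving event. Your handling of the connectedness argument and the $\be=\al$ diagonal term matches the paper's; the only difference is presentational, in that you inline the lemma rather than stating it separately.
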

To prove Proposition~\ref{prop:u:pds}, we prove the following lemma.
\begin{lemma}\label{lem:crucial:identity}
    Suppose $\al,\be\in \hat\cI$. Then, we have
    \[
    \sum_{\ga \subseteq V(\be)\cup\{1\}}\left(-\sqrt{\frac{\rho}{1-\rho}}\right)^{|\gamma|}M_{\be\ga,\al}=\one_{\be=\al}\cdot \left(p_0(1-p_0)\right)^{|\al|/2}.
    \]
\end{lemma}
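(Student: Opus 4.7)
The plan is to plug in the explicit formula for $M_{\beta\gamma,\alpha}$ from Eq.~\eqref{eq:M:pds}, pull the $\gamma$-independent factors out of the sum, and show that the remaining sum over $\gamma$ collapses to a clean product. Using that $\ga$ is a subset of $V(\be)\cup\{1\}$, I rewrite
\[
\left(-\sqrt{\tfrac{\rho}{1-\rho}}\right)^{|\gamma|}\left(\frac{\theta-\rho}{\sqrt{\rho(1-\rho)}}\right)^{\ga} = \prod_{i \in \gamma}\frac{\rho-\theta_i}{1-\rho},
\]
so summing over all $\gamma\subseteq V(\be)\cup\{1\}$ and using $\sum_{\gamma\subseteq S}\prod_{i\in \gamma}a_i = \prod_{i\in S}(1+a_i)$ gives the factor
\[
\prod_{i\in V(\be)\cup\{1\}}\left(1+\frac{\rho-\theta_i}{1-\rho}\right)=\prod_{i\in V(\be)\cup\{1\}}\frac{1-\theta_i}{1-\rho}.
\]
Thus the left-hand side of the identity reduces (assuming $\be\le \al$, else both sides vanish) to
\[
\one_{\be \le \al}\cdot(p_1-p_0)^{|\al|-|\be|}\bigl(p_0(1-p_0)\bigr)^{|\be|/2}\cdot\E\!\left[\left(\tfrac{p_1(1-p_1)}{p_0(1-p_0)}\right)^{\ell(\be;\theta)/2}\one_{\theta_i=1,\,\forall i\in V(\al-\be)}\prod_{i\in V(\be)\cup\{1\}}\frac{1-\theta_i}{1-\rho}\right].
\]

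Next I treat the two cases $\be=\al$ and $\be\lneq \al$ separately. When $\be=\al$, both the indicator over $V(\al-\be)=\emptyset$ is trivial and the product $\prod_{i\in V(\be)\cup\{1\}}\frac{1-\theta_i}{1-\rho}$ forces $\theta_i=0$ on $V(\be)\cup\{1\}$; in particular $\ell(\be;\theta)=0$ on the support of the expectation, so the ratio involving $p_1(1-p_1)$ equals $1$. Independence of the $\theta_i$ together with $\E[(1-\theta_i)/(1-\rho)]=1$ then shows the expectation is $1$, giving $\bigl(p_0(1-p_0)\bigr)^{|\al|/2}$ as claimed.

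The main case to verify is $\be\lneq \al$, where I need the expectation to vanish. The key observation is that on the support of the product $\prod_{i\in V(\be)\cup\{1\}}(1-\theta_i)$ we need $\theta_i=0$ for all $i\in V(\be)\cup\{1\}$, while the indicator demands $\theta_i=1$ for all $i\in V(\al-\be)$; so the expectation vanishes provided
\[
(V(\be)\cup\{1\})\cap V(\al-\be)\neq \emptyset.
\]
This is where connectedness enters, and it is the only genuinely non-computational step. If $\be=0$ then $V(\be)\cup\{1\}=\{1\}$ and $1\in V(\al)=V(\al-\be)$ since $\al\in \hat\cI$, and we are done. Otherwise $\be,\al\in \hat\cI$ are both nonempty and connected with $1$ in their vertex sets, and $\be\lneq\al$. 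Pick any edges $e\in E(\be)$ and $e'\in E(\al-\be)$; by connectedness of $\al$ there is a path in $\al$ from $e$ to $e'$, and along this path there must be two consecutive edges, one in $E(\be)$ and one in $E(\al-\be)$. Their shared endpoint lies in $V(\be)\cap V(\al-\be)$, completing the argument.

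The main potential obstacle I expect is bookkeeping around the exceptional case $\be=0$ (where $V(\be)=\emptyset$ but $V(\be)\cup\{1\}$ is not), and the subtlety that on the support where the product is nonzero, $\ell(\be;\theta)$ automatically equals zero so the apparently scary $\left(\tfrac{p_1(1-p_1)}{p_0(1-p_0)}\right)^{\ell(\be;\theta)/2}$ factor trivializes. Both are minor once noticed. With the vanishing/non-vanishing dichotomy established, the stated identity follows immediately.
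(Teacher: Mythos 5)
Your proof is correct and follows essentially the same route as the paper's: plug in the explicit formula for $M_{\be\ga,\al}$, evaluate the inner sum over $\gamma$ via the product formula to get the factor $\prod_{i\in V(\be)\cup\{1\}}\frac{1-\theta_i}{1-\rho}$ forcing $\theta\equiv 0$ on $V(\be)\cup\{1\}$ (which also trivializes the $\ell(\be;\theta)$ exponent), and then observe that this is incompatible with the indicator $\one_{\theta_i=1,\forall i\in V(\al-\be)}$ whenever $(V(\be)\cup\{1\})\cap V(\al-\be)\neq\emptyset$, which holds for $\be\lneq\al$ by connectedness. Your path-walk argument for the non-emptiness is a valid (if slightly more verbose) way to establish what the paper simply asserts, and your explicit handling of the $\be=0$ case is the same subtlety the paper's $\hat\cJ$ definition is built around.
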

\noindent We note that the above lemma is trivial when $\beta \nleq \al$ because $M_{\be\ga,\al}=0$ in this case. The non-trivial part is to show that if $\beta\lneq \al$, then $\sum_{\ga \subseteq V(\be)\cup\{1\}}\left(-\sqrt{\frac{\rho}{1-\rho}}\right)^{|\gamma|}M_{\be\ga,\al}=0$. We also note that in the sum $\sum_{\ga \subseteq V(\be)\cup\{1\}}$, the union with $\{1\}$ is only relevant when $\beta=\emptyset$. Although this seems unimportant, it plays a crucial role in the construction of $u$, and this is the reason we included $(\beta \gamma)=(\emptyset, \{1\})$ in $\hat\cJ$ in the first place.
\begin{proof}
We consider $\be\le \al$, since otherwise both sides are trivially zero.
Recalling the expression for $M_{\be\ga,\al}$ in Eq.~\eqref{eq:M:pds}, $\sum_{\ga \subseteq V(\be)\cup\{1\}}\left(-\sqrt{\frac{\rho}{1-\rho}}\right)^{|\gamma|}M_{\be\ga,\al}$ equals 
\[
(p_1-p_0)^{|\al|-|\be|}\left(p_0(1-p_0)\right)^{|\be|/2}\sum_{\gamma\subseteq V(\beta)\cup \{1\}}\E\left[\left(\frac{p_1(1-p_1)}{p_0(1-p_0)}\right)^{\ell(\be;\theta)/2}\left(-\frac{\theta-\rho}{1-\rho}\right)^{\ga}\one_{\theta_i=1,\forall i \in V(\alpha-\beta)}\right].
\]
A crucial observation is that
\[
\sum_{\gamma\subseteq V(\beta)\cup \{1\}}\left(-\frac{\theta-\rho}{1-\rho}\right)^{\ga}=\prod_{i\in V(\beta)\cup\{1\}}\left(1-\frac{\theta_i-\rho}{1-\rho}\right)=(1-\rho)^{-|V(\beta)\cup\{1\}|}\cdot \one_{\theta_i=0,\forall i \in V(\beta)\cup\{1\}}.
\]
Since $\alpha, \beta\in \hat\cI$, note that $V(\al-\be)\cap (V(\be)\cup\{1\})$ is non-empty unless $\al=\be$. Therefore, combining the two displays above yields
\[
\begin{split}
&\sum_{\ga \subseteq V(\be)\cup\{1\}}\left(-\sqrt{\frac{\rho}{1-\rho}}\right)^{|\gamma|}M_{\be\ga,\al}\\
&\qquad=\one_{\al=\be} \left(p_0(1-p_0)\right)^{|\be|/2} (1-\rho)^{-|V(\beta)\cup\{1\}|}\E\left[\left(\frac{p_1(1-p_1)}{p_0(1-p_0)}\right)^{\ell(\be;\theta)/2}\one_{\theta_i=0,\forall i \in V(\beta)\cup\{1\}}\right].
\end{split}
\]
Note that on the event that $\theta_i=0$ holds for all $i\in V(\beta)\cup \{1\}$, $\ell(\beta;\theta)=0$ holds. Thus, the expectation in the display above equals $(1-\rho)^{|V(\beta)\cup \{1\}|}$, which concludes the proof. 
\end{proof}

\begin{proof}[Proof of Proposition~\ref{prop:u:pds}]
For a given $\al, \beta\in \hat\cI$, we have
   \[
   \begin{split}
   \sum_{\ga: \be\ga\in \hat\cJ}u_{\be \ga}M_{\be \ga,\al}
   &=\sum_{\ga: \be\ga\in \hat\cJ}\left(-\sqrt{\frac{\rho}{1-\rho}}\right)^{|\gamma|} \cdot \frac{c_\alpha}{\left(p_0(1-p_0)\right)^{|\al|/2}}\cdot M_{\be\ga,\al}\\
   &=\one_{\al=\be}\cdot c_\al,
   \end{split}
   \]
   where the last equality is by Lemma~\ref{lem:crucial:identity}. Thus, summing over $\beta \in \hat\cI$ concludes the proof.
\end{proof}
\begin{proof}[Proof of Theorem~\ref{thm:main-pds}(a)]
    Note that $u_{\al\ga}$ constructed in Proposition~\ref{prop:u:pds} is given by
    \[
    u_{\al\ga}=\left(-\sqrt{\frac{\rho}{1-\rho}}\right)^{|\gamma|} \left(\frac{p_1-p_0}{\sqrt{p_0(1-p_0)}}\right)^{|\al|}\rho^{|V(\al)\cup\{1\}|} \qquad \al\in \hat\cI
    \]
    where we used the expression~\eqref{eq:c:pds} for $c_{\al}$. Observe that this equals $u_{\al\ga}$ for the planted submatrix problem constructed in Lemma~\ref{lem:def-u-subm}, with the substitution $\la=\frac{p_1-p_0}{\sqrt{p_0(1-p_0)}}$. Since the sets $\hat\cI$ and $\hat\cJ$ are also the same as the planted submatrix problem (in fact, here there are fewer since we restrict to simple graphs) the exact same calculations as in Section~\ref{sec:subm} yield the desired result. 
\end{proof}

\section{Spiked Wigner}
\label{sec:wigner}

This section is devoted to the proof of Theorem~\ref{thm:main-wigner}(a). For this result we will not need to use the full power of our new framework but can instead use the method of~\cite{SW-estimation} as a starting point. In bounding the resulting formula, we will need to use some arguments that are somewhat more delicate than those appearing in~\cite{SW-estimation}, in order to capture the sharp threshold.

\subsection{Cumulant Formula}
\label{sec:cumulant}
Our starting point will be Theorem~2.2 of~\cite{SW-estimation}, which we recap here. This result pertains to the general additive Gaussian noise model $Y = X+Z$ with $Z$ i.i.d.\ $\cN(0,1)$ and $X$ drawn from an arbitrary distribution (independent from $Z$), of which our setting is a special case. Accounting for our difference in normalization, the result is
\[ \EE[x^2] \cdot \Corr_{\le D}^2 \le \sum_{\substack{\alpha \in \NN^{\mcn} \\ 0 \le |\alpha| \le D}} \frac{\kappa_\alpha^2}{\alpha!} \]
where $\kappa_\alpha$ is defined recursively by
\[ \kappa_\alpha = \EE[x X^\alpha] - \sum_{0 \le \beta \lneq \alpha} \binom{\alpha}{\beta} \EE[X^{\alpha-\beta}] \kappa_\beta. \]
Equivalently, $\kappa_\alpha$ is equal to the joint cumulant of $|\alpha|+1$ (dependent) random variables: $x$, along with $\alpha_{ij}$ copies of $X_{ij}$ for each $i \le j$. (These are not independent copies but identical copies of the same random variable.)

\subsection{The Case $m=1$}

We focus for now on the case $m=1$, as the general case will follow easily from this using standard properties of cumulants. Here we have $X = \sqrt{\frac{\lambda}{n}}uu^\top$ where $u \in \RR^n$ has entries i.i.d.\ from $\pi$.

\begin{definition}[``Good'' graphs for spiked Wigner]
\label{def:good:SW}
Say $\alpha \in \NN^{\mcn}$ is ``good'' if either $\alpha = 0$ or if $\alpha$ (viewed as a multigraph) satisfies all of the following:
\begin{itemize}
    \item $1,2 \in V(\alpha)$,
    \item $\bar\alpha := \alpha + \one_{(1,2)}$ is connected,
    \item for every $v \in V(\alpha)$, $\deg_{\bar\alpha}(v) \ge 2$.
\end{itemize}

\end{definition}

\begin{lemma}
If $\alpha$ is ``bad'' (i.e., not good), then $\kappa_\alpha = 0$.
\end{lemma}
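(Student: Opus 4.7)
The plan is to view $\kappa_\alpha$ as a joint cumulant of $|\alpha|+1$ random variables indexed by the edges of the multigraph $\bar\alpha=\alpha+\bone_{(1,2)}$: the argument $x=\sqrt{\lambda/n}\,u_1u_2$ corresponds to the distinguished edge $(1,2)$, and the $\alpha_{ij}$ copies of $X_{ij}=\sqrt{\lambda/n}\,u_iu_j$ correspond to the remaining $|\alpha|$ edges. Each argument is therefore a product of two entries of the independent vector $u$. I will use two standard properties of joint cumulants of $n\ge 2$ arguments. (P1): the joint cumulant vanishes whenever its arguments can be split into two non-empty groups that are mutually independent. (P2): if $Y_1=UZ$ where $U$ is independent of $(Z,Y_2,\ldots,Y_n)$ and $\EE[U]=0$, then $\kappa_n(Y_1,\ldots,Y_n)=0$. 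Property (P2) is immediate from the moment--cumulant expansion $\kappa_n(Y_1,\ldots,Y_n)=\sum_{\pi}(|\pi|-1)!(-1)^{|\pi|-1}\prod_{B\in\pi}\EE\bigl[\prod_{i\in B}Y_i\bigr]$, since in every partition $\pi$ the block $B\ni 1$ contributes the factor $\EE[U]\cdot\EE\bigl[Z\prod_{i\in B\setminus\{1\}}Y_i\bigr]=0$.

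Now fix a bad $\alpha\ne 0$, so at least one of the three conditions in Definition~\ref{def:good:SW} fails, and I would split into three cases. First, if $\bar\alpha$ is disconnected, I partition the arguments according to connected components of $\bar\alpha$; distinct components involve disjoint subsets of the $u_i$'s, so the resulting groups of arguments are mutually independent and (P1) gives $\kappa_\alpha=0$. Second, if $\bar\alpha$ is connected but some $v\in V(\alpha)$ satisfies $\deg_{\bar\alpha}(v)\le 1$, I first observe that $v\notin\{1,2\}$: for $v\in\{1,2\}\cap V(\alpha)$ we have $\deg_{\bar\alpha}(v)=\deg_\alpha(v)+1\ge 2$. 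Hence $v\in V(\alpha)\setminus\{1,2\}$ with $\deg_\alpha(v)=1$; since a self-loop at $v$ contributes $2$ to the multigraph degree, the unique edge of $\alpha$ incident to $v$ must be of the form $(v,w)$ with $w\ne v$ and multiplicity $1$, so $u_v$ appears in exactly one argument of $\kappa_\alpha$, namely $X_{vw}=u_v\cdot(\sqrt{\lambda/n}\,u_w)$. Because $u_v$ is independent of all other $u_j$ and $\EE[u_v]=0$, property (P2) yields $\kappa_\alpha=0$. Third, if $\bar\alpha$ is connected and every $v\in V(\alpha)$ has $\deg_{\bar\alpha}(v)\ge 2$ but $1\notin V(\alpha)$ (the case $2\notin V(\alpha)$ being symmetric), then $u_1$ does not appear in any $X_{ij}$ with $\alpha_{ij}>0$; it appears only in $x=u_1\cdot(\sqrt{\lambda/n}\,u_2)$, and a final application of (P2) gives $\kappa_\alpha=0$.

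The main obstacle I expect is the bookkeeping in the second case: one must verify that $\deg_\alpha(v)=1$ truly forces the unique incident edge to be a non-loop of multiplicity exactly one, so that $u_v$ may be peeled off cleanly as a mean-zero independent scalar factor. The other two cases reduce transparently to the independence structure of the entries of $u$; the reason the missing-endpoint case has to be treated separately from the leaf case is simply that vertices $1,2$ are not required to lie in $V(\alpha)$ by the ``good'' definition, so when one of them is absent we must apply the leaf-pruning argument to the distinguished variable $x$ itself rather than to some $X_{vw}$.
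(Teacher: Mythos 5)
Your proof is correct and follows essentially the same route as the paper: reduce to two standard cumulant facts (vanishing under a split of the arguments into mutually independent groups, and vanishing when one argument carries a mean-zero scalar factor independent of everything else), applied to the $u_i$ corresponding to an isolated component or a degree-1 vertex. The only cosmetic difference is that the paper folds your Cases 2 and 3 into the single observation that $\bar\alpha$ cannot have a degree-1 vertex at all, whereas you track the three conditions of the definition separately; both formulations are valid.
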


\noindent For consistency with the SBM in the next section, we are defining the empty graph $\alpha = 0$ to be ``good,'' even though $\kappa_0 = \EE[x] = 0$.

\begin{proof}
We will use some standard properties of cumulants that are presented in Section~2.5 of~\cite{SW-estimation}. Recall $\kappa_\alpha$ is the joint cumulant of the collection of random variables $X_{ij} = \sqrt{\frac{\lambda}{n}} u_i u_j$ for $(i,j) \in E(\bar\alpha)$. First, $\bar\alpha$ must be connected or else we can partition our collection of random variables into two independent parts and conclude $\kappa_\alpha = 0$ by~\cite[Proposition~2.11]{SW-estimation}. Also, $\bar\alpha$ cannot have a degree-1 vertex $i$, or else $X_{ij}$ for the single edge $(i,j)$ is uncorrelated with any function of the other random variables (due to the independent mean-zero $u_i$), and we can conclude $\kappa_\alpha = 0$ using the combinatorial definition of joint cumulants as a sum over partitions~\cite[Definition~2.10]{SW-estimation}.
\end{proof}

Bound the moments: for any $\alpha \in \NN^{\mcn}$,
\[ |\EE[X^\alpha]| = \left(\frac{\lambda}{n}\right)^{|\alpha|/2} \left|\EE\prod_{(i,j) \in E(\alpha)} u_i u_j\right| = \left(\frac{\lambda}{n}\right)^{|\alpha|/2} \prod_{v \in V(\alpha)} |\EE[\pi^{\deg_\alpha(v)}]| \le \left(\frac{\lambda}{n}\right)^{|\alpha|/2} \prod_{\substack{v \in V(\alpha) \\ \deg_\alpha(v) \ge 3}} \EE|\pi|^{\deg_\alpha(v)}. \]

\noindent For $a,b > 0$ we have $\EE|\pi|^a \EE|\pi|^b \le \EE|\pi|^{a+b}$, since two increasing functions of the same random variable always have nonnegative covariance (in our case, $t \mapsto t^a$ and $t \mapsto t^b$ are increasing functions applied to the random variable $|\pi|$). This gives
\[ |\EE[X^\alpha]| \le \left(\frac{\lambda}{n}\right)^{|\alpha|/2} \EE|\pi|^{\delta(\alpha)} \]
(with the convention $0^0 = 1$, in the case $\delta(\alpha) = 0$) where the ``excess degree'' $\delta$ is defined as
\[ \delta(\alpha) := \sum_{\substack{v \in V(\alpha) \\ \deg_\alpha(v) \ge 3}} \deg_\alpha(v). \]
Now to bound $\kappa_\alpha$ for good $\alpha$: since $\EE[xX^\alpha] = \EE[X^{\bar\alpha}]$ where, recall, $\bar\alpha := \alpha + \one_{(1,2)}$, we have
\begin{align}
|\kappa_\alpha| &\le |\EE[x X^\alpha]| + \sum_{0 \le \beta \lneq \alpha} \binom{\alpha}{\beta} |\EE[X^{\alpha-\beta}]| \cdot |\kappa_\beta| \nonumber \\
&\le \left(\frac{\lambda}{n}\right)^{(|\alpha|+1)/2} \EE|\pi|^{\delta(\bar\alpha)} + \sum_{\substack{0 \lneq \beta \lneq \alpha \\ \beta\text{ good}}} \binom{\alpha}{\beta} \left(\frac{\lambda}{n}\right)^{(|\alpha|-|\beta|)/2} \EE|\pi|^{\delta(\alpha-\beta)} \cdot |\kappa_\beta|,
\label{eq:kappa-recursion}
\end{align}
recalling $\kappa_0 = 0$.

\begin{lemma}\label{lem:spiked:cumulant}
    For any good $\alpha$,
    \[ |\kappa_\alpha| \le \left(\frac{\lambda}{n}\right)^{(|\alpha|+1)/2} f(\alpha) \cdot M(\delta(\bar\alpha)) \qquad\text{where } M(k) := \max_{j \in \{0,1,\ldots,k\}} \EE|\pi|^j, \]
and where $f(\cdot)$, which takes a good $\alpha$ and outputs a natural number, is given by
\begin{equation}\label{eq:def:recursion:f}
f(\alpha)=\sum_{\substack{0 \le \beta\lneq\alpha \\ \beta \textnormal{ good}}}\binom{\alpha}{\beta} f(\beta) \qquad \text{for } \alpha \ne 0,
\end{equation}
with base case $f(0) = 1$.
\end{lemma}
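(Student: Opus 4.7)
The plan is to prove the bound by induction on $|\alpha|$, driven directly by the recursion~\eqref{eq:kappa-recursion}. Since $\kappa_\beta = 0$ whenever $\beta$ is bad (by the preceding lemma), the sum in~\eqref{eq:kappa-recursion} restricts to good $\beta$, matching exactly the sum defining $f(\alpha)$ in~\eqref{eq:def:recursion:f}. For the base case, when no $\beta$ with $0 \lneq \beta \lneq \alpha$ is good, the recursion collapses to $|\kappa_\alpha| \le (\lambda/n)^{(|\alpha|+1)/2}\EE|\pi|^{\delta(\bar\alpha)} \le (\lambda/n)^{(|\alpha|+1)/2} M(\delta(\bar\alpha))$, and $f(\alpha) = \binom{\alpha}{0} f(0) = 1$ makes this match the claim.

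For the inductive step, I would apply the inductive hypothesis to each $|\kappa_\beta|$ appearing in~\eqref{eq:kappa-recursion} and factor out $(\lambda/n)^{(|\alpha|+1)/2}$, obtaining
\[
|\kappa_\alpha| \le \Bigl(\tfrac{\lambda}{n}\Bigr)^{(|\alpha|+1)/2}\Bigl[\EE|\pi|^{\delta(\bar\alpha)} + \sum_{\substack{0 \lneq \beta \lneq \alpha \\ \beta \text{ good}}} \binom{\alpha}{\beta}\,\EE|\pi|^{\delta(\alpha-\beta)}\, f(\beta)\, M(\delta(\bar\beta))\Bigr].
\]
The crux is then to absorb each factor $\EE|\pi|^{\delta(\alpha-\beta)} M(\delta(\bar\beta))$ into a common $M(\delta(\bar\alpha))$. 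The key deterministic input is the excess-degree subadditivity
\[
\delta(\bar\alpha) \ge \delta(\bar\beta) + \delta(\alpha-\beta),
\]
valid for every $\beta \le \alpha$. It follows vertex-by-vertex: since $\bar\alpha = \bar\beta + (\alpha-\beta)$, we have $\deg_{\bar\alpha}(v) = \deg_{\bar\beta}(v) + \deg_{\alpha-\beta}(v)$, so whenever either summand is $\ge 3$ the total is as well, and the indicator-weighted contribution to $\delta(\bar\alpha)$ at $v$ pointwise dominates the sum of the contributions to $\delta(\bar\beta)$ and $\delta(\alpha-\beta)$.

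Choosing $j^\star \le \delta(\bar\beta)$ that attains $M(\delta(\bar\beta)) = \EE|\pi|^{j^\star}$ and invoking the correlation bound $\EE|\pi|^a\EE|\pi|^b \le \EE|\pi|^{a+b}$ recalled in the excerpt, I get
\[
\EE|\pi|^{\delta(\alpha-\beta)}\, M(\delta(\bar\beta)) \le \EE|\pi|^{\delta(\alpha-\beta)+j^\star} \le M(\delta(\bar\alpha)),
\]
the last step using $\delta(\alpha-\beta)+j^\star \le \delta(\alpha-\beta)+\delta(\bar\beta) \le \delta(\bar\alpha)$. Substituting back, together with the trivial $\EE|\pi|^{\delta(\bar\alpha)} \le M(\delta(\bar\alpha))$, gives
\[
|\kappa_\alpha| \le \Bigl(\tfrac{\lambda}{n}\Bigr)^{(|\alpha|+1)/2} M(\delta(\bar\alpha))\Bigl[1 + \sum_{\substack{0 \lneq \beta \lneq \alpha \\ \beta \text{ good}}} \binom{\alpha}{\beta}\, f(\beta)\Bigr],
\]
and the bracket equals $f(\alpha)$ by~\eqref{eq:def:recursion:f}, since the $\beta=0$ term contributes $\binom{\alpha}{0} f(0) = 1$. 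This closes the induction. The main obstacle is really just the excess-degree subadditivity; everything else is bookkeeping, and the recursive definition of $f$ is tailored exactly to absorb what the induction produces.
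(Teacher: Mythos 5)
Your proof is correct and follows essentially the same route as the paper: strong induction on $|\alpha|$ driven by the cumulant recursion~\eqref{eq:kappa-recursion}, absorbing $\EE|\pi|^{\delta(\alpha-\beta)}\,M(\delta(\bar\beta))$ into $M(\delta(\bar\alpha))$ via the excess-degree subadditivity $\delta(\bar\alpha)\ge\delta(\bar\beta)+\delta(\alpha-\beta)$ together with the moment inequality $\EE|\pi|^a\EE|\pi|^b\le\EE|\pi|^{a+b}$. The only difference is cosmetic — you spell out the vertex-by-vertex proof of the subadditivity (using $(a+b)\one_{a+b\ge3}\ge a\one_{a\ge3}+b\one_{b\ge3}$), which the paper states without justification.
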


\begin{proof}
Proceed by strong induction on $|\alpha|$, i.e., fix a good $\alpha$ and assume the conclusion holds for all good $\beta \lneq \alpha$. First, focus on one term in~\eqref{eq:kappa-recursion} and apply the induction hypothesis:
\begin{align*}
T_{\alpha,\beta} := \left(\frac{\lambda}{n}\right)^{(|\alpha|-|\beta|)/2} \EE|\pi|^{\delta(\alpha-\beta)} \cdot |\kappa_\beta| &\le \left(\frac{\lambda}{n}\right)^{(|\alpha|-|\beta|)/2} \EE|\pi|^{\delta(\alpha-\beta)} \cdot \left(\frac{\lambda}{n}\right)^{(|\beta|+1)/2} f(\beta) \cdot M(\delta(\bar\beta)) \\
&= \left(\frac{\lambda}{n}\right)^{(|\alpha|+1)/2} \EE|\pi|^{\delta(\alpha-\beta)} \EE|\pi|^k f(\beta)
\end{align*}
for some $k \in \{0,1,\ldots,\delta(\bar\beta)\}$. Note that $(\alpha-\beta) + \bar\beta = \bar\alpha$ and so $\delta(\alpha-\beta) + k \le \delta(\alpha-\beta) + \delta(\bar\beta) \le \delta(\bar\alpha)$. This means
\[ \EE|\pi|^{\delta(\alpha-\beta)} \EE|\pi|^k \le \EE|\pi|^{\delta(\alpha-\beta) + k} \le M(\delta(\bar\alpha)), \]
and we conclude
\[ T_{\alpha,\beta} \le \left(\frac{\lambda}{n}\right)^{(|\alpha|+1)/2} f(\beta) \cdot M(\delta(\bar\alpha)). \]
Putting it together,
\begin{align*}
|\kappa_\alpha| &\le \left(\frac{\lambda}{n}\right)^{(|\alpha|+1)/2} \EE|\pi|^{\delta(\bar\alpha)} + \sum_{\substack{0 \lneq \beta \lneq \alpha \\ \beta\text{ good}}} \binom{\alpha}{\beta} T_{\alpha,\beta} \\
&\le \left(\frac{\lambda}{n}\right)^{(|\alpha|+1)/2} M(\delta(\bar\alpha)) \left[1 + \sum_{\substack{0 \lneq \beta \lneq \alpha \\ \beta\text{ good}}} \binom{\alpha}{\beta} f(\beta) \right],
\end{align*}
and then we recognize the term in brackets $[\cdots]$ as $f(\alpha)$.
\end{proof}

\begin{lemma}\label{lem:f:upper:bound}
For any good $\alpha \ne 0$,
\[
f(\alpha)\leq (2 |\alpha|)^{|\alpha|-|V(\alpha)|+1}.
\]
\end{lemma}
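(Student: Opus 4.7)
I would prove the bound by strong induction on $n:=|\al|$. The base case $n=1$ is immediate: the only good $\al$ of size one is $\al=\bone_{(1,2)}$, and~\eqref{eq:def:recursion:f} gives $f(\al)=f(0)=1=(2\cdot 1)^{0}$. The crucial structural ingredient is the claim that for any good $\be$ with $0\lneq \be\lneq \al$, one has $g(\be)\leq g(\al)-1$, where $g(\eta):=|\eta|-|V(\eta)|+1$. To see this, let $W:=V(\al)\setminus V(\be)$. Goodness of $\be$ forces $\{1,2\}\subseteq V(\be)$, so every $w\in W$ satisfies $\deg_{\bar\al}(w)=\deg_\al(w)\geq 2$ by goodness of $\al$. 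Since all edges incident to $W$ lie in $\al-\be$, a handshake count gives $2|W|\leq 2(n-|\be|)$, which rearranges to $g(\be)\leq g(\al)$. Equality would force every edge of $\al-\be$ to lie entirely inside $W$, disconnecting $W$ from $V(\be)$ in $\bar\al$ (the only candidate crossing edge $(1,2)$ has both endpoints in $V(\be)$) and contradicting connectedness of $\bar\al$ whenever $W\neq\emptyset$; if instead $W=\emptyset$, then $\be\lneq\al$ forces $|\be|<n$ and hence $g(\be)<g(\al)$.

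With this in hand, the case $g(\al)=0$ is direct: the constraint $|\al|=|V(\al)|-1$ combined with the min-degree-2 condition on $\bar\al$ forces $\bar\al$ to be a simple cycle through $(1,2)$, so $\al$ is a simple path from $1$ to $2$; hence the only good $\be\lneq\al$ is $\be=0$, giving $f(\al)=1=(2n)^{0}$. For $g(\al)\geq 1$, combining~\eqref{eq:def:recursion:f} with the induction hypothesis $f(\be)\leq (2|\be|)^{g(\be)}$ (for good $\be\neq 0$ with $|\be|<n$) together with $f(0)=1$ yields
\[
f(\al)\leq 1+\sum_{\substack{0\lneq \be\lneq \al \\ \be\text{ good}}}\binom{\al}{\be}(2|\be|)^{g(\be)},
\]
so the goal reduces to showing the right-hand side is at most $(2n)^{g(\al)}$.

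The verification of this inequality is the main technical obstacle. My plan is to organize the sum according to the vertex set $V':=V(\be)$ (which must satisfy $\{1,2\}\subseteq V'\subseteq V(\al)$) and the size $k:=|\be|$. For each fixed $V'$ of size $v'$, $\sum_{\be:V(\be)=V',\,|\be|=k}\binom{\al}{\be}$ is bounded above by $\binom{n_{V'}}{k}$, where $n_{V'}$ denotes the number of edges of $\al$ with both endpoints in $V'$; and $(2|\be|)^{g(\be)}=(2k)^{k-v'+1}$ depends only on $(k,v')$. The sum then becomes a triple sum over $V'$, $v'$, and $k$, and the principal challenge is to carry out this algebra carefully---using $n_{V'}\leq n$, the constraint $k-v'+1\leq g(\al)-1$, the count $\binom{|V(\al)|-2}{v'-2}$ of vertex subsets $V'$, and a dominance argument showing that the total is controlled by the terms with $V'=V(\al)$ and $k$ close to $n$---to arrive at the clean bound $(2n)^{g(\al)}-1$.
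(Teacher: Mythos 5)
Your base case, your handling of the $g(\alpha)=0$ case, and your key structural lemma ($g(\beta) \le g(\alpha) - 1$ for good $0 \lneq \beta \lneq \alpha$) are all correct and nicely argued. However, the proof is incomplete in its central step, and you acknowledge this yourself: you reduce the inductive step to showing that $1 + \sum_{0 \lneq \beta \lneq \alpha, \beta\,\mathrm{good}} \binom{\alpha}{\beta}(2|\beta|)^{g(\beta)} \le (2n)^{g(\alpha)}$ and then only describe a plan (``organize the sum by $(V',k)$ and invoke a dominance argument'') without carrying it out. This is a genuine gap, and I am not convinced the sketched plan is viable with the stated ingredients. The crude bound $\binom{n_{V'}}{k}$ (counting \emph{all} sub-(multi)graphs of $\alpha|_{V'}$, not just good ones) is very lossy, and the further relaxation $n_{V'} \le n$ destroys the constraint $k \le n_{V'}$ that keeps the exponent $k-v'+1$ from being too large when $V'$ is small. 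If one runs the estimate as you describe---bound $\binom{n_{V'}}{k}(2k)^{k-v'+1}$ by $\binom{n}{k}(2n)^{k-v'+1}$, sum over allowed $k$, multiply by the $\binom{|V(\alpha)|-2}{s}$ choices of $V'$ of size $v'-s$, and sum over $s$---one picks up a factor growing with $|V(\alpha)|$ (roughly $2^{|V(\alpha)|-2}$) and the target bound fails. To make your approach work you would have to retain the constraint that $\beta$ is actually good (every vertex of $V'\setminus\{1,2\}$ has $\beta$-degree $\ge 2$), which drastically thins the count; but that analysis is considerably more delicate than you indicate and is precisely the part you have not done.

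The paper's proof sidesteps this combinatorial sum entirely. For each $e \in E(\alpha)$ it introduces $\alpha^\star(e)$, the unique maximal good subgraph of $\alpha - \one_e$, and shows via the identity $2f(\gamma) = 1 + \sum_{0 \lneq \beta \le \gamma,\,\beta\,\mathrm{good}}\binom{\gamma}{\beta}f(\beta)$ that $f(\alpha) \le \sum_{e:\alpha^\star(e)\ne 0} 2f(\alpha^\star(e))$. Since passing from $\alpha$ to $\alpha^\star(e)$ removes strictly more edges than vertices, the induction closes with the single factor $|\alpha|$ coming from the number of edges $e$, and the factor $2$ from the identity, giving exactly $(2|\alpha|)^{g(\alpha)}$ with no slack to absorb. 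I would recommend adopting this aggregation-by-maximal-good-subgraph device rather than trying to directly control the sum over all good $\beta$; your structural observation that $g(\beta)$ strictly drops is the right intuition, but on its own it does not tame the multiplicity of good subgraphs.
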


\begin{proof}
From the definition of $f$, we have the following identity for any good $\alpha \ne 0$:
\begin{equation}\label{eq:f-identity}
2f(\alpha)=\sum_{\substack{0 \le \beta \le \alpha \\ \beta \textnormal{ good}}} \binom{\alpha}{\beta}f(\beta) = 1 + \sum_{\substack{0 \lneq \beta \le \alpha \\ \beta \textnormal{ good}}} \binom{\alpha}{\beta}f(\beta).
\end{equation}
Proceed by strong induction on $|\alpha|$. For an edge $e \in E(\alpha)$, define $\alpha^\star(e) \in \NN^{\mcn}$ to be the unique maximal good graph contained in $\alpha - \one_e$. Specifically, $\alpha^\star(e)$ is obtained from $\alpha - \one_e$ by the following 3-step procedure. First, repeatedly remove any degree-1 vertices other than vertices $1,2$ (by deleting the one edge incident to them) until no degree-1 vertices remain except possibly $1,2$. (In other words, this first step removes from $\alpha$ the maximal path containing $e$ whose internal vertices have degree 2 and are not $1,2$.) Next, remove any connected components that are not connected to vertex $1$ or $2$. Finally, check that vertices $1$ and $2$ still have degree at least 1, and otherwise remove the entire graph, i.e., set $\alpha^\star(e) = 0$. The graph $\alpha^\star(e)$ produced by the above procedure is good, and (since we only removed edges that were necessary to remove in order to maintain a good graph) also has the property that if $\beta \le \alpha - \one_e$ is good then $\beta \le \alpha^\star(e)$.

If $\alpha \ne 0$ is good and has the property that the only good $\beta \lneq \alpha$ is $\beta = 0$, then we have $f(\alpha) = 1$ and $|\alpha|-|V(\alpha)|+1 \ge 0$, so we can verify the conclusion of the lemma directly. Otherwise,
\begin{align*}
f(\alpha) &= \sum_{\substack{0 \le \beta\lneq\alpha \\ \beta \textnormal{ good}}}\binom{\alpha}{\beta} f(\beta) \\
&= 1 + \sum_{\substack{0 \lneq \beta\lneq\alpha \\ \beta \textnormal{ good}}}\binom{\alpha}{\beta} f(\beta),
\intertext{and viewing $\sum_\beta \binom{\alpha}{\beta}$ as a sum over subgraphs --- i.e., subsets of the edge (multi)set $E(\alpha)$ --- of the multigraph described by $\alpha$,}
&\leq 1 + \sum_{e\in E(\alpha)}\sum_{\substack{0 \lneq \beta\leq \alpha-\one_e \\ \beta \textnormal{ good}}}\binom{\alpha-\one_e}{\beta}f(\beta),\\
&= 1 + \sum_{\substack{e\in E(\alpha) \\ \alpha^\star(e) \ne 0}}\sum_{\substack{0 \lneq \beta\leq \alpha^\star(e) \\ \beta \textnormal{ good}}}\binom{\alpha^\star(e)}{\beta}f(\beta),
\intertext{where the equality $\binom{\alpha-\one_e}{\beta} = \binom{\alpha^\star(e)}{\beta}$ is because $\alpha-\one_e$ and $\alpha^\star(e)$ only differ on $i,j$ pairs where $(\alpha^\star(e))_{ij} = 0$. Now applying the identity~\eqref{eq:f-identity} to $\alpha^\star(e)$,}
&= 1 + \sum_{\substack{e\in E(\alpha) \\ \alpha^\star(e) \ne 0}} [2 f(\alpha^\star(e)) - 1] \\
&\le \sum_{\substack{e\in E(\alpha) \\ \alpha^\star(e) \ne 0}} 2 f(\alpha^\star(e)),
\end{align*}
recalling that we checked by hand the case where every $e \in E(\alpha)$ has $\alpha^\star(e) = 0$. Next, note that for $\alpha^\star(e) \ne 0$, the process of producing $\alpha^\star(e)$ from $\alpha$ removes more edges than vertices, so $|\alpha^\star(e)| - |V(\alpha^\star(e))| \le |\alpha| - |V(\alpha)| - 1$. Continuing from above and applying the induction hypothesis,
\begin{align*}
f(\alpha) &\le \sum_{\substack{e\in E(\alpha) \\ \alpha^\star(e) \ne 0}} 2 (2 |\alpha^\star(e)|)^{|\alpha^\star(e)| - |V(\alpha^\star(e))| + 1} \\
&\le \sum_{\substack{e\in E(\alpha) \\ \alpha^\star(e) \ne 0}} 2 (2 |\alpha|)^{|\alpha| - |V(\alpha)|} \\
&\le |\alpha| \cdot 2 (2 |\alpha|)^{|\alpha| - |V(\alpha)|} \\
&= (2 |\alpha|)^{|\alpha| - |V(\alpha)| + 1},
\end{align*}
as desired.
\end{proof}

\begin{lemma}\label{lem:number:good:graphs:2}
For $d \ge 1$ and $v \ge 2$, the number of good $\alpha$ with $|\alpha|=d$ and $|V(\alpha)|=v$ is at most $n^{v-2}(2d)^{5(d-v+1)}$.
\end{lemma}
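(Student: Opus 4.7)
The plan is a two-step reduction. First, since vertices $1$ and $2$ must lie in $V(\alpha)$ by the goodness assumption, I pay $\binom{n-2}{v-2} \le n^{v-2}/(v-2)!$ to select the remaining $v-2$ vertices of $V(\alpha)$ from $[n] \setminus \{1, 2\}$. It then suffices to show that on a fixed $v$-vertex set $V \ni 1, 2$, the number of good multigraphs $\alpha$ with $V(\alpha) = V$ and $|\alpha| = d$ is at most $(v-2)! \cdot (2d)^{5\ell}$, where $\ell := d - v + 1$.

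To bound the fixed-vertex-set count, I use a \emph{kernel-plus-subdivision} decomposition of $\bar\alpha$. Let $B := \{1, 2\} \cup \{w \in V : \deg_{\bar\alpha}(w) \ge 3\}$ be the branch set. The degree-sum identity $\sum_w (\deg_{\bar\alpha}(w) - 2) = 2(d+1) - 2v = 2\ell$, together with the goodness constraint $\deg_{\bar\alpha}(w) \ge 2$, forces $|B| \le 2\ell + 2$. All vertices in $V \setminus B$ have $\bar\alpha$-degree exactly $2$, so $\bar\alpha$ factors uniquely into a kernel multigraph $K$ on $B$ (obtained by contracting each maximal chain of degree-$2$ vertices to a single edge) together with subdivision data (specifying which chain each vertex in $V \setminus B$ lies on, and in what order). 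A chain-endpoint count, using $\sum_{w \in B} \deg_{\bar\alpha}(w) = 2(d+1) - 2(v - |B|)$, shows that $K$ has $p := \ell + |B|$ edges.

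I then bound the count on a fixed vertex set by multiplying three factors: (a) the choice of $B \setminus \{1, 2\}$, bounded by $\sum_{b=0}^{2\ell} \binom{v-2}{b} \le (2d)^{O(\ell)}$; (b) the multigraph $K$ on $|B|$ labeled vertices with $p$ edges (self-loops and parallel edges allowed), bounded by $\binom{\binom{|B|+1}{2} + p - 1}{p} \le (2d)^{O(\ell)}$ using $|B|, p = O(\ell)$ and $\binom{a}{b} \le (ea/b)^b$; and (c) the subdivision, which distributes $v - |B|$ internal vertices along $p$ ordered chains, counted by $(v - |B|)! \binom{v - |B| + p - 1}{p - 1} \le (v-2)! \cdot (2d)^{O(\ell)}$, where the factorial gives the dominant $(v-2)!$ piece (absorbed using $|B| \ge 2$) and the binomial gives $(2d)^{O(\ell)}$ via $p - 1 = O(\ell)$. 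Combining these with Step 1 yields the bound $n^{v-2} (2d)^{C\ell}$ for some absolute constant $C$.

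The main obstacle is verifying that $C$ can be taken to be $5$. The kernel count is the most delicate contribution, because it is a multigraph count with $\binom{|B|+1}{2} = O(\ell^2)$ edge slots raised to an $O(\ell)$ power, so the sharper estimate $\binom{a}{b} \le (ea/b)^b$ rather than $\binom{a}{b} \le a^b$ is essential to keep the exponent within $5\ell$. A secondary subtlety arises in the boundary case $\ell = 0$: here $\bar\alpha$ must be a simple cycle through $1, 2$ containing the edge $(1,2)$, and the naive kernel-plus-subdivision counting overcounts by failing to enforce $\bar\alpha_{(1,2)} \ge 1$ (it would give $(v-1)!$ instead of $(v-2)!$). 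One recovers the correct count $(v-2)!$ of simple $1$-to-$2$ paths either by handling $\ell = 0$ separately or by refining the decomposition to distinguish a ``direct'' $(1,2)$ edge in $K$, equivalently by working with $\alpha$ rather than $\bar\alpha$ and allowing $1, 2$ to have degree $1$.
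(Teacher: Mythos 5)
You take a genuinely different route from the paper. The paper's argument is constructive: starting from two isolated vertices $1,2$, build $\alpha$ in $T=d-v+2=\Delta+1$ steps, each step inserting a new edge (or lollipop) between two existing vertices and subdividing it with degree-$2$ vertices. This directly gives $\binom{d-1}{\Delta}\,v^{2(\Delta+1)}\,(n-2)^{v-2}\le(2d)^{3\Delta+2}\,n^{v-2}$, and then $3\Delta+2\le 5\Delta$ for $\Delta\ge 1$, with the $\Delta=0$ case (simple $1$-to-$2$ paths) checked separately. You instead dissect a given good $\alpha$ into its branch set $B$, kernel multigraph $K$ on $B$, and chain subdivision. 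Your structural facts are all correct: $|B|\le 2\ell+2$ from the degree-sum identity, $K$ has $p=\ell+|B|$ edges, the subdivision binomial is $\binom{v-|B|+p-1}{p-1}=\binom{d}{p-1}$, and your flag on the $\ell=0$ boundary (one must force the $(1,2)$ chain to have no internal vertices to recover $(v-2)!$ rather than $(v-1)!$) is also right.

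The gap is exactly where you flag it: the constant $5$ is left unverified, and that verification is the entire content of the lemma. Moreover, the scheme of bounding factors (a), (b), (c) separately by $(2d)^{O(\ell)}$ and multiplying does not produce $5$. The raw multiset kernel count $\binom{\binom{|B|+1}{2}+p-1}{p}$ alone already costs on the order of $(2d)^{3\ell+O(1)}$ even with the sharper $\binom{a}{b}\le(ea/b)^b$, and (a), (c) contribute further, so the naive product exceeds $(2d)^{5\ell}$. The approach can be made to work, but only by summing over $b=|B|-2\le 2\ell$ while keeping the factorial denominators: $\binom{v-2}{b}\cdot(v-|B|)!/(v-2)!=1/b!$, and $\binom{d}{p-1}$ carries a $1/(p-1)!$; it is these factorials, combined across the three factors, that pull the total back below $(2d)^{5\ell}$. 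That calculation is what's missing from your proposal. By contrast, the paper's incremental ear/lollipop construction never needs to bound a full kernel multigraph — each of the $\Delta+1$ ears pays only $v^2\le(2d)^2$ for its two endpoints plus a stars-and-bars factor of $\binom{d-1}{\Delta}$ — which is why $(2d)^{3\Delta+2}$ drops out in one line. That contrast may help you finish your version.
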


\begin{proof}
Starting with two isolated vertices $1$ and $2$, imagine building $\alpha$ by repeating the following operation: add a new edge whose endpoints are two existing vertices (which need not be distinct), and add some number (possibly 0) of degree-2 vertices along the new edge (so that this edge becomes a path). We also allow a variant of this operation where a ``lollipop'' (an edge with a self-loop at the end) rooted at some existing vertex is added, and again degree-2 vertices can be placed along the two edges; for counting purposes, we think of this as a case of the previous operation where one of the two endpoints of the new edge is not an existing vertex but rather one of the future vertices to be added along the new edge. We claim that any good $\alpha \ne 0$ can be produced by a sequence of these operations. This can be seen by working backwards from $\alpha$ as follows. At each step, choose a non-bridge edge (one whose removal won't disconnect $\bar\alpha$) and remove the maximal path containing this edge whose internal vertices have degree 2 (and are not vertices $1,2$). This results in a smaller graph and maintains the properties that $\bar\alpha$ is connected and all vertices have degree $\ge 2$ (except possibly $1,2$), allowing us to induct; the one exception is if the path we remove has both endpoints equal to some degree-3 vertex (now a degree-1 vertex), in which case we remove a lollipop.

Suppose $\alpha$ is produced from $T$ applications of the above operation, and let $\ell_t \ge 0$ be the number of degree-2 vertices added at step $t \in [T]$. Since each step adds $\ell_t$ vertices and $\ell_t+1$ edges, we must have $\sum_t \ell_t = v-2$ and $\sum_t (\ell_t+1) = d$, which implies $T = d-v+2$. The number of ways to choose the $\ell_t$ values for $t \in [T]$, subject to their sum being $v-2$, is $\binom{d-1}{d-v+1}$ using the ``stars and bars'' method. Each time we add an edge, there are at most $v^2$ choices for the two endpoints (which includes the possibility of choosing a ``future'' vertex to form a lollipop), for a total contribution of $v^{2T} = v^{2(d-v+2)}$. Finally, each new vertex must be assigned a label from $[n] \setminus \{1,2\}$, for a total contribution of $(n-2)^{v-2}$. In total, the count of $\alpha$'s is at most
\[ \binom{d-1}{d-v+1} v^{2(d-v+2)} (n-2)^{v-2} \le d^{d-v+1} v^{2(d-v+2)} n^{v-2}. \]
Define $\Delta := d-v+1$ and note that $\Delta \ge 0$ for good $\alpha$ (this is true for any graph without isolated vertices). The above becomes
\[ d^\Delta v^{2(\Delta+1)} n^{v-2} \le (2d)^\Delta (2d)^{2(\Delta+1)} n^{v-2} = (2d)^{3\Delta+2} n^{v-2}. \]
For $\Delta \ge 1$ we have $3\Delta+2 \le 5\Delta$, which gives the desired bound. For $\Delta = 0$, the only good $\alpha$ is a simple path from vertex $1$ to vertex $2$, (since only forests have $\Delta = 0$) so we can improve the bound to $n^{v-2}$, matching the desired bound.
\end{proof}

\begin{lemma}\label{lem:upper:delta}
For any good $\alpha$, we have $\delta(\bar\alpha) \le 6(|\alpha|-|V(\alpha)|+1)$.
\end{lemma}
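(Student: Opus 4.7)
The plan is to use a degree-counting argument on the multigraph $\bar\alpha$. Since $\alpha$ is good, every vertex of $\bar\alpha$ has degree at least $2$ (note $V(\bar\alpha) = V(\alpha)$ since $1,2 \in V(\alpha)$). Let $n_k$ denote the number of vertices of $\bar\alpha$ with degree exactly $k$. The two basic identities I will use are
\[ \sum_{k \ge 2} n_k = |V(\alpha)| \qquad \text{and} \qquad \sum_{k \ge 2} k\, n_k = 2|\bar\alpha| = 2(|\alpha|+1). \]

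Next, I would separate out the degree-$2$ vertices from the rest. By definition of $\delta$,
\[ \delta(\bar\alpha) = \sum_{k \ge 3} k\, n_k = 2(|\alpha|+1) - 2 n_2. \]
On the other hand, since each vertex counted in $\delta(\bar\alpha)$ has degree at least $3$, we have $\sum_{k \ge 3} n_k \le \delta(\bar\alpha)/3$, and combining this with $n_2 = |V(\alpha)| - \sum_{k \ge 3} n_k$ yields
\[ n_2 \ge |V(\alpha)| - \tfrac{1}{3}\delta(\bar\alpha). \]

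Substituting back,
\[ \delta(\bar\alpha) \le 2(|\alpha|+1) - 2|V(\alpha)| + \tfrac{2}{3}\delta(\bar\alpha), \]
and rearranging gives $\delta(\bar\alpha) \le 6(|\alpha| - |V(\alpha)| + 1)$, as desired. The edge case $\alpha = 0$ is trivial since $\delta(\bar\alpha) = 0$ and $|\alpha| - |V(\alpha)| + 1 \ge 0$ for any graph with no isolated vertices (and can be checked directly otherwise). There is no real obstacle here; the only thing to be careful about is noting that $V(\bar\alpha) = V(\alpha)$ (adding the edge $(1,2)$ does not introduce new vertices because $1,2 \in V(\alpha)$ already), so the minimum degree condition from goodness applies to all of $V(\bar\alpha)$.
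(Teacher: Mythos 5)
Your proof is correct and is essentially the same argument as the paper's: both exploit the minimum-degree-two property of $\bar\alpha$ together with the handshake identity $\sum_v \deg_{\bar\alpha}(v) = 2(|\alpha|+1)$, with the paper packaging the degree-$\ge 3$ step as the pointwise inequality $\deg(v) \le 3(\deg(v)-2)$ and you encoding the same fact as $\sum_{k\ge 3} n_k \le \delta(\bar\alpha)/3$ before solving for $\delta(\bar\alpha)$.
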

\begin{proof}
The case $\alpha = 0$ can be checked by hand, so assume $\alpha \ne 0$. Since all vertices of $\bar\alpha$ have degree at least 2,
\[ \delta(\bar\alpha) = \sum_{\substack{v \in V(\bar\alpha) \\ \deg_{\bar\alpha}(v) \ge 3}} \deg_{\bar\alpha}(v) \le \sum_{v \in V(\bar\alpha)} 3(\deg_{\bar\alpha}(v) - 2) = 6(|\bar\alpha| - |V(\bar\alpha)|) = 6(|\alpha| + 1 - |V(\alpha)|), \]
as desired.
\end{proof}
\begin{proof}[Proof of Theorem~\ref{thm:main-wigner}(a) for $m=1$] For any good $\al\neq 0$ with $|V(\al)|=v$ and $|\al|=d$, combining Lemmas~\ref{lem:spiked:cumulant},~\ref{lem:f:upper:bound}, and \ref{lem:upper:delta} yields
\begin{equation}\label{eq:upper:kappa}
|\ka_{\al}|\leq \left(\frac{\lambda}{n}\right)^{(d+1)/2} (2d)^{d-v+1} \cdot M\big(6(d-v+1)\big).
\end{equation}
Thus, recalling  that $\Delta\equiv d-v+1 \ge 0$ for good $\al$,
\begin{align*}
\EE[x^2] \cdot \Corr^2_{\le D} &\le \sum_{\substack{1 \le |\alpha| \le D \\ \alpha \text{ good}}} \kappa_\alpha^2 \\
&\stackrel{(a)}{\le} \sum_{d=1}^D \sum_{v=2}^{d+1} n^{v-2}(2d)^{5(d-v+1)} \left[\left(\frac{\lambda}{n}\right)^{\frac{d+1}{2}} (2d)^{d-v+1} M\big(6(d-v+1)\big)\right]^2 \\
&= n^{-2} \sum_{d=1}^D \lambda^{d+1} \sum_{v=2}^{d+1} n^{-(d-v+1)}(2d)^{7(d-v+1)} M\big(6(d-v+1)\big)^2 \\
&= n^{-2} \sum_{d=1}^D \lambda^{d+1} \sum_{\Delta = 0}^{d-1} n^{-\Delta}(2d)^{7\Delta} M\big(6\Delta\big)^2 \\
&\stackrel{(b)}{\le} n^{-2} \sum_{d=1}^D \lambda^{d+1} \sum_{\Delta = 0}^{d-1} n^{-\Delta}(2D)^{7\Delta} (6c\Delta)^{12\nu\Delta} \\
&\le n^{-2} \left(\sum_{d=1}^D \lambda^{d+1}\right) \sum_{\Delta \ge 0} \left(\frac{(2D)^7 (6cD)^{12\nu}}{n}\right)^\Delta,
\end{align*}
where $(a)$ holds by Eq.~\eqref{eq:upper:kappa} and Lemma~\ref{lem:number:good:graphs:2}, and $(b)$ holds by our assumption that $\E|\pi|^{k}\leq (ck)^{\nu k}$ for all integers $k\geq 1$. The final sum over $\Delta$ is $O(1)$ provided $D\leq n^{\delta}$ where $\delta\equiv \delta(c,\nu)>0$. Finally note that $\EE[x^2] = \lambda m/n$ to complete the proof in the case $m=1$.
\end{proof}

\subsection{General $m$}

\begin{proof}[Proof of Theorem~\ref{thm:main-wigner}(a) for general $m$]
Now write $\kappa^{(m)}_\alpha$ to denote the cumulants for the rank-$m$ case. Above, we have bounded $\kappa^{(1)}_\alpha$.  Recall $\kappa^{(m)}_\alpha$ is the joint cumulant (see Section~2.5 of~\cite{SW-estimation})
\[ \kappa^{(m)}_\alpha = \kappa\left(\sqrt{\frac{\lambda}{n}} (UU^\top)_{ij} \,:\, (i,j) \in E(\bar\alpha)\right) \]
where $U$ is $n \times m$ with entries i.i.d.\ from $\pi$ and $\bar\alpha := \alpha + \one_{(1,2)}$. Joint cumulants are well known to have the following additivity property (e.g.,~\cite[Proposition~2.12]{SW-estimation}).

\begin{proposition}
If $X_1,\ldots,X_k$ and $Y_1,\ldots,Y_k$ are random variables with $\{X_i\}_{i \in [k]}$ independent from $\{Y_i\}_{i \in [k]}$, then
\[ \kappa(X_1+Y_1,\ldots,X_k+Y_k) = \kappa(X_1,\ldots,X_k) + \kappa(Y_1,\ldots,Y_k). \]
\end{proposition}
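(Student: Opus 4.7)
The plan is to reduce the claim to a one-line identity for cumulant generating functions. Let $K_X(t_1,\ldots,t_k) := \log \EE[\exp(\sum_j t_j X_j)]$, and similarly define $K_Y$ and $K_{X+Y}$ for $W_j := X_j + Y_j$. The standard fact connecting the combinatorial/partition-based definition of joint cumulants (cf.\ \cite[Definition~2.10]{SW-estimation}) with the analytic one is that
\[
\kappa(W_1,\ldots,W_k) \;=\; \frac{\partial^k}{\partial t_1 \cdots \partial t_k} K_{X+Y}(t) \,\Big|_{t=0},
\]
where either we take the expression as a formal power series in $t$ (which only requires finiteness of the mixed moment $\EE|W_1 \cdots W_k|$, i.e., the setting in which joint cumulants are defined), or we pass to the characteristic function $t \mapsto \log \EE[\exp(i \sum t_j W_j)]$ and include the appropriate factor of $i^k$. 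Either route is fine; I would use the formal power series version to match the paper's framework.

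The core step is the factorization. Because $\{X_j\}$ is independent of $\{Y_j\}$, the joint MGF (viewed as a formal power series, so convergence is not an issue) factorizes:
\[
\EE\!\left[\exp\!\Big(\sum_j t_j(X_j+Y_j)\Big)\right] \;=\; \EE\!\left[\exp\!\Big(\sum_j t_j X_j\Big)\right]\cdot \EE\!\left[\exp\!\Big(\sum_j t_j Y_j\Big)\right].
\]
Taking logs gives $K_{X+Y}(t) = K_X(t) + K_Y(t)$ as an identity of formal power series in $t_1,\ldots,t_k$. Applying the linear operator $\partial_{t_1}\cdots\partial_{t_k}$ and evaluating at $t=0$ yields $\kappa(W_1,\ldots,W_k) = \kappa(X_1,\ldots,X_k) + \kappa(Y_1,\ldots,Y_k)$.

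If one prefers to stay entirely within the combinatorial definition (without invoking the MGF), an equivalent route is to expand each $W_{j_1}\cdots W_{j_\ell}$ using multilinearity into a sum of $2^\ell$ mixed products and apply the Möbius/partition formula on each term; products in which both an $X$-variable and a $Y$-variable appear vanish in the cumulant sum by the standard fact that $\kappa(Z_1,\ldots,Z_k)=0$ whenever the variables split into two independent groups (this is precisely \cite[Proposition~2.11]{SW-estimation}, cited earlier in the excerpt). The only surviving contributions are the pure-$X$ and pure-$Y$ terms, which reassemble into $\kappa(X_1,\ldots,X_k)$ and $\kappa(Y_1,\ldots,Y_k)$ respectively.

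There is no substantive obstacle here; the sole subtlety is ensuring the differentiation-under-the-expectation (or, equivalently, the formal power-series manipulation) is justified, and this is immediate once all mixed moments of order up to $k$ are assumed finite, as is implicit in the very definition of the joint cumulant $\kappa(W_1,\ldots,W_k)$.
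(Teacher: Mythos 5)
The paper does not prove this proposition; it cites it as a standard fact from \cite[Proposition~2.12]{SW-estimation}, so there is no in-paper argument to compare against. Your proof is correct and is the standard one: independence factorizes the (formal) moment generating function, taking logs converts the product into a sum, and extracting the coefficient of $t_1\cdots t_k$ recovers the joint cumulant. The alternative combinatorial route you sketch --- expanding $\kappa(X_1+Y_1,\ldots,X_k+Y_k)$ by multilinearity and killing every mixed $X$--$Y$ term via the vanishing of cumulants across independent groups (\cite[Proposition~2.11]{SW-estimation}) --- is equally valid and arguably more self-contained, since it stays entirely within the partition definition and sidesteps any question of MGF existence. One small wrinkle: the parenthetical claim that only $\EE|W_1\cdots W_k|<\infty$ is needed for the formal power-series route is slightly too weak, since extracting the $t_1\cdots t_k$ coefficient of $\log \EE\exp(\sum_j t_j W_j)$ uses finiteness of every subset moment $\EE\big[\prod_{i\in S} W_i\big]$ for $S\subseteq[k]$; but you state the correct hypothesis (``all mixed moments of order up to $k$'') at the end, and that is precisely what is implicit in the definition of the joint cumulant, so the argument stands.
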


In our setting, we can write $UU^\top = \tilde{U}\tilde{U}^\top + uu^\top$ where $\tilde{U} \in \RR^{n \times (m-1)}$ contains the first $m-1$ columns of $U$ and $u \in \RR^n$ is the final column. Since $\tilde{U}$ and $u$ are independent from each other,
\begin{align*}
\kappa^{(m)}_\alpha &= \kappa\left(\sqrt{\frac{\lambda}{n}} (\tilde{U}\tilde{U}^\top)_{ij} + \sqrt{\frac{\lambda}{n}}(uu^\top)_{ij} \,:\, (i,j) \in E(\bar\alpha)\right) \\
&= \kappa\left(\sqrt{\frac{\lambda}{n}} (\tilde{U}\tilde{U}^\top)_{ij} \,:\, (i,j) \in E(\bar\alpha)\right) + \kappa\left(\sqrt{\frac{\lambda}{n}}(uu^\top)_{ij} \,:\, (i,j) \in E(\bar\alpha)\right) \\
&= \kappa^{(m-1)}_\alpha + \kappa^{(1)}_\alpha.
\end{align*}
By induction we conclude $\kappa^{(m)}_\alpha = m \cdot \kappa^{(1)}_\alpha$. As a result, our bound on $\EE[x^2] \cdot \Corr_{\le D}^2$ is simply $m^2$ times the bound for the $m=1$ case, namely
\[ \frac{\lambda m}{n} \cdot \Corr_{\le D}^2 \le O(1) \cdot \frac{m^2}{n^2} \sum_{d=1}^D \lambda^{d+1}, \]
completing the proof.
\end{proof}

\section{Stochastic Block Model}
\label{sec:sbm}

This section is devoted to the proof of Theorem~\ref{thm:main-sbm}(a). To begin, we consider a different estimand,
\[
x_{k,\ell}:=(\one_{\sigma^\star_1=k}-\pi_k)(\one_{\sigma^\star_2=\ell}-\pi_\ell), \qquad k,\ell\in [q].
\]
Then we have the following.
\begin{theorem}\label{thm:sbm}
Consider the stochastic block model with parameters $q,\pi,Q$ such that Eq.~\eqref{eq:degree:condition} holds. There exist constants $\delta,C>0$ depending only on $q,\pi,Q$ for which the following holds. If $D\leq n^\delta$ then for every $k_0,\ell_0 \in [q]$ we have
\[
\Corr_{\leq D}(k_0,\ell_0) := \sup_{f\in \R[Y]_{\leq D}} \frac{\EE[f(Y) \cdot x_{k_0,\ell_0}]}{\sqrt{\EE[f(Y)^2]}}\leq \sqrt{\frac{C}{n}\sum_{t=1}^{D}(d\la^2)^{t}}.
\]
\end{theorem}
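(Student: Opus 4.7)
The strategy is to apply Proposition~\ref{prop:master}, which for the normalization in the theorem statement gives $\Corr_{\le D}(k_0,\ell_0)\le\|u\|$ (the factor $\sqrt{\E[x^2]}$ absorbs). Take as a basis of $\R[Y]_{\le D}$ the centered monomials $\phi_\alpha(Y)=\prod_{(i,j)\in E(\alpha)}(Y_{ij}-d/n)$ for $\alpha\in\{0,1\}^{\bcn}$ with $|\alpha|\le D$, noting $\E[Y_{ij}]=d/n$ by~\eqref{eq:degree:condition}. For the orthonormal family in the underlying variables $W=(\sigma^\star,Y)$, fix an orthonormal basis $\{e_0,\ldots,e_{q-1}\}$ of $L^2([q],\pi)$ with $e_0\equiv 1$ and set
\[
\psi_{\beta\gamma}(Y,\sigma^\star)=\prod_{(i,j)\in E(\beta)}\frac{Y_{ij}-Q_{\sigma_i^\star\sigma_j^\star}/n}{\sqrt{(Q_{\sigma_i^\star\sigma_j^\star}/n)(1-Q_{\sigma_i^\star\sigma_j^\star}/n)}}\prod_{i\in[n]}e_{\gamma_i}(\sigma_i^\star),
\]
for $\beta\in\{0,1\}^{\bcn}$ with $|\beta|\le D$ and $\gamma\in\{0,\ldots,q-1\}^n$; conditional independence of the Bernoulli edges given $\sigma^\star$ yields orthonormality. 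Call $\alpha$ good iff $\alpha=0$, or $\{1,2\}\subseteq V(\alpha)$, $\alpha+\one_{(1,2)}$ is connected, and $\deg_\alpha(v)\ge 2$ for every $v\in V(\alpha)\setminus\{1,2\}$; call $(\beta,\gamma)$ good iff $\beta$ is good and $\{i:\gamma_i\ne 0\}\subseteq V(\beta)\cup\{1,2\}$. For $\alpha$ bad due to a leaf $v\notin\{1,2\}$ (or a component disjoint from $\{1,2\}$), any good $(\beta,\gamma)$ has $\gamma_v=0$ and $v\notin V(\beta)$; then conditioning on the remaining randomness and averaging over $\sigma_v^\star$ using $\E_{\sigma_v^\star}[Q_{\sigma_v^\star,m}/n-d/n]=0$ from~\eqref{eq:degree:condition} forces $c_\alpha=M_{\beta\gamma,\alpha}=0$, so Lemma~\ref{lem:disconn} drops all bad terms (with $\hat\alpha=0$, $\mu=0$).

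Having restricted to good $\alpha$, the constraint $(u^\top M)_\alpha=c_\alpha$ reads
\[
\langle m_\alpha,u_\alpha\rangle=r_\alpha,\qquad m_\alpha(\gamma):=M_{\alpha\gamma,\alpha},\quad r_\alpha:=c_\alpha-\sum_{\substack{\beta\lneq\alpha\\ \beta\text{ good}}}\sum_\gamma u_{\beta\gamma}\,M_{\beta\gamma,\alpha},
\]
where $u_\alpha:=(u_{\alpha\gamma})_\gamma$. I build $u$ by strong induction on $|\alpha|$, at each step taking the minimum-$\ell_2$-norm solution $u_\alpha=(r_\alpha/\|m_\alpha\|^2)\,m_\alpha$, so $\|u_\alpha\|^2=r_\alpha^2/\|m_\alpha\|^2$. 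Parseval in the $\sigma^\star$-coordinates gives
\[
\|m_\alpha\|^2=\E\Bigl[\prod_{(i,j)\in E(\alpha)}(Q_{\sigma_i^\star\sigma_j^\star}/n)(1-Q_{\sigma_i^\star\sigma_j^\star}/n)\Bigr]\asymp(d/n)^{|\alpha|},
\]
where the $\asymp$ holds up to constants depending on $\min Q,\max Q$ and uses that~\eqref{eq:degree:condition} makes each edge integrate to $d$ when successively peeling leaves. The crucial inductive claim is
\[
\|u_\alpha\|^2\le K^{|\alpha|-|V(\alpha)|+1}\,(d/n)^{|\alpha|}\,\lambda^{2|\alpha|}
\]
for a constant $K=K(q,\pi,Q)>0$. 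The factor $\lambda^{|\alpha|}$ arises because computing $c_\alpha$ (and the recursive combinations defining $r_\alpha$) amounts to matrix products of $\tilde Q\,\diag(\pi)=(d/n)(S-\mathbf{1}\pi^\top)$ along paths from $1$ to $2$ in $\alpha$, where $S:=Q\diag(\pi)/d$ is similar to $T$ and hence $S-\mathbf{1}\pi^\top$ has spectral radius $\lambda$. The main obstacle of the proof is organizing the induction so that the denominator $\|m_\alpha\|^{-2}\asymp(n/d)^{|\alpha|}$ cancels exactly one factor of $d/n$ per edge appearing in $r_\alpha^2$, leaving $K$ raised only to the excess-edge count $|\alpha|-|V(\alpha)|+1$ rather than $|\alpha|$; this is where the analysis departs most substantially from the planted-submatrix and PDS settings.

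Finally I assemble the pieces. The counting argument of Lemma~\ref{lem:number:good:graphs:2} transfers: the number of good $\alpha$ with $|\alpha|=t$ and $|V(\alpha)|=v$ is at most $n^{v-2}(2t)^{O(t-v+1)}$. Writing $\Delta=t-v+1\ge 0$,
\[
\|u\|^2=\sum_{\alpha\text{ good}}\|u_\alpha\|^2\lesssim\sum_{t=1}^D\sum_{v=2}^{t+1}n^{v-2}(2t)^{O(\Delta)}K^{\Delta+1}(d/n)^t\lambda^{2t}=\frac{K}{n}\sum_{t=1}^D(d\lambda^2)^t\sum_{\Delta\ge 0}\Bigl(\frac{(2D)^{O(1)}K}{n}\Bigr)^{\!\Delta},
\]
and the geometric sum in $\Delta$ is $O(1)$ once $D\le n^\delta$ for a sufficiently small $\delta=\delta(q,\pi,Q)>0$. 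This yields the desired bound $\Corr_{\le D}(k_0,\ell_0)\le\sqrt{(C/n)\sum_{t=1}^D(d\lambda^2)^t}$. The genuinely new technical ingredient, relative to the proofs for planted submatrix, PDS, and spiked Wigner, is the inductive bound on $\|u_\alpha\|^2$ controlling the blow-up by the excess-edge exponent rather than by $|\alpha|$.
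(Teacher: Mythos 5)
Your proposal correctly reproduces the architecture of the paper's proof: the basis $\phi_\alpha$, the orthonormal family $\psi_{\beta\gamma}$ (your Fourier-basis parametrization $\{e_k\}$ of $L^2([q],\pi)$ differs cosmetically from the paper's indicator parametrization $\one(\sigma^\star_{V(\beta)}=\gamma)$ but spans the same subspace, so $\|u\|$ is unchanged), the same good/bad partition with the bad terms killed by the degree condition, the step-by-step minimum-$\ell_2$-norm choice $u_\alpha = (r_\alpha/\|m_\alpha\|^2)\,m_\alpha$ (the paper's $d_\alpha$ is your $r_\alpha$), and the final assembly via the counting Lemma~\ref{lem:number:good:graphs:2}.

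The genuine gap is precisely where you say the ``main obstacle'' lies: the inductive bound $\|u_\alpha\|^2\le K^{|\alpha|-|V(\alpha)|+1}(d/n)^{|\alpha|}\lambda^{2|\alpha|}$ is asserted, not proved, and proving it is the whole technical content of the SBM lower bound. Your heuristic --- that $c_\alpha$ reduces to matrix powers of $S-\bone\pi^\top$, contributing $\lambda^{|\alpha|}$ --- is correct for paths and trees but does not address cyclic good $\alpha$, and it says nothing about the cross-terms $M_{\beta\gamma,\alpha}$ for $\beta\lneq\alpha$ that drive the recursion for $r_\alpha$. The paper handles both with Proposition~\ref{prop:sbm:crucial:estimate}, a uniform bound on $\max_\tau|\E[\phi_\alpha(Y)\mid\sigma^\star_W=\tau]|$ proved via a branching-point argument for trees and a vertex-splitting induction that peels off excess edges; Lemmas~\ref{lem:sbm:c:cycle}, \ref{lem:sbm:bound:M}, and the recursion of Lemma~\ref{lem:sbm:d:alpha} then convert this into the required control of $d_\alpha$, using the combinatorial quantity $f(\alpha)\le(2|\alpha|)^{|\alpha|-|V(\alpha)|+1}$. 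Two further caveats: (i) your stated inductive bound omits the $(2|\alpha|)^{O(|\alpha|-|V(\alpha)|+1)}$ factor that the correct argument actually produces --- harmless, since the counting lemma already introduces a $(2D)^{O(\Delta)}$ factor that absorbs it; and (ii) the paper's proof of Lemma~\ref{lem:sbm:bound:M} crucially uses $\E[\phi_\beta\psi_{\beta\gamma}\mid\sigma^\star_{V(\beta)}]\ge 0$ almost surely, which holds in the indicator parametrization but fails for a generic Fourier basis $\{e_k\}$, so if you pursue your parametrization you would need a modified argument at that step.
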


\noindent We first prove that Theorem~\ref{thm:sbm} implies Theorem~\ref{thm:main-sbm}(a). In the later subsections, we prove Theorem~\ref{thm:sbm}.

\begin{proof}[Proof of Theorem~\ref{thm:main-sbm}(a)]
Note that 
\[
\sum_{k,\ell=1}^{q}(Q_{k,\ell}-d)x_{k,\ell}\equiv \sum_{k,\ell=1}^{q}(Q_{k,\ell}-d) (\one_{\sigma^\star_1=k}-\pi_k) (\one_{\sigma^\star_2=\ell}-\pi_\ell)=Q_{\sigma^\star_1,\sigma^\star_2}-d,
\]
where the last equality holds since $\sum_{\ell=1}^{q}(Q_{k,\ell}-d)\pi_{\ell}=0$ for every $k\in [q]$ by the condition~\eqref{eq:degree:condition}. Thus, it follows that
\begin{align*}
\Corr_{\le D} &\equiv \sup_{f\in \R[Y]_{\leq D}} \frac{\EE[f(Y) \cdot (Q_{\sigma^\star_1,\sigma^\star_2}-d)]}{\sqrt{\EE[f(Y)^2] \cdot \EE[(Q_{\sigma^\star_1,\sigma^\star_2}-d)^2]}}\\
&= \sup_{f\in \R[Y]_{\leq D}} \sum_{k,\ell=1}^q \frac{\EE[f(Y) \cdot (Q_{k,\ell}-d)x_{k,\ell}]}{\sqrt{\EE[f(Y)^2] \cdot \EE[(Q_{\sigma^\star_1,\sigma^\star_2}-d)^2]}} \\
&\le \sum_{k,\ell=1}^q |Q_{k,\ell}-d| \sup_{f\in \R[Y]_{\leq D}} \frac{\EE[f(Y) \cdot x_{k,\ell}]}{\sqrt{\EE[f(Y)^2] \cdot \EE[(Q_{\sigma^\star_1,\sigma^\star_2}-d)^2]}} \\
&= \sum_{k,\ell=1}^q \frac{|Q_{k,\ell}-d|}{\EE[(Q_{\sigma^\star_1,\sigma^\star_2}-d)^2]} \cdot \Corr_{\le D}(k,\ell) \\
&\le O(1) \cdot \sqrt{\frac{C}{n}\sum_{t=1}^{D}(d\la^2)^t},
\end{align*}
where the last step used Theorem~\ref{thm:sbm}.
\end{proof}

\noindent The rest of this section is devoted to proving Theorem~\ref{thm:sbm}.

\subsection{Orthonormal Basis}
For a graph $\al\in \{0,1\}^{\bcn}$, choose 
\[
\phi_{\al}(Y):=\left(Y-\frac{d}{n}\right)^{\al}.
\]
Consider $\be\in \{0,1\}^{\bcn}$ and $\ga \in [q]^{V(\beta)}$ the labels of the vertices of $\beta$. For $k,\ell\in[q]$, denote the set of edges between community $k$ and community $\ell$ by\[
E_{k,\ell}(\beta, \sigma^\star)=\{(i,j)\in E(\beta) \,:\, \sigma^\star_i = k, \, \sigma^\star_j = \ell\}
\]
where, recall, $i < j$ by convention. We encode $Y$ as follows. Let $X\equiv \left(X_{ij}(k,\ell)\right)_{i<j, k,\ell\in [q]}$ have independent entries with distributed according to \[
X_{ij}(k,\ell) \sim \Ber\left(\frac{Q_{k,\ell}}{n}\right).
\]
Then, we set $Y_{ij}=X_{ij}(\sigma^\star_i,\sigma^\star_j)$. Denoting $\sigma^\star_{V(\beta)}:=(\sigma^\star_i)_{i\in V(\beta)}$, consider
\[
\begin{split}
\psi_{\be\ga}(X,\sigma^\star)
:=& \one(\sigma^\star_{V(\be)}=\ga)\cdot \prod_{i\in V(\be)}\pi_{\ga_i}^{-1/2}\prod_{(i,j)\in \beta}\frac{Y_{ij}-\frac{Q_{\ga_i,\ga_j}}{n}}{\sqrt{\frac{Q_{\ga_i,\ga_j}}{n}(1-\frac{Q_{\ga_i,\ga_j}}{n})}}\\
=& \one(\sigma^\star_{V(\be)}=\ga)\cdot \prod_{i\in V(\be)}\pi_{\ga_i}^{-1/2}\prod_{k,\ell\in[q]} \; \prod_{(i,j)\in E_{k,\ell}(\beta,\sigma^\star)}\frac{X_{ij}(k,\ell)-\frac{Q_{k,\ell}}{n}}{\sqrt{\frac{Q_{k,\ell}}{n}(1-\frac{Q_{k,\ell}}{n})}}.
\end{split}
\]
We first prove that $\{\psi_{\be\ga}\}$ is orthonormal.
\begin{lemma}
    $\{\psi_{\be\ga} \,:\, \be\in \{0,1\}^{\bcn}, \, \ga \in [q]^{V(\be)}\}$ is orthonormal.
\end{lemma}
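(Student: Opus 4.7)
\medskip

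The plan is to compute $\EE[\psi_{\be\ga}\psi_{\be'\ga'}]$ by conditioning on $\sigma^\star$ and exploiting the mutual independence of the Bernoulli variables $\{X_{ij}(k,\ell)\}_{i<j,\,k,\ell\in[q]}$. By the tower property,
\[
\EE[\psi_{\be\ga}\psi_{\be'\ga'}] = \EE\big[\EE[\psi_{\be\ga}\psi_{\be'\ga'} \mid \sigma^\star]\big].
\]
Because $\psi_{\be\ga}$ carries the indicator $\one(\sigma^\star_{V(\be)}=\ga)$ (and similarly for $\psi_{\be'\ga'}$), the product vanishes off the event on which $\sigma^\star_i=\ga_i$ for all $i\in V(\be)$ and $\sigma^\star_i=\ga'_i$ for all $i\in V(\be')$; in particular this already forces $\ga_i=\ga'_i$ on $V(\be)\cap V(\be')$.

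On this conditioning event, each edge $(i,j)\in E(\be)$ contributes a factor involving the variable $X_{ij}(\ga_i,\ga_j)$, and each $(i,j)\in E(\be')$ contributes a factor involving $X_{ij}(\ga'_i,\ga'_j)$. Since the $X_{ij}(k,\ell)$ are independent across distinct triples $(i,j,k,\ell)$, the conditional expectation factors as a product of one-variable expectations
\[
\EE\!\left[\left(\tfrac{X_{ij}(k,\ell)-Q_{k,\ell}/n}{\sqrt{(Q_{k,\ell}/n)(1-Q_{k,\ell}/n)}}\right)^{\!e_{ij,k\ell}}\right],
\]
where $e_{ij,k\ell}\in\{0,1,2\}$ is the total exponent with which $X_{ij}(k,\ell)$ occurs across the two factors. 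Using that the bracketed random variable has mean $0$ and variance $1$, each factor equals $1$ if $e_{ij,k\ell}\in\{0,2\}$ and $0$ if $e_{ij,k\ell}=1$, so the product is nonzero iff every variable appears to an even power.

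Next I would argue this even-power condition forces $(\be,\ga)=(\be',\ga')$. If some edge $(i,j)\in E(\be)\triangle E(\be')$ exists, then $X_{ij}(\ga_i,\ga_j)$ (or its primed analogue) occurs with exponent $1$, killing the expectation; hence $\be=\be'$. Given $\be=\be'$, independence of the different $(k,\ell)$-slots rules out mismatches $(\ga_i,\ga_j)\neq(\ga'_i,\ga'_j)$ on edges of $\be$, but such mismatches are already forbidden on the conditioning event, which guarantees $\ga=\ga'$ on $V(\be)=V(\be')$. Finally, when $(\be,\ga)=(\be',\ga')$, every variable appears with exponent $0$ or $2$ and the conditional expectation equals $\one(\sigma^\star_{V(\be)}=\ga)\prod_{i\in V(\be)}\pi_{\ga_i}^{-1}$; taking expectation over $\sigma^\star$ produces $\prod_{i\in V(\be)}\pi_{\ga_i}$, which cancels the pre-factor exactly, giving $\EE[\psi_{\be\ga}^2]=1$.

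The proof is essentially a computation, with no serious obstacle beyond careful bookkeeping: the main subtlety is that the two factors may reference \emph{different} $(k,\ell)$-slots of the same index $(i,j)$, but since these slots correspond to independent Bernoullis, the even-power argument still applies, and the indicator structure forces the labelings to agree wherever both are present.
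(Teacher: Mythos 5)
Your proof is correct and follows the same route as the paper: condition on $\sigma^\star$, exploit independence of the $\{X_{ij}(k,\ell)\}$, and observe that the conditional expectation vanishes unless every $X$-factor appears with even exponent. The paper packages this slightly more compactly by writing the conditional expectation as $\one\{E_{k,\ell}(\beta,\sigma^\star)=E_{k,\ell}(\beta',\sigma^\star)\ \forall k,\ell\}=\one_{\beta=\beta'}$ and then deducing $\gamma=\gamma'$ from $\P(\sigma^\star_{V(\beta)}=\gamma,\,\sigma^\star_{V(\beta)}=\gamma')$, but the substance is identical, and your observation about the $(k,\ell)$-slot mismatches being redundant given the indicators is accurate.
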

\begin{proof}
    Consider $\be,\be^\prime\in \{0,1\}^{\bcn}$ and $\ga\in [q]^{V(\be)}, \ga^\prime \in [q]^{V(\be^\prime)}$. Since $\E[X_{i,j}(k,\ell)-Q_{k,\ell}/n]=0$ and $\E[(X_{i,j}(k,\ell)-Q_{k,\ell}/n)^2]=(Q_{k,\ell}/n)(1-Q_{k,\ell}/n)$,
    \[
    \begin{split}
    &\E\left[\prod_{k,\ell\in[q]}\left\{\prod_{(i,j)\in E_{k,\ell}(\beta,\sigma^\star)}\frac{X_{ij}(k,\ell)-\frac{Q_{k,\ell}}{n}}{\sqrt{\frac{Q_{k,\ell}}{n}(1-\frac{Q_{k,\ell}}{n})}}\prod_{(i,j)\in E_{k,\ell}(\beta^\prime,\sigma^\star)}\frac{X_{ij}(k,\ell)-\frac{Q_{k,\ell}}{n}}{\sqrt{\frac{Q_{k,\ell}}{n}(1-\frac{Q_{k,\ell}}{n})}}\right\}\BBgiven \sigma^\star\right]\\
    &\quad=\one\{E_{k,\ell}(\beta,\sigma^\star)=E_{k,\ell}(\beta^\prime,\sigma^\star),~\forall k,\ell\in [q]\}.
    \end{split}
    \]
    Note that the last indicator equals $\one_{\beta=\beta^\prime}$.
    We thus have by the tower property that
    \[
    \E [\psi_{\be\ga}\psi_{\be^\prime \ga^\prime}]=\one_{\beta=\beta^\prime}\prod_{i\in V(\be)}\pi_{\ga_i}^{-1/2}\prod_{i\in V(\be^\prime)}\pi_{\ga^\prime_i}^{-1/2}\;\P(\sigma^\star_{V(\beta)}=\gamma,\, \sigma^\star_{V(\beta^\prime)}=\gamma^\prime)=\one_{\beta=\beta^\prime, \ga=\ga^\prime},
    \]
    which concludes the proof.
\end{proof}

\subsection{Removing ``Bad'' Terms}

Our next step is to apply Lemma~\ref{lem:disconn}. We define $\hat\cI$ and $\hat\cJ$ as follows.
\begin{definition}[``Good'' graphs for SBM]
\label{def:good:SBM}
    We define the set $\cIsbm\subseteq \{0,1\}^{\bcn}$ of ``good'' $\alpha$ for the SBM as follows. A graph $\alpha$ is considered ``good'' if either $\alpha = \empty$ or if $\alpha$ satisfies all of the following:
    \begin{itemize}
        \item $1,2 \in V(\alpha)$,
        \item $\bar\alpha := \alpha + \one_{(1,2)}$ is connected,
        \item for every $v \in V(\alpha)$, $\deg_{\bar\alpha}(v) \ge 2$.
    \end{itemize}
    We define $\cJsbm:=\{(\be,\ga) \,:\, \be\in \cIsbm,\, \ga\in [q]^{V(\be)}\}$ with the convention that if $\be=\emptyset$ then $\ga=\emptyset$.
\end{definition}

\begin{remark}\label{rmk:sbm}
Observe that the definition for $\cIsbm$ above is the same as the ``good'' graphs for spiked Wigner (Definition~\ref{def:good:SW}) except we restrict to simple graphs in the current SBM setting. In particular, the function $f(\al)$ from Lemma~\ref{lem:spiked:cumulant} is well-defined for $\al\in \cIsbm$, and satisfies the bound $f(\al)\leq (2|\al|)^{|\al|-|V(\al)|+1}$ from Lemma~\ref{lem:f:upper:bound}. Also, for $t \ge 1$ and $v \ge 2$, the number of $\al \in \cIsbm$ with $|\al|=t$ and $|V(\al)|=v$ is at most $n^{v-2}(2t)^{5(t-v+1)}$ by Lemma~\ref{lem:number:good:graphs:2}.
\end{remark}

We now verify the conditions of Lemma~\ref{lem:disconn} in the following proposition.
\begin{proposition}\label{prop:good:sbm}
For each $\al\notin \cIsbm$ there exist $\hat\al\in \cIsbm$ and $\mu \in \R$ such that $c_{\al}=\mu c_{\hat\al}$ and $M_{\be\ga,\al}=\mu M_{\be\ga,\hat\al}$ for all $(\be,\ga) \in \cJsbm$.
\end{proposition}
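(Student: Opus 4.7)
The plan is to verify the two conditions of Lemma~\ref{lem:disconn} by constructing $(\hat\al,\mu)$ through a two-stage reduction that peels off inessential pieces of $\al$. As a preliminary, I would derive closed-form expressions for $c_\al$ and $M_{\be\ga,\al}$ by conditioning on $\sigma^\star$ (and, for $M$, additionally on $\{\sigma^\star_{V(\be)}=\ga\}$) and exploiting the independence of the family $\{X_{ij}(k,\ell)\}$. This yields
\[
c_\al=\EE\Bigl[x_{k_0,\ell_0}\prod_{e\in E(\al)}\bigl(Q_{\sigma^\star(e)}/n-d/n\bigr)\Bigr],
\]
while $M_{\be\ga,\al}$ vanishes unless $E(\be)\subseteq E(\al)$ and otherwise factors as an explicit product over $E(\be)$ (coming from the edge standardizations in $\psi_{\be\ga}$) times a conditional expectation of $\prod_{e\in E(\al)\setminus E(\be)}(Q_{\sigma^\star(e)}/n-d/n)$ given $\sigma^\star_{V(\be)}=\ga$.

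The second step is a component reduction. Let $\al_1\subseteq\al$ be the edges of $\al$ lying in the $\bar\al$-connected component of $\{1,2\}$, and set $\al_2=\al\setminus\al_1$. Because $V(\al_2)$ is disjoint from $V(\al_1)\cup\{1,2\}$, the variables feeding $\phi_{\al_2}$ are independent of those feeding $(\phi_{\al_1},x)$; and for any good $\be$ with $E(\be)\subseteq E(\al)$, the connectedness of $\bar\be$ together with $1,2\in V(\be)$ forces $V(\be)\subseteq V(\al_1)$ and hence $E(\be)\subseteq E(\al_1)$. Plugging these two independences into the Step~1 formulas yields
\[
c_\al=\EE[\phi_{\al_2}]\cdot c_{\al_1},\qquad M_{\be\ga,\al}=\EE[\phi_{\al_2}]\cdot M_{\be\ga,\al_1}\qquad \forall(\be,\ga)\in\cJsbm.
\]
If $\al_1\in\cIsbm$ I stop and take $(\hat\al,\mu)=(\al_1,\EE[\phi_{\al_2}])$.

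Step~3 handles $\al_1\notin\cIsbm$. If $\al_1=\emptyset$ (equivalently, $1\notin V(\al)$ or $2\notin V(\al)$), then $c_{\al_1}=c_\emptyset=0$ and $M_{\be\ga,\al_1}=\one_{(\be,\ga)=(\emptyset,\emptyset)}$, so $(\hat\al,\mu)=(\emptyset,\EE[\phi_\al])$ works. Otherwise $\al_1$ contains a vertex $v\in V(\al_1)\setminus\{1,2\}$ with $\deg_{\al_1}(v)=1$; let $e=(v,w)$ be its unique incident edge. The key claim is that $c_{\al_1}=0$ and $M_{\be\ga,\al_1}=0$ for every $(\be,\ga)\in\cJsbm$. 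For $c_{\al_1}$, the factor $Y_e-d/n$ is the only place in which $\sigma^\star_v$ and $X_e$ appear, so marginalizing them yields $\EE_{\sigma^\star_v}[Q_{\sigma^\star_v,\sigma^\star_w}/n-d/n]=0$ by~\eqref{eq:degree:condition}. For $M_{\be\ga,\al_1}$ the only nontrivial case is $E(\be)\subseteq E(\al_1)$, and the crucial observation is that then $v\notin V(\be)$: otherwise goodness of $\be$ with $v\neq 1,2$ would force $\deg_\be(v)\geq 2$, whereas $E(\be)\subseteq E(\al_1)$ caps $\deg_\be(v)\leq\deg_{\al_1}(v)=1$. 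Once $v\notin V(\be)$, the same marginalization annihilates the residual factor indexed by $e$. Combined with Step~2, this yields $c_\al=M_{\be\ga,\al}=0$ for all good $(\be,\ga)$, so $(\hat\al,\mu)=(\emptyset,0)$ satisfies the proposition.

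The main technical obstacle is the uniform $M_{\be\ga,\al_1}=0$ claim in Step~3, which must hold over all good $(\be,\ga)$ rather than for a single reference graph. Ruling out $v\in V(\be)$ is precisely where the ``$\deg_{\bar\al}\geq 2$'' clause of Definition~\ref{def:good:SBM} earns its keep; once that combinatorial restriction is in place, the probabilistic content reduces to the single identity~\eqref{eq:degree:condition} applied via elementary conditioning.
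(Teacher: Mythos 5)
Your proposal follows essentially the same route as the paper --- a component reduction followed by degree-1 pruning --- but the case analysis in Step~3 contains a genuine error. The parenthetical ``(equivalently, $1\notin V(\al)$ or $2\notin V(\al)$)'' is wrong: $\al_1 = \emptyset$ happens only if \emph{neither} $1$ \emph{nor} $2$ is a vertex of $\al$. If, say, $1 \notin V(\al)$ but $2 \in V(\al)$, the $\bar\al$-component of $\{1,2\}$ contains the entire $\al$-component of $2$, so $\al_1 \neq \emptyset$ yet $1 \notin V(\al_1)$. The downstream consequence is that your ``Otherwise'' branch is false: take $\al$ to be a triangle on $\{2,3,4\}$. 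Then $\al_1 = \al \notin \cIsbm$, $\al_1 \neq \emptyset$, and $\al_1$ has no degree-$1$ vertex at all, so the pruning step has nothing to grab onto. The combinatorial fact you are invoking --- ``$\bar{\al_1}$ connected and $\al_1 \notin \cIsbm$ implies $\al_1$ has a degree-$1$ vertex in $V(\al_1)\setminus\{1,2\}$'' --- is only valid under the additional hypothesis $1,2 \in V(\al_1)$, which you have not yet secured.

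In the missing case the proposition still holds with $(\hat\al,\mu) = (\emptyset, \EE[\phi_\al])$, but this requires a direct argument rather than the $\al_1$-route: with $1 \notin V(\al)$, the factor $\one_{\sigma^\star_1=k_0}-\pi_{k_0}$ in $x_{k_0,\ell_0}$ is independent of $\phi_\al$ and of the other factor and has mean zero, giving $c_\al = 0 = c_\emptyset$; and for any good $\be \neq \emptyset$, $\be \leq \al$ would force $1 \in V(\be) \subseteq V(\al)$, so $M_{\be\ga,\al}=0=\EE[\phi_\al]\cdot M_{\be\ga,\emptyset}$, while $M_{\emptyset\emptyset,\al}=\EE[\phi_\al]=\EE[\phi_\al]\cdot M_{\emptyset\emptyset,\emptyset}$. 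This is precisely the paper's Case~1 (``$\al$ does not contain both vertices $1$ and $2$''), which the paper places \emph{before} the component decomposition exactly to avoid the issue you run into. If you carve out ``$1 \notin V(\al)$ or $2 \notin V(\al)$'' as a separate first case, the rest of your argument (Step~2 and the degree-$1$ pruning, which then legitimately assumes $1,2\in V(\al_1)$) goes through and coincides with the paper's Cases~3 and~2, in that order.
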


\noindent To prove Proposition~\ref{prop:good:sbm}, we first prove the following lemma.

\begin{lemma}\label{lem:sbm:simple}
Let $\al, \be\in \{0,1\}^{\bcn}$.
\begin{enumerate}
\item [(a)]For $W\subseteq V(\al)$, suppose there exists $i_0\in V(\al)\setminus W$ such that $\deg_{\al}(i_0)=1$. Denote $X^{-}:=(X_{ij})_{\{i,j\}\neq \{i_0,j_0\}}$ where $j_0\in V(\alpha)$ is the unique neighbor of $i_0$ in $\alpha$. Then, $\E[\phi_{\al}(Y)\given \sigma^\star_W, X^{-}]=0$ holds almost surely.
\item [(b)]$M_{\be\ga,\al}=0$ holds whenever $\beta \nleq \al$.
\end{enumerate}
\end{lemma}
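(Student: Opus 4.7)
My plan for part (a) is to exploit the fact that $i_0$ has degree one in $\al$, with unique neighbor $j_0\in V(\al)$. I would factor $\phi_\al(Y)=(Y_{i_0 j_0}-d/n)\cdot R$, where $R:=\prod_{(i,j)\in E(\al)\setminus\{(i_0,j_0)\}}(Y_{ij}-d/n)$ is a measurable function of $X^{-}$ and $\sigma^\star_{V(\al)\setminus\{i_0\}}$ alone, since every other edge of $\al$ avoids $i_0$. By the tower property it suffices to show $\E[\phi_\al(Y)\mid \sigma^\star_{[n]\setminus\{i_0\}},X^{-}]=0$ almost surely. Under this enlarged conditioning, $R$ and $\sigma^\star_{j_0}$ are deterministic, while $\sigma^\star_{i_0}$ retains its $\pi$-law and the (unobserved) entries $X_{i_0 j_0}(\cdot,\cdot)$ remain independent Bernoullis with their unconditional parameters, using the independence of $\sigma^\star$ from $X$ and of $X_{i_0 j_0}(\cdot,\cdot)$ from $X^{-}$. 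A direct computation then gives $\E[Y_{i_0 j_0}\mid \sigma^\star_{[n]\setminus\{i_0\}},X^{-}]=\tfrac{1}{n}\sum_k \pi_k Q_{k,\sigma^\star_{j_0}}=\tfrac{d}{n}$ by the degree condition~\eqref{eq:degree:condition}, so $\E[(Y_{i_0 j_0}-d/n)R\mid \sigma^\star_{[n]\setminus\{i_0\}},X^{-}]=0$, and a final tower step delivers the claim.

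For part (b), whenever $\beta\nleq\al$ I would choose any edge $(i,j)\in E(\be)\setminus E(\al)$ and isolate within $\psi_{\be\ga}$ the single factor $A:=(X_{ij}(\ga_i,\ga_j)-Q_{\ga_i,\ga_j}/n)/\sqrt{(Q_{\ga_i,\ga_j}/n)(1-Q_{\ga_i,\ga_j}/n)}$, writing $\psi_{\be\ga}=A\cdot\tilde\psi$, where $\tilde\psi$ depends only on $\sigma^\star$ and on entries $X_{i'j'}(k,\ell)$ with $(i',j')\neq(i,j)$ (collecting the indicator $\one(\sigma^\star_{V(\be)}=\ga)$, the normalizing constants, and the centered factors for the remaining edges of $\be$). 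Since $(i,j)\notin E(\al)$, the polynomial $\phi_\al(Y)$ depends on $X$ only through $X_{i'j'}(\cdot,\cdot)$ with $(i',j')\in E(\al)$, and in particular not through $X_{ij}(\cdot,\cdot)$. Joint independence of the entries of $X$, together with independence of $X$ from $\sigma^\star$, then makes $A$ independent of $\tilde\psi\cdot \phi_\al(Y)$, and $\E[A]=0$ yields $M_{\be\ga,\al}=\E[A]\cdot\E[\tilde\psi\,\phi_\al(Y)]=0$.

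The only (minor) delicacy I anticipate is the conditioning bookkeeping in part (a): one must verify that enlarging the conditioning from $(\sigma^\star_W,X^{-})$ to $(\sigma^\star_{[n]\setminus\{i_0\}},X^{-})$ does not perturb the law of $\sigma^\star_{i_0}$, which follows from the i.i.d.\ community labels and independence of $\sigma^\star$ and $X$, and also confirm that~\eqref{eq:degree:condition} is applied with the outer index played by $\sigma^\star_{j_0}$. Part (b) is essentially a one-line independence argument once the factorization $\psi_{\be\ga}=A\tilde\psi$ is written down, and the hypothesis $1\le i<j\le n$ ensures the edge index $(i,j)$ unambiguously picks out a single underlying Bernoulli family.
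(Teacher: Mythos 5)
Your proposal is correct and follows essentially the same route as the paper. For part (a), the paper conditions on $(\sigma^\star_{V(\alpha)\setminus\{i_0\}}, X^-)$ and shows $\E[Y_{i_0j_0}-d/n\mid\sigma^\star_{j_0}]=0$ via~\eqref{eq:degree:condition}; you enlarge the conditioning slightly to $\sigma^\star_{[n]\setminus\{i_0\}}$ and do the same computation, which is equivalent. For part (b), the paper conditions on $(\sigma^\star,X^-)$ and observes that the leftover factor has conditional mean zero, while you invoke full independence of $X_{ij}(\gamma_i,\gamma_j)$ from $(\sigma^\star,X^-)$ and $\E[A]=0$; these are the same argument in different bookkeeping.
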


\begin{proof}
For the first item, it suffices to consider the case $W=V(\alpha)\setminus \{i_0\}$ by the tower property. Note that
\[
\E[\phi_{\al}(Y)\given \sigma^\star_W,X^{-}]=\phi_{\al-\one_{(i_0,j_0)}}(Y)\E[Y_{i_0j_0}-d/n\given \sigma^\star_W, X^{-}].
\]
Since $Y_{i_0j_0}$ is measurable w.r.t.\ $(\sigma^\star_{i_0}, \sigma^\star_{j_0}, X_{i_0j_0})$, we have
\[
\E[Y_{i_0j_0}-d/n\given \sigma^\star_W, X^{-}]=\E[Y_{i_0j_0}-d/n\given \sigma^\star_{j_0}]=\frac{1}{n}\sum_{\ell=1}^{q}(Q_{\sigma^\star_{j_0},\ell}-d)\pi_{\ell}=0,
\]
where the last equality holds by our assumption~\eqref{eq:degree:condition}. Thus, $\E[\phi_{\al}(Y)\given \sigma^\star_W,X^{-}]=0$ holds.

For the second item, suppose that $(i,j)\in E(\beta)\setminus E(\al)$. Denote $X^{-}=(X_{i'j'})_{\{i',j'\}\neq \{i,j\}}$ as before, and let $\psi^{-}_{\be\ga}=\psi_{\be\ga}/\Big(X_{ij}(\ga_i,\ga_j)-\frac{Q_{\ga_i,\ga_j}}{n}/\sqrt{\frac{Q_{\ga_i,\ga_j}}{n}(1-\frac{Q_{\ga_i,\ga_j}}{n})}\Big)$. Then, $\phi_{\al}$ and $\psi^{-}_{\be\ga}$ are measurable w.r.t.\ $(\sigma^\star, X^{-})$, so
\[
\E[\phi_{\al}\psi_{\beta\gamma}\given \sigma^\star,X^{-}]=\phi_{\al}\psi^{-}_{\be\ga}\cdot \E\bigg[\frac{X_{ij}(\ga_i,\ga_j)-\frac{Q_{\ga_i,\ga_j}}{n}}{\sqrt{\frac{Q_{\ga_i,\ga_j}}{n}(1-\frac{Q_{\ga_i,\ga_j}}{n})}}\bbgiven \sigma^\star_i,\sigma^\star_j\bigg]=0.
\]
Therefore $M_{\be\ga,\al}=\E[\phi_{\alpha}\psi_{\be\ga}]=0$, concluding the proof.
\end{proof}

\begin{proof}[Proof of Proposition~\ref{prop:good:sbm}]
    Consider $\al\notin \cIsbm$. We divide into three cases. First, consider the case where $\al$ does not contain both vertices $1$ and $2$. By setting $\hat\al=\emptyset$, Lemma~\ref{lem:sbm:simple} shows that $M_{\be\ga,\al}=0=M_{\be\ga,\hat\al}$ for all $\be\neq \emptyset$. Moreover, if we assume $2\notin V(\al)$, then 
    \[
    c_{\al}= \E[\phi_{\al}(\one_{\sigma^\star_1=k_0}-\pi_{k_0})]\E[\one_{\sigma^\star_2=\ell_0}-\pi_{\ell_0}]=0.
    \]
    Thus, $c_{\al}=\mu c_{\emptyset}$ and $M_{\be\ga,\al}=\mu M_{\be\ga,\emptyset}$ for $\be\in \cIsbm, \ga\in [q]^{V(\be)}$ are trivially satisfied by setting $\mu=M_{\emptyset,\al}/M_{\emptyset,\emptyset}$. 

    Next, consider the case where there exists $i_0\neq 1,2$ such that $\deg_{\al}(i_0)=1$. We claim that $M_{\be\ga,\al}=0$ holds for $\be\in \cIsbm$ and $\ga\in [q]^{V(\be)}$. Indeed, this is true for $\be\nleq \al$ by Lemma~\ref{lem:sbm:simple}(b). For $\be\leq \al$, note that since $\be\in \cIsbm$, we must have $i_0\notin V(\beta)$. Thus, by Lemma~\ref{lem:sbm:simple}(a),  
    \[
    \E[\phi_{\al}\psi_{\be\ga}\given \sigma^\star_{V(\alpha)\setminus\{i_0\}}, X^{-}]=\psi_{\be\ga}\E[\phi_{\al}\given \sigma^\star_{V(\alpha)\setminus\{i_0\}}, X^{-}]=0,
    \]
    which yields $M_{\be\ga,\al}\equiv \E[\phi_{\al}\psi_{\be\ga}]=0$. Similarly, we have that $c_{\al}=0$ in this case. Consequently, setting $\mu=0$ gives the desired conclusion.

    Finally, consider the case where $1,2\in V(\al)$ and $\deg_{\alpha}(i)\geq 2$ holds for all $i\in V(\alpha)\setminus\{1,2\}$, but $\al+\bone_{(1,2)}$ is not connected. For $i=1,2$, let $\al_i$ denote the connected component of $\al$ containing vertex $i$, and set $\hat \al=\al_1\cup \al_2$. Then we have $\hat\al \in \cIsbm$. Also since $V(\al) \cap V(\al-\hat\al)=\emptyset$ holds, setting $\mu=\E[\phi_{\al-\hat\al}]$ gives the desired conclusion. 
\end{proof}

\subsection{Constructing $u$}
\label{subsec:SBM:u}
In light of Lemma~\ref{lem:disconn} and Proposition~\ref{prop:good:sbm}, our goal is to construct $u=(u_{\be\ga})_{\be\ga\in \cJsbm}$ such that 
\begin{equation*}
    \sum_{\be\ga \in \cJsbm} u_{\be\ga}M_{\be\ga,\al}=c_{\al} \qquad \forall\al\in \cIsbm
\end{equation*}
and such that $\|u\|$ is ``small enough.'' As opposed to the planted submatrix and planted dense subgraph models, where we constructed $u$ explicitly, we will construct $u$ implicitly by the following recursion:
\begin{enumerate}
    \item Set $u_{\empty\empty}=0$. 
    \item Having  determined $u_{\be\ga}$ for all $\be\in \cIsbm$ such that $|\be|<|\al|$, set
    \begin{equation}\label{eq:def:d}
   d_{\al}:=c_{\al}-\sum_{\be\ga\in \cJsbm:\be\lneq\al}u_{\be\ga}M_{\be\ga,\al}
    \end{equation}
    and
\begin{equation}\label{eq:sbm:u}
    u_{\al\ga}=\frac{M_{\al\ga,\al}}{\sum_{\ga'\in [q]^{V(\al)}}M_{\al \ga', \al}^2}d_{\al},~~~\ga\in [q]^{V(\al)}.
    \end{equation}
\end{enumerate}
The intuition behind this choice of $u_{\al\ga}$ is that given $(u_{\be\ga})_{\be\ga\in \cJsbm: |\be|<|\al|}$, the choice \eqref{eq:sbm:u} minimizes $\sum_{\ga\in [q]^{V(\al)}}u_{\al\ga}^2$ by the Cauchy--Schwarz inequality. Consequently, by Proposition~\ref{prop:master},
\begin{equation}\label{eq:sbm:corr:bound}
\Corr_{\leq D}(k_0,\ell_0)^2 \leq \|u\|^2=\sum_{\al\in \cIsbm:|\al|\leq D} \frac{d_\al^2}{\sum_{\ga\in [q]^{V(\al)}}M_{\al\ga,\al}^2}.
\end{equation}

\subsection{Upper bound for $\|u\|$}
Having \eqref{eq:sbm:corr:bound} in hand, we upper bound $d_{\al}^2$ and lower bound $\sum_{\ga\in [q]^{V(\al)}}M_{\al\ga,\al}^2$ separately. 

One of the challenges in upper bounding $|d_{\al}|$ is that unlike the other models we've considered, $c_{\al}$ and $M_{\be\ga,\al}$ do not admit simple expressions in terms of $|\al|, |\be|, |V(\al)|, |V(\be)|$. Rather, they depend on the entire topology of $\al$ and $\be$. Nevertheless, we show that the main contribution in the sum $\sum_{\al,\ga}u_{\al\ga}^2$ is from $\alpha$ that is a path connecting $1$ and $2$, and that $u_{\al\ga}$ for such path $\al$ can be explicitly calculated. In doing so, the following symmetric matrix $B=(B_{k,\ell})_{k,\ell \in [q]} \in \R^{q\times q}$ plays a central role:
\[
B:=\diag(\pi)^{-1/2}(T-\pi\bone^\top) \, \diag(\pi)^{1/2}=\frac{1}{d}\diag(\pi)^{1/2}Q \, \diag(\pi)^{1/2}-\sqrt{\pi}\sqrt{\pi}^\top.
\]
Observe that the eigenvalues of $B$ are $\lambda_2(T),\ldots, \lambda_q(T)$, so we have for each $t\geq 1$,
\begin{equation}\label{eq:sbm:simple}
\|B^t\|_{\infty}\equiv \max_{k,\ell}\big|(B^t)_{k,\ell}\big|\leq \|B^t\|_{\op}\leq \|B\|_{\op}^t=\lambda^t,
\end{equation}
which will be used multiple times throughout the proofs.
\begin{lemma}\label{lem:sbm:c:path}
    Let $\al$ be a (simple) path from vertex $1$ to vertex $2$ with length $t\geq 1$. Then,
    \[
    c_{\al}=\left(\frac{d}{n}\right)^{t}(B^t)_{k_0,\ell_0}\cdot \sqrt{\pi_{k_0}}\sqrt{\pi_{\ell_0}}.
    \]
\end{lemma}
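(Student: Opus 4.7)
The plan is to compute $c_\al$ directly, reducing it to a matrix-power expression in which the degree condition~\eqref{eq:degree:condition} triggers cancellations that leave a clean closed form. Write the path as $1 = v_0, v_1, \ldots, v_t = 2$ with distinct vertices, so that $\phi_\al(Y) = \prod_{i=1}^t (Y_{v_{i-1}v_i} - d/n)$. Conditioning on $\sigma^\star$, the edges along the path are independent Bernoullis with conditional mean $Q_{\sigma_{v_{i-1}}^\star,\sigma_{v_i}^\star}/n$; moreover, since the $v_i$ are distinct, the labels $s_i:=\sigma_{v_i}^\star$ are i.i.d.\ from $\pi$. Substituting and using the tower property gives
\[
c_\al \;=\; \frac{1}{n^t}\sum_{s_0,\ldots,s_t}\Big(\prod_{i=0}^t \pi_{s_i}\Big)\prod_{i=1}^t (Q_{s_{i-1},s_i}-d)\cdot (\one_{s_0=k_0}-\pi_{k_0})(\one_{s_t=\ell_0}-\pi_{\ell_0}).
\]

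Next, introduce $A \in \R^{q\times q}$ defined by $A_{k,\ell}:=(Q_{k,\ell}-d)\pi_\ell$. Absorbing $\pi_{s_i}$ into the $i$-th factor for $i\geq 1$, the product rewrites as $\prod_{i=1}^t A_{s_{i-1},s_i}$, and summing over the interior indices $s_1,\ldots,s_{t-1}$ collapses it to $(A^t)_{s_0,s_t}$. Hence
\[
c_\al \;=\; \frac{1}{n^t}\sum_{s_0,s_t}\pi_{s_0}(A^t)_{s_0,s_t}(\one_{s_0=k_0}-\pi_{k_0})(\one_{s_t=\ell_0}-\pi_{\ell_0}).
\]
The key observation is that the degree condition~\eqref{eq:degree:condition} combined with the symmetry of $Q$ (which gives $\pi^\top Q = d\bone^\top$) yields $A\bone = 0$ and $\pi^\top A = 0$, hence $A^t\bone = 0$ and $\pi^\top A^t = 0$. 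Expanding the two centered indicators produces four terms, and these two identities annihilate three of them, leaving only $c_\al = n^{-t}\,\pi_{k_0}(A^t)_{k_0,\ell_0}$.

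Finally, one checks the conjugation $A = d\,\diag(\pi)^{-1/2} B \diag(\pi)^{1/2}$ directly from the definition of $B$, whence $A^t = d^t \diag(\pi)^{-1/2} B^t \diag(\pi)^{1/2}$ and therefore $\pi_{k_0}(A^t)_{k_0,\ell_0} = d^t \sqrt{\pi_{k_0}\pi_{\ell_0}}\,(B^t)_{k_0,\ell_0}$, which is the stated identity. The computation is otherwise mechanical; the only step requiring care is recognizing that~\eqref{eq:degree:condition} is exactly what forces $A\bone = 0$ and $\pi^\top A = 0$, without which three unwanted ``boundary'' contributions would obstruct the clean closed form.
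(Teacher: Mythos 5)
Your proof is correct and takes essentially the same route as the paper: condition on $\sigma^\star$, recognize the sum over interior labels as a matrix power, and use the degree condition~\eqref{eq:degree:condition} (together with symmetry of $Q$) to annihilate the boundary contributions from the centered indicators. The paper packages the boundary cancellations via Lemma~\ref{lem:sbm:simple} applied at the two leaves $1$ and $2$, whereas you encode them as the identities $A\bone = 0$ and $\pi^\top A = 0$ for $A_{k,\ell} := (Q_{k,\ell}-d)\pi_\ell$; these are the same facts stated in different language, and the final conjugation to $B$ is likewise the same step.
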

\begin{proof}
Let $1=i_0,i_1,\ldots, i_{t-1},i_t=2$ denote the vertices of $\al$ in consecutive order. Since $\deg_{\al}(i)=1$ for $i\in \{1,2\}$, Lemma~\ref{lem:sbm:simple} shows that $\E[\phi_{\al}\given \sigma^\star_i]=0$ holds for $i\in \{1,2\}$. It follows that
   \[
   \E[\phi_{\al}]=\E[\phi_{\al}\one_{\sigma^\star_{1}=k_0}]=\E[\phi_{\al}\one_{\sigma^\star_{2}=\ell_0}]=0.
   \]
   Hence, we have
   \[
   c_{\al}\equiv \E[\phi_{\al} \cdot(\one_{\sigma^\star_1=k_0}-\pi_{k_0})(\one_{\sigma^\star_1=\ell_0}-\pi_{\ell_0})]=\E[\phi_{\al}\one_{\sigma^\star_1=k_0,\sigma^\star_2=\ell_0}].
   \]
   We can compute the RHS by first conditioning on $\sigma^\star$, which yields
   \[
   c_{\al}=\left(\frac{d}{n}\right)^{t}\E\left[\one_{\sigma^\star_1=k_0,\sigma^\star_2=\ell_0}\prod_{s=1}^{t} \left(\frac{Q_{\sigma^\star_{i_{s-1}}, \sigma^\star_{i_{s}}}}{d}-1\right)\right]=\left(\frac{d}{n}\right)^{t}(B^t)_{k_0,\ell_0}\cdot \sqrt{\pi_{k_0}}\sqrt{\pi_{\ell_0}}.
   \]
\end{proof}

Thus, as a consequence of Lemma~\ref{lem:sbm:c:path} and Eq.~\eqref{eq:sbm:simple}, if $\al$ is a path from $1$ to $2$,
\[
|c_{\al}|\leq \left(\frac{d\la}{n}\right)^{|\al|}.
\]
The next lemma shows that for cyclic $\al$'s, the values $|c_{\al}|$ satisfy a similar bound with an extra term that depends on the number of tree excess edges.

\begin{lemma}\label{lem:sbm:c:cycle}
For $\al \in \cIsbm$, we have
\[
|c_{\al}|\leq \left(\frac{d\la}{n}\right)^{|\al|} \cdot \left(\frac{q}{\pi_{\min}}\right)^{2(|\al|-|V(\al)|+2)},
\]
where $\pi_{\min} := \min_{k\in [q]}\pi_k$.
\end{lemma}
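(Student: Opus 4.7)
The plan is to expand $c_\alpha$ as a weighted sum indexed by community assignments $\tau : V(\alpha) \to [q]$, then perform a tensor-network contraction that sums out every internal degree-$2$ vertex $w \notin \{1,2\}$ of $\alpha$. What remains is a much smaller multigraph $\alpha'$ on the vertex set $V' := \{1,2\} \cup \{v \in V(\alpha) : \deg_\alpha(v) \ge 3\}$, whose edges carry powers of $B$. The uniform spectral bound $\|B^k\|_\op \le \lambda^k$ from Eq.~\eqref{eq:sbm:simple} then controls each edge, and elementary degree counting controls the number of remaining vertices.

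Conditioning on $\sigma^\star$ and using $\EE[Y_{ij} - d/n \mid \sigma^\star] = (Q_{\sigma_i^\star,\sigma_j^\star} - d)/n$ together with the identity $Q_{k,\ell} - d = d\, B_{k,\ell}/\sqrt{\pi_k\pi_\ell}$, I would first write
\[
c_\alpha \;=\; \left(\frac{d}{n}\right)^{|\alpha|}\sum_{\tau \in [q]^{V(\alpha)}}\prod_{v \in V(\alpha)}\pi_{\tau_v}^{1 - \deg_\alpha(v)/2}\; x(\tau)\prod_{(i,j) \in E(\alpha)} B_{\tau_i, \tau_j},
\]
where $x(\tau) := (\one_{\tau_1 = k_0} - \pi_{k_0})(\one_{\tau_2 = \ell_0} - \pi_{\ell_0})$. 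For any $w \notin \{1,2\}$ with $\deg_\alpha(w) = 2$, the weight $\pi_{\tau_w}^{1 - \deg_\alpha(w)/2}$ equals $1$ and, because $\alpha$ is simple, the two neighbors $u, v$ of $w$ are distinct, so $\sum_{\tau_w} B_{\tau_u,\tau_w} B_{\tau_w, \tau_v} = (B^2)_{\tau_u,\tau_v}$ merges the two incident edges. Iterating over all such $w$ yields a multigraph $\alpha'$ on $V'$ with each edge $e$ assigned an integer length $k(e) \ge 1$ satisfying $\sum_e k(e) = |\alpha|$ (parallel edges and self-loops at vertices $1,2$ may appear), and
\[
c_\alpha \;=\; \left(\frac{d}{n}\right)^{|\alpha|}\sum_{\tau \in [q]^{V'}}\prod_{v \in V'}\pi_{\tau_v}^{1 - \deg_\alpha(v)/2}\; x(\tau)\prod_{e \in E(\alpha')}\bigl(B^{k(e)}\bigr)_{\tau_{u(e)},\tau_{v(e)}}.
\]

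Taking absolute values, using $|x(\tau)| \le 1$, $\bigl|(B^{k(e)})_{\tau_u,\tau_v}\bigr| \le \lambda^{k(e)}$, and $\sum_e k(e) = |\alpha|$, and bounding each per-vertex sum by $\sum_k \pi_k^{1 - \deg_\alpha(v)/2} \le q\, \pi_{\min}^{-(\deg_\alpha(v) - 2)^+/2}$, I obtain
\[
|c_\alpha| \;\le\; \left(\frac{d\lambda}{n}\right)^{|\alpha|}\, q^{|V'|}\, \pi_{\min}^{-\sum_{v : \deg_\alpha(v) \ge 3}(\deg_\alpha(v) - 2)/2}.
\]
Finally, because $\alpha \in \cIsbm$ forces $\deg_\alpha(v) \ge 2$ for $v \notin \{1,2\}$ and $\deg_\alpha(v) \ge 1$ for $v \in \{1,2\}$, the identity $\sum_v \deg_\alpha(v) = 2|\alpha|$ gives $\sum_{v : \deg_\alpha(v) \ge 3}(\deg_\alpha(v) - 2) \le 2(|\alpha| - |V(\alpha)| + 1)$. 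Since every $v \in V' \setminus \{1,2\}$ contributes at least $1$ to this sum, $|V'| \le 2 + 2(|\alpha| - |V(\alpha)| + 1) = 2(|\alpha| - |V(\alpha)| + 2)$, and both exponents of $q$ and of $\pi_{\min}^{-1}$ are bounded by the common quantity $2(|\alpha| - |V(\alpha)| + 2)$, yielding the stated estimate.

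The main obstacle I anticipate is the bookkeeping in the contraction step: specifically, making sure that cycles of degree-$2$ internal vertices attached to a single vertex of $V'$ collapse cleanly into self-loops in $\alpha'$ (with the correct length $k(e)$), that parallel edges from multiple disjoint paths between the same pair of $V'$-vertices are tracked separately, and that the lengths $k(e)$ really do sum to $|\alpha|$. Once these combinatorial details are in place, the spectral and degree-counting estimates are routine.
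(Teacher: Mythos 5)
Your proof is correct, and it takes a genuinely different route from the paper's. The paper deduces Lemma~\ref{lem:sbm:c:cycle} as the special case $W=\{1,2\}$ of a more general conditional estimate, Proposition~\ref{prop:sbm:crucial:estimate}, which is itself proved in three stages: a tree lemma (Lemma~\ref{lem:sbm:crucial:estimate:tree}) that contracts degree-$2$ segments between branch points after rooting the tree; an induction on $|\alpha|-|V(\alpha)|$ (Lemma~\ref{lem:sbm:crucial:estimate:connected}) that handles each excess cycle edge by splitting it and adding a fresh vertex, thereby reducing to the tree case; and a final product over connected components. You instead expand $c_\alpha$ explicitly as a $B$-weighted sum over vertex colorings and perform a single global contraction of all degree-$2$ vertices outside $\{1,2\}$, turning the graph into a small multigraph whose edges carry matrix powers $B^{k(e)}$. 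This is more direct and avoids the induction; it recovers the same exponent via elementary degree counting. What the paper's modularization buys is reuse: Proposition~\ref{prop:sbm:crucial:estimate} is invoked again in Lemma~\ref{lem:sbm:bound:M} with $W=V(\al-\be)\cap V(\be)$, whereas your argument as written only produces the $W=\{1,2\}$ case (though it would extend to general $W$ by fixing $\sigma^\star_W=\tau$ and keeping $W\cup\{v:\deg_\alpha(v)\ge 3\}$ as the surviving vertex set).

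The ``anticipated obstacle'' you raise is not actually a problem. The contraction rule $\sum_{\tau_w}(B^a)_{\tau_u,\tau_w}(B^b)_{\tau_w,\tau_{u'}}=(B^{a+b})_{\tau_u,\tau_{u'}}$ is valid whether or not $u=u'$, so the order of contractions is irrelevant, lengths add correctly, and self-loops and parallel edges in the intermediate multigraphs are handled automatically; the entrywise bound $\|B^k\|_\infty\le\lambda^k$ from Eq.~\eqref{eq:sbm:simple} applies to them just as well. The one point you should state a bit more carefully is that the parenthetical ``because $\alpha$ is simple, the two neighbors of $w$ are distinct'' is only relevant to the very first contraction step and is in fact unnecessary: the identity above never requires distinct endpoints. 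Also worth noting: for $\al\in\cIsbm$ every connected component of $\al$ contains vertex $1$ or $2$ (since $\bar\al$ is connected), which rules out a purely degree-$2$ cycle with no surviving vertex, so the contraction always terminates at a nonempty multigraph on $V'\supseteq\{1,2\}$; and the degenerate case $\al=\emptyset$ gives $c_\emptyset=0$ directly.
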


\noindent Lemma~\ref{lem:sbm:c:cycle} is a consequence of the following crucial estimate whose proof is deferred to Section~\ref{subsec:prop:sbm:crucial:estimate}.
\begin{proposition}\label{prop:sbm:crucial:estimate}
Let $\al\in \{0,1\}^{\bcn}$ be any non-empty graph. Consider a non-empty vertex set $W\subseteq V(\al)$ that intersects with every connected component of $\al$. Then, we have
\begin{equation}\label{eq:prop:sbm:crucial:estimate}
\max_{\tau\in [q]^W}\left|\E\big[\phi_{\al}(Y)\bgiven \sigma^\star_W=\tau\big]\right|\leq \left(\frac{d\la}{n}\right)^{|\al|}\left(\frac{q}{\pi_{\min}}\right)^{2(|\al|-|V(\al)|+|W|)}.
\end{equation}
\end{proposition}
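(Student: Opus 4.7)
My plan is to first recast the conditional expectation as a tensor contraction involving the matrix $B$, then to bound it by combining $\|B\|_{\op}=\la$ with the cancellation $B\sqrt{\pi}=0$ supplied by Condition~\eqref{eq:degree:condition}. Conditional independence of the $Y_{ij}$ given $\sigma^\star$, together with the identity $Q_{k,\ell}-d=d\,B_{k,\ell}/\sqrt{\pi_k\pi_\ell}$ (which follows directly from the definition of $B$), yields
\[
\E[\phi_\al(Y)\bgiven \sigma^\star_W=\tau]=\Big(\frac{d}{n}\Big)^{|\al|}F(\al,W,\tau),\qquad F(\al,W,\tau):=\E\Bigl[\prod_{(i,j)\in E(\al)}\frac{B_{\sigma^\star_i,\sigma^\star_j}}{\sqrt{\pi_{\sigma^\star_i}\pi_{\sigma^\star_j}}}\Bgiven \sigma^\star_W=\tau\Bigr].
\]
The claim thus reduces to $|F(\al,W,\tau)|\leq \la^{|\al|}(q/\pi_{\min})^{2(|\al|-|V(\al)|+|W|)}$, and since both sides are multiplicative across the connected components of $\al$ (each of which intersects $W$ by hypothesis), I may assume $\al$ is connected.

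Next, I would carry out two reductions in series. If some $v\in V(\al)\setminus W$ has $\deg_\al(v)=1$, then the sum over $\sigma^\star_v$ yields $\sum_k\sqrt{\pi_k}B_{k,\cdot}=0$, so $F\equiv 0$ and the bound is trivial. Otherwise all free vertices of $\al$ have degree $\geq 2$, and any maximal chain $u{-}v_1{-}\cdots{-}v_t{-}u'$ of degree-two vertices in $V(\al)\setminus W$ can be integrated out via the identity
\[
\sum_{k_1,\ldots,k_t}\prod_{i=1}^t\pi_{k_i}\prod_{i=0}^t\frac{B_{k_i,k_{i+1}}}{\sqrt{\pi_{k_i}\pi_{k_{i+1}}}}=\frac{(B^{t+1})_{k_u,k_{u'}}}{\sqrt{\pi_{k_u}\pi_{k_{u'}}}}\qquad(k_0:=k_u,\,k_{t+1}:=k_{u'}),
\]
which collapses the chain into a single edge $(u,u')$ carrying the matrix power $B^{t+1}$. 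Iterating yields a multigraph $\al^\star$ on vertex set $V^\star\supseteq W$ whose edges are labeled by integers $t_e\geq 1$ with $\sum_e t_e=|\al|$, in which $\deg_{\al^\star}(v)\geq 3$ for every $v\in V^\star\setminus W$; junction degrees are preserved under contraction, so $|E^\star|=|\al|-|V(\al)|+|V^\star|$.

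Having reduced to $\al^\star$, I would take absolute values: each edge contributes $|(B^{t_e})_{k,k'}|\leq\|B^{t_e}\|_{\op}\leq\la^{t_e}$, for a total of $\la^{|\al|}$, while the $1/\sqrt{\pi\pi}$ denominators combine with the probability factors on free vertices (using $\sum_k\pi_k^{1-\deg/2}\leq q\,\pi_{\min}^{1-\deg/2}$, valid because $\deg_{\al^\star}(v)\geq 3$ makes the exponent nonpositive) to give
\[
|F(\al^\star,W,\tau)|\leq \la^{|\al|}\,q^{|V^\star\setminus W|}\,\pi_{\min}^{-(|\al|-|V(\al)|+|W|)}.
\]
The minimum-degree bound $3|V^\star\setminus W|\leq 2|E^\star|$ combined with $|E^\star|=|\al|-|V(\al)|+|V^\star|$ yields $|V^\star\setminus W|\leq 2(|\al|-|V(\al)|+|W|)$; together with $\pi_{\min}\leq 1$ this delivers the advertised bound.

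The main obstacle I anticipate is the bookkeeping through the chain contractions: $\al^\star$ may acquire self-loops or parallel edges when the chain closes up on itself or lies in parallel with a pre-existing edge, and I must verify that the invariants $\sum_e t_e=|\al|$, the minimum-degree condition $\deg_{\al^\star}(v)\geq 3$ on $V^\star\setminus W$, and the identity $|E^\star|=|\al|-|V(\al)|+|V^\star|$ all persist throughout. Self-loops in particular require care because they contribute an extra $1/\pi_{\min}$ at their incident vertex, and the counting argument must still keep the total exponent of $\pi_{\min}^{-1}$ and $q$ inside the budget $2(|\al|-|V(\al)|+|W|)$ prescribed by the statement.
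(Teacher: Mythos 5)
Your proposal is correct, and it takes a genuinely different (and arguably more unified) route than the paper's. The paper proves the result in three stages: it first handles trees (Lemma~\ref{lem:sbm:crucial:estimate:tree}) by decomposing $\al$ into path segments between branching/$W$-vertices and bounding the number of branching vertices via $|V_{\branch}|\le |W|-2$; it then extends to arbitrary connected graphs (Lemma~\ref{lem:sbm:crucial:estimate:connected}) by induction on the cyclomatic number $|\al|-|V(\al)|$, using the trick of splitting a cycle edge $(v_1,v_2)$ into $(v',v_2)$ with a fresh vertex $v'$ and conditioning additionally on $\sigma^\star_{v_1}=\sigma^\star_{v'}$; and finally it passes to disconnected $\al$ via independence. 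Your plan instead performs a single chain-contraction step that handles trees and cyclic graphs in the same stroke, reducing to a multigraph $\al^\star$ where all non-conditioned vertices have degree $\ge 3$, and then closes by degree counting. Both arguments hinge on the same two ingredients --- $\|B^t\|_\infty\le\lambda^t$ for each segment, and a combinatorial bound on the number of high-degree junction vertices --- so they are close in spirit, but your version avoids the induction and treats the cyclic case without the fresh-vertex device, at the cost of working with a multigraph.

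On the obstacles you flagged: they are real things to check, but they work out. Contracting a maximal chain with $t$ internal degree-$2$ free vertices removes $t$ vertices and $t+1$ edges while adding one edge, so $|E|-|V|$ is invariant and $|E^\star|=|\al|-|V(\al)|+|V^\star|$ holds even when the chain closes into a self-loop. The degree of every surviving vertex is preserved under contraction (a self-loop contributes $2$, exactly as the two original edge-ends did), so $\deg_{\al^\star}(v)\ge 3$ for every $v\in V^\star\setminus W$, and the handshake bound $3|V^\star\setminus W|\le 2|E^\star|$ applies verbatim to multigraphs. The $\pi_{\min}$ accounting for a self-loop at $v$ is $\pi_{\sigma_v}^{-1}$, which is precisely $\pi_{\sigma_v}^{-\deg/2}$ for that loop's two degree units, so nothing leaks past the budget $|E^\star|-|V^\star\setminus W|=|\al|-|V(\al)|+|W|$. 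I also verified the two reductions you invoke: $\sum_k\sqrt{\pi_k}B_{k,\ell}=0$ follows from condition~\eqref{eq:degree:condition}, which justifies killing the expectation when a free leaf exists, and the chain-collapse identity is a direct telescoping of the $\sqrt{\pi}$ normalizations. The one place where you should be explicit in a write-up is that the constraint $|\al|-|V(\al)|+|W|\ge 0$ is needed to convert $\pi_{\min}^{-(|\al|-|V(\al)|+|W|)}$ into the weaker $\pi_{\min}^{-2(|\al|-|V(\al)|+|W|)}$; this holds per connected component since $|\al_i|-|V(\al_i)|\ge -1$ and $|W_i|\ge 1$.
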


\begin{proof}[Proof of Lemma~\ref{lem:sbm:c:cycle}]
Note that since $x_{k_0,\ell_0}\equiv (\one_{\sigma^\star_1=k_0}-\pi_{k_0})(\one_{\sigma^\star_2=\ell_0}-\pi_{\ell_0})$ is $(\sigma^\star_1,\sigma^\star_2)$-measurable and $|x_{k_0,\ell_0}|\leq 1$, we have $|c_{\al}|\leq \E\left|\E[\phi_{\al}(Y)\given \sigma^\star_1,\sigma^\star_2]\right|$. Moreover, since $\al\in \cIsbm$, the vertex set $\{1,2\}\subseteq V(\al)$ intersects with every connected component of $\al$. Consequently, applying Proposition~\ref{prop:sbm:crucial:estimate} with $W=\{1,2\}$ yields the desired conclusion.  
\end{proof}
As a next step, we show that $M_{\be\ga,\al}$ can be bounded in terms of $M_{\be\ga,\be}$. Here, Proposition~\ref{prop:sbm:crucial:estimate} again plays an important role.
\begin{lemma}\label{lem:sbm:bound:M}
Consider $\al,\be\in \cIsbm$ such that $\be\le \al$. We have that $M_{\be\ga,\be}\geq 0$ and
\[
|M_{\be\ga,\al}|\le M_{\be\ga,\be}\cdot \left(\frac{d\la}{n}\right)^{|\al|-|\be|}  \left(\frac{q}{\pi_{\min}}\right)^{2(|\al|-|\be|-|V(\al)|+|V(\be)|)}.
\]
\end{lemma}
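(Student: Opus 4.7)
The proof proceeds by exploiting conditional independence to factorize $M_{\be\ga,\al}$ into a ``$\be$ part'' (which gives $M_{\be\ga,\be}$) and an ``$\al-\be$ part'' (which is then handled by Proposition~\ref{prop:sbm:crucial:estimate}).

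\textbf{Step 1 (Factorization).} Since $\be\le \al$, we have $\phi_\al(Y)=\phi_\be(Y)\phi_{\al-\be}(Y)$. The random variable $\phi_\be(Y)\psi_{\be\ga}$ depends only on $\sigma^\star$ and $(Y_{ij})_{(i,j)\in E(\be)}$, while $\phi_{\al-\be}(Y)$ depends only on $(Y_{ij})_{(i,j)\in E(\al-\be)}$. Since the entries $\{Y_{ij}\}$ are conditionally independent given $\sigma^\star$, these two quantities are conditionally independent given $\sigma^\star$, so
\[
M_{\be\ga,\al} = \E\!\left[\E\!\left[\phi_\be(Y)\psi_{\be\ga}\,\middle|\, \sigma^\star\right]\cdot \E\!\left[\phi_{\al-\be}(Y)\,\middle|\, \sigma^\star\right]\right].
\]

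\textbf{Step 2 (Explicit evaluation of the $\be$-part, and $M_{\be\ga,\be}\ge 0$).} On the event $\{\sigma^\star_{V(\be)}=\ga\}$, each $Y_{ij}$ with $(i,j)\in E(\be)$ is $\Ber(Q_{\ga_i,\ga_j}/n)$, and a short calculation using
\[
\E\!\left[(Y_{ij}-d/n)(Y_{ij}-Q_{\ga_i,\ga_j}/n)\,\big|\, \sigma^\star\right]=\frac{Q_{\ga_i,\ga_j}}{n}\!\left(1-\frac{Q_{\ga_i,\ga_j}}{n}\right)
\]
yields
\[
\E\!\left[\phi_\be\psi_{\be\ga}\,\middle|\, \sigma^\star\right] = \one(\sigma^\star_{V(\be)}=\ga)\cdot \prod_{i\in V(\be)}\pi_{\ga_i}^{-1/2}\prod_{(i,j)\in\be}\sqrt{\tfrac{Q_{\ga_i,\ga_j}}{n}\!\left(1-\tfrac{Q_{\ga_i,\ga_j}}{n}\right)}\,\ge 0.
\]
Specializing to $\al=\be$ (so $\phi_{\al-\be}\equiv 1$) gives $M_{\be\ga,\be}\ge 0$. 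Plugging this back into Step~1 for general $\al$ yields
\[
M_{\be\ga,\al}=M_{\be\ga,\be}\cdot \E\!\left[\phi_{\al-\be}(Y)\,\middle|\, \sigma^\star_{V(\be)}=\ga\right].
\]

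\textbf{Step 3 (Apply the crucial estimate).} Set $W:=V(\be)\cap V(\al-\be)$ and $\tau:=\ga|_W$. Since the entries $Y_{ij}$ for $(i,j)\in E(\al-\be)$ depend only on $\sigma^\star_{V(\al-\be)}$, the conditional expectation above equals $\E[\phi_{\al-\be}\mid \sigma^\star_W=\tau]$, which is bounded by Proposition~\ref{prop:sbm:crucial:estimate} provided $W\ne\emptyset$ and $W$ hits every connected component of $\al-\be$. I verify this claim: if some connected component $C$ of $\al-\be$ were disjoint from $V(\be)$, then every edge of $\al$ incident to $V(C)$ would lie in $\al-\be$ (else one endpoint would be in $V(\be)$), so $C$ would be a full connected component of $\al$; but $\al\in\cIsbm$ implies every connected component of $\al$ contains $1$ or $2$, both of which lie in $V(\be)$ (using $\be\in\cIsbm$ and $\be\ne\emptyset$), a contradiction. (The case $\be=\al$ is trivial, and the case $\be=\emptyset$ is never invoked in Section~\ref{subsec:SBM:u} because $u_{\emptyset\emptyset}=0$.)

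\textbf{Step 4 (Matching the exponent).} Proposition~\ref{prop:sbm:crucial:estimate} produces the factor $\left(\tfrac{q}{\pi_{\min}}\right)^{2(|\al-\be|-|V(\al-\be)|+|W|)}$. Since every edge of $\al$ belongs to exactly one of $\be,\al-\be$, we have $V(\al)=V(\be)\cup V(\al-\be)$, which gives
\[
|V(\al)|=|V(\be)|+|V(\al-\be)|-|W|, \qquad\text{hence}\qquad |V(\al-\be)|-|W|=|V(\al)|-|V(\be)|.
\]
Combined with $|\al-\be|=|\al|-|\be|$, the exponent becomes $2(|\al|-|\be|-|V(\al)|+|V(\be)|)$, matching the claim.

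The only nontrivial step is the graph-theoretic verification in Step~3 that $W$ meets every connected component of $\al-\be$; once that is in place, the rest is bookkeeping with the explicit formulas and one invocation of Proposition~\ref{prop:sbm:crucial:estimate}.
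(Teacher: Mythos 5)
Your proof is correct and follows essentially the same strategy as the paper's: factorize via conditional independence, evaluate the $\beta$-part explicitly to get $M_{\beta\gamma,\beta}\ge 0$, reduce the $\alpha-\beta$ part to a conditional expectation given $\sigma^\star_W$ with $W = V(\beta)\cap V(\alpha-\beta)$, verify the hypothesis of Proposition~\ref{prop:sbm:crucial:estimate} by a graph-theoretic contradiction argument, and then match exponents. The only differences are cosmetic: you condition on all of $\sigma^\star$ first and extract an exact identity $M_{\beta\gamma,\alpha} = M_{\beta\gamma,\beta}\cdot\E[\phi_{\alpha-\beta}\mid\sigma^\star_W=\tau]$ rather than the paper's inequality, and your component-hitting argument proceeds directly (a disjoint component of $\alpha-\beta$ would be a full component of $\alpha$ avoiding both $1$ and $2$) rather than invoking ``at most two components,'' but these are equivalent in substance.
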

\begin{proof}
The statement is trivial for $\be=\al$, so assume $\be\lneq \al$. We first compute $M_{\be\ga,\be}$. Note that
\[
\E\left[\phi_{\be}\psi_{\be\ga, \be}\given \sigma^\star_{V(\be)}\right]= \one_{\sigma^\star_{V(\be)}=\ga}\prod_{i\in V(\beta)}\pi_{\gamma_i}^{-1/2}\prod_{(i,j)\in \beta}\E\left[\left(X_{ij}(\ga_i,\ga_j)-\frac{d}{n}\right)\cdot \frac{X_{ij}(\ga_i,\ga_j)-\frac{Q_{\ga_i,\ga_j}}{n}}{\sqrt{\frac{Q_{\ga_i,\ga_j}}{n}(1-\frac{Q_{\ga_i,\ga_j}}{n})}}\right].
\]
Since $X_{ij}(\ga_i,\ga_j)\sim \Ber(Q_{\ga_i,\ga_j}/n)$, we can compute the rightmost expectation as
\begin{equation}\label{eq:sbm:M:self}
\E\left[\phi_{\be}\psi_{\be\ga, \be}\given \sigma^\star_{V(\be)}\right]= \one_{\sigma^\star_{V(\be)}=\ga}\prod_{i\in V(\beta)}\pi_{\gamma_i}^{-1/2}\prod_{(i,j)\in \beta}\sqrt{\frac{Q_{\ga_i,\ga_j}}{n}\left(1-\frac{Q_{\ga_i,\ga_j}}{n}\right)}\,.
\end{equation}
Taking expectation shows that $M_{\be\ga,\be}\geq 0$ holds. We next bound $|M_{\be\ga,\al}|$:
\begin{equation*}
M_{\be\ga,\al}=\E\Big[\E\big[\phi_{\al-\be}\phi_{\be}\psi_{\be\ga}\bgiven \sigma^\star_{V(\be)}\big]\Big]=\E\Big[\E\big[\phi_{\al-\be}\bgiven \sigma^\star_{V(\be)}\big]\cdot\E\big[\phi_{\be}\psi_{\be\ga}\bgiven \sigma^\star_{V(\be)}\big]\Big],
\end{equation*}
where the last equality holds because $\phi_{\al-\be}$ is conditionally independent from $\phi_{\be}\psi_{\be\ga}$ given $\sigma^\star_{V(\be)}$. Note that letting $W=V(\al-\be)\cap V(\be)$, we have $\E[\phi_{\al-\be}\given \sigma^\star_{V(\be)}]=\E[\phi_{\al-\be}\given \sigma^\star_{W}]$. Thus, it follows that
\begin{equation}\label{eq:sbm:technical}
\begin{split}
|M_{\be\ga,\al}|
&\leq \E\Big[\big|\E\big[\phi_{\al-\be}\bgiven \sigma^\star_{W}\big]\big|\cdot\E\big[\phi_{\be}\psi_{\be\ga}\bgiven \sigma^\star_{V(\be)}\big]\Big]\\
&\leq M_{\be\ga,\be}\cdot \max_{\tau\in [q]^W}\big|\E\big[\phi_{\al-\be}\bgiven \sigma^\star_{W}=\tau\big]\big|,
\end{split}
\end{equation}
where in the first inequality, we used the fact $\E[\phi_{\be}\psi_{\be\ga,\be}\given \sigma^\star_{V(\be)}]\geq 0$ a.s., which follows from~\eqref{eq:sbm:M:self}.

We next claim that $(\al-\be,W)$ satisfies the assumption of Proposition~\ref{prop:sbm:crucial:estimate}. Suppose by contradiction that there exists a connected component $\al'$ of $\al-\be$ such that $V(\al')\cap W=\empty$. Then $V(\al')\cap V(\be)=\empty$, so $\al'$ is a connected component of $\al$ not intersecting with $\be$. Since $\al,\be\in \cIsbm$ has at most $2$ connected components, this implies that $\al =\al'\sqcup \be$. Since $1,2\in V(\be)$, $\al'$ is a connected component of $\al$ not containing $1$ nor $2$, which contradicts that $\al\in \cIsbm$.

Consequently, we can apply Proposition~\ref{prop:sbm:crucial:estimate}, which yields
\begin{equation}\label{eq:sbm:not:important}
\max_{\tau\in [q]^W}\big|\E\big[\phi_{\al-\be}\bgiven \sigma^\star_{W}=\tau\big]\big|\leq \left(\frac{d\la}{n}\right)^{|\al-\be|}  \left(\frac{q}{\pi_{\min}}\right)^{2(|\al-\be|-|V(\al-\be)|+|W|)}.
\end{equation}
Note that $|\al-\be|=|\al|-|\be|$ and since $V(\al-\be)\setminus V(\be)=V(\al)\setminus V(\be)$, we have 
\[
|V(\al-\be)|=|V(\al-\be)\setminus V(\be)|+|W|=|V(\al)|-|V(\be)|+|W|.
\]
Therefore, combining Eq.~\eqref{eq:sbm:technical} and Eq.~\eqref{eq:sbm:not:important} implies the desired conclusion.
\end{proof}
\noindent We are now ready to bound $|d_{\al}|$. Recall the quantity $f(\al)$ from Lemma~\ref{lem:spiked:cumulant} (see also Remark~\ref{rmk:sbm}).
\begin{lemma}\label{lem:sbm:d:alpha}
    For a non-empty $\al\in \cIsbm$, we have
    \[
    |d_{\al}|\leq \left(\frac{d\la}{n}\right)^{|\al|}f(\al) \cdot \left(\frac{q}{\pi_{\min}}\right)^{2(|\al|-|V(\al)|+2)}.
    \]
\end{lemma}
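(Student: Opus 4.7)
The plan is to prove the bound by strong induction on $|\al|$, using the construction of $u$ from Section~\ref{subsec:SBM:u} together with the previously established building blocks (Lemmas~\ref{lem:sbm:c:cycle} and~\ref{lem:sbm:bound:M}) and the combinatorial function $f$ from Lemma~\ref{lem:spiked:cumulant}, which by Remark~\ref{rmk:sbm} is well-defined on $\cIsbm$ and satisfies the same recursion~\eqref{eq:def:recursion:f}. The base case is simply $d_{\one_{(1,2)}}=c_{\one_{(1,2)}}$ (since $u_{\empty\empty}=0$), which is handled directly by Lemma~\ref{lem:sbm:c:cycle}.

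The first step in the inductive argument is to record the identity coming from the choice of $u$: the formula~\eqref{eq:sbm:u} gives $\sum_{\ga}u_{\be\ga}^2=d_{\be}^2/\sum_{\ga'}M_{\be\ga',\be}^2$. For each good $\be\le\al$ with $\be\lneq\al$ I then apply Cauchy--Schwarz to $\sum_\ga u_{\be\ga}M_{\be\ga,\al}$ and bound $\sum_\ga M_{\be\ga,\al}^2$ using the squared version of Lemma~\ref{lem:sbm:bound:M}. The factors $\sum_\ga M_{\be\ga,\be}^2$ cancel, leaving
\[
\left|\sum_{\ga}u_{\be\ga}M_{\be\ga,\al}\right|\le |d_\be|\cdot\left(\frac{d\la}{n}\right)^{|\al|-|\be|}\left(\frac{q}{\pi_{\min}}\right)^{2(|\al|-|\be|-|V(\al)|+|V(\be)|)}.
\]
Applying the induction hypothesis to $|d_\be|$, the $|\be|$ and $|V(\be)|$ contributions cancel cleanly and this simplifies to $(d\la/n)^{|\al|}f(\be)(q/\pi_{\min})^{2(|\al|-|V(\al)|+2)}$.

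To finish, I note that by Proposition~\ref{prop:good:sbm} and Lemma~\ref{lem:sbm:simple}(b) only good $\be\le\al$ contribute to the sum defining $d_\al$, and the $\be=\empty$ term vanishes since $u_{\empty\empty}=0$. Combining the bound above with the triangle inequality and bounding $|c_\al|$ via Lemma~\ref{lem:sbm:c:cycle} yields
\[
|d_\al|\le \left(\frac{d\la}{n}\right)^{|\al|}\left(\frac{q}{\pi_{\min}}\right)^{2(|\al|-|V(\al)|+2)}\Bigg[1+\sum_{\substack{0\lneq\be\lneq\al,\,\be\text{ good}\\ \be\le\al}}f(\be)\Bigg].
\]
Since we are dealing with simple graphs, $\binom{\al}{\be}=\one_{\be\le\al}$, and with the convention $f(\empty)=1$ supplying the ``$1+$'', the bracketed quantity is exactly the recursive definition of $f(\al)$ given in~\eqref{eq:def:recursion:f}, closing the induction. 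The main obstacle is ensuring that the exponent of $q/\pi_{\min}$ stays tied to the cyclic excess $|\al|-|V(\al)|+2$ rather than accumulating across the recursion; this works only because of the precise additive structure of the exponent in Lemma~\ref{lem:sbm:bound:M}, which causes the $|\be|$ and $|V(\be)|$ terms to cancel against the inductive exponent and is what allows the SBM analysis to inherit the spiked-Wigner combinatorial bound for $f$ from Lemma~\ref{lem:f:upper:bound}.
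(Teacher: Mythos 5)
Your proof is correct, and it takes essentially the same approach as the paper: strong induction on $|\alpha|$, triangle inequality, Lemma~\ref{lem:sbm:bound:M} to compare $M_{\beta\gamma,\alpha}$ to $M_{\beta\gamma,\beta}$, Lemma~\ref{lem:sbm:c:cycle} for $|c_\alpha|$, and recognition of the recursion~\eqref{eq:def:recursion:f}. The only micro-level difference is how you handle the inner sum over $\gamma$: you apply Cauchy--Schwarz to $\sum_\gamma u_{\beta\gamma}M_{\beta\gamma,\alpha}$, using $\sum_\gamma u_{\beta\gamma}^2 = d_\beta^2 / \sum_{\gamma'} M_{\beta\gamma',\beta}^2$, while the paper exploits the sign structure directly (since $u_{\beta\gamma}\propto M_{\beta\gamma,\beta}\geq 0$ with proportionality constant independent of $\gamma$, one has $\sum_\gamma |u_{\beta\gamma}|\,M_{\beta\gamma,\beta}=|d_\beta|$ exactly). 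Both give the identical bound $|d_\beta|\cdot(\tfrac{d\lambda}{n})^{|\alpha|-|\beta|}(\tfrac{q}{\pi_{\min}})^{2(\cdots)}$ for each term, so there is no loss either way.
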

\begin{proof}
    We proceed by strong induction on $|\al|$. If $|\al|=1$ and $\al\in \cIsbm$, then $\al=\bone_{(1,2)}$. In this case, $d_{\al}=c_{\al}$ by definition, and Lemma~\ref{lem:sbm:c:path} shows that $|d_{\al}|\leq \frac{d\la}{n}$. Since $f(\bone_{(1,2)})=1$, the conclusion holds for the base case $|\al|=1$. For the induction step, fix a good $\al$ and assume the conclusion holds for all $\beta\in \cIsbm$ such that $\be \lneq \al$. Recalling the definition of $d_{\al}$ in \eqref{eq:def:d}, we have
    \begin{equation}\label{eq:bound:d}
    |d_{\al}|\leq |c_{\al}|+\sum_{\be\ga\in \cJsbm:\be\lneq\al}|u_{\be\ga}|\cdot|M_{\be\ga,\al}|.
    \end{equation}
    First, we bound the sum in \eqref{eq:bound:d}. Since $u_{\empty\empty}=0$, we can restrict the sum to $\be\neq \empty$. Using Lemma~\ref{lem:sbm:bound:M}, 
    \[
    \sum_{\be\ga\in \cJsbm:\be\lneq\al}|u_{\be\ga}|\cdot|M_{\be\ga,\al}|\leq \sum_{\substack{\be\ga\in \cJsbm\\\empty \lneq \be\lneq\al}}|u_{\be\ga}| \cdot M_{\be\ga,\be}\cdot \left(\frac{d\la}{n}\right)^{|\al|-|\be|}  \left(\frac{q}{\pi_{\min}}\right)^{2(|\al|-|\be|-|V(\al)|+|V(\be)|)}.
    \]
    Note that for a fixed $\beta\in \cIsbm$ such that $\empty\lneq \beta\lneq \al$, our choice of $u_{\be\ga}$ in Eq.~\eqref{eq:sbm:u} dictates
    \[
    \sum_{\ga\in [q]^{V(\be)}}|u_{\be\ga}| \cdot M_{\be\ga,\be}=|d_{\be}|\leq \left(\frac{d\la}{n}\right)^{|\be|}f(\be) \cdot \left(\frac{q}{\pi_{\min}}\right)^{2(|\be|-|V(\be)|+2)},
    \]
    where the last inequality holds by the induction hypothesis. Thus, it follows that
    \[
      \sum_{\be\ga\in \cJsbm:\be\lneq\al}|u_{\be\ga}|\cdot|M_{\be\ga,\al}|\leq\left(\frac{d\la}{n}\right)^{|\al|}\left(\frac{q}{\pi_{\min}}\right)^{2(|\al|-|V(\al)|+2)} \sum_{\be\in \cIsbm: \empty\lneq \beta\lneq \al} f(\be).
    \]
    Plugging this estimate into \eqref{eq:bound:d} and using Lemma~\ref{lem:sbm:c:cycle} to bound $|c_{\al}|$, it follows that
    \[
    \begin{split}
    |d_{\al}|
    &\leq \left(\frac{d\la}{n}\right)^{|\al|}\left(\frac{q}{\pi_{\min}}\right)^{2(|\al|-|V(\al)|+2)} \left(1+\sum_{\be\in \cIsbm: \empty\lneq \beta\lneq \al} f(\be)\right)\\
    &=\left(\frac{d\la}{n}\right)^{|\al|}\left(\frac{q}{\pi_{\min}}\right)^{2(|\al|-|V(\al)|+2)} f(\al),
    \end{split}
    \]
    which concludes the proof.
\end{proof}

We next lower bound $\sum_{\ga\in [q]^{V(\al)}}M_{\al\ga,\al}^2$. By taking expectations in Eq.~\eqref{eq:sbm:M:self}, we have
\[
M_{\al\ga,\al}=\prod_{i \in V(\be)}\pi_{\ga_i}^{1/2}\prod_{(i,j)\in \al}\sqrt{\frac{Q_{\ga_i,\ga_j}}{n}\left(1-\frac{Q_{\ga_i,\ga_j}}{n}\right)}.
\]
Thus, we can express
\[
\sum_{\ga\in [q]^{V(\al)}}M_{\al\ga,\al}^2=\E\bigg[ \prod_{(i,j)\in\al} \bigg\{\frac{Q_{\sigma^\star_i,\sigma^\star_j}}{n}\bigg(1-\frac{Q_{\sigma^\star_i,\sigma^\star_j}}{n}\bigg)\bigg\}\bigg].
\]
Writing $\|Q\|_{\infty}:=\max_{k,\ell\in [q]}|Q_{k,\ell}|$, we lower bound
\begin{equation}\label{eq:sbm:M:lower:bound}
\sum_{\ga\in [q]^{V(\al)}}M_{\al\ga,\al}^2\geq \left(1-\frac{\|Q\|_{\infty}}{n}\right)^{|\al|} \E\bigg[\prod_{(i,j)\in \al}\frac{Q_{\sigma^\star_i,\sigma^\star_j}}{n}\bigg].
\end{equation}
We first calculate the above expected value when $\al$ is a tree.
\begin{lemma}\label{lem:sbm:M:tree}
    For any tree $\al\in \{0,1\}^{\bcn}$,
    \[
    \E\Big[\prod_{(i,j)\in \al}Q_{\sigma^\star_i,\sigma^\star_j}\Big]=d^{|\al|}.
    \]
\end{lemma}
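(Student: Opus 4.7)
The plan is a straightforward induction on $|\alpha|$, peeling off leaves one at a time and invoking the balanced-degree condition~\eqref{eq:degree:condition} at each step.

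For the base case, if $|\alpha|=0$ then the product is empty and equals $1=d^0$. For the inductive step, assume $|\alpha|\ge 1$. Since $\alpha$ is a non-empty tree it has at least one leaf; let $v$ be such a leaf, with unique neighbor $u \in V(\alpha)$. Set $\alpha' := \alpha - \mathbf{1}_{(u,v)}$, which is again a tree (or a single vertex) with $|\alpha'| = |\alpha|-1$ edges, and note that $v \notin V(\alpha')$ so $\sigma^\star_v$ is independent of $\sigma^\star_{V(\alpha')}$. Then I would condition on $\sigma^\star_{V(\alpha')}$, pull the factor $Q_{\sigma^\star_u,\sigma^\star_v}$ out of the product for $\alpha'$, and use $\sigma^\star_v \sim \pi$ together with~\eqref{eq:degree:condition}:
\[
\EE\big[ Q_{\sigma^\star_u, \sigma^\star_v} \,\big|\, \sigma^\star_{V(\alpha')} \big] \;=\; \sum_{\ell=1}^{q} Q_{\sigma^\star_u, \ell}\, \pi_\ell \;=\; d.
\]
By the tower property this yields
\[
\EE\Big[\prod_{(i,j)\in\alpha} Q_{\sigma^\star_i,\sigma^\star_j}\Big] \;=\; d \cdot \EE\Big[\prod_{(i,j)\in\alpha'} Q_{\sigma^\star_i,\sigma^\star_j}\Big] \;=\; d \cdot d^{|\alpha'|} \;=\; d^{|\alpha|},
\]
where the middle equality uses the induction hypothesis applied to the tree $\alpha'$.

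There is essentially no obstacle here; the argument is purely combinatorial/probabilistic and the only substantive input is the degree-balance assumption~\eqref{eq:degree:condition}, which is precisely what makes the leaf marginalization yield the constant $d$ regardless of $\sigma^\star_u$. The statement is also morally clear from a branching-process perspective: on a tree, the Markov structure of community labels combined with the equal-mean-degree condition makes each edge contribute exactly $d$ in expectation. The only mild care needed is to ensure $\alpha'$ is handled correctly when $u$ becomes isolated in $\alpha'$ (i.e., when $\alpha$ was a single edge), but this still fits the inductive framework since $\alpha'$ then has no edges and the hypothesis gives $1=d^0$.
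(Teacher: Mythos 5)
Your proof is correct and takes essentially the same approach as the paper's: peel off a leaf vertex $v$, condition on the remaining community labels, use the degree-balance condition~\eqref{eq:degree:condition} to show the leaf edge contributes a factor of $d$, and induct on the number of edges. The paper conditions on $\sigma^\star_i$ for all $i\neq v$ rather than on $\sigma^\star_{V(\alpha')}$, which sidesteps the minor $|\alpha|=1$ edge case you flag, but the argument is otherwise identical.
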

\begin{proof}
    Consider a leaf of the tree $v\in V(\al)$ and let $e=(u,v)\in \al$ be the adjacent edge. Denote the community labels except $\sigma^\star_{v}$ by $\sigma^\star_{\sm v}:=(\sigma^\star_i)_{i\neq v}$. By revealing $\sigma^\star_{-v}$, we have
    \[
     \E\Big[\prod_{(i,j)\in \al}Q_{\sigma^\star_i,\sigma^\star_j}\Bgiven \sigma^\star_{\sm v}\Big]=\E\Big[\prod_{(i,j)\in \al-\bone_e}Q_{\sigma^\star_i,\sigma^\star_j}\Bgiven \sigma^\star_{\sm v}\Big]\cdot \E\big[Q_{\sigma^\star_u,\sigma^\star_v}\bgiven \sigma^\star_u\big]. 
    \]
    Recall that we assumed $\sum_{\ell=1}^{q}Q_{k,\ell}\pi_{\ell}=d$ for any $k\in [q]$ (see Eq.~\eqref{eq:degree:condition}), so $\E\big[Q_{\sigma^\star_u,\sigma^\star_v}\bgiven \sigma^\star_u\big]=d$ holds almost surely. Hence, taking expectation in the display above over $\sigma^\star_{\sm v}$ yields
    \[
      \E\Big[\prod_{(i,j)\in \al}Q_{\sigma^\star_i,\sigma^\star_j}\Big]=d\cdot \E\Big[\prod_{(i,j)\in \al-\bone_e}Q_{\sigma^\star_i,\sigma^\star_j}\Big].
    \]
    Since $\al-\bone_{e}$ is another tree with one fewer edge, the desired conclusion holds by induction.
\end{proof}
With Lemma~\ref{lem:sbm:M:tree} in hand, we lower bound $\sum_{\ga} M_{\al\ga,\al}^2$ as follows.
\begin{lemma}\label{lem:sbm:M:lower:bound}
Consider any non-empty graph $\alpha \in \{0,1\}^{\bcn}$, and let $K$ be its number of connected components. Writing $Q_{\min}:=\min_{k,\ell\in [q]}Q_{k,\ell}>0$, we have
\[
\sum_{\ga\in [q]^{V(\al)}}M_{\al\ga,\al}^2 \geq \left(\frac{d}{n}\right)^{|\al|}\left(1-\frac{\|Q\|_{\infty}}{n}\right)^{|\al|} \left(\frac{Q_{\min}}{d}\right)^{|\al|-|V(\al)|+K}.
\]
\begin{proof}
    Let $\al_1,\ldots, \al_K$ be the connected components of $\al$. Recalling the lower bound in Eq.~\eqref{eq:sbm:M:lower:bound}, we have by the independence of $\{\sigma^\star_{V(\al_i)} \,:\, i \in [K]\}$ that
    \[
    \sum_{\ga\in [q]^{V(\al)}}M_{\al\ga,\al}^2 \geq \left(\frac{d}{n}\right)^{|\al|}\left(1-\frac{\|Q\|_{\infty}}{n}\right)^{|\al|} \,\prod_{i=1}^{K} \bigg\{\E\bigg[\prod_{(u,v)\in \al_i}\frac{Q_{\sigma^\star_u,\sigma^\star_v}}{d}\bigg]\bigg\}.
    \]
    For $i\in [K]$, let $\hat{\alpha}_i$ be a spanning tree of $\alpha_i$. For $(u,v)\in \alpha_i-\hat{\alpha}_i$, we can crudely lower bound $Q_{\sigma^\star_u,\sigma^\star_v}\geq Q_{\min}$, which yields
    \[
    \E\bigg[\prod_{(u,v)\in \al_i}\frac{Q_{\sigma^\star_u,\sigma^\star_v}}{d}\bigg]\geq \left(\frac{Q_{\min}}{d}\right)^{|\al_i|-|\hat{\al}_i|} \E\bigg[\prod_{(u,v)\in \hat\al_i}\frac{Q_{\sigma^\star_u,\sigma^\star_v}}{d}\bigg]=\left(\frac{Q_{\min}}{d}\right)^{|\al_i|-|V(\al_i)|+1}.
    \]
    The equality holds because $|\hat\al_i|=|V(\hat\al_i)|-1=|V(\al_i)|-1$ and $\E\prod_{(u,v)\in \hat\al_i}Q_{\sigma^\star_u,\sigma^\star_v}=d^{|\hat\al_i|}$ by Lemma~\ref{lem:sbm:M:tree}. Combining the two displays above concludes the proof.
\end{proof}
\end{lemma}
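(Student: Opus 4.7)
The plan is to start from the lower bound~\eqref{eq:sbm:M:lower:bound} already derived from the explicit formula for $M_{\al\ga,\al}$, which reduces the task to lower bounding the expectation $\E\big[\prod_{(i,j)\in \al}\frac{Q_{\sigma^\star_i,\sigma^\star_j}}{n}\big]$. The target prefactor $(d/n)^{|\al|}$ already splits off as $(1/n)^{|\al|}$, so I only need to show
\[
\E\Big[\prod_{(i,j)\in \al} Q_{\sigma^\star_i,\sigma^\star_j}\Big]\;\ge\; d^{|\al|}\Big(\tfrac{Q_{\min}}{d}\Big)^{|\al|-|V(\al)|+K}.
\]

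The first step is to decompose $\al$ into its $K$ connected components $\al_1,\dots,\al_K$. Since the community labels $\sigma^\star_{V(\al_i)}$ for different components involve disjoint sets of coordinates of $\sigma^\star$, they are independent, so the expectation factorizes as $\prod_{i=1}^K \E\big[\prod_{(u,v)\in \al_i} Q_{\sigma^\star_u,\sigma^\star_v}\big]$. I would then handle each component separately.

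For a single connected component $\al_i$, the idea is to pick any spanning tree $\hat\al_i\subseteq \al_i$, so that $\hat\al_i$ has $|V(\al_i)|-1$ edges and the excess set $\al_i - \hat\al_i$ has exactly $|\al_i|-|V(\al_i)|+1$ edges. Since $Q_{\min}>0$ is assumed and every entry of $Q$ is positive, for each excess edge $(u,v)$ we can pointwise bound $Q_{\sigma^\star_u,\sigma^\star_v}\ge Q_{\min}$; pulling this constant out yields $\E\big[\prod_{(u,v)\in\al_i} Q_{\sigma^\star_u,\sigma^\star_v}\big]\ge Q_{\min}^{|\al_i|-|V(\al_i)|+1}\,\E\big[\prod_{(u,v)\in \hat\al_i} Q_{\sigma^\star_u,\sigma^\star_v}\big]$. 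Now $\hat\al_i$ is a tree, so Lemma~\ref{lem:sbm:M:tree} gives the tree expectation exactly as $d^{|V(\al_i)|-1}$.

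Multiplying these per-component bounds over $i\in[K]$ and using $\sum_i(|\al_i|-|V(\al_i)|+1)=|\al|-|V(\al)|+K$ and $\sum_i (|V(\al_i)|-1)=|V(\al)|-K$, the total becomes $d^{|V(\al)|-K}Q_{\min}^{|\al|-|V(\al)|+K}=d^{|\al|}(Q_{\min}/d)^{|\al|-|V(\al)|+K}$, which matches the target after combining with the $(1/n)^{|\al|}(1-\|Q\|_\infty/n)^{|\al|}$ factor from~\eqref{eq:sbm:M:lower:bound}. I do not anticipate a serious obstacle here: the only substantive step is the independence-based factorization over components, and the rest is a bookkeeping exercise. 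The mild subtlety is that one must use Lemma~\ref{lem:sbm:M:tree} on a tree without isolated vertices, which is automatic because $\hat\al_i$ is a spanning tree of the \emph{non-empty} component $\al_i$; and one must rely on $Q_{\min}>0$ (an assumption in the lemma) to justify the pointwise lower bound on excess-edge factors.
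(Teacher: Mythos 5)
Your proposal is correct and follows essentially the same approach as the paper: factor over connected components using independence of the label vectors, extract a spanning tree from each component, bound the excess-edge factors pointwise by $Q_{\min}$, and apply Lemma~\ref{lem:sbm:M:tree} to evaluate the tree expectation exactly. The bookkeeping identities you use to recombine the per-component bounds are the same as in the paper.
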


Finally, we prove Theorem~\ref{thm:sbm}.
\begin{proof}[Proof of Theorem~\ref{thm:sbm}]
Recall the bound on $\Corr_{\leq D}(k_0,\ell_0)^2$ in Eq.~\eqref{eq:sbm:corr:bound}. Using Lemma~\ref{lem:sbm:d:alpha} to upper bound $d_{\al}^2$ and Lemma~\ref{lem:sbm:M:lower:bound} to lower bound $\sum_{\ga}M_{\al\ga,\al}^2$,
\[
\Corr_{\leq D}(k_0,\ell_0)^2\leq \sum_{\al\in \cIsbm:|\al|\leq D} \frac{\left(\frac{d\la}{n}\right)^{2|\al|}f(\al)^2 \left(\frac{q}{\pi_{\min}}\right)^{4(|\al|-|V(\al)|+2)}}{\left(\frac{d}{n}\right)^{|\al|}\left(1-\frac{\|Q\|_{\infty}}{n}\right)^{|\al|} \left(\frac{Q_{\min}}{d}\right)^{|\al|-|V(\al)|+2}},
\]
where we used that $\al\in \cIsbm$ has at most $2$ connected components. By Lemma~\ref{lem:f:upper:bound}, we have $f(\al)\leq (2|\al|)^{|\al|-|V(\al)|+1}$ and by Lemma~\ref{lem:number:good:graphs:2}, the number of good $\al$ with $|\al|=t$ and $|V(\al)|=v$ is at most $n^{v-2}(2d)^{5(d-v+1)}$. Write $\Delta=|\al|-|V(\al)|+1$ and note that $\Delta\geq 0$ for $\al\in \cIsbm$. Putting these all together,
\[
\begin{split}
\Corr_{\leq D}(k_0,\ell_0)^2
&\leq \sum_{t=1}^{D}\sum_{\Delta=0}^{t-1} \frac{C'}{n}\left(\frac{d \la^2}{1-\frac{\|Q\|_{\infty}}{n}}\right)^t \left(\frac{C'(2t)^7}{n}\right)^{\Delta}\\
&\leq \frac{C'}{n}\sum_{t=1}^{D}\left(\frac{d \la^2}{1-\frac{\|Q\|_{\infty}}{n}}\right)^t \sum_{\Delta\geq 0}\left(\frac{C'(2D)^7}{n}\right)^{\Delta},
\end{split}
\]
where we denoted $C':=\left(\frac{q}{\pi_{\min}}\right)^4\frac{d}{Q_{\min}}>0$, which is a constant that only depends on $q,\pi,Q$. The result now follows.
\end{proof}

\subsection{Proof of Proposition~\ref{prop:sbm:crucial:estimate}}
 \label{subsec:prop:sbm:crucial:estimate}
To facilitate the proof of Proposition~\ref{prop:sbm:crucial:estimate}, it is advantageous to consider an arbitrary finite graph $\alpha$ with vertex set $V(\alpha) \subseteq \mathbb{N}$, thereby removing the restriction that $V(\alpha) \subseteq [n]$. Note that the stochastic block model with vertices in $\mathbb{N}$ is well-defined by letting $(\sigma^\star_i)_{i \in \mathbb{N}}\iid \pi$ and defining $Y_{ij}\sim \Ber(\frac{Q_{k,\ell}}{n})$ whenever $\sigma^\star_i=k$ and $\sigma^\star_j=\ell$ for all $1 \le i,j < \infty$.
 
In this context, the parameter $n$ appears only in the edge probabilities $\frac{Q_{k,\ell}}{n}$. Our goal is to prove the estimate in Eq.~\eqref{eq:prop:sbm:crucial:estimate} for an arbitrary non-empty finite graph $\alpha \in \{0,1\}^{\{(i, j) \,:\, 1 \le i < j<\infty\}}$ and any subset $W \subseteq V(\alpha)$. We begin by considering the case where $\alpha$ is a tree.

\begin{lemma}\label{lem:sbm:crucial:estimate:tree}
Let $\al$ be a tree. For any $W\subseteq V(\al)$, we have
\begin{equation*}
\max_{\tau\in [q]^W}\left|\EE\big[\phi_{\al}(Y)\bgiven \sigma^\star_W=\tau\big]\right|\leq \left(\frac{d\la}{n}\right)^{|\al|}\left(\frac{q}{\pi_{\min}}\right)^{|W|-1}.
\end{equation*}
\end{lemma}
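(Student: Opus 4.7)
The plan is to reduce $\alpha$ to a contracted tree whose structure forces the target bound via spectral estimates on the matrix $B$. Conditional independence of edges given $\sigma^\star$ yields
\[
\EE[\phi_\alpha \mid \sigma^\star] \;=\; (d/n)^{|\alpha|}\prod_{(i,j)\in\alpha}\bigl(Q_{\sigma^\star_i,\sigma^\star_j}/d-1\bigr) \;=\; (d/n)^{|\alpha|}\prod_{(i,j)\in\alpha}\frac{B_{\sigma^\star_i,\sigma^\star_j}}{\sqrt{\pi_{\sigma^\star_i}\pi_{\sigma^\star_j}}},
\]
using the identity $B_{k,\ell}=\sqrt{\pi_k\pi_\ell}(Q_{k,\ell}/d-1)$. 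The degree condition~\eqref{eq:degree:condition} rewrites as $B\sqrt\pi=0$, i.e., $\sum_\ell \pi_\ell(Q_{k,\ell}/d-1)=0$ for every $k\in[q]$. Consequently, if any leaf $v$ of $\alpha$ lies outside $W$, integrating the unique edge incident to $v$ against $\pi$ gives $0$ and the whole conditional expectation vanishes. So I may assume every leaf of $\alpha$ lies in $W$; otherwise the bound holds trivially.

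Next, I contract every maximal path of degree-$2$ vertices outside $W$ into a single ``meta-edge'' of length equal to the number of edges it replaces, obtaining a tree $\bar\alpha$ whose vertex set consists of those vertices of $\alpha$ that are in $W$ or have degree $\neq 2$. Summing out the intermediate variables along a length-$t$ meta-edge telescopes via the iterated identity $\sum_m\pi_m(Q_{k,m}/d-1)(Q_{m,\ell}/d-1)=(B^2)_{k,\ell}/\sqrt{\pi_k\pi_\ell}$, producing the meta-edge weight $(B^t)_{\sigma^\star_u,\sigma^\star_v}/\sqrt{\pi_{\sigma^\star_u}\pi_{\sigma^\star_v}}$. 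By construction, every leaf of $\bar\alpha$ lies in $W$ and every non-$W$ vertex of $\bar\alpha$ has degree $\geq 3$. The handshake identity $\sum_v\deg_{\bar\alpha}(v)=2(|V(\bar\alpha)|-1)$ combined with $\deg_{\bar\alpha}(v)\geq 3$ for $v\notin W$ and $\deg_{\bar\alpha}(v)\geq 1$ for $v\in W$ then yields the crucial structural bound
\[
|V(\bar\alpha)\setminus W|\;\leq\; |W|-2.
\]

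I then root $\bar\alpha$ at some $r\in W$ and set up a tree message-passing recursion. For each non-root $v\in V(\bar\alpha)$ with meta-edge length $t_v$ to its parent and children $c_1,\ldots,c_{p_v}$, let $M_v(k)$ denote the subtree contribution as a function of the parent label $k$, and set $\tilde M_v(k):=\sqrt{\pi_k}\,M_v(k)$. Inductive bounds fall into three cases: for a leaf $v\in W$, $\|\tilde M_v\|_\infty\leq \lambda^{t_v}/\sqrt{\pi_{\min}}$; for internal $v\in W$, $\|\tilde M_v\|_\infty\leq \lambda^{t_v}\pi_{\min}^{-(p_v+1)/2}\prod_i\|\tilde M_{c_i}\|_\infty$; and for internal $v\notin W$ (necessarily with $p_v\geq 2$ thanks to the contraction), one writes $\tilde M_v=B^{t_v}\vec\Theta$ with $\Theta_\ell=\pi_\ell^{(1-p_v)/2}\prod_i\tilde M_{c_i}(\ell)$, and then $\|B^{t_v}\|_{\op}\leq\lambda^{t_v}$ together with Cauchy--Schwarz (using $\sum_\ell\pi_\ell^{1-p_v}\leq q\pi_{\min}^{1-p_v}$) yields $\|\tilde M_v\|_\infty\leq\sqrt q\,\lambda^{t_v}\pi_{\min}^{(1-p_v)/2}\prod_i\|\tilde M_{c_i}\|_\infty$. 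Collecting contributions at the root and summing exponents via $\sum_v\deg_{\bar\alpha}(v)=2(|V(\bar\alpha)|-1)$ collapses the $\pi_{\min}$-exponent to exactly $-(|W|-1)$, while the $\sqrt q$ factors accumulate to $q^{|V(\bar\alpha)\setminus W|/2}$. The structural bound from the previous step promotes this to $q^{|W|-1}$, giving the claim. The main technical hurdle is the precise bookkeeping of the $\pi_{\min}$-powers across the three message cases; contracting first is essential, since a naive message-pass directly on $\alpha$ would charge a $\sqrt q$ factor per non-$W$ degree-$2$ vertex, blowing up along long paths and destroying the bound.
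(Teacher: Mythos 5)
Your proposal is correct, and it reorganizes the argument in a way that is genuinely different from the paper. Both proofs share the key ingredients: the reduction to the case where all leaves lie in $W$ (else the expectation vanishes), the substitution $B_{k,\ell}=\sqrt{\pi_k\pi_\ell}(Q_{k,\ell}/d-1)$ that telescopes degree-$2$ chains into $B^t$ controlled by $\|B^t\|_{\mathrm{op}}\le\lambda^t$, and the handshake count that bounds the number of non-$W$ branch points by $|W|-2$. The paper handles this in two cases: it first treats $W$ equal to the leaf set exactly (expanding the conditional expectation as a sum over branch-point labels and bounding that sum by $q^{|V_{\branch}|}$), and then reduces a general $W$ to this case by splitting $\alpha$ into subtrees at the internal $W$-vertices and multiplying the resulting bounds. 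You instead contract every maximal non-$W$ degree-$2$ path into a meta-edge and run a single bottom-up message-passing recursion on the contracted tree $\bar\alpha$, with three update rules according to whether the vertex is a $W$-leaf, a $W$-internal node, or a non-$W$ branch point. This handles general $W$ in one uniform pass, tracks the $\pi_{\min}$-exponent exactly to $-(|W|-1)$ via the handshake identity $\sum_v\deg_{\bar\alpha}(v)=2(|V(\bar\alpha)|-1)$, and gives a slightly sharper intermediate power $q^{|V(\bar\alpha)\setminus W|/2}$ (versus $q^{|V_{\branch}|}$). Your remark that a naive message-pass on $\alpha$ itself would incur a $\sqrt q$ penalty at every degree-$2$ vertex is exactly the reason the paper also only ever bounds $B$-products along whole maximal degree-$2$ segments; the contraction step makes this insight structural. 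I verified all three recursion cases and the exponent accounting; the only thing left implicit is the root's contribution, a factor $\pi_{\min}^{-\deg_{\bar\alpha}(r)/2}\prod_i\|\tilde M_{c_i}\|_\infty$, but it fits the stated pattern and your exponent sum $\sum_{v\in W}\deg_{\bar\alpha}(v)/2+\sum_{v\notin W}(\deg_{\bar\alpha}(v)-2)/2=|W|-1$ is consistent with it.
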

\begin{proof}
Throughout, $\tau=(\tau_i)_{i\in W}$ denotes an element of $[q]^W$.  By Lemma~\ref{lem:sbm:simple}, we may assume w.l.o.g.\ that every leaf of $\al$ is contained in $W$ since otherwise the LHS equals $0$. In particular, we have $|W|\geq 2$. Let us first consider the case where $W$ equals the set of leaves. Later, the case of general $W$ will follow from this case. 

\subsubsection*{Case 1: $W=\{v\in V(\al) \,:\, \deg_{\al}(v)=1\}$.}
Let $V_{\text{int}}= V(\al)\setminus W$ denote the set of internal vertices of $\al$. By a direct calculation, 
\[
\begin{split}
\E\big[\phi_{\al}(Y)\bgiven \sigma^\star_W=\tau\big]
&=\left(\frac{d}{n}\right)^{|\al|}\E\bigg[\prod_{(i,j)\in \al}\left(\frac{Q_{\sigma^\star_i,\sigma^\star_j}}{d}-1\right)\bbgiven \sigma^\star_W=\tau \bigg]\\
&=\left(\frac{d}{n}\right)^{|\al|}\sum_{\substack{\sigma\in [q]^{V(\al)}\\\sigma_j=\tau_j,~\forall j\in W}}~\prod_{i \in V_{\text{int}}}\pi_{\sigma_i}\prod_{(i,j)\in \al}\left(\frac{Q_{\sigma_i,\sigma_j}}{d}-1\right).
\end{split}
\]
Recalling that $B_{k,\ell}=\sqrt{\pi_k}\sqrt{\pi_{\ell}}(d^{-1}Q_{k,\ell}-1)$, we can simplify the RHS above as follows. Choose a root $\rho\in V(\al)$ for $\al$ and for $i\in V(\al)\setminus \{\rho\}$, let $p(i)$ denote the parent of $i$. Then,
\begin{equation}\label{eq:sbm:compute}
\E\big[\phi_{\al}(Y)\bgiven \sigma^\star_W=\tau\big]=\left(\frac{d}{n}\right)^{|\al|}\bigg(\prod_{i\in W}\pi_i\bigg)^{-1/2}\sum_{\substack{\sigma\in [q]^{V(\al)}\\\sigma_j=\tau_j,~\forall j\in W}}~\prod_{i\in V(\al)\setminus\{\rho\}}B_{\sigma_i,\sigma_{p(i)}}.
\end{equation}
Observe that if $\al$ is a path with end points $W=\{i_1,i_2\}$, the sum in the RHS of Eq.~\eqref{eq:sbm:compute} equals $(B^{|\al|})_{i_1,i_2}$. To deal with a general tree $\al$, we consider the set of branching points 
\[
V_{\branch}:=\{v\in V(\al) \,:\, \deg_{\al}(v)\geq 3\}.
\]
For $v\in V_{\branch}\cup W\setminus \{\rho\}$, consider the unique shortest path starting from $v$ and ending at $\bar{p}(v)\in V_{\branch}\cup W$, $\bar{p}(v)\neq v$, where $\bar{p}(v)$ is closer to $\rho$ than $v$. Such $\bar{p}(v)$ can be thought of as a `parent' branching point of $v$. Let $\ell_v$ denote the length of the segment $[v,\bar{p}(v)]$. Then, we have
\[
\sum_{\substack{\sigma\in [q]^{V(\al)}\\\sigma_j=\tau_j,~\forall j\in W}}~\prod_{i\in V(\al)\setminus\{\rho\}}B_{\sigma_i,\sigma_{p(i)}}=\sum_{\substack{\sigma\in [q]^{V_{\branch}\cup W}\\ \sigma_v=\tau_v,~\forall v\in W}}~\prod_{v\in V_{\branch}\cup W\setminus\{\rho\}} (B^{\ell_v})_{\sigma_v, \sigma_{\bar{p}(v)}}.
\]
Note that the RHS above can be bounded above in absolute value by
\[
q^{|V_{\branch}|}\prod_{v\in V_{\branch}\cup W\setminus\{\rho\}}\|B^{\ell_v}\|_{\infty}\leq q^{|V_{\branch}|}\lambda^{\sum_{v\in V_{\branch}\cup W\setminus\{\rho\}}\ell_v}=q^{|V_{\branch}|}\lambda^{|\al|},
\]
where the inequality is due to~\eqref{eq:sbm:simple} and the equality holds because the segments $[v,\bar{p}(v)]$ for $v\in V_{\branch}\cup W\setminus\{\rho\}$ constitute a partition of the tree $\al$. Consequently, plugging this estimate into Eq.~\eqref{eq:sbm:compute} yields
\[
\big| \EE\big[\phi_{\al}(Y)\bgiven \sigma^\star_W=\tau\big]\big|\leq \left(\frac{d\la}{n}\right)^{|\al|}\bigg(\prod_{i\in W}\pi_i\bigg)^{-1/2}q^{|V_{\branch}|}. 
\]
Finally, observe that since $V(\al)\setminus (V_{\branch}\cup W)$ is the set of degree-2 vertices, 
\[
2|\al|=\sum_{v\in V(\al)} \deg_{\al}(v)\geq |W|+2(|V(\al)|-|V_{\branch}|-|W|)+ 3|V_{\branch}|,
\]
so $|V_{\branch}|\leq |W|+2(|\al|-|V(\al)|)=|W|-2$. Therefore,
\[
\big| \EE\big[\phi_{\al}(Y)\bgiven \sigma^\star_W=\tau\big]\big|\leq \left(\frac{d\la}{n}\right)^{|\al|}\frac{q^{|W|-2}}{\pi_{\min}^{|W|/2}}\leq \left(\frac{d\la}{n}\right)^{|\al|}\left(\frac{q}{\pi_{\min}}\right)^{|W|-1},
\]
where we used $|W|/2 \leq |W|-1$ in the final inequality. This concludes the proof for the first case.

\subsubsection*{Case 2: General $W\supseteq \{v\in V(\al): \deg_{\al}(v)=1\}$.}
Observe that given a set $W$ containing the leaves of the tree $\al$, we can uniquely decompose $\al$ into $K$ trees $\al_{1},\ldots, \al_{K}$ where the leaves of $\al_i$ are given by $W\cap V(\al_i)$ for each $1\leq i \leq K$. That is, $(\al_{i})_{i\in [K]}$ are the subtrees of $\al$ obtained by splitting $\al$ according to the internal vertices of $\al$ that are contained in $W$. Since $W\cap V(\al_i)$ equals the leaves of $\al_i$, we can apply the results of Case 1 as follows. Writing $W_i= W\cap V(\al_i)$ and $\tau_i= (\tau_v)_{v\in W_i}$, we have for each $i \in [K]$,
\[
\big| \EE\big[\phi_{\al_i}(Y)\bgiven \sigma^\star_{W_i}=\tau_i\big]\big|\leq \left(\frac{d\la}{n}\right)^{|\al_i|}\left(\frac{q}{\pi_{\min}}\right)^{|W_i|-1}.
\]
Thus, it follows that
\[
\big| \EE\big[\phi_{\al}(Y)\bgiven \sigma^\star_{W}=\tau\big]\big|=\prod_{i=1}^{K} \big| \EE\big[\phi_{\al_i}(Y)\bgiven \sigma^\star_{W_i}=\tau_i\big]\big|\leq \prod_{i=1}^{K}\bigg\{\left(\frac{d\la}{n}\right)^{|\al_i|}\left(\frac{q}{\pi_{\min}}\right)^{|W_i|-1}\bigg\}.
\]
Finally, observe that $\sum_{i=1}^K|\al_i|=|\al|$ and
\[
\sum_{i=1}^{K}(|W_i|-1)=|W|-1.
\]
Indeed, if we root the tree $\al$ at $\rho$ and let $\rho_i$ be the leaf of $\al_i$ that is closest to $\rho$ (so $\rho_i=\rho$ for exactly one $i\in [K]$), then we have $W\setminus \{\rho\}=\sqcup_{i=1}^K W_i\setminus \{\rho_i\}$. This concludes the proof.
\end{proof}

We next generalize Lemma~\ref{lem:sbm:crucial:estimate:tree} to arbitrary connected graphs.
\begin{lemma}\label{lem:sbm:crucial:estimate:connected}
Let $\al$ be a non-empty connected graph. For any $W\subseteq V(\al)$, we have
\begin{equation}\label{eq:lem:sbm:crucial:estimate:connected}
 \max_{\tau\in [q]^W}\left|\EE\big[\phi_{\al}(Y)\bgiven \sigma^\star_W=\tau\big]\right|\leq \left(\frac{d\la}{n}\right)^{|\al|}\left(\frac{q}{\pi_{\min}}\right)^{2|\al|-2|V(\al)|+|W|+1}.
\end{equation}
\end{lemma}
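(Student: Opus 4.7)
My plan is to extend the proof of Lemma~\ref{lem:sbm:crucial:estimate:tree} from trees to general connected graphs by a direct contraction argument, handled in one step rather than by induction on the cycle rank.

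First, by Lemma~\ref{lem:sbm:simple}(a), if $\al$ has a leaf outside $W$ then $\E[\phi_\al\mid \sigma^\star_W]=0$ almost surely, so I may assume every leaf of $\al$ lies in $W$; in particular, every vertex in $V(\al)\setminus W$ has degree at least $2$. Conditioning on $\sigma^\star$, using conditional independence of $\{Y_{ij}\}$ given $\sigma^\star$, and substituting $Q_{k,\ell}/d-1=B_{k,\ell}/\sqrt{\pi_k\pi_\ell}$ then yields
\[
\E[\phi_\al(Y)\mid \sigma^\star_W=\tau]=\left(\frac{d}{n}\right)^{|\al|}\prod_{v\in W}\pi_{\tau_v}^{-\deg_\al(v)/2}\sum_{\substack{\sigma\in [q]^{V(\al)}\\ \sigma_W=\tau}}\left(\prod_{v\in V(\al)\setminus W}\pi_{\sigma_v}^{1-\deg_\al(v)/2}\right)\prod_{(i,j)\in \al}B_{\sigma_i,\sigma_j}.
\]

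Next, I would contract the degree-$2$ non-$W$ vertices. Let $V_{\mathrm{br}}:=\{v\in V(\al)\setminus W:\deg_\al(v)\ge 3\}$; each $v\in V(\al)\setminus(W\cup V_{\mathrm{br}})$ has degree exactly $2$ and carries the trivial weight $\pi_{\sigma_v}^0=1$. Summing over its label contracts each maximal segment of length $\ell$ between two consecutive vertices of $W\cup V_{\mathrm{br}}$ into a single matrix-power entry $(B^\ell)_{\sigma_u,\sigma_{u'}}$, so the remaining sum is over $\sigma\in [q]^{W\cup V_{\mathrm{br}}}$ with $\sigma_W=\tau$. Bounding $|(B^\ell)_{i,j}|\le \|B\|_{\op}^\ell=\la^\ell$ entrywise, and using that the segment lengths sum to $|\al|$, the product of segment factors is at most $\la^{|\al|}$. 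Summing over the $q^{|V_{\mathrm{br}}|}$ labelings of branch vertices and bounding each remaining $\pi$-weight by the worst case $\pi_{\min}$, the degree-sum identity $\sum_v\deg_\al(v)=2|\al|$ collapses the total $\pi$-exponent to $-(|W|+\Delta-1)$, where $\Delta:=|\al|-|V(\al)|+1$ is the cycle rank.

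Finally I would bound $|V_{\mathrm{br}}|$. Writing $d_W := \sum_{v \in W}\deg_\al(v)$ and $d_{\mathrm{br}} := \sum_{v \in V_{\mathrm{br}}}\deg_\al(v)$, the partition $d_W+d_{\mathrm{br}}+2(|V(\al)|-|W|-|V_{\mathrm{br}}|)=2|\al|$ combined with $d_W\ge |W|$ (each vertex has degree $\ge 1$) and $d_{\mathrm{br}}\ge 3|V_{\mathrm{br}}|$ yields $|V_{\mathrm{br}}|\le |W|+2\Delta-2$. Assembling these ingredients,
\[
|\E[\phi_\al\mid \sigma^\star_W=\tau]|\le \left(\frac{d\la}{n}\right)^{|\al|}q^{|W|+2\Delta-2}\pi_{\min}^{-(|W|+\Delta-1)}\le \left(\frac{d\la}{n}\right)^{|\al|}\left(\frac{q}{\pi_{\min}}\right)^{|W|+2\Delta-1},
\]
using $q\le q/\pi_{\min}$, which matches the target exponent $2|\al|-2|V(\al)|+|W|+1$.

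The main obstacle is the bookkeeping in the second step: one must verify that summing over the degree-$2$ non-$W$ labels produces matrix powers $B^\ell$ with no leftover $\pi$-factors, and that the tallied $\pi$-exponent collapses cleanly to $-(|W|+\Delta-1)$ via the degree-sum identity. A natural alternative via induction on $\Delta$ (removing one cycle edge at a time and augmenting $W$ by its endpoints) loses an extra $q/\pi_{\min}$ factor per cycle edge with both endpoints outside $W$, and therefore does not reach the desired exponent; this is what motivates the single-step contraction approach above.
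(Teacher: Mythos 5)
Your proof is correct, and it takes a genuinely different route from the paper's. Let me first confirm the key steps, then compare.

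Your contraction computation checks out. Conditioning on $\sigma^\star$ and substituting $Q_{k,\ell}/d-1=B_{k,\ell}/\sqrt{\pi_k\pi_\ell}$ gives exactly the displayed formula, and the $\pi$-exponent accounting is right: for $v\in W$ the exponent is $-\deg_\al(v)/2$, for $v\in V_{\mathrm{br}}$ it is $1-\deg_\al(v)/2$, and for degree-$2$ non-$W$ vertices it is $0$; summing and using $\sum_v\deg_\al(v)=2|\al|$ gives a total of $-(|\al|-|V(\al)|+|W|)=-(|W|+\Delta-1)$. Since every individual exponent is $\leq 0$ (for $W$ vertices because $\deg\geq 1$, for branch vertices because $\deg\geq 3$) and the total is $\leq 0$, bounding term-by-term by $\pi_{\min}$ is legitimate. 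The contraction of each degree-$2$ non-$W$ path segment into a single $(B^\ell)$ entry is valid because those vertices carry trivial $\pi$-weight; segment lengths partition $E(\al)$ so the product is $\leq\la^{|\al|}$; and the degree count $d_W+d_{\mathrm{br}}+2(|V(\al)|-|W|-|V_{\mathrm{br}}|)=2|\al|$ with $d_W\geq|W|$ and $d_{\mathrm{br}}\geq 3|V_{\mathrm{br}}|$ yields $|V_{\mathrm{br}}|\leq |W|+2\Delta-2$. The final comparison $q^{|W|+2\Delta-2}\pi_{\min}^{-(|W|+\Delta-1)}\leq(q/\pi_{\min})^{|W|+2\Delta-1}$ holds since $q\pi_{\min}^{-\Delta}\geq 1$. (As you note, the degenerate case $W\cup V_{\mathrm{br}}=\emptyset$ only arises when $\al$ is a cycle and $W=\emptyset$, which the enclosing Proposition~\ref{prop:sbm:crucial:estimate} never needs; and when $\Delta=0$ with $|W|=1$ the leaf-reduction already forces triviality.) So in fact your argument gives a slightly sharper bound than the stated $(q/\pi_{\min})^{2|\al|-2|V(\al)|+|W|+1}$.

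The paper's proof is different in structure: it first proves the tree case (Lemma~\ref{lem:sbm:crucial:estimate:tree}) via essentially your contraction argument restricted to forests, and then handles general connected graphs by induction on the cycle rank $\Delta=|\al|-|V(\al)|+1$. Crucially, the paper's inductive step is \emph{not} the edge-removal-plus-augment-$W$ variant you correctly identify as lossy; instead it uses a vertex-splitting trick: delete a cycle edge $(v_1,v_2)$ and replace it with $(v',v_2)$ for a fresh vertex $v'$, then condition on $\sigma^\star_{v_1}=\sigma^\star_{v'}=k$. This keeps $|\al|$ unchanged, increases $|V(\al)|$ by one, and increases $|W|$ by at most two, leaving the target exponent $2|\al|-2|V(\al)|+|W|+1$ exactly invariant. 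So the paper's induction is lossless, contrary to the implicit impression in your closing remark (and as a small aside, the naive edge-removal loses a factor $1/\pi_{\min}$ per step, not $q/\pi_{\min}$). Your single-shot contraction avoids the induction entirely and is more direct, at the cost of the slightly heavier bookkeeping you flag; the paper's vertex-splitting induction trades that bookkeeping for the need to notice the invariance of the exponent under the splitting operation.
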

\begin{proof}
We proceed by an induction over $|\al|-|V(\al)|$. The base case $|\al|-|V(\al)|=-1$ follows from Lemma~\ref{lem:sbm:crucial:estimate:tree}. Assume that our goal~\eqref{eq:lem:sbm:crucial:estimate:connected} holds for any $(\al,W)$ such that $|\al|-|V(\al)|= L-1$ where $L\geq 0$, and consider $(\al,W)$ such that $|\al|-|V(\al)|=L$. Since $|\al|-|V(\al)|\geq 0$, there exists $(v_1,v_2)\in \alpha$ such that $(v_1,v_2)$ belongs to a cycle in $\al$. Then consider the following graph $\al'$ created from $\al$: delete the edge $(v_1,v_2)$ from $\al$ and add an edge $(v',v_2)$, where $v'\notin V(\al)$ is a fresh vertex. That is, $\al'=\al-\bone_{(v_1,v_2)}+\bone_{(v',v_2)}$. An important observation is that the induction hypothesis can be applied to $\al'$ since $\al'$ is connected and $|\al'|-|V(\al')|=L-1$ holds because $|\al'|=|\al|$ while $|V(\al')|=|V(\al)|+1$. Moreover, for any given color $k\in [q]$, the conditional distribution of $(\phi_{\al}(Y),\sigma^\star_W)$ given $\sigma^\star_{v_1}=k$ equals that of $(\phi_{\al'}(Y), \sigma^\star_W)$ given $\sigma^\star_{v_1}=\sigma^\star_{v'}=k$. Consequently,
\[
\begin{split}
  \big| \EE\big[\phi_{\al}(Y)\bgiven \sigma^\star_{W}=\tau\big]\big|
  &\leq   \max_{k\in[q]}\big|\EE\big[\phi_{\al}(Y)\bgiven \sigma^\star_{W}=\tau,\; \sigma^\star_{v_1}=k\big]\big|\\
  &=\max_{k\in [q]}\big|\EE\big[\phi_{\al'}(Y)\bgiven \sigma^\star_{W}=\tau,\;\sigma^\star_{v_1}=\sigma^\star_{v'}=k\big]\big|.
\end{split}
\]
Using the induction hypothesis on the RHS, we have
\[
\begin{split}
\max_{\tau\in [q]^W}\big| \EE\big[\phi_{\al}(Y)\bgiven \sigma^\star_{W}=\tau\big]\big|
&\leq \left(\frac{d\la}{n}\right)^{|\al'|}\left(\frac{q}{\pi_{\min}}\right)^{2|\al'|-2|V(\al')|+|W|+3}\\
&=\left(\frac{d\la}{n}\right)^{|\al|}\left(\frac{q}{\pi_{\min}}\right)^{2|\al|-2|V(\al)|+|W|+1},
\end{split}
\]
which concludes the proof.
\end{proof}

\begin{proof}[Proof of Proposition~\ref{prop:sbm:crucial:estimate}]
Let $\al_1,\ldots, \al_K$ be the connected components of $\al$ and let $W_i\equiv W\cap V(\al_i)$. By assumption, $W_i\neq \empty$. Since $(\phi_{\al_i}(Y), \sigma^\star_{W_i})_{i\in [K]}$ are independent, we have
\[
\begin{split}
\max_{\tau\in [q]^W}\left|\EE\big[\phi_{\al}(Y)\bgiven \sigma^\star_W=\tau\big]\right|
&\leq \prod_{i=1}^{K}\max_{\tau_i\in [q]^{W_i}}\left|\EE\big[\phi_{\al_i}(Y)\bgiven \sigma^\star_{W_i}=\tau_i\big]\right|\\
&\leq \prod_{i=1}^{K}\left(\frac{d\la}{n}\right)^{|\al_i|}\left(\frac{q}{\pi_{\min}}\right)^{2|\al_i|-2|V(\al_i)|+|W_i|+1}\\
&\leq \left(\frac{d\la}{n}\right)^{|\al|}\left(\frac{q}{\pi_{\min}}\right)^{2(|\al|-|V(\al)|+|W|)},
\end{split}
\]
where we used Lemma~\ref{lem:sbm:crucial:estimate:connected} in the second inequality and $1+|W_i|\leq 2|W_i|$ in the final inequality.
\end{proof}

\section{Upper Bounds}
\label{sec:upper}

\subsection{Planted Submatrix}
\label{sec:subm-upper}

This section is devoted to the proof of Theorem~\ref{thm:main-subm}(b).

Let $\cT_k \subseteq \{0,1\}^{\bcn}$ denote the set of (simple) trees such that vertex 1 has exactly two neighbors, and the two subtrees rooted at these two neighbors each contain exactly $k$ edges. Each tree in $\cT_k$ has exactly $D := 2k+2$ edges in total. For some $k = k_n = \Theta(\log n)$ to be chosen later, consider the degree-$D$ polynomial
\[ f(Y) = \sum_{\alpha \in \cT_k} Y^\alpha. \]
By Cayley's tree formula, the number of spanning trees on $v$ (labeled) vertices is $v^{v-2}$, so
\begin{align*}
|\cT_k| &= \binom{n-1}{2} \binom{n-3}{k} \binom{n-3-k}{k} (k+1)^{2(k-1)} \\
&= \frac{(n-1)!}{2(k!)^2(n-2k-3)!} (k+1)^{2(k-1)} \\
&\ge \frac{1}{2}(n-2k-2)^{2k+2} (k!)^{-2} (k+1)^{2(k-1)} \\
&= (1-o(1)) (4 \pi k^3)^{-1} (en)^{2k+2},
\end{align*}
where the last step uses Stirling's approximation $k! = (1+o(1)) \sqrt{2\pi k} (k/e)^k$ as $k \to \infty$, along with the fact $((k+1)/k)^{2k} = (1+1/k)^{2k} \to e^2$.

Compute
\[ \EE[f(Y) \cdot \theta_1] = |\cT_k| \cdot \lambda^D \rho^{D+1}. \]
The main challenge will be to bound
\begin{equation}\label{eq:f-var}
\EE[f(Y)^2] = \sum_{\alpha,\beta \in \cT_k} \EE[Y^{\alpha+\beta}].
\end{equation}
For a pair $(\alpha,\beta)$ we will introduce various parameters. The first two that we need, $\tilde{m} = \tilde{m}(\alpha,\beta)$ and $\tilde{b} = \tilde{b}(\alpha,\beta)$ (closely related to $m,b$ used later) are defined as follows: $\tilde{m} \ge 0$ is the number of connected components in the forest $\alpha \cap \beta$, and $\tilde{b} = |\tilde{B}|$ where
\[ \tilde{B} := V(\alpha \cap \beta) \cap V(\alpha \triangle \beta). \]

\begin{lemma}\label{lem:Y-moments}
Let $\eta := \lambda^2$. For $\alpha,\beta \in \cT_k$, we have the following bounds:
\begin{itemize}
    \item If $\alpha = \beta$ then $\EE[Y^{\alpha+\beta}] \le (\eta \rho + 1)^{|\alpha|}$.
    \item If $\alpha \ne \beta$ then $\EE[Y^{\alpha+\beta}] \le \lambda^{|\alpha \triangle \beta|} \rho^{|V(\alpha \triangle \beta)|} (\eta + 1)^{\tilde{b}-\tilde{m}} (\eta \rho + 1)^{|\alpha \cap \beta| - (\tilde{b} - \tilde{m})}$.
\end{itemize}
\end{lemma}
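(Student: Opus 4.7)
The plan is to condition on $\theta$, use independence of the Gaussian noise to factor the conditional expectation edge by edge, and then reduce the residual $\theta$-expectation to a clean combinatorial estimate on the forest $\alpha\cap\beta$. Concretely, since $Y_{ij}=\lambda\theta_i\theta_j+Z_{ij}$ with $Z_{ij}\sim\cN(0,1)$ independent and $\theta_i\in\{0,1\}$, one has $\EE[Y_{ij}\mid\theta]=\lambda\theta_i\theta_j$ and $\EE[Y_{ij}^2\mid\theta]=1+\eta\theta_i\theta_j$. Since $\alpha,\beta$ are simple, the multiplicity of edge $(i,j)$ in $\alpha+\beta$ equals $2$ on $\alpha\cap\beta$, $1$ on $\alpha\triangle\beta$, and $0$ elsewhere, so
\[ \EE[Y^{\alpha+\beta}\mid\theta] = \lambda^{|\alpha\triangle\beta|}\Bigl(\prod_{v\in V(\alpha\triangle\beta)}\theta_v\Bigr)\prod_{(i,j)\in\alpha\cap\beta}(1+\eta\theta_i\theta_j). \]
Taking expectation over $\theta$, the first product vanishes unless $\theta\equiv 1$ on $V(\alpha\triangle\beta)$, an event of probability $\rho^{|V(\alpha\triangle\beta)|}$ which in particular pins $\theta$ to $1$ on $\tilde B$. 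Therefore
\[ \EE[Y^{\alpha+\beta}] = \lambda^{|\alpha\triangle\beta|}\rho^{|V(\alpha\triangle\beta)|}\cdot\EE\Bigl[\prod_{(i,j)\in\alpha\cap\beta}(1+\eta X_iX_j)\Bigr], \]
where $X_v\equiv 1$ for $v\in\tilde B$ and $X_v\iid\Ber(\rho)$ for $v\in V(\alpha\cap\beta)\setminus\tilde B$.

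Next I would factor the remaining expectation over the components $T_1,\ldots,T_{\tilde m}$ of the forest $\alpha\cap\beta$ and extract the combinatorial content as a tree inequality. A key structural observation is that for $\alpha\neq\beta$ each $T_s$ meets $\tilde B$: otherwise every $\alpha$-edge incident to $V(T_s)$ would lie in $\alpha\cap\beta$, and connectivity of $\alpha$ would force $T_s=\alpha$, hence $\alpha\subseteq\beta$ and then $\alpha=\beta$ (since $|\alpha|=|\beta|$), a contradiction. The proof then reduces to the following tree-level claim: for any tree $T$ on $v\ge 2$ vertices with a marked set $V_B\subseteq V(T)$ of size $b$,
\[ \EE\prod_{(i,j)\in E(T)}(1+\eta X_iX_j) \;\le\; \begin{cases}(\eta\rho+1)^{v-1}, & b=0,\\ (\eta+1)^{b-1}(\eta\rho+1)^{v-b}, & b\ge 1.\end{cases} \]
Granted this, applying it to each $T_s$ with $b_s=\tilde b_s\ge 1$ and using $\sum_s|V(T_s)|=|\alpha\cap\beta|+\tilde m$ yields the bound in Case~2. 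Case~1 is the $b=0$ instance applied to $T=\alpha$, for which $|E(\alpha)|=v-1=|\alpha|$.

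I would prove the tree inequality by induction on $v$, removing a leaf $\ell$ of $T$ with incident edge $(\ell,w)$ to form $T'=T-\ell$. Conditioning on $X_{V(T')}$ and using $X_w\in\{0,1\}$,
\[ \EE[(1+\eta X_\ell X_w) \mid X_{V(T')}] \le \begin{cases} 1+\eta, & \ell\in V_B,\\ 1+\eta\rho, & \ell\notin V_B,\end{cases} \]
so the expectation over $T$ is bounded by the corresponding factor times the expectation over $T'$. If some leaf of $T$ lies outside $V_B$, I would remove such a leaf, collecting $(\eta\rho+1)$ and preserving $b$; the induction hypothesis closes the bound. Otherwise every leaf of $T$ lies in $V_B$, which, since any tree on $v\ge 2$ vertices has at least two leaves, forces $b\ge 2$; removing any leaf then collects $(\eta+1)$ while decreasing $b$ by $1$, and induction closes again.

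The main obstacle I expect is precisely this last step: one must never end up in the regime $b\ge 1$ with $b'=0$ while paying the larger factor $(\eta+1)$, which would overshoot the target since $\eta$ can be much larger than $\eta\rho$. The case dichotomy above, combined with the ``two-leaf'' fact that forces $b\ge 2$ whenever all leaves are marked, rules out exactly this bad scenario. Everything else is routine bookkeeping, chaining the initial conditioning, the per-edge factor, and the component decomposition to assemble the Case~2 bound.
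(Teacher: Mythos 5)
Your proof is correct, and it is essentially the same as the paper's, repackaged. The paper's argument for the tree-forest bound is a one-shot version of your induction: it roots each component of $\alpha\cap\beta$ at a vertex of $\tilde B$, assigns each edge to its child endpoint $j$, and bounds the factor by $\eta+1$ if $j\in\tilde B$ and by $\eta\theta_j+1$ otherwise, then takes the expectation over the disjoint child vertices. Your leaf-removal induction is the standard inductive way to carry out precisely this rooting/assignment bookkeeping, and the per-component inequality you isolate ($(\eta+1)^{b-1}(\eta\rho+1)^{v-b}$ for $b\ge 1$, $(\eta\rho+1)^{v-1}$ for $b=0$) is exactly the product that accrues. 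Two small points in your favor: you spell out the case dichotomy carefully, noting that when all leaves are marked the two-leaf fact of trees forces $b\ge 2$ and prevents the "bad" accounting error of paying $(\eta+1)$ while $b$ hits zero — the paper handles this implicitly by always rooting inside $\tilde B$; and you explicitly prove (via connectivity of $\alpha$ and $|\alpha|=|\beta|$) that when $\alpha\neq\beta$ every component of $\alpha\cap\beta$ meets $\tilde B$, where the paper asserts this in a parenthetical remark. So the route is the same and both are correct; your version packages the tree step as a standalone inductive lemma, which is slightly longer but makes the edge-to-vertex accounting, and the role of $\tilde b-\tilde m$, easier to audit.
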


\noindent We have introduced a new parameter $\eta := \lambda^2$ here for the purpose of reusing these calculation in planted dense subgraph, where $\eta$ will take a different value.

\begin{proof}
Compute
\begin{align}
\EE[Y^{\alpha+\beta}] &= \EE \prod_{(i,j) \in \alpha \cap \beta} (\lambda \theta_i \theta_j + Z_{ij})^2 \prod_{(i,j) \in \alpha \triangle \beta} (\lambda \theta_i \theta_j + Z_{ij}) \nonumber \\
&= \EE \prod_{(i,j) \in \alpha \cap \beta} (\lambda^2 \theta_i \theta_j + 1) \prod_{(i,j) \in \alpha \triangle \beta} \lambda \theta_i \theta_j \nonumber \\
&= \lambda^{|\alpha \triangle \beta|} \rho^{|V(\alpha \triangle \beta)|} \, \EE \left[\prod_{(i,j) \in \alpha \cap \beta} (\eta \theta_i \theta_j + 1) \;\Bigg|\; \theta_i = 1,~\forall i \in \tilde{B}\right].
\label{eq:eta-comparison}
\end{align}

\noindent We will bound the conditional expectation above, as follows. For each connected component of $\alpha \cap \beta$ (each component is a tree), choose a ``root'' vertex in that component that belongs to $\tilde{B}$ (one must always exist unless $\alpha = \beta$; in that case, choose any root vertex). Consider an arbitrary edge $(i,j) \in \alpha \cap \beta$, with $i$ closer to the root than $j$. If $j \in \tilde{B}$ we will upper bound the corresponding factor by $\eta \theta_i \theta_j + 1 \le \eta + 1$; if $j \notin \tilde{B}$ we will instead use the bound $\eta \theta_i \theta_j + 1 \le \eta \theta_j + 1$. In the case $\alpha \ne \beta$, this gives
\begin{align*}
\EE[Y^{\alpha+\beta}] &\le \lambda^{|\alpha \triangle \beta|} \rho^{|V(\alpha \triangle \beta)|} \, \EE \left[(\eta + 1)^{\tilde{b}-\tilde{m}} \prod_{j \in V(\alpha \cap \beta) \setminus \tilde{B}} (\eta \theta_j + 1)\right] \\
&= \lambda^{|\alpha \triangle \beta|} \rho^{|V(\alpha \triangle \beta)|} (\eta + 1)^{\tilde{b}-\tilde{m}} (\eta \rho + 1)^{|V(\alpha \cap \beta)| - \tilde{b}} \\
&= \lambda^{|\alpha \triangle \beta|} \rho^{|V(\alpha \triangle \beta)|} (\eta + 1)^{\tilde{b}-\tilde{m}} (\eta \rho + 1)^{|\alpha \cap \beta| + \tilde{m} - \tilde{b}}.
\end{align*}
A similar calculation applies in the case $\alpha = \beta$.
\end{proof}

We will break down the sum in~\eqref{eq:f-var} into three cases.

\subsubsection*{Case 1: $\alpha \cap \beta = \emptyset$.}

Let $w \ge 0$ denote the number of vertices aside from vertex 1 that are shared between $\alpha$ and $\beta$. We will give an upper bound on the number of $(\alpha,\beta)$ pairs that fall into Case~1 with a given value of $w$. To count these pairs, we need to first choose $w$ shared vertices. Then for each of $\alpha,\beta$ we need to choose 2 vertices to neighbor vertex 1 plus $2k-w$ additional vertices, then decide how the vertices are split among the two subtrees, and then finally choose the structure of the subtrees (using Cayley's tree formula to count spanning trees on $k+1$ vertices). The number of $(\alpha,\beta)$ pairs for a given $w \ge 0$ is at most
\begin{align*}
\binom{n}{w} \left[\binom{n}{2} \binom{n}{2k-w} \binom{2k}{k} (k+1)^{2(k-1)}\right]^2 &\le n^w \left[\frac{n^2}{2} \frac{n^{2k-w}}{(2k-w)!} \frac{(2k)!}{(k!)^2} (k+1)^{2(k-1)}\right]^2 \\
&\le n^w \left[\frac{n^{2k+2-w}}{2} \frac{(2k)^w}{(k!)^2} (k+1)^{2(k-1)}\right]^2 \\
&= \left(\frac{4k^2}{n}\right)^w \frac{n^{4k+4}}{4} (k!)^{-4} (k+1)^{4(k-1)} \\
&= (1+o(1)) \left(\frac{4k^2}{n}\right)^w \frac{n^{4k+4}}{4} \frac{1}{4\pi^2 k^6} e^{4k+4} \\
&= (1+o(1)) \left(\frac{4k^2}{n}\right)^w (16 \pi^2 k^6)^{-1} (en)^{4k+4},
\end{align*}
where $o(1)$ is uniform over $w$. Now by Lemma~\ref{lem:Y-moments} each of these $(\alpha,\beta)$ pairs has
\[ \EE[Y^{\alpha+\beta}] \le \lambda^{|\alpha \triangle \beta|} \rho^{|V(\alpha \triangle \beta)|} = \lambda^{2D} \rho^{2D+1-w}, \]
so the contribution from Case~1 to $\EE[f(Y)^2]$ is
\begin{align*}
\sum_{\substack{\alpha,\beta \in \cT_k \\ \textnormal{Case 1}}} \EE[Y^{\alpha+\beta}] &\le (1+o(1)) (16 \pi^2 k^6)^{-1} (en)^{4k+4} \lambda^{2D} \rho^{2D+1} \sum_{w \ge 0} \left(\frac{4k^2}{\rho n}\right)^w \\
&= (1+o(1)) (16 \pi^2 k^6)^{-1} (en)^{4k+4} \lambda^{2D} \rho^{2D+1},
\end{align*}
where we have used $k^2/(\rho n) = o(1)$ by our assumptions on $k,\rho$. We aim to show that this term dominates $\EE[f(Y)^2]$. To this end, define
\begin{equation}\label{eq:def-N}
N := k^{-6} (en)^{4k+4} \lambda^{2D} \rho^{2D+1} = k^{-6} \rho (en\lambda\rho)^{2D}.
\end{equation}

\subsubsection*{Case 2: $\alpha = \beta$.}

In this case we can directly compute, using Lemma~\ref{lem:Y-moments},
\[ \sum_{\substack{\alpha,\beta \in \cT_k \\ \textnormal{Case 2}}} \EE[Y^{\alpha+\beta}] \le |\cT_k| \, (\eta \rho + 1)^D \le (1-o(1))(16\pi k^3)^{-1} (en)^D (\eta \rho + 1)^D. \]
To show this is $o(N)$, we need to show
\[ k^3 \rho^{-1} \left(\frac{\eta\rho+1}{en \lambda^2 \rho^2}\right)^D = o(1). \]
This holds using the assumption on $\lambda$ along with $\eta\rho = o(1)$, and choosing $D$ to be a large enough multiple of $\log n$.

\subsubsection*{Case 3: $\alpha \cap \beta \ne \emptyset$ and $\alpha \ne \beta$.}

We classify the $(\alpha,\beta)$ pairs in this case based on the following parameters. Let $\ell := |\alpha \cap \beta| \ge 1$. We refer to the edges $\alpha \cap \beta$ as the ``core,'' which is always a forest. We define the ``core vertices'' $V_\core := V(\alpha \cap \beta) \cup \{1\}$, with vertex 1 always included by convention. Let $m \ge 1$ denote the number of connected components in the core, including $\{1\}$ in the case that vertex 1 is isolated. Vertices at which the core meets the remaining edges $\alpha \triangle \beta$ are called ``branch points'' and the number of them will be denoted
\begin{equation}\label{eq:def-b}
b := |V(\alpha \triangle \beta) \cap V_\core| \ge m.
\end{equation}
Also, the number of vertices shared by $\alpha$ and $\beta$ outside the core will be denoted
\begin{equation}\label{eq:def-w}
w := |(V(\alpha) \cap V(\beta)) \setminus V_\core|.
\end{equation}
We will bound the number of $(\alpha,\beta)$ pairs for a given tuple $(\ell,m,b,w)$. For the time being, we will forget the specific structure of trees in $\cT_k$ and overcount by allowing $\alpha,\beta$ to be any $D$-edge trees that contain vertex 1. It is a standard consequence of Cayley's formula that the number of \emph{rooted} forests on $v$ (labeled) vertices is exactly $(v+1)^{v-1}$, which is an upper bound on the number of forests. Since the core is a forest on $\ell+m$ vertices including vertex 1, the number of ways to choose the core is at most
\begin{align*}
\binom{n}{\ell+m-1} (\ell+m+1)^{\ell+m-1}
&\le \left(\frac{en}{\ell+m-1}\right)^{\ell+m-1} (\ell+m+1)^{\ell+m-1} \\
&= (en)^{\ell+m-1} \left(1 + \frac{2}{\ell+m-1}\right)^{\ell+m-1} \\
&\le e^2 (en)^{\ell+m-1}.
\end{align*}
Here we have used the standard bound $\binom{n}{k} \le (en/k)^k$, valid for $1 \le k \le n$. (In the case $\ell+m-1 = 0$, the final conclusion above can be verified by hand, despite the division by 0 in intermediate steps.) Next we need to choose $w$ shared vertices, along with $u := D+1-\ell-m-w$ additional vertices for each of $\alpha,\beta$. Then choose $b$ branch points among the $\ell+m$ core vertices. Now $\alpha \setminus \beta$ is a forest on at most $v := D+1-\ell-m+b$ vertices, so the number of ways to choose $\alpha \setminus \beta$ is a most $(v+1)^{v-1}$, and the same holds for $\beta \setminus \alpha$. Putting it together, the number of $(\alpha,\beta)$ pairs for a given tuple $(\ell,m,b,w)$ is at most
\[ e^2 (en)^{\ell+m-1} \binom{n}{w} \binom{n}{u}^2 \binom{\ell+m}{b} (v+1)^{2(v-1)} \le e^2 (en)^{\ell+m-1} n^w (\ell+m)^b \left(\frac{en}{u}\right)^{2u} (v+1)^{2(v-1)}. \]
Focusing on a particular portion of the above,
\begin{align*}
\frac{(v+1)^{2(v-1)}}{u^{2u}} &\le \left(\frac{v+1}{u}\right)^{2u} (v+1)^{2(v-u)-2} \\
&= \left(1 + \frac{v-u+1}{u}\right)^{2u} (v+1)^{2(v-u)-2} \\
&\le e^{2(v-u+1)} (v+1)^{2(v-u)-2} \\
&= e^{2(b+w+1)} (v+1)^{2(b+w)-2},
\end{align*}
so the above count of $(\alpha,\beta)$ pairs becomes
\begin{align*}
&e^2 (en)^{\ell+m-1+2u} n^w (\ell+m)^b e^{2(b+w+1)} (v+1)^{2(b+w)-2} \\
&\quad \le e^2 (en)^{\ell+m-1+2u} n^w (D+1)^b e^{2(b+w+1)} (D+2)^{2(b+w)} \\
&\quad = e^4 (en)^{\ell+m-1+2(d+1-\ell-m-w)} n^w (D+1)^b e^{2(b+w)} (D+2)^{2(b+w)} \\
&\quad = e^4 \left(e^2(D+1)(D+2)^2\right)^b \left(\frac{(D+2)^2}{n}\right)^w (en)^{2D+1-\ell-m} \\
&\quad \le e^4 \left(e^2(D+2)^3\right)^b \left(\frac{(D+2)^2}{n}\right)^w (en)^{2D+1-\ell-m}.
\end{align*}
By Lemma~\ref{lem:Y-moments}, and observing $\tilde{b}-\tilde{m} = b-m$, each of these $(\alpha,\beta)$ pairs has (recalling $\alpha \ne \beta$)
\begin{align*}
\EE[Y^{\alpha+\beta}] &\le \lambda^{|\alpha \triangle \beta|} \rho^{|V(\alpha \triangle \beta)|} (\eta+1)^{b-m} (\eta\rho+1)^{\ell-(b-m)} \\
&= \lambda^{2(D-\ell)} \rho^{2(D+1-\ell-m)+b-w} (\eta+1)^{b-m} (\eta\rho+1)^{\ell-(b-m)},
\end{align*}
so the contribution from Case~3 to $\EE[f(Y)^2]$ is
\begin{align*}
\sum_{\substack{\alpha,\beta \in \cT_k \\ \textnormal{Case 3}}} \EE[Y^{\alpha+\beta}] &\le \sum_{\ell,m,b,w} e^4 (en\lambda\rho)^{2D} \left(\frac{\eta\rho+1}{en\lambda^2\rho^2}\right)^\ell \left(\frac{1}{en \rho^2}\right)^{m-1} \left(e^2(D+2)^3 \rho\right)^b \left(\frac{(D+2)^2}{n \rho}\right)^w \left(\frac{\eta+1}{\eta\rho+1}\right)^{b-m} \\
&= (1+o(1)) e^5 n\rho^2 (en\lambda\rho)^{2D} \sum_{\ell,m,b} \left(\frac{\eta\rho+1}{en\lambda^2\rho^2}\right)^\ell \left(\frac{e^2(D+2)^3 \rho}{en\rho^2}\right)^m \left(\frac{e^2(D+2)^3 \rho(\eta+1)}{\eta\rho+1}\right)^{b-m} \\
&\le (1+o(1)) N e^5 k^6 n \rho \sum_{\ell,m,b} (1-\delta)^\ell \left(\frac{e(D+2)^3}{n\rho}\right)^m \left(\frac{e^2(D+2)^3 \rho(\eta+1)}{\eta\rho+1}\right)^{b-m}
\end{align*}
for a constant $\delta > 0$ depending on $\epsilon$. Here we have used the assumptions $D^2/(n\rho) = o(1)$ and $\eta\rho = o(1)$. We aim to show that the above quantity is $o(N)$, where $N$ was defined in~\eqref{eq:def-N}. It will now be important to use the specific structure of trees in $\cT_k$ to see which $(\ell,b)$ pairs are actually realizable. Note that $b \ge m$ always, but it will additionally be crucial to know that for $m=1$ we must either have $b \ge 2$ or large $\ell$.

\begin{lemma}\label{lem:b-ell}
Every pair $\alpha,\beta \in \cT_k$ with $\alpha \cap \beta \ne \emptyset$ must either have $b \ge 2$ or $\ell \ge k+1$.
\end{lemma}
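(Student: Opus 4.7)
The plan is to prove the contrapositive: assuming $b\le 1$, I will show that either $\alpha=\beta$ (so $\ell=2k+2\ge k+1$) or one of the two $k$-edge subtrees of $\alpha$ rooted at a neighbor of vertex $1$, together with its incident edge, lies entirely in the core $\alpha\cap\beta$, yielding $\ell\ge k+1$.

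The workhorse is a simple \emph{propagation principle}: a core vertex $v$ that is not a branch point satisfies $v\notin V(\alpha\triangle\beta)$, so every edge of $\alpha$ (and every edge of $\beta$) incident to $v$ already lies in $\alpha\cap\beta$. Iterating, if we walk from any non-branch-point core vertex along $\alpha$-edges, we stay inside a single connected component of the core until we hit a branch point or exhaust $\alpha$.

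Let $a_1,a_2$ be the $\alpha$-neighbors of vertex $1$ with $k$-edge subtrees $A_1,A_2$, and $b_1,b_2$ the analogous $\beta$-neighbors. I will split according to $|\{a_1,a_2\}\cap\{b_1,b_2\}|$. \textbf{Case A} (both shared): both edges $(1,a_1),(1,a_2)$ are in the core, vertex $1$ is not a branch point, and since $b\le 1$ the sole possible branch point $v^*$ lies in at most one of the vertex-disjoint subtrees $A_1,A_2$; propagation through the $v^*$-free subtree plus its connecting edge yields $k+1$ core edges. \textbf{Case B} (exactly one shared, WLOG $a_1=b_1$): vertex $1$ is itself a branch point because $(1,a_2)\in\alpha\setminus\beta$, so by $b\le 1$ it is the unique one; then $a_1$ is a non-branch-point core vertex, and propagation through $A_1$ gives $k+1$ core edges. \textbf{Case C} (nothing shared at vertex $1$): vertex $1$ is the unique branch point, and following $\alpha$-edges from either endpoint of any core edge toward vertex $1$ eventually forces an $\alpha$-edge incident to vertex $1$ into the core, contradicting the case assumption; so Case C cannot arise when $\alpha\cap\beta\ne\emptyset$.

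The only real bookkeeping subtlety is in Case A: a priori $v^*$ could equal $a_1$ or $a_2$, but $V(A_1)$ and $V(A_2)$ are disjoint from each other and from $\{1\}$ by the tree structure, so $v^*$ is contained in at most one of them, letting propagation run on the opposite side without interference. Beyond this, the whole argument is a short tree-traversal induction with no delicate estimates required.
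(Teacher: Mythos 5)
Your proof is correct and follows essentially the same strategy as the paper's: both rest on the observation that a core vertex which is not a branch point has all its incident $\alpha$- and $\beta$-edges in the core, which lets you propagate membership through an entire subtree until a branch point is encountered. Your three-way split on $|\{a_1,a_2\}\cap\{b_1,b_2\}|$ is just a refinement of the paper's two-way split on whether vertex $1$ is a branch point (Case A is the paper's first case; Cases B and C together are the paper's second case), so the arguments coincide at the level of ideas.
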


\begin{proof}
First consider the case where vertex 1 (the ``root'') is not a branch point. This means the two root-incident edges of $\alpha$ match those of $\beta$. If both subtrees of $\alpha$ contain a branch point then $b \ge 2$. Otherwise, one subtree has no branch point, but this means the entire subtree must be contained in the core, so $\ell \ge k+1$.

Now consider the case where the root is a branch point. Since $\alpha \cap \beta \ne \emptyset$ by assumption, (at least) one of the two subtrees of $\alpha$ has an edge in the core (where the root-incident edge is included). If this subtree has another branch point (aside from the root) then $b \ge 2$. Otherwise, the entire subtree must be contained in the core, so $\ell \ge k+1$.
\end{proof}

Choose $k = \Theta(\log n)$ large enough so that $(1-\delta)^{k+1} \le n^{-1}$. Continuing from above and isolating the $m=1$ term,
\begin{align*}
\sum_{\substack{\alpha,\beta \in \cT_k \\ \textnormal{Case 3}}} \EE[Y^{\alpha+\beta}] &\le O(N D^6 n \rho) \sum_{\ell,b} \Bigg[ (1-\delta)^\ell \left(\frac{e(D+2)^3}{n\rho}\right) \left(\frac{e^2(D+2)^3 \rho(\eta+1)}{\eta\rho+1}\right)^{b-1} \\
&\qquad + \sum_{m \ge 2} (1-\delta)^\ell \left(\frac{e(D+2)^3}{n\rho}\right)^m \left(\frac{e^2(D+2)^3 \rho(\eta+1)}{\eta\rho+1}\right)^{b-m} \bigg] \\
&\le O(N D^6 \rho^{-1} (\eta+1)^{-1}) \sum_{\ell,b} (1-\delta)^\ell \, O(D^3 \rho (\eta+1))^b \\
&\qquad + O(N D^{12} (n \rho)^{-1}) \sum_{m \ge 2} \sum_{b \ge m} O(D^3 (n \rho)^{-1})^{m-2} \, O(D^3 \rho (\eta+1))^{b-m},
\intertext{and now using Lemma~\ref{lem:b-ell},}
&\le O\left(\frac{N D^6}{\rho(\eta+1)}\right) \left[\sum_{\ell \ge 1, b \ge 2} (1-\delta)^\ell \, O(D^3 \rho (1+\eta))^b + \sum_{\ell \ge k+1, b \ge 1} (1-\delta)^\ell \, O(D^3 \rho (1+\eta))^b\right] + o(N) \\
&\le O\left(\frac{N D^6}{\rho(\eta+1)}\right) \Big[O(D^6 \rho^2 (1+\eta)^2) + O(n^{-1} \cdot D^3 \rho (1+\eta))\Big] + o(N) \\
&= o(N).
\end{align*}

\begin{proof}[Proof of Theorem~\ref{thm:main-subm}(b)]
Now combining the three cases above, we conclude
\[ \EE[f(Y)^2] \le (1+o(1)) (16 \pi^2 k^6)^{-1} (en)^{4k+4} \lambda^{2D} \rho^{2D+1}. \]
Also recall
\[ \EE[f(Y) \cdot \theta_1] = |\cT_k| \cdot \lambda^D \rho^{D+1} \ge (1-o(1)) (4\pi k^3)^{-1} (en)^{2k+2} \lambda^D \rho^{D+1}, \]
which gives $\Corr_{\le C \log n} = 1-o(1)$ as desired.

To recap, the assumptions we needed were
\begin{equation}\label{eq:subm-upper-final-cond}
\lambda \ge (1+\eps)(\rho\sqrt{en})^{-1}, \quad n \rho = \omega(\log^{12} n), \quad \rho(\eta+1) = o(\log^{-12} n). 
\end{equation}
Specializing to $\eta = \lambda^2$, we will show that the last assumption can be replaced by $\rho = o(\log^{-12} n)$. Note that $\Corr_{\le D}$ is increasing in $\lambda$ (with all else held fixed) since a random polynomial can simulate additional noise; see~\cite[Claim~A.2 \& Fact~1.1]{SW-estimation}. This means we can shrink $\lambda$ as necessary to assume without loss of generality that the first assumption holds with equality: $\lambda = (1+\eps)(\rho\sqrt{en})^{-1}$. Now the requirement $\rho\lambda^2 = o(\log^{-12} n)$ reduces to $n \rho = \omega(\log^{12} n)$, which is the second assumption.
\end{proof}

\subsection{Planted Dense Subgraph}

This section is devoted to the proof of Theorem~\ref{thm:main-pds}(b).

The estimator and analysis will be similar to the case of planted submatrix in Section~\ref{sec:subm-upper}. Define the set of trees $\cT_k$ as in Section~\ref{sec:subm-upper} and consider the estimator
\[ f(Y) = \sum_{\alpha \in \cT_k} \prod_{(i,j) \in \alpha} \frac{Y_{ij}-p_0}{\sqrt{p_0(1-p_0)}}. \]
Recall that $\alpha \in \cT_k$ has $|\alpha| = D := 2k+2$. Compute
\[ \EE[f(Y) \cdot \theta_1] = |\cT_k| \cdot \rho^{D+1} \left(\frac{p_1-p_0}{\sqrt{p_0(1-p_0)}}\right)^D. \]
The main challenge will be to bound
\[ \EE[f(Y)^2] = \sum_{\alpha,\beta \in \cT_k} \EE \left(\prod_{(i,j) \in \alpha} \frac{Y_{ij}-p_0}{\sqrt{p_0(1-p_0)}}\right) \left(\prod_{(i,j) \in \beta} \frac{Y_{ij}-p_0}{\sqrt{p_0(1-p_0)}}\right) =: \sum_{\alpha,\beta \in \cT_k} R_{\alpha \beta}. \]
For a pair $(\alpha,\beta)$, recall the parameters $\tilde{m}$ and $\tilde{b}$ from Section~\ref{sec:subm-upper}.

\begin{lemma}
\label{lem:PDS:estimate}
Suppose $p_1 \ge p_0$ and define
\begin{equation}\label{eq:subg-subst}
\eta := \frac{p_1-p_0}{p_0(1-p_0)} \ge 0 \qquad \text{and} \qquad \lambda := \frac{p_1-p_0}{\sqrt{p_0(1-p_0)}}.
\end{equation}
For $\alpha,\beta \in \cT_k$, we have the following bounds:
\begin{itemize}
    \item If $\alpha = \beta$ then $R_{\alpha\beta} \le (\eta \rho + 1)^{|\alpha|}$.
    \item If $\alpha \ne \beta$ then $R_{\alpha\beta} \le \lambda^{|\alpha \triangle \beta|} \rho^{|V(\alpha \triangle \beta)|} (\eta + 1)^{\tilde{b}-\tilde{m}} (\eta \rho + 1)^{|\alpha \cap \beta| - (\tilde{b} - \tilde{m})}$.
\end{itemize}
\end{lemma}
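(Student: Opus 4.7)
The plan is to reduce this lemma to the planted submatrix computation (Lemma~\ref{lem:Y-moments}) by working with the normalized variables $\tilde{Y}_{ij} := (Y_{ij}-p_0)/\sqrt{p_0(1-p_0)}$. Conditional on $\theta$, the $Y_{ij}$ are independent, so
\[
R_{\alpha\beta} \;=\; \EE\left[\prod_{(i,j)\in\alpha\triangle\beta}\EE[\tilde Y_{ij}\mid\theta]\prod_{(i,j)\in\alpha\cap\beta}\EE[\tilde Y_{ij}^{2}\mid\theta]\right].
\]
For $(i,j)\in\alpha\triangle\beta$ a one-line computation gives $\EE[\tilde Y_{ij}\mid\theta]=\lambda\,\theta_i\theta_j$, paralleling the planted submatrix case.

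The key ingredient, and the only genuinely new calculation, is to handle the squared factors. When $\theta_i\theta_j=0$ we have $\EE[\tilde Y_{ij}^{2}\mid\theta]=1$ by definition; when $\theta_i\theta_j=1$, $Y_{ij}\sim\Ber(p_1)$ and so
\[
\EE[\tilde Y_{ij}^{2}\mid\theta_i\theta_j=1]=\frac{p_1(1-p_1)+(p_1-p_0)^2}{p_0(1-p_0)}.
\]
Unlike the Gaussian case where this expression is exactly $1+\lambda^2$, here I only expect an inequality. Using $p_0\le p_1$, a short algebraic manipulation gives $p_1(1-p_1)+(p_1-p_0)^2\le p_1-p_0^2=p_0(1-p_0)+(p_1-p_0)=(1+\eta)\,p_0(1-p_0)$, so
\[
\EE[\tilde Y_{ij}^{2}\mid\theta]\;\le\;1+\eta\,\theta_i\theta_j.
\]

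Plugging these two bounds back in yields
\[
R_{\alpha\beta}\;\le\;\EE\left[\prod_{(i,j)\in\alpha\cap\beta}(1+\eta\,\theta_i\theta_j)\prod_{(i,j)\in\alpha\triangle\beta}\lambda\,\theta_i\theta_j\right],
\]
which is precisely the expression obtained in the second line of the planted submatrix proof (before Eq.~\eqref{eq:eta-comparison}), with the same $\eta$ and $\lambda$ playing the roles they did there. From this point on, the argument is identical: factor out $\lambda^{|\alpha\triangle\beta|}\rho^{|V(\alpha\triangle\beta)|}$ by reducing to the conditional expectation given $\theta_i=1$ for $i\in\tilde B$, root each component of $\alpha\cap\beta$ at a vertex of $\tilde B$ (when $\alpha\ne\beta$), and bound each core edge by either $\eta+1$ (endpoint in $\tilde B$) or $\eta\theta_j+1$ (endpoint outside $\tilde B$), then take expectations.

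The main obstacle is really just confirming the second-moment inequality $\EE[\tilde Y_{ij}^{2}\mid\theta_i\theta_j=1]\le 1+\eta$; the rest is a clean reduction. I expect no complications because the bound is stated as an upper bound rather than an equality, so the slack from $p_1(1-p_1)+(p_1-p_0)^2\le p_1-p_0^2$ causes no issue.
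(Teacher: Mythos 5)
Your proof is correct and takes essentially the same approach as the paper: reduce to the conditional moments given $\theta$, bound $\EE[\tilde Y_{ij}^2\mid\theta]\le 1+\eta\theta_i\theta_j$, and then invoke the argument from Lemma~\ref{lem:Y-moments}. The only cosmetic difference is that the paper computes the conditional second moment exactly as $1+(1-2p_0)\eta\theta_i\theta_j$ via the identity $Y_{ij}^2=Y_{ij}$ and then drops the $(1-2p_0)$ factor by taking absolute values, while you arrive at the same inequality via the variance formula; both yield the identical intermediate expression.
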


\begin{proof}
With $\tilde{B}$ defined as in Section~\ref{sec:subm-upper}, compute
\begin{align*}
R_{\alpha\beta} &= \EE \prod_{(i,j) \in \alpha \cap \beta} \left(\frac{Y_{ij}-p_0}{\sqrt{p_0(1-p_0)}}\right)^2 \prod_{(i,j) \in \alpha \triangle \beta} \left(\frac{Y_{ij}-p_0}{\sqrt{p_0(1-p_0)}}\right) \\
&= \rho^{|V(\alpha \triangle \beta)|} \left(\frac{p_1-p_0}{\sqrt{p_0(1-p_0)}}\right)^{|\alpha \triangle \beta|} \EE \left[\prod_{(i,j) \in \alpha \cap \beta} \left(\frac{Y_{ij}-p_0}{\sqrt{p_0(1-p_0)}}\right)^2 \;\Bigg|\; \theta_i = 1,~\forall i \in \tilde{B}\right] \\
&= \rho^{|V(\alpha \triangle \beta)|} \lambda^{|\alpha \triangle \beta|} \, \EE \left[\prod_{(i,j) \in \alpha \cap \beta} \frac{Y_{ij} - 2 p_0 Y_{ij} + p_0^2}{p_0(1-p_0)} \;\Bigg|\; \theta_i = 1,~\forall i \in \tilde{B}\right] \\
&= \rho^{|V(\alpha \triangle \beta)|} \lambda^{|\alpha \triangle \beta|} \, \EE \left[\prod_{(i,j) \in \alpha \cap \beta} \frac{(1 - 2 p_0)[p_0 + (p_1-p_0)\theta_i\theta_j] + p_0^2}{p_0(1-p_0)} \;\Bigg|\; \theta_i = 1,~\forall i \in \tilde{B}\right] \\
&= \rho^{|V(\alpha \triangle \beta)|} \lambda^{|\alpha \triangle \beta|} \, \EE \left[\prod_{(i,j) \in \alpha \cap \beta} \big((1-2p_0)\eta \theta_i \theta_j + 1\big) \;\Bigg|\; \theta_i = 1,~\forall i \in \tilde{B}\right].
\end{align*}
Taking absolute values on both sides, we can bound
\[
\begin{split}
|R_{\al,\be}|
&\leq \rho^{|V(\alpha \triangle \beta)|} \lambda^{|\alpha \triangle \beta|} \, \EE \left[\prod_{(i,j) \in \alpha \cap \beta} \Big|(1-2p_0)\eta \theta_i \theta_j + 1\Big|\;\Bigg|\; \theta_i = 1,~\forall i \in \tilde{B}\right]\\
&\leq \rho^{|V(\alpha \triangle \beta)|} \lambda^{|\alpha \triangle \beta|} \, \EE \left[\prod_{(i,j) \in \alpha \cap \beta} (\eta \theta_i \theta_j + 1) \;\Bigg|\; \theta_i = 1,~\forall i \in \tilde{B}\right],
\end{split}
\]
where the last inequality holds because $\big|(1-2p_0)\eta \theta_i\theta_j+1\big|\leq \eta \theta_i\theta_j+1$ by noting that $\eta,\theta_i,\theta_j$ are all non-negative and $p_0\in [0,1]$. Comparing the RHS to~\eqref{eq:eta-comparison}, we now follow the proof of Lemma~\ref{lem:Y-moments} to obtain the result.
\end{proof}

\begin{proof}[Proof of Theorem~\ref{thm:main-pds}(b)]
Our situation is now identical to that of planted submatrix in Section~\ref{sec:subm-upper}, with the substitutions~\eqref{eq:subg-subst}. From the proof in Section~\ref{sec:subm-upper} we conclude $\Corr_{\le C \log n} = 1-o(1)$ under the assumptions~\eqref{eq:subm-upper-final-cond}, namely
\[ \lambda \ge (1+\epsilon)(\rho\sqrt{en})^{-1}, \quad n \rho = \omega(\log^{12} n), \quad \rho(\eta+1) = o(\log^{-12} n). \]
We will show that the last condition can be replaced by $\rho = o(\log^{-12} n)$ and $p_0 = \omega(n^{-1} \log^{24} n)$. Given an input graph $Y$, consider the operation that selects each edge independently with some probability $s$, and then resamples the selected edges as Bernoulli$(p_0)$. This has the effect of decreasing $p_1$ to some value in the range $[p_0,p_1]$ while leaving all other parameters unchanged. Since a random polynomial can implement this operation, $\Corr_{\le D}$ is increasing in $p_1$; see~\cite[Claim~A.1 \& Fact~1.1]{SW-estimation}. This means we can shrink $p_1$ as necessary to assume without loss of generality that the first assumption holds with equality: $\lambda = (1+\eps)(\rho\sqrt{en})^{-1}$. Since $\eta = \frac{ \lambda}{\sqrt{p_0(1-p_0)}}$, the requirement $\rho\eta = o(\log^{-12} n)$ is implied by $p_0 = \omega(n^{-1} \log^{24} n)$.
\end{proof}

\subsection{Spiked Wigner}

This section is devoted to the proof of Theorem~\ref{thm:main-wigner}(b).

Note that $\Corr_{\le D}$ is increasing in $\lambda$ (with all else held fixed) since a random polynomial can simulate additional noise; see~\cite[Claim~A.2 \& Fact~1.1]{SW-estimation}. This means we can shrink $\lambda$ as necessary to assume without loss of generality that $\lambda = 1+\eta$ for a constant $\eta \in (0,1)$.

Throughout, we let $K\equiv \E\pi^4<\infty$ and assume $D=\Theta(\log n)$, with the specific scaling of $D$ to be chosen later. In addition, we denote the rows of the matrix $U\in \R^{n\times m}$ by $x_i \in \R^m$ for $i \in [n]$, so the entries $(x_{i,k})_{k \in [m]}$ of $x_i$ are drawn i.i.d.\ from $\pi$.

Let $\cS_D\subseteq \{0,1\}^{\bcn}$ denote the set of self-avoiding (simple) paths from vertex $1$ to vertex $2$ with length $D$ (i.e., $D$ edges). Consider the estimator
\begin{equation}\label{eq:f:saw}
f(Y)=\frac{1}{|\cS_D|}\sum_{\al \in \cS_D} Y^{\al}.
\end{equation}
Note that
\[
\E[f(Y)\cdot x]=\left(\frac{\la}{n}\right)^{(D+1)/2}\E\Big[\prod_{(i,j)\in \al+\one_{(1,2)}}\langle x_i,x_j\rangle\Big]=m\left(\frac{\la}{n}\right)^{(D+1)/2},
\]
where $\al\in \cS_D$ is arbitrary. Here, the first equality holds by first taking expectation w.r.t.\ the noise $Z$, and the second equality holds since we assumed $\E\pi=0$, $\E\pi^2=1$. Thus, the main challenge is to bound
\begin{equation}\label{eq:express:f:sec:mo}
\E[f(Y)^2]=\frac{1}{|\cS_D|^2}\sum_{\al,\be\in \cS_D} \E[Y^{\al+\be}].
\end{equation}
Our first task is to compute $\E[Y^{\al+\be}]$. The following lemma will be useful here.
\begin{lemma}\label{lem:compute:path}
Suppose $\ga$ is a self-avoiding path joining two distinct vertices. Then,
\[
\E\bigg[\prod_{(i,j)\in \ga}\langle x_i,x_j\rangle^2\bigg]=m^{|\ga|}\cdot \left(1+\frac{K-1}{m}\right)^{|\ga|-1},
\]
which is at most $(Km)^{|\ga|}$.
\end{lemma}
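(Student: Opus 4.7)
My plan is to prove the identity by iteratively integrating out the vertices of $\gamma$ one at a time, using that a self-avoiding path has distinct vertices, so the random vectors $x_{v_0},x_{v_1},\ldots,x_{v_L}$ (where $v_0,\ldots,v_L$ enumerate the path's vertices in order with $L=|\gamma|$) are mutually independent. The proof reduces everything to two elementary Gaussian-style moment calculations for a single row.

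The first calculation is at the endpoint: since $\E\pi=0$ and $\E\pi^2=1$, expanding the inner product gives
\[
\E_{x_{v_0}}\langle x_{v_0},x_{v_1}\rangle^2=\sum_{k,\ell}\E[x_{v_0,k}x_{v_0,\ell}]\,x_{v_1,k}x_{v_1,\ell}=\|x_{v_1}\|^2.
\]
The second, which I would call the ``absorption identity,'' states that for any deterministic (or independent) $y\in\RR^m$,
\[
\E_x\big[\|x\|^2\langle x,y\rangle^2\big]=(K+m-1)\|y\|^2.
\]
This follows from expanding $\|x\|^2\langle x,y\rangle^2=\sum_{k,a,b}x_k^2 x_a x_b\, y_a y_b$ and noting that $\E[x_k^2 x_a x_b]$ vanishes unless $a=b$; when $a=b=k$ it equals $K$, and when $a=b\neq k$ it equals $1$. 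Summing gives $\sum_a y_a^2(K+m-1)$.

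With these two facts in hand, I would integrate in order $x_{v_0},x_{v_1},\ldots,x_{v_{L-1}}$. The first integration turns the product into $\|x_{v_1}\|^2\prod_{t=2}^L\langle x_{v_{t-1}},x_{v_t}\rangle^2$, and each subsequent integration of $x_{v_t}$ applies the absorption identity to $\|x_{v_t}\|^2\langle x_{v_t},x_{v_{t+1}}\rangle^2$, producing a factor $(K+m-1)$ and leaving $\|x_{v_{t+1}}\|^2$ times the rest. After $L-1$ absorptions, the integrand collapses to $(K+m-1)^{L-1}\|x_{v_L}\|^2$, whose expectation is $(K+m-1)^{L-1}m$. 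Factoring $K+m-1=m(1+(K-1)/m)$ yields
\[
\E\prod_{t=1}^L\langle x_{v_{t-1}},x_{v_t}\rangle^2=m^{L}\big(1+(K-1)/m\big)^{L-1},
\]
as claimed. The upper bound $(Km)^L$ follows immediately from $1+(K-1)/m\le K$ (valid since $m\ge 1$ and $K=\E\pi^4\ge(\E\pi^2)^2=1$).

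I do not expect a substantial obstacle: the only step requiring care is verifying the absorption identity, which is a routine fourth-moment computation, and ensuring the sequential factorization of expectations is legitimate, which is guaranteed by the distinctness of vertices on a self-avoiding path.
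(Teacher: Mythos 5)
Your proof is correct, and it takes a genuinely different route from the paper's. The paper expands $\prod_{(i,j)\in\gamma}\langle x_i,x_j\rangle^2$ entrywise into a double sum over colorings $k,\ell\in[m]^{\gamma}$ of the edges, uses $\E\pi=0$ together with the path structure to argue that only the diagonal $k=\ell$ survives, and then recognizes the surviving sum as $m^{|\gamma|}\,\E\bigl[K^{\#\{t:\,U_t=U_{t+1}\}}\bigr]$ where $U_1,\ldots,U_{|\gamma|}$ are i.i.d.\ Unif$([m])$ and the collision indicators are independent $\Ber(1/m)$, which yields the stated product formula. Your approach instead peels off vertices sequentially from one endpoint of the path, using the endpoint identity $\E_{x_{v_0}}\langle x_{v_0},y\rangle^2=\|y\|^2$ and the ``absorption identity'' $\E_x[\|x\|^2\langle x,y\rangle^2]=(K+m-1)\|y\|^2$ (both of which are correct routine fourth-moment calculations; in the latter, the case $a\ne b$ always vanishes because some entry appears to an odd power and $\E\pi=0$, regardless of $\E\pi^3$). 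The sequential conditioning is legitimate precisely because the path is self-avoiding, so the $x_{v_t}$ are mutually independent, and the bookkeeping $(K+m-1)=m(1+(K-1)/m)$ and $K\ge 1$ (Jensen) closes the argument. Your route is arguably more mechanical and easier to verify line by line, while the paper's coloring/collision viewpoint is what it reuses (with more work) for the non-path unions arising in Lemma~\ref{lem:compute:Y:al:plus:be}; for this particular lemma the two are equivalent in content and effort.
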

\begin{proof}
Expanding the LHS in terms of the entries $(x_{i,k})_{i \in [n], k \in [m]}$ yields
\[
\E\bigg[\prod_{(i,j)\in \ga}\langle x_i,x_j\rangle^2\bigg]=\sum_{k=(k_{ij}), \ell=(\ell_{ij})\in [m]^{\ga}}\E\bigg[\prod_{(i,j)\in \ga}x_{i,k_{ij}}x_{i,\ell_{ij}}x_{j,k_{ij}}x_{j,\ell_{ij}}\bigg].
\]
Because $\E \pi=0$, we may restrict this sum to $k,\ell \in [m]^{\ga}$ that satisfy the following property: for any $k_{ij}$ (resp.\ $\ell_{ij}$) where $(i,j)\in \ga$, either $k_{ij}=\ell_{ij}$ or there exists $j'\neq j$ such that $(i,j')\in \ga$ and either $k_{ij'}$ or $\ell_{ij'}$ equals $k_{ij}$ (resp.\ $\ell_{ij}$). Moreover, since $\ga$ is a self-avoiding path, it can be easily seen that the only $k,\ell$ satisfying this property are $k,\ell$ such that $k=\ell$ (start from the one endpoint of $\ga$ and apply the property). Consequently, recalling the notation $K\equiv \E \pi^4$, the above sum equals
\[
\sum_{k\in [m]^{\ga}}\E\bigg[\prod_{(i,j)\in \ga}x_{i,k_{ij}}^2 x_{j,k_{ij}}^2\bigg]=m^{|\ga|}\cdot \E\bigg[K^{\sum_{t=1}^{|\ga|-1}\one_{U_t=U_{t+1}}}\bigg],
\]
where $U_1,\ldots, U_{|\ga|}$ are i.i.d.\ $\textnormal{Unif}([m])$ random variables. Finally, note that the random variables $(\one_{U_t=U_{t+1}})_{1 \leq t\leq |\ga|-1}$ are independent and distributed according to $\Ber(1/m)$. Therefore, the RHS equals $m^{|\ga|}(1+(K-1)/m)^{|\ga|-1}$, which concludes the proof.
\end{proof}

\begin{lemma}\label{lem:compute:Y:al:plus:be}
Let $\al,\be\in \cS_D$ and denote $v:=|V(\al)\cap V(\be)|$ and $\ell:=|\al\cap \be|$. Then,
\[
0\leq \EE[Y^{\al+\be}]\leq (Km)^{v-\ell-1}\cdot \left(1+\frac{Km\la}{n}\right)^{\ell}\left(\frac{\la}{n}\right)^{D-\ell}.
\]
\end{lemma}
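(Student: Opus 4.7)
The plan is to integrate out the Gaussian noise, expand the remaining product over subsets of $\al\cap\be$, and bound each resulting term using the self-avoiding-path structure of $\al$ and $\be$.

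First, conditioning on the signal $U$, independence of $Z$ gives $\E[Y_{ij}\mid X]=X_{ij}$ and $\E[Y_{ij}^2\mid X]=X_{ij}^2+1$. Since $\al+\be$ has multiplicity $2$ on $\al\cap\be$ and multiplicity $1$ on $\al\triangle\be$, and $|\al\triangle\be|=2(D-\ell)$, substituting $X_{ij}=\sqrt{\la/n}\,\langle x_i,x_j\rangle$ yields
\[
\E[Y^{\al+\be}] = \Bigl(\tfrac{\la}{n}\Bigr)^{\!D-\ell}\E\Bigl[\prod_{(i,j)\in\al\cap\be}\!\bigl(1+\tfrac{\la}{n}\langle x_i,x_j\rangle^2\bigr)\prod_{(i,j)\in\al\triangle\be}\langle x_i,x_j\rangle\Bigr].
\]
Distributing the first product over subsets $S\subseteq\al\cap\be$ reduces the task to controlling
\[
T_S := \E\Bigl[\prod_{(i,j)\in S}\langle x_i,x_j\rangle^2\prod_{(i,j)\in\al\triangle\be}\langle x_i,x_j\rangle\Bigr],
\]
and the lemma will follow from the pointwise bound $0\le T_S\le (Km)^{v-\ell-1+|S|}$, after which the geometric sum $\sum_{S\subseteq\al\cap\be}(Km\la/n)^{|S|}=(1+Km\la/n)^{\ell}$ produces the claimed estimate.

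Next I would bound $T_S$. Expanding each $\langle x_i,x_j\rangle=\sum_{k\in[m]}x_{i,k}x_{j,k}$, the quantity $T_S$ becomes a sum over labelings $\sigma:E(G_S)\to[m]$ of the multigraph $G_S:=2S\cup(\al\triangle\be)$, weighted by a product $\prod_v \E\prod_{e\ni v}\pi^{\mathrm{mult}_\sigma(v,e)}$. Two structural facts drive the argument. First, every vertex of $\al\triangle\be$ has even degree: for $v\in\{1,2\}$ one has $\deg_\al(v)=\deg_\be(v)=1$, for internal vertices $\deg_\al(v),\deg_\be(v)\in\{0,2\}$, and in general $\deg_{\al\triangle\be}(v)=\deg_\al(v)+\deg_\be(v)-2\deg_{\al\cap\be}(v)$ is therefore even; hence $G_S$ is Eulerian. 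Consequently, only labelings in which each coordinate appears an even number of times at each vertex contribute, and every such contribution is a product of nonnegative moments of $\pi$, yielding $T_S\ge 0$. Second, because $\al$ and $\be$ are simple, $\al\cap\be$ is a forest with $\ell$ edges on $|V(\al\cap\be)|$ vertices, and together with the $v-|V(\al\cap\be)|$ isolated shared vertices (in particular $1$ and $2$), the shared skeleton has exactly $v-\ell$ connected components. I would then adapt the chain-labeling argument of Lemma~\ref{lem:compute:path}: labels propagate along each piece of the shared skeleton (contributing a factor of $m$ per edge), while each of the $v-\ell-1$ ``excess'' shared vertices beyond a single spine, as well as each edge of $S$, introduces additional free labels contributing at most a factor of $Km$ each, with the fourth-moment $\E\pi^4=K$ absorbing the collisions at vertices of $G_S$.

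The main obstacle is obtaining the sharp exponent $v-\ell-1+|S|$ in the bound on $T_S$: a naive labeling count gives an exponent scaling with $|\al\triangle\be|+|S|$ (that is, with $D$), which is far too weak to overcome the normalization $|\cS_D|^{-2}$ in~\eqref{eq:express:f:sec:mo}. Extracting the correct exponent requires a careful partition of $G_S$ into a spanning subforest (on which labels are forced to propagate with factor $m$ per edge) plus surplus edges (each introducing at most one new free label contributing a factor of at most $Km$), together with bookkeeping of the vertex-moments produced when labels collide. Making this Eulerian-walk accounting precise, while simultaneously preserving the nonnegativity, is the technical crux.
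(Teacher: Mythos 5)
Your setup matches the paper's first step: integrating out $Z$ gives the displayed identity, and expanding the product over subsets $S\subseteq\al\cap\be$ reduces the lemma to showing $0\le T_S\le(Km)^{v-\ell-1+|S|}$, which is exactly the right target. But you explicitly defer the core step (``Making this Eulerian-walk accounting precise \ldots is the technical crux''), and that step \emph{is} the lemma. What you have is an outline with a genuine gap, not a proof.

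The paper avoids the Eulerian bookkeeping entirely via one reduction you do not perform. Because $\al,\be$ are self-avoiding paths from $1$ to $2$, the symmetric difference $\al\tri\be$ decomposes into $L=v-\ell-1$ edge-disjoint cycles, each meeting $V(\al)\cap V(\be)$ in exactly two vertices $i_\ell,j_\ell$ and with pairwise disjoint interiors. Conditioning on $(x_i)_{i\in V(\al)\cap V(\be)}$ and averaging out the cycle interiors telescopes $\prod_{(i,j)\in\al\tri\be}\langle x_i,x_j\rangle$ down to $\prod_{\ell=1}^L\langle x_{i_\ell},x_{j_\ell}\rangle^2$, a product of squares. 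This rewrites $T_S=\E\bigl[\prod_{(i,j)\in S+\zeta}\langle x_i,x_j\rangle^2\bigr]$ where $\zeta$ has $L$ edges joining the contracted shared vertices and $S+\zeta$ is a disjoint union of simple paths. From there $T_S\ge 0$ is immediate, and Lemma~\ref{lem:compute:path} applied path-by-path gives $T_S\le(Km)^{|S|+L}$, which is the exponent you want. Keeping the unsquared $\prod_{\al\tri\be}\langle x_i,x_j\rangle$ around, as in your plan, is precisely what makes the accounting delicate; the cycle-averaging step removes that difficulty at the start.

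There is also a logical flaw in your nonnegativity argument as written. ``$G_S$ is Eulerian'' does \emph{not} by itself imply that only labelings with even color-degree at each vertex survive: a degree-$6$ vertex can carry the color multiset $\{k,k,k,k',k',k'\}$ with both color-degrees odd, and $\E[\pi^3]$ is not assumed to vanish. The parity claim is rescued here only because every vertex of $G_S$ has degree at most $4$ (this uses that $\al,\be$ are simple paths and $S\subseteq\al\cap\be$), and at degree $\le 4$ the only color-degree profiles with no part equal to $1$ are $\{4\}$ and $\{2,2\}$, both even. That degree bound must be stated and invoked; without it the parity argument is simply false, and you would have to handle potentially signed odd moments of $\pi$.
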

\begin{proof}
Compute $\E[Y^{\al+\be}]$ by taking expectation w.r.t.\ $Z$ first:
\begin{equation}\label{eq:compute:Y:al:be}
\E[Y^{\al+\be}]=\left(\frac{\la}{n}\right)^{|\al\tri \be|/2}\E\left[\prod_{(i,j)\in \al\cap \be}\left(\frac{\la}{n}\langle x_i,x_j\rangle^2+1\right)\cdot\prod_{(i,j)\in \al\tri\be}\langle x_i,x_j\rangle\right].   
\end{equation}
Note that since $\al,\be$ are self-avoiding paths joining vertices $1$ and $2$, $\al\tri \be$ can be uniquely decomposed into cycles where each cycle contains exactly $2$ vertices in $V(\al)\cap V(\be)$. Let $\al \tri \be =\sqcup_{\ell=1}^{L} \ga_\ell$ denote this decomposition and denote $\{i_{\ell},j_{\ell}\}= V(\ga_\ell)\cap V(\al)\cap V(\be)$ for $1\leq \ell \leq L$. Here, $L$ is determined by $v\equiv |V(\al)\cap V(\be)|$ and $\ell\equiv |\al\cap \be|$ as $L=v-\ell-1$. This is because we may double count the multi-set $\{1,i_1,j_1,\ldots, i_L,j_L, 2\}$ as $2L+2$ or the number of non-internal vertices in $\al\cap \be$, i.e., vertices with degree $1$ in $\al\cap\be$, plus twice the number of isolated vertices $(V(\al)\cap V(\be))\setminus V(\al\cap \be)$. For example, if $V(\al)\cap V(\be)=\{1,i,2\}$ for some $3\leq i\leq n$, then $L=1$ if $\al\cap \be = \one_{(1,i)}$ or $\al\cap \be=\one_{(i,2)}$, and $L=2$ if $\al\cap \be=\emptyset$. 

We compute the RHS of~\eqref{eq:compute:Y:al:be} by first conditioning on $(x_i)_{i\in V(\al)\cap V(\be)}$, i.e., averaging only w.r.t.\ vertices of $\al\tri \be$ except for $\{i_1,j_{1},\ldots, i_{L},j_L\}$. Note that the first product in the RHS of \eqref{eq:compute:Y:al:be} is $(x_i)_{i\in V(\al)\cap V(\be)}$-measurable because $V(\al\cap\be)\subseteq V(\al)\cap V(\be)$, and for the second product,
\[
\E\bigg[\prod_{(i,j)\in \al\tri \be}\langle x_i,x_j\rangle\bbgiven \left(x_i\right)_{i\in V(\al)\cap V(\be)}\bigg]=\prod_{\ell=1}^{L}\E\bigg[\prod_{(i,j)\in \ga_\ell}\langle x_i,x_j\rangle \bbgiven x_{i_{\ell}}, x_{j_{\ell}}\bigg]=\prod_{\ell=1}^{L}\langle x_{i_{\ell}}, x_{j_{\ell}}\rangle ^2,
\]
where the first equality holds since two different cycles $\ga_{\ell}$ and $\ga_{\ell'}$ can only share vertices among $V(\al)\cap V(\be)$ and the second equality can be seen by expanding $\prod_{(i,j)\in \ga_\ell} \sum_{k \in [m]} x_{i,k},x_{j,k}$ and using $\E \pi=0$, $\E\pi^2=1$. Thus, if we let $\zeta=\sum_{\ell=1}^{L}\one_{(i_{\ell},j_{\ell})}$ be the path joining $(i_1,j_1),\ldots ,(i_L,j_L)$, then $|\zeta|=L=v-\ell-1$ and 
\[
\begin{split}
\E[Y^{\al+\be}]
&=\left(\frac{\la}{n}\right)^{|\al\tri \be|/2}\E\left[\prod_{(i,j)\in \al\cap \be}\left(\frac{\la}{n}\langle x_i,x_j\rangle^2+1\right)\cdot\prod_{(i,j)\in \zeta}\langle x_i,x_j\rangle^2\right]\\
&=\left(\frac{\la}{n}\right)^{D-\ell}\sum_{0\leq \gamma\leq \al \cap\be} \left(\frac{\la}{n}\right)^{|\ga|}\E\left[\prod_{(i,j)\in \ga+\zeta}\langle x_i,x_j\rangle^2\right],
\end{split}
\]
where we used $|\al\tri \be|=2D-2|\al\cap\be|$ and that $\al\cap\be\cap \zeta=\empty$ in the last equality. From this expression, we have $\E[Y^{\al+\be}]\geq 0$. Further, note that since $\al\cap \be$ is a disjoint union of paths and $\zeta$ is a path, $\ga+\zeta$ must be a disjoint union of paths for any $\ga\le \al\cap\be$. As a consequence, we can upper bound the final expectation using Lemma~\ref{lem:compute:path} as $\E \prod_{(i,j)\in \ga+\zeta}\langle x_i,x_j\rangle^2\leq (Km)^{|\ga|+|\zeta|}$. Hence, 
\[
\E[Y^{\al+\be}]\leq \left(\frac{\la}{n}\right)^{D-\ell}(Km)^{|\zeta|}\sum_{0\leq \gamma\leq \al\cap\beta}\left(\frac{Km\la}{n}\right)^{|\ga|}=\left(\frac{\la}{n}\right)^{D-\ell}(Km)^{v-\ell-1}\cdot\left(1+\frac{Km\la}{n}\right)^{\ell},
\]
which concludes the proof.
\end{proof}

Our next task is to bound the number of $(\al,\be)\in \cS_D$ such that $|\al\cap\be|=\ell$ and $|V(\al)\cap V(\be)|=v$. Note that if $\ell=D$, then we must have $\al=\be$ and $v=D+1$, and the number of such $(\alpha,\beta)$ pairs is $|\cS_D|$. Otherwise, $\ell\leq D-1$ and in this case, we must have $v\geq \ell+2$. In the following, we let $(m)_{k}\equiv \prod_{\ell=0}^{k-1}(m-\ell)$ denote the falling factorial.

\begin{lemma}\label{lem:combinatorics:saw}
Fix $\al\in \cS_D$, $0\leq \ell \leq D-1$, and $\ell+2\leq v\leq D$. The number of $\be\in \cS_D$ such that $|\al\cap\be|=\ell$ and $|V(\al)\cap V(\be)|=v$ is at most $\binom{\ell+2}{2}\cdot D^{3(v-\ell-2)}\cdot n^{D+1-v}$. Moreover, if $v=\ell+2$, the number of such $\beta\in \cS_D$ equals $(\ell+1)\cdot(n-D-1)_{D-\ell-1}$.
\end{lemma}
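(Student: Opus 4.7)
The plan is to encode each $\beta\in\cS_D$ satisfying $|\alpha\cap\beta|=\ell$ and $|V(\alpha)\cap V(\beta)|=v$ by a tuple of structural data, and then bound the number of valid encodings. Write $\alpha=(1=a_0,a_1,\ldots,a_{D-1},a_D=2)$. Since both $\alpha$ and $\beta$ are self-avoiding paths from $1$ to $2$, the intersection $\alpha\cap\beta$ decomposes into $L$ vertex-disjoint sub-paths of $\alpha$ (``shared sub-paths'') with $\ell$ edges in total; together with $I:=v-\ell-L$ \emph{isolated shared vertices} (those in $V(\alpha)\cap V(\beta)\setminus V(\alpha\cap\beta)$), they form $v-\ell$ \emph{shared portions} $B_0,B_1,\ldots,B_{v-\ell-1}$ visited in this order along $\beta$, with $B_0$ containing vertex $1$ and $B_{v-\ell-1}$ containing vertex $2$. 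Consecutive portions are joined by \emph{bridges} whose internal vertices lie in $[n]\setminus V(\alpha)$; there are $v-1-\ell$ bridges, totaling $D-\ell$ edges and containing $D+1-v$ internal unshared vertices.

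For the exact formula when $v=\ell+2$ there are no middle portions. The hypothesis $v\leq D$ forces $\ell\leq D-2$, hence the unique bridge has length $D-\ell\geq 2$ and therefore at least one internal unshared vertex, guaranteeing that its edges cannot coincide with edges of $\alpha$. The encoding then reduces to: a choice of $k_1\in\{0,1,\ldots,\ell\}$ specifying $B_0$ as the initial length-$k_1$ sub-path $a_0\to\cdots\to a_{k_1}$ of $\alpha$ when $k_1\geq 1$ (or $B_0=\{1\}$ when $k_1=0$), with $k_2:=\ell-k_1$ specifying $B_1$ symmetrically; together with an ordered sequence of $D-\ell-1$ distinct internal unshared vertices drawn from the $n-D-1$ elements of $[n]\setminus V(\alpha)$. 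A direct case analysis on whether vertex $1$ (resp.\ $2$) is isolated shared or lies in a shared sub-path confirms that each valid $\beta$ corresponds to a unique $(k_1,\text{ordered sequence})$ and vice versa, so the count is exactly $(\ell+1)(n-D-1)_{D-\ell-1}$.

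For the general upper bound with $v\geq\ell+2$ and $t:=v-\ell-2$, I extend the encoding by additionally specifying: the $t$ middle portions $B_1,\ldots,B_t$ (each encoded by a pair $(s_i,e_i)\in\{1,\ldots,D-1\}^2$, with $s_i\neq e_i$ encoding the sub-path of $\alpha$ traversed from $a_{s_i}$ to $a_{e_i}$, and $s_i=e_i$ encoding the isolated shared vertex $\{a_{s_i}\}$); the bridge-length composition, i.e., $(1+t)$ positive integers summing to $D-\ell$; and the ordered sequence of $D+1-v$ internal unshared vertices across all bridges. Bounding each piece: the number of pairs $(k_1,k_2)\in\NN^2$ with $k_1+k_2\leq\ell$ equals $\binom{\ell+2}{2}$ (by stars-and-bars); the middle portions contribute at most $(D-1)^{2t}\leq D^{2t}$; the bridge-length compositions contribute $\binom{D-\ell-1}{t}\leq D^t$; and the ordered unshared vertex sequences contribute $(n-D-1)_{D+1-v}\leq n^{D+1-v}$. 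Multiplying yields $\binom{\ell+2}{2}\,D^{3(v-\ell-2)}\,n^{D+1-v}$, matching the claimed bound.

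The main obstacle is verifying that this encoding is injective on valid $\beta$'s — that each $\beta$ is uniquely reconstructible from the data $(k_1,k_2,(s_i,e_i)_{i=1}^t,\text{bridge lengths},\text{unshared sequence})$. This will follow because $\beta$'s ordered sequence of shared portions determines the $\alpha$-footprint of each portion (via its $(s_i,e_i)$), the bridge lengths determine where and how long each bridge is, and the ordered unshared vertex sequence fills in the bridge internals; together these recover $\beta$ edge by edge. The encoding deliberately overcounts (it does not enforce vertex-disjointness of the middle portions with each other or with $B_0,B_{v-\ell-1}$, nor the consistency $\sum_{i=1}^{t}|s_i-e_i|=\ell-k_1-k_2$), but this overcounting is harmless for an upper bound.
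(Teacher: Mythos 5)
Your proof is correct and uses the same decomposition as the paper's proof: split $\beta$ into shared portions joined by bridges, and count the pieces. The substantive difference is how you encode the middle portions. You attach to each middle portion $B_i$ an ordered pair $(s_i,e_i)$, where $i$ is its rank in $\beta$-order and the ordered pair records both the $\alpha$-footprint and the direction in which $\beta$ traverses it; this makes the encoding manifestly injective on valid $\beta$'s. The paper instead first chooses $(\alpha\cap\beta)_+$ as an \emph{unordered} subgraph (a set of end-points plus a marked isolated subset) and then constructs $\beta\setminus\alpha$ from bridge lengths and an ordered list of unshared internal vertices; as written, this omits a count for the order and direction in which $\beta$ visits the middle portions (equivalently, which pair of end-points each bridge attaches to), so the two choices do not quite pin down a unique $\beta$. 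Your encoding sidesteps this entirely. You also correctly allow bridges of length $1$ by counting positive compositions $\binom{D-\ell-1}{t}$, whereas the paper's $\binom{D-v}{v-\ell-2}$ tacitly assumes every bridge contains an internal vertex; this is not always so (e.g.\ $\alpha=(1,a,b,c,2)$, $\beta=(1,a,c,b',2)$ gives $\ell=1$, $v=D=4$ with the length-$1$ bridge $ac$, where the paper's intermediate stars-and-bars count would be zero). Both refinements are subsumed by the same final arithmetic $\binom{\ell+2}{2}\cdot D^{2t}\cdot D^t\cdot n^{D+1-v}$ with $t=v-\ell-2$, so the lemma is confirmed and your proof gets there without the gaps. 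The exact formula for $v=\ell+2$ matches the paper's: there is a single bridge of length $D-\ell\ge 2$ and no middle portions, so neither subtlety arises.
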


\begin{proof}
Throughout, we fix $\al\in \cS_D$. We construct $\be\in \cS_D$ by first choosing $\al\cap \be$ and $(V(\al)\cap V(\be))\setminus V(\al\cap\be)$, and then constructing $\be\setminus \al$ attached to $V(\al)\cap V(\be)$.

Note that $\al \cap \be$ is a disjoint union of paths contained in $\al$, and $(V(\al)\cap V(\be))\setminus V(\al\cap \be)$ is a set of isolated vertices in $\al$. We may view these altogether as another disjoint union of paths, where each isolated vertex in $(V(\al)\cap V(\be))\setminus V(\al\cap \be)$ is considered a length-0 ``path'' with 1 vertex. Let us write $(\al\cap\be)_+$ as the corresponding union of paths and call its internal boundary (i.e., isolated vertices and degree-1 vertices) the \textit{end-points} of $(\al\cap\be)_+$. An important observation is that given $\al$, the end-points and the choice of isolated vertices among them determines $(\al\cap\be)_{+}$.

Consequently, we construct $(\al\cap\be)_{+}$ as follows. First, we choose the two paths in $(\al\cap\be)_{+}$ that contain vertex $1$ and vertex $2$ respectively (these are distinct, since $\ell < D$). Given $\al$, these paths are determined by their lengths, which are non-negative and sum to at most $\ell$, so the number of ways to choose the paths is at most $\binom{\ell+2}{2}$. Second, we choose the end-points of the rest of the paths in $(\al\cap\be)_{+}$. Note that the number of paths in $(\al\cap\be)_{+}$ is $v-\ell$, since $(\al\cap\be)_{+}$ is a union of paths having $v$ vertices and $\ell$ edges in total. Thus, the number of paths that don't contain vertex $1$ nor $2$ is $v-\ell-2$, so the number of end-points contained in these paths is $2(v-\ell-2)-w$ where $w$ is the number of isolated vertices among these paths. As a result, the number of ways to choose the end-points of these paths is at most $\binom{D-1}{2(v-\ell-2)-w}$. Finally, the number of ways to choose the isolated vertices among the available $2(v-\ell-2)-w$ end-points is at most $\binom{2(v-\ell-2)-w}{w}$. In total, the number of ways to construct $(\al\cap \be)_{+}$ is at most
\[
\begin{split}
\binom{\ell+2}{2}\sum_{w\geq 0}\binom{D-1}{2(v-\ell-2)-w}\binom{2(v-\ell-2)-w}{w}
&\leq \binom{\ell+2}{2}\sum_{w\geq 0}(D-1)^{2(v-\ell-2)-w}\binom{2(v-\ell-2)}{w}\\
&=\binom{\ell+2}{2} D^{2(v-\ell-2)}.
\end{split}
\]
Having constructed $(\al\cap\be)_{+}$, we now construct $\be\setminus \al$. Recall that $\al\tri \be$ consists of $v-\ell-1$ cycles where each cycle contains exactly $2$ vertices in $V(\al)\cap V(\be)$ (see the proof of Lemma~\ref{lem:compute:Y:al:plus:be}). Thus, $\be\setminus \al$ consists of $v-\ell-1$ paths, where each path has length at least $2$ and is attached to $2$ of the end-points of $(\al\cap \be)_{+}$. Since $|V(\be)\setminus V(\al)|=D+1-v$, the number of ways to assign lengths to $v-\ell-1$ paths (or equivalently, the number of internal vertices) is $\binom{D-v}{v-\ell-2}$ by the ``stars and bars'' method. For each assignment of lengths to these paths, the number of ways to construct $\be\setminus \al$ is at most the falling factorial $(n-D-1)_{D+1-v}$. Therefore, having chosen $(\al\cap\be)_{+}$, the number of ways to construct $\be\setminus \al$ is at most $\binom{D-v}{v-\ell-2}\cdot (n-D-1)_{D+1-v}\leq D^{v-\ell-2}\cdot (n-D-1)_{D+1-v}$. This establishes the desired bound in the case $v\geq \ell+3$.

If $v=\ell+2$, then $(\al \cap \be)_{+}$ consist of two distinct paths that contain vertex $1$ and vertex $2$ respectively. Given $\al$, $(\al\cap\be)_{+}$ is determined by the lengths of these two paths, which must sum to $\ell$. Therefore, the number of ways to choose $(\al \cap \be)_{+}$ is $\ell+1$. Having constructed $(\al\cap \be)_{+}$, the number of ways to construct $\be\setminus \al$ is $(n-D-1)_{D+1-v}=(n-D-1)_{D-\ell-1}$. Thus, the desired claim holds for $v=\ell+2$.
\end{proof}

\begin{proof}[Proof of Theorem~\ref{thm:main-wigner}(b)]
Consider the estimator $f \in \R[Y]_{\leq D}$ from Eq.~\eqref{eq:f:saw}, 
with $D = \lceil \frac{C}{\eta}\log (n/m) \rceil$ where we take $C>2$. Recalling the expression \eqref{eq:express:f:sec:mo}, we can bound $\E[f(Y)^2]$ by using Lemma~\ref{lem:compute:Y:al:plus:be} and~\ref{lem:combinatorics:saw} as
\begin{equation}\label{eq:bound:sec:mo}
\begin{split}
\E[f(Y)^2]
&\leq\frac{1}{|\cS_D|}\sum_{\ell=0}^{D-1}\sum_{v=\ell+3}^{D-1}\binom{\ell+2}{2}D^{3(v-\ell-2)}n^{D+1-v}\cdot (Km)^{v-\ell-1}\left(1+\frac{Km\la}{n}\right)^{\ell}\left(\frac{\la}{n}\right)^{D-\ell}\\
&\quad\quad+\frac{1}{|\cS_D|}\sum_{\ell=0}^{D-1}(\ell+1)n^{D-\ell-1}\cdot Km\left(1+\frac{Km\la}{n}\right)^{\ell}\left(\frac{\la}{n}\right)^{D-\ell}+\frac{1}{|\cS_D|}\left(1+\frac{Km\la}{n}\right)^D,
\end{split}
\end{equation}
where the last term accounts for the contribution from identical copies, i.e., $\ell=D$ and $v=D+1$. Since $|\cS_D|=(n-2)_{D-1}=n^{D-1}(1+o(1))$, the first sum equals
\[
\begin{split}
&\left(1+o(1)\right)Km\left(\frac{\la}{n}\right)^{D}\cdot \sum_{\ell=0}^{D-1}\binom{\ell+2}{2}\left(1+\frac{Km\la}{n}\right)^{\ell}\la^{-\ell}\sum_{v=\ell+3}^{D-1}\left(\frac{KmD^3}{n}\right)^{v-\ell-2}\\
&\leq \left(1+o(1)\right)Km\left(\frac{\la}{n}\right)^{D}\cdot \sum_{\ell=0}^{D-1}\binom{\ell+2}{2}\left(\frac{1}{\la}+\frac{Km}{n}\right)^{\ell}\cdot \frac{KmD^3}{n}\\
&\leq \left(1+o(1)\right)\frac{KmD^3}{n}\cdot Km\left(\frac{\la}{n}\right)^{D}\cdot \sum_{\ell=0}^{\infty}\binom{\ell+2}{2}\left(\frac{1}{1+\eta/2}\right)^{\ell}\\
&\leq o(1)\cdot m\left(\frac{\la}{n}\right)^{D},
\end{split}
\]
where the first and third inequality holds since $\frac{KmD^3}{n} = \frac{CK^3}{\eta}\cdot \frac{m}{n}\log^3(\frac{n}{m})= o(1)$ by our assumption that $m=o(n)$, and the second inequality holds since $\frac{Km}{n}=o(1)$. Similarly, the second sum in~\eqref{eq:bound:sec:mo} equals
\[
\begin{split}
\left(1+o(1)\right)Km\left(\frac{\la}{n}\right)^{D}\cdot \sum_{\ell=0}^{D-1}(\ell+1)\left(\frac{1}{\la}+\frac{Km}{n}\right)^{\ell}
&\leq \left(1+o(1)\right)Km\left(\frac{\la}{n}\right)^{D}\cdot \sum_{\ell=0}^{\infty}(\ell+1)\left(\frac{1}{1+\eta/2}\right)^{\ell}\\
&= \left(1+o(1)\right)Km\left(\frac{\la}{n}\right)^{D}\cdot  \frac{(1+2\eta)^2}{\eta^2}.
\end{split}
\]
Thus, using these bounds in \eqref{eq:bound:sec:mo} shows 
It follows that
\[
\E[f(Y)^2]\leq \big(1+o(1)\big)m\left(\frac{\la}{n}\right)^D\left(\frac{K(1+2\eta)^2}{\eta^2}+\frac{n}{m}\left(\frac{1}{\la}+\frac{Km}{n}\right)^D\right)=\big(1+o(1)\big)m\left(\frac{\la}{n}\right)^D\frac{9K}{\eta^2},
\]
where the last equality holds because $\eta<1$ and for large enough $n$,
\[
\frac{n}{m}\left(\frac{1}{\la}+\frac{Km}{n}\right)^D\leq \frac{n}{m}\left(\frac{1}{1+\eta/2}\right)^{-D}\leq \frac{n}{m}e^{-\frac{C}{2}\log(n/m)}= o(1),
\]
since we chose $C>2$ and $m=o(n)$ holds.
Also, recall that $\E[f(Y)\cdot x]=m\left(\frac{\la}{n}\right)^{(D+1)/2}$, and a direct calculation shows $\E[x^2]=\frac{\la}{n}\E[\langle x_1,x_2\rangle^2]=\frac{m\la}{n}$. Therefore, $\Corr_{\leq D}^2\geq \frac{\eta^2}{9K}-o(1)$, which concludes the proof.
\end{proof}

\subsection{Stochastic Block Model}

This section is devoted to the proof of Theorem~\ref{thm:main-sbm}(b), which is based on~\cite{HS-bayesian}.

\begin{proof}[Proof of Theorem~\ref{thm:main-sbm}(b)]
Consider the following estimator introduced in \cite[Section 2]{HS-bayesian}. For a simple graph $\al\in \{0,1\}^{\bcn}$, let $\phi_{\al}(Y):=\prod_{i<j}(Y_{ij}-\frac{d}{n})^{\al_{ij}}$. As in the previous section, let $\cS_D$ denote the set of self-avoiding paths from vertex $1$ to $2$ of length $D$. Consider $f\in \R[Y]_{\le D}$ where
\[
f(Y)=\frac{1}{|\cS_D|}\sum_{\al\in \cS_D}\phi_{\al}(Y)\equiv \frac{1}{|\cS_D|}\sum_{\al\in \cS_D}\prod_{i<j}\left(Y_{ij}-\frac{d}{n}\right)^{\al_{ij}}.
\]
It was shown in Lemma 4.9 in the arxiv version of \cite{HS-bayesian} that if $d\la^2>1+\eps$ and $D\geq C \log n$ for some $C=C(\eps,q,\pi,Q)$,
\[
\E \big[f(Y)^2\big]\leq c\left(\frac{d}{n}\right)^{2D}\E \Bigg[\prod_{(i,j)\in \al_1\cup \al_2}\bigg(\frac{Q_{\sigma^\star_i,\sigma^\star_j}}{d}-1\bigg)\Bigg], 
\]
where $c > 0$ depends only on $q,\pi,Q$ and $\al_1,\al_2\in \mathcal{A}_D$ are two self-avoiding paths that only share vertices $1$ and $2$. Since $\al_1\cup\al_2$ is a cycle of length $2D$, we can directly compute
\[
\E \Bigg[\prod_{(i,j)\in \al_1\cup \al_2}\bigg(\frac{Q_{\sigma^\star_i,\sigma^\star_j}}{d}-1\bigg)\Bigg]=\Tr\left(\frac{1}{d}\,\diag(\pi)\,Q-\pi \bone^{\top}\right)=\sum_{k=2}^{q}\la_k(T)^{2D},
\]
where $T$ is defined in Eq.~\eqref{eq:T}. Let $s$ be the sum of the multiplicities of the eigenvalues that equals $\la_2(T)$ up to potential sign change. Then, the sum in the RHS is $s\la^{2D}(1+o(1))$ for $D\geq C\log n$. 
Moreover, Lemma~\ref{lem:sbm:c:path} yields that
\[
\E\big[f(Y)(Q_{\sigma^\star_1,\sigma^\star_2}-d)\big]=\left(\frac{d}{n}\right)^{D}d\cdot \E \Bigg[\prod_{(i,j)\in \al+\bone_{(1,2)}}\bigg(\frac{Q_{\sigma^\star_i,\sigma^\star_j}}{d}-1\bigg)\Bigg]=\frac{d^{D+1}}{n^D}\cdot \sum_{k=2}^{q}\la_k(T)^{D+1}.
\]
Thus, $|\EE[f(Y)(Q_{\sigma^\star_1,\sigma^\star_2}-d)]|\geq \frac{(d\la)^{D+1}}{n^{D}}s(1-o(1))$. It follows that
\[
\EE[x^2] \cdot \Corr_{\leq 2C\log n}^{2}\geq  \frac{\frac{(d\la)^{2D+2}}{n^{2D}} \cdot s^2(1-o(1))}{c\left(\frac{d\la}{n}\right)^{2D}\cdot s(1+o(1))}=\frac{d^2\la^2 s}{c}(1-o(1)).
\]
Finally, note that $\EE[x^2] \equiv \EE[(Q_{\sigma^\star_1,\sigma^\star_2}-d)^2]$ is of constant order.
\end{proof}

\addcontentsline{toc}{section}{Acknowledgments}
\section*{Acknowledgments}
We are grateful to Jonathan Niles-Weed for discussions that helped inspire the framework presented in Section~\ref{sec:overview-proof}. We are grateful to Elchanan Mossel for numerous discussions and valuable insights. We thank Lenka Zdeborov{\'a} for introducing us to the spiked Wigner model with growing rank (also known as symmetric matrix factorization) and providing the reference~\cite{extensive-rank}. We thank Daniel Guti\'errez Espinoza for correcting a factor of $1/2$ in Section~\ref{sec:subm-upper}.

\addcontentsline{toc}{section}{References}
\bibliographystyle{alpha}
\bibliography{main}

@article{EGV:25,
  title={Computational lower bounds in latent models: clustering, sparse-clustering, biclustering},
  author={Even, Bertrand and Giraud, Christophe and Verzelen, Nicolas},
  journal={arXiv preprint arXiv:2506.13647},
  year={2025}
}

@article{CKPZ:18,
  title={Information-theoretic thresholds from the cavity method},
  author={{Coja-Oghlan}, Amin and Krzakala, Florent and Perkins, Will and Zdeborov{\'a}, Lenka},
  journal={Advances in Mathematics},
  volume={333},
  pages={694--795},
  year={2018},
  publisher={Elsevier}
}

@article{DyerFrieze:89,
  title={The solution of some random {NP}-hard problems in polynomial expected time},
  author={Dyer, M.E. and Frieze, A.M.},
  journal={Journal of Algorithms},
  volume={10},
  number={4},
  pages={451--489},
  year={1989},
  publisher={Elsevier}
}

@article{JerrumSorkin:98,
  title={The {M}etropolis algorithm for graph bisection},
  author={Jerrum, M. and Sorkin, G.B.},
  journal={Discrete Applied Mathematics},
  volume={82},
  number={1-3},
  pages={155--175},
  year={1998},
  publisher={Elsevier}
}

@article{CondonKarp:01,
  title={Algorithms for graph partitioning on the planted partition model},
  author={Condon, A. and Karp, R.M.},
  journal={Random Structures and Algorithms},
  volume={18},
  number={2},
  pages={116--140},
  year={2001},
  publisher={Wiley Periodicals}
}

@inproceedings{McSherry:01,
  title={Spectral partitioning of random graphs},
  author={McSherry, Frank},
  booktitle={Foundations of Computer Science, 2001. Proceedings. 42nd IEEE Symposium on},
  pages={529--537},
  year={2001},
  organization={IEEE}
}

@article{CojaOghlan:10,
  title={Graph partitioning via adaptive spectral techniques},
  author={{Coja-Oghlan}, A.},
  journal={Combinatorics, Probability and Computing},
  volume={19},
  number={02},
  pages={227--284},
  year={2010},
  publisher={Cambridge Univ Press}
}

@article{HoLaLe:83,
title = "Stochastic blockmodels: First steps",
journal = "Social Networks",
volume = "5",
number = "2",
pages = "109 - 137",
year = "1983",
note = "",
issn = "0378-8733",
doi = "10.1016/0378-8733(83)90021-7",
url = "http://www.sciencedirect.com/science/article/pii/0378873383900217",
author = "Holland, P.W. and Laskey, K.B. and Leinhardt, S."
}

@article{DaMoRo:11,
AUTHOR = {C. Daskalakis and E. Mossel and S. Roch},
    TITLE = {Evolutionary trees and the Ising model on the Bethe lattice: a proof of Steel's conjecture},
    ID_AR={math.PR/0509575},
    YEAR={2011},
    URL = {http://www.springerlink.com/content/k76630287587433x/},
    journal={Probability Theory and Related Fields},
    volume={149},
    number={1-2},
    pages={149---189},
    SUBJECTS={prob bio},
}

@article{MoRoSl:11,
author = {E. Mossel and S. Roch and A. Sly},
Title  = "On the inference of large phylogenies with long branches: How long is too long?",
ID_AR={1001.3480},
Journal = {Bull. Math. Bio.},
Year={2011},
volume={73},
number={7},
Subjects={prob bio},
pages={1627-1644},
}

@article{RochSly:17,
  title={Phase transition in the sample complexity of likelihood-based phylogeny inference},
  author={Roch, Sebastien and Sly, Allan},
  journal={Probability Theory and Related Fields},
  volume={169},
  number={1},
  pages={3--62},
  year={2017},
  publisher={Springer}
}

@article{BoJaRi:07,
  title={The phase transition in inhomogeneous random graphs},
  author={Bollob{\'a}s, B{\'e}la and Janson, Svante and Riordan, Oliver},
  journal={Random Structures \& Algorithms},
  volume={31},
  number={1},
  pages={3--122},
  year={2007},
  publisher={Wiley Online Library}
}

@article{SnijdersNowicki:97,
  title={Estimation and prediction for stochastic blockmodels for graphs with latent block structure},
  author={Snijders, T.A.B. and Nowicki, K.},
  journal={Journal of Classification},
  volume={14},
  number={1},
  pages={75--100},
  year={1997},
  publisher={Springer}
}

@article{BC:09,
	author = {Bickel, P.J. and Chen, A.},
	date-added = {2023-06-02 15:43:53 -0500},
	date-modified = {2023-06-02 15:46:17 -0500},
	journal = {Proceedings of the National Academy of Sciences},
	number = {50},
	pages = {21068--21073},
	publisher = {National Acad Sciences},
	title = {A nonparametric view of network models and {N}ewman-{G}irvan and other modularities},
	volume = {106},
	year = {2009}}

@article{RCY:11,
	author = {Rohe, Karl and Chatterjee, Sourav and Yu, Bin},
	date-added = {2023-06-02 15:44:36 -0500},
	date-modified = {2023-06-02 15:46:17 -0500},
	journal = {The Annals of Statistics},
	number = {4},
	pages = {1878--1915},
	publisher = {Institute of Mathematical Statistics},
	title = {Spectral clustering and the high-dimensional stochastic blockmodel},
	volume = {39},
	year = {2011}}

@book{sze-book,
  title={Orthogonal polynomials},
  author={G\'{a}bor Szeg\"{o}},
  publisher={American Mathematical Soc.},
  volume = {23},
  year={1975}
}

@article{CMSW25,
  title={Stochastic block models with many communities and the {Kesten--Stigum} bound},
  author={Chin, Byron and Mossel, Elchanan and Sohn, Youngtak and Wein, Alexander S},
  journal={arXiv preprint arXiv:2503.03047},
  year={2025}
}

@article {KestenStigum:66,
    AUTHOR = {H. Kesten and B. P. Stigum},
     TITLE = {Additional limit theorems for indecomposable multidimensional
              {G}alton-{W}atson processes},
   JOURNAL = {Ann. Math. Statist.},
    VOLUME = {37},
      YEAR = {1966},
     PAGES = {1463--1481},
}

@article{mss-weak,
  title={Weak recovery, hypothesis testing, and mutual information in stochastic block models and planted factor graphs},
  author={Mossel, Elchanan and Sly, Allan and Sohn, Youngtak},
  journal={arXiv preprint arXiv:2406.15957},
  year={2024}
}

@inproceedings{HS-bayesian,
  title={Efficient Bayesian estimation from few samples: community detection and related problems},
  author={Hopkins, Samuel B and Steurer, David},
  booktitle={58th Annual Symposium on Foundations of Computer Science (FOCS)},
  pages={379--390},
  year={2017},
  organization={IEEE}
}

@inproceedings{HWX-reduction,
  title={Computational lower bounds for community detection on random graphs},
  author={Hajek, Bruce and Wu, Yihong and Xu, Jiaming},
  booktitle={Conference on Learning Theory},
  pages={899--928},
  year={2015},
  organization={PMLR}
}

@inproceedings{secret-leakage,
  title={Reducibility and statistical-computational gaps from secret leakage},
  author={Brennan, Matthew and Bresler, Guy},
  booktitle={Conference on Learning Theory},
  pages={648--847},
  year={2020},
  organization={PMLR}
}

@inproceedings{BBH-reduction,
  title={Reducibility and computational lower bounds for problems with planted sparse structure},
  author={Brennan, Matthew and Bresler, Guy and Huleihel, Wasim},
  booktitle={Conference On Learning Theory},
  pages={48--166},
  year={2018},
  organization={PMLR}
}

@article{decelle,
  title={Asymptotic analysis of the stochastic block model for modular networks and its algorithmic applications},
  author={Decelle, Aurelien and Krzakala, Florent and Moore, Cristopher and Zdeborov{\'a}, Lenka},
  journal={Physical Review E},
  volume={84},
  number={6},
  pages={066106},
  year={2011},
  publisher={APS}
}

@inproceedings{MMSE,
  title={{MMSE} of probabilistic low-rank matrix estimation: Universality with respect to the output channel},
  author={Lesieur, Thibault and Krzakala, Florent and Zdeborov{\'a}, Lenka},
  booktitle={2015 53rd Annual Allerton Conference on Communication, Control, and Computing (Allerton)},
  pages={680--687},
  year={2015},
  organization={IEEE}
}

@article{ogp-survey,
  title={The overlap gap property: A topological barrier to optimizing over random structures},
  author={Gamarnik, David},
  journal={Proceedings of the National Academy of Sciences},
  volume={118},
  number={41},
  pages={e2108492118},
  year={2021},
  publisher={National Acad Sciences}
}

@inproceedings{sos-survey,
  title={High dimensional estimation via sum-of-squares proofs},
  author={Raghavendra, Prasad and Schramm, Tselil and Steurer, David},
  booktitle={Proceedings of the International Congress of Mathematicians: Rio de Janeiro 2018},
  pages={3389--3423},
  year={2018},
  organization={World Scientific}
}

@inproceedings{sos-detect,
  title={The power of sum-of-squares for detecting hidden structures},
  author={Hopkins, Samuel B and Kothari, Pravesh K and Potechin, Aaron and Raghavendra, Prasad and Schramm, Tselil and Steurer, David},
  booktitle={58th Annual Symposium on Foundations of Computer Science (FOCS)},
  pages={720--731},
  year={2017},
  organization={IEEE}
}

@phdthesis{hopkins-thesis,
	Author = {Hopkins, Samuel},
	School = {Cornell University},
	Title = {Statistical Inference and the Sum of Squares Method},
	Year = {2018}
}

@inproceedings{ld-notes,
  title={Notes on computational hardness of hypothesis testing: Predictions using the low-degree likelihood ratio},
  author={Kunisky, Dmitriy and Wein, Alexander S and Bandeira, Afonso S},
  booktitle={ISAAC Congress (International Society for Analysis, its Applications and Computation)},
  pages={1--50},
  year={2019},
  organization={Springer}
}

@inproceedings{sq-ld,
  title={Statistical Query Algorithms and Low Degree Tests Are Almost Equivalent},
  author={Brennan, Matthew S and Bresler, Guy and Hopkins, Sam and Li, Jerry and Schramm, Tselil},
  booktitle={Conference on Learning Theory},
  pages={774--774},
  year={2021},
  organization={PMLR}
}

@article{RM-tensor-pca,
  title={A statistical model for tensor {PCA}},
  author={Richard, Emile and Montanari, Andrea},
  journal={Advances in neural information processing systems},
  volume={27},
  year={2014}
}

@article{SW-estimation,
  title={Computational barriers to estimation from low-degree polynomials},
  author={Schramm, Tselil and Wein, Alexander S},
  journal={The Annals of Statistics},
  volume={50},
  number={3},
  pages={1833--1858},
  year={2022},
  publisher={Institute of Mathematical Statistics}
}

@article{KM-tree,
  title={Reconstruction on Trees and Low-Degree Polynomials},
  author={Koehler, Frederic and Mossel, Elchanan},
  journal={arXiv preprint arXiv:2109.06915},
  year={2021}
}

@article{subexp-sparse,
  title={Subexponential-time algorithms for sparse {PCA}},
  author={Ding, Yunzi and Kunisky, Dmitriy and Wein, Alexander S and Bandeira, Afonso S},
  journal={Foundations of Computational Mathematics},
  pages={1--50},
  year={2023},
  publisher={Springer}
}

@inproceedings{spectral-planting,
  title={Spectral planting and the hardness of refuting cuts, colorability, and communities in random graphs},
  author={Bandeira, Afonso S and Banks, Jess and Kunisky, Dmitriy and Moore, Christopher and Wein, Alexander S},
  booktitle={Conference on Learning Theory},
  pages={410--473},
  year={2021},
  organization={PMLR}
}

@article{submatrix-ogp,
  title={The overlap gap property in principal submatrix recovery},
  author={Gamarnik, David and Jagannath, Aukosh and Sen, Subhabrata},
  journal={Probability Theory and Related Fields},
  volume={181},
  number={4},
  pages={757--814},
  year={2021},
  publisher={Springer}
}

@inproceedings{kikuchi,
  title={The {Kikuchi} hierarchy and tensor {PCA}},
  author={Wein, Alexander S and {El Alaoui}, Ahmed and Moore, Cristopher},
  booktitle={2019 IEEE 60th Annual Symposium on Foundations of Computer Science (FOCS)},
  pages={1446--1468},
  year={2019},
  organization={IEEE}
}

@article{ogp-sparse,
  title={Free Energy Wells and Overlap Gap Property in Sparse {PCA}},
  author={{Ben Arous}, G{\'e}rard and Wein, Alexander S and Zadik, Ilias},
  journal={Communications on Pure and Applied Mathematics},
  volume={76},
  number={10},
  pages={2410--2473},
  year={2023},
  publisher={Wiley Online Library}
}

@article{LZ-tensor,
  title={Tensor clustering with planted structures: Statistical optimality and computational limits},
  author={Luo, Yuetian and Zhang, Anru R},
  journal={The Annals of Statistics},
  volume={50},
  number={1},
  pages={584--613},
  year={2022},
  publisher={Institute of Mathematical Statistics}
}

@article{MW-amp,
  title={Equivalence of approximate message passing and low-degree polynomials in rank-one matrix estimation},
  author={Montanari, Andrea and Wein, Alexander S},
  journal={Probability Theory and Related Fields},
  pages={1--53},
  year={2024},
  publisher={Springer}
}

@article{BBP,
  title={Phase Transition of the Largest Eigenvalue for Nonnull Complex Sample Covariance Matrices},
  author={Baik, Jinho and {Ben Arous}, G{\'e}rard and P{\'e}ch{\'e}, Sandrine},
  journal={Annals of Probability},
  pages={1643--1697},
  year={2005}
}

@article{FP-wigner,
  title={The largest eigenvalue of rank one deformation of large {Wigner} matrices},
  author={F{\'e}ral, Delphine and P{\'e}ch{\'e}, Sandrine},
  journal={Communications in mathematical physics},
  volume={272},
  pages={185--228},
  year={2007},
  publisher={Springer}
}

@article{peche,
  title={The largest eigenvalue of small rank perturbations of {Hermitian} random matrices},
  author={P{\'e}ch{\'e}, Sandrine},
  journal={Probability Theory and Related Fields},
  volume={134},
  pages={127--173},
  year={2006},
  publisher={Springer}
}

@article{BN-eigenvec,
  title={The eigenvalues and eigenvectors of finite, low rank perturbations of large random matrices},
  author={{Benaych-Georges}, Florent and Nadakuditi, Raj Rao},
  journal={Advances in Mathematics},
  volume={227},
  number={1},
  pages={494--521},
  year={2011},
  publisher={Elsevier}
}

@article{maida,
  title={Large deviations for the largest eigenvalue of rank one deformations of Gaussian ensembles},
  author={Ma{\"\i}da, Myl{\`e}ne},
  journal={Electronic Journal of Probability},
  volume={12},
  pages={1131--1150},
  year={2007}
}

@article{MRZ,
  title={On the limitation of spectral methods: From the gaussian hidden clique problem to rank-one perturbations of gaussian tensors},
  author={Montanari, Andrea and Reichman, Daniel and Zeitouni, Ofer},
  journal={Advances in Neural Information Processing Systems},
  volume={28},
  year={2015}
}

@article{CDF-wigner,
  title={The largest eigenvalues of finite rank deformation of large {Wigner} matrices: convergence and nonuniversality of the fluctuations},
  author={Capitaine, Mireille and {Donati-Martin}, Catherine and F{\'e}ral, Delphine},
  journal={The Annals of Probability},
  volume={37},
  number={1},
  pages={1--47},
  year={2009},
  publisher={Institute of Mathematical Statistics}
}

@article{BMVVX,
  title={Information-theoretic bounds and phase transitions in clustering, sparse {PCA}, and submatrix localization},
  author={Banks, Jess and Moore, Cristopher and Vershynin, Roman and Verzelen, Nicolas and Xu, Jiaming},
  journal={IEEE Transactions on Information Theory},
  volume={64},
  number={7},
  pages={4872--4894},
  year={2018},
  publisher={IEEE}
}

@article{opt-subopt,
  title={Optimality and sub-optimality of {PCA I}: Spiked random matrix models},
  author={Perry, Amelia and Wein, Alexander S and Bandeira, Afonso S and Moitra, Ankur},
  journal={The Annals of Statistics},
  volume={46},
  number={5},
  pages={2416--2451},
  year={2018}
}

@article{fund-limits-wigner,
  title={Fundamental limits of detection in the spiked {Wigner} model},
  author={{El Alaoui}, Ahmed and Krzakala, Florent and Jordan, Michael I},
  journal={The Annals of Statistics},
  volume={48},
  number={2},
  pages={863--885},
  year={2020}
}

@inproceedings{LKZ-sparse,
  title={Phase transitions in sparse {PCA}},
  author={Lesieur, Thibault and Krzakala, Florent and Zdeborov{\'a}, Lenka},
  booktitle={International Symposium on Information Theory (ISIT)},
  pages={1635--1639},
  year={2015},
  organization={IEEE}
}

@article{FR-amp,
  title={Iterative reconstruction of rank-one matrices in noise},
  author={Fletcher, Alyson K and Rangan, Sundeep},
  journal={Information and Inference: A Journal of the IMA},
  volume={7},
  number={3},
  pages={531--562},
  year={2018},
  publisher={Oxford University Press}
}

@article{miolane-survey,
  title={Phase transitions in spiked matrix estimation: information-theoretic analysis},
  author={Miolane, L{\'e}o},
  journal={arXiv preprint arXiv:1806.04343},
  year={2018}
}

@inproceedings{BMNN,
  title={Information-theoretic thresholds for community detection in sparse networks},
  author={Banks, Jess and Moore, Cristopher and Neeman, Joe and Netrapalli, Praneeth},
  booktitle={Conference on Learning Theory},
  pages={383--416},
  year={2016},
  organization={PMLR}
}

@inproceedings{tensor-decomp-LD,
  title={Average-case complexity of tensor decomposition for low-degree polynomials},
  author={Wein, Alexander S},
  booktitle={Proceedings of the 55th Annual ACM Symposium on Theory of Computing},
  pages={1685--1698},
  year={2023}
}

@inproceedings{dense-cycle,
  title={Detection-recovery gap for planted dense cycles},
  author={Mao, Cheng and Wein, Alexander S and Zhang, Shenduo},
  booktitle={The Thirty Sixth Annual Conference on Learning Theory},
  pages={2440--2481},
  year={2023},
  organization={PMLR}
}

@article{HM-tree,
  title={Low Degree Hardness for Broadcasting on Trees},
  author={Huang, Han and Mossel, Elchanan},
  journal={arXiv preprint arXiv:2402.13359},
  year={2024}
}

@article{graphon,
  title={Computational Lower Bounds for Graphon Estimation via Low-degree Polynomials},
  author={Luo, Yuetian and Gao, Chao},
  journal={arXiv preprint arXiv:2308.15728},
  year={2023}
}

@article{ZF-phys-survey,
  title={Statistical physics of inference: Thresholds and algorithms},
  author={Zdeborov{\'a}, Lenka and Krzakala, Florent},
  journal={Advances in Physics},
  volume={65},
  number={5},
  pages={453--552},
  year={2016},
  publisher={Taylor \& Francis}
}

@article{counting-stars,
  title={Counting Stars is Constant-Degree Optimal For Detecting Any Planted Subgraph},
  author={Yu, Xifan and Zadik, Ilias and Zhang, Peiyuan},
  journal={arXiv preprint arXiv:2403.17766},
  year={2024}
}

@article{submatrix-amp,
  title={Submatrix localization via message passing},
  author={Hajek, Bruce and Wu, Yihong and Xu, Jiaming},
  journal={Journal of Machine Learning Research},
  volume={18},
  number={186},
  pages={1--52},
  year={2018}
}

@inproceedings{anytime-sparse,
  title={A greedy anytime algorithm for sparse PCA},
  author={Holtzman, Guy and Soffer, Adam and Vilenchik, Dan},
  booktitle={Conference on Learning Theory},
  pages={1939--1956},
  year={2020},
  organization={PMLR}
}

@article{all-nothing,
  title={It was ``all'' for ``nothing'': sharp phase transitions for noiseless discrete channels},
  author={Niles-Weed, Jonathan and Zadik, Ilias},
  journal={IEEE Transactions on Information Theory},
  volume={69},
  number={8},
  pages={5188--5202},
  year={2023},
  publisher={IEEE}
}

@inproceedings{DM-sparse-amp,
  title={Information-theoretically optimal sparse {PCA}},
  author={Deshpande, Yash and Montanari, Andrea},
  booktitle={2014 IEEE International Symposium on Information Theory},
  pages={2197--2201},
  year={2014},
  organization={IEEE}
}

@article{BI-info,
  title={Detection of a sparse submatrix of a high-dimensional noisy matrix},
  author={Butucea, C and Ingster, YI},
  journal={Bernoulli: a Journal of Mathematical Statistics and Probability},
  volume={19},
  number={5 B},
  pages={2652--2688},
  year={2013},
  publisher={International Statistical Institute}
}

@article{kolar-info,
  title={Minimax localization of structural information in large noisy matrices},
  author={Kolar, Mladen and Balakrishnan, Sivaraman and Rinaldo, Alessandro and Singh, Aarti},
  journal={Advances in Neural Information Processing Systems},
  volume={24},
  year={2011}
}

@article{BIS-info,
  title={Sharp variable selection of a sparse submatrix in a high-dimensional noisy matrix},
  author={Butucea, Cristina and Ingster, Yuri I and Suslina, Irina A},
  journal={ESAIM: Probability and Statistics},
  volume={19},
  pages={115--134},
  year={2015},
  publisher={EDP Sciences}
}

@article{MW-reduction,
  title={COMPUTATIONAL BARRIERS IN MINIMAX SUBMATRIX DETECTION},
  author={Ma, Zongming and Wu, Yihong},
  journal={The Annals of Statistics},
  pages={1089--1116},
  year={2015},
  publisher={JSTOR}
}

@article{clique-e,
  title={Finding hidden cliques of size $\sqrt{N/e}$ in nearly linear time},
  author={Deshpande, Yash and Montanari, Andrea},
  journal={Foundations of Computational Mathematics},
  volume={15},
  pages={1069--1128},
  year={2015},
  publisher={Springer}
}

@article{alon-clique,
  title={Finding a large hidden clique in a random graph},
  author={Alon, Noga and Krivelevich, Michael and Sudakov, Benny},
  journal={Random Structures \& Algorithms},
  volume={13},
  number={3-4},
  pages={457--466},
  year={1998},
  publisher={Wiley Online Library}
}

@article{subhyper,
  title={Detection of dense subhypergraphs by low-degree polynomials},
  author={Dhawan, Abhishek and Mao, Cheng and Wein, Alexander S},
  journal={arXiv preprint arXiv:2304.08135},
  year={2023}
}

@inproceedings{log-density,
  title={Detecting high log-densities: an $O(n^{1/4})$ approximation for densest k-subgraph},
  author={Bhaskara, Aditya and Charikar, Moses and Chlamtac, Eden and Feige, Uriel and Vijayaraghavan, Aravindan},
  booktitle={Proceedings of the forty-second ACM symposium on Theory of computing},
  pages={201--210},
  year={2010}
}

@article{AV-info,
  title={Community detection in random networks},
  author={Arias-Castro, Ery and Verzelen, Nicolas},
  journal={arXiv preprint arXiv:1302.7099},
  year={2013}
}

@article{VA-info,
  title={Community detection in sparse random networks},
  author={Verzelen, N and Arias-Castro, E},
  journal={Annals of Applied Probability},
  volume={25},
  number={6},
  pages={3465--3510},
  year={2015},
  publisher={Institute of Mathematical Statistics}
}

@article{ames-convex,
  title={Robust convex relaxation for the planted clique and densest k-subgraph problems},
  author={Ames, Brendan PW},
  journal={arXiv preprint arXiv:1305.4891},
  year={2013},
  publisher={Citeseer}
}

@article{CX-info,
  title={Statistical-computational tradeoffs in planted problems and submatrix localization with a growing number of clusters and submatrices},
  author={Chen, Yudong and Xu, Jiaming},
  journal={Journal of Machine Learning Research},
  volume={17},
  number={27},
  pages={1--57},
  year={2016}
}

@article{LM-wigner,
  title={Fundamental limits of symmetric low-rank matrix estimation},
  author={Lelarge, Marc and Miolane, L{\'e}o},
  journal={Probability Theory and Related Fields},
  volume={173},
  pages={859--929},
  year={2019},
  publisher={Springer}
}

@inproceedings{short-replica,
  title={Estimation in the spiked {Wigner} model: a short proof of the replica formula},
  author={{El Alaoui}, Ahmed and Krzakala, Florent},
  booktitle={International Symposium on Information Theory (ISIT)},
  pages={1874--1878},
  year={2018},
  organization={IEEE}
}

@article{proof-replica,
  title={Mutual information for symmetric rank-one matrix estimation: A proof of the replica formula},
  author={Dia, Mohamad and Macris, Nicolas and Krzakala, Florent and Lesieur, Thibault and Zdeborov{\'a}, Lenka and others},
  journal={Advances in Neural Information Processing Systems},
  volume={29},
  year={2016}
}

@article{extensive-rank,
  title={Perturbative construction of mean-field equations in extensive-rank matrix factorization and denoising},
  author={Maillard, Antoine and Krzakala, Florent and M{\'e}zard, Marc and Zdeborov{\'a}, Lenka},
  journal={Journal of Statistical Mechanics: Theory and Experiment},
  volume={2022},
  number={8},
  pages={083301},
  year={2022},
  publisher={IOP Publishing}
}

@article{pbm24,
  author={Pourkamali, Farzad and Barbier, Jean and Macris, Nicolas},
  journal={IEEE Transactions on Information Theory}, 
  title={Matrix Inference in Growing Rank Regimes}, 
  year={2024},
  volume={70},
  number={11},
  pages={8133-8163},
  doi={10.1109/TIT.2024.3422263}
}

@article{multiscale-cavity,
  title={A multiscale cavity method for sublinear-rank symmetric matrix factorization},
  author={Barbier, Jean and Ko, Justin and Rahman, Anas A},
  journal={arXiv preprint arXiv:2403.07189},
  year={2024}
}

@article{moore-survey-sbm,
  title={The computer science and physics of community detection: Landscapes, phase transitions, and hardness},
  author={Moore, Cristopher},
  journal={arXiv preprint arXiv:1702.00467},
  year={2017}
}

@article{abbe-survey-sbm,
  title={Community detection and stochastic block models: recent developments},
  author={Abbe, Emmanuel},
  journal={Journal of Machine Learning Research},
  volume={18},
  number={177},
  pages={1--86},
  year={2018}
}

@article{mns-alg,
  title={A proof of the block model threshold conjecture},
  author={Mossel, Elchanan and Neeman, Joe and Sly, Allan},
  journal={Combinatorica},
  volume={38},
  number={3},
  pages={665--708},
  year={2018},
  publisher={Springer}
}

@inproceedings{massoulie,
  title={Community detection thresholds and the weak Ramanujan property},
  author={Massouli{\'e}, Laurent},
  booktitle={Proceedings of the forty-sixth annual ACM symposium on Theory of computing},
  pages={694--703},
  year={2014}
}

@inproceedings{nb-spectrum,
  title={Non-backtracking spectrum of random graphs: community detection and non-regular ramanujan graphs},
  author={Bordenave, Charles and Lelarge, Marc and Massouli{\'e}, Laurent},
  booktitle={2015 IEEE 56th Annual Symposium on Foundations of Computer Science},
  pages={1347--1357},
  year={2015},
  organization={IEEE}
}

@article{AS-acyclic,
  title={Proof of the achievability conjectures for the general stochastic block model},
  author={Abbe, Emmanuel and Sandon, Colin},
  journal={Communications on Pure and Applied Mathematics},
  volume={71},
  number={7},
  pages={1334--1406},
  year={2018},
  publisher={Wiley Online Library}
}

@article{mns-impossible,
  title={Stochastic block models and reconstruction},
  author={Mossel, Elchanan and Neeman, Joe and Sly, Allan},
  journal={arXiv preprint arXiv:1202.1499},
  year={2012}
}

@inproceedings{exact-phase-sbm,
  title={Exact phase transitions for stochastic block models and reconstruction on trees},
  author={Mossel, Elchanan and Sly, Allan and Sohn, Youngtak},
  booktitle={Proceedings of the 55th Annual ACM Symposium on Theory of Computing},
  pages={96--102},
  year={2023}
}

@inproceedings{AS-crossing,
  title={Crossing the {KS} threshold in the stochastic block model with information theory},
  author={Abbe, Emmanuel and Sandon, Colin},
  booktitle={2016 IEEE International Symposium on Information Theory (ISIT)},
  pages={840--844},
  year={2016},
  organization={IEEE}
}

@article{tensor-cumulants,
  title={Tensor cumulants for statistical inference on invariant distributions},
  author={Kunisky, Dmitriy and Moore, Cristopher and Wein, Alexander S},
  journal={arXiv preprint arXiv:2404.18735},
  year={2024}
}

@inproceedings{mi-rank-1,
  title={Mutual information in rank-one matrix estimation},
  author={Krzakala, Florent and Xu, Jiaming and Zdeborov{\'a}, Lenka},
  booktitle={2016 IEEE Information Theory Workshop (ITW)},
  pages={71--75},
  year={2016},
  organization={IEEE}
}

@article{spectral-redemption,
  title={Spectral redemption in clustering sparse networks},
  author={Krzakala, Florent and Moore, Cristopher and Mossel, Elchanan and Neeman, Joe and Sly, Allan and Zdeborov{\'a}, Lenka and Zhang, Pan},
  journal={Proceedings of the National Academy of Sciences},
  volume={110},
  number={52},
  pages={20935--20940},
  year={2013},
  publisher={National Acad Sciences}
}

@article{amp-univ,
  title={Universality in polytope phase transitions and message passing algorithms},
  author={Bayati, Mohsen and Lelarge, Marc and Montanari, Andrea},
  journal={Annals of Applied Probability: an official journal of the Institute of Mathematical Statistics},
  volume={25},
  number={2},
  pages={753--822},
  year={2015},
  publisher={Institute of Mathematical Statistics}
}

@article{fourier-iter,
  title={Fourier Analysis of Iterative Algorithms},
  author={Jones, Chris and Pesenti, Lucas},
  journal={arXiv preprint arXiv:2404.07881},
  year={2024}
}

@article{opt-bot,
  title={Optimal Low degree hardness for Broadcasting on Trees},
  author={Huang, Han and Mossel, Elchanan},
  journal={arXiv preprint arXiv:2502.04861},
  year={2025}
}

@inproceedings{sos-amp-robustly,
  title={Semidefinite programs simulate approximate message passing robustly},
  author={Ivkov, Misha and Schramm, Tselil},
  booktitle={Proceedings of the 56th Annual ACM Symposium on Theory of Computing},
  pages={348--357},
  year={2024}
}

@article{alg-universality,
  title={Algorithmic Universality, Low-Degree Polynomials, and Max-Cut in Sparse Random Graphs},
  author={Cheairi, Houssam El and Gamarnik, David},
  journal={arXiv preprint arXiv:2412.18014},
  year={2024}
}

@article{KZ-phys-notes,
  title={Statistical physics methods in optimization and machine learning},
  author={Krzakala, Florent and Zdeborov{\'a}, Lenka},
  journal={Lecture Notes},
  year={2024}
}

@article{one-community-sparse,
  title={Finding one community in a sparse graph},
  author={Montanari, Andrea},
  journal={Journal of Statistical Physics},
  volume={161},
  pages={273--299},
  year={2015},
  publisher={Springer}
}

@article{color-coding,
  title={Color-coding},
  author={Alon, Noga and Yuster, Raphael and Zwick, Uri},
  journal={Journal of the ACM (JACM)},
  volume={42},
  number={4},
  pages={844--856},
  year={1995},
  publisher={ACM New York, NY, USA}
}

@article{graph-match-trees,
  title={Testing network correlation efficiently via counting trees},
  author={Mao, Cheng and Wu, Yihong and Xu, Jiaming and Yu, Sophie H},
  journal={arXiv preprint arXiv:2110.11816},
  year={2021}
}

@inproceedings{graph-match-otter,
  title={Random graph matching at {Otter’s} threshold via counting chandeliers},
  author={Mao, Cheng and Wu, Yihong and Xu, Jiaming and Yu, Sophie H},
  booktitle={Proceedings of the 55th Annual ACM Symposium on Theory of Computing},
  pages={1345--1356},
  year={2023}
}

@article{meso,
  title={Mesoscopic perturbations of large random matrices},
  author={Huang, Jiaoyang},
  journal={Random Matrices: Theory and Applications},
  volume={7},
  number={02},
  pages={1850004},
  year={2018},
  publisher={World Scientific}
}

@article{bun-rotational,
  title={Rotational invariant estimator for general noisy matrices},
  author={Bun, Jo{\"e}l and Allez, Romain and Bouchaud, Jean-Philippe and Potters, Marc},
  journal={IEEE Transactions on Information Theory},
  volume={62},
  number={12},
  pages={7475--7490},
  year={2016},
  publisher={IEEE}
}

@article{semerjian,
  title={Matrix denoising: {Bayes}-optimal estimators via low-degree polynomials},
  author={Semerjian, Guilhem},
  journal={Journal of Statistical Physics},
  volume={191},
  number={10},
  pages={139},
  year={2024},
  publisher={Springer}
}

@article{alg-contig,
  title={Algorithmic contiguity from low-degree conjecture and applications in correlated random graphs},
  author={Li, Zhangsong},
  journal={arXiv preprint arXiv:2502.09832},
  year={2025}
}

@article{sbm-reduction,
  title={Low degree conjecture implies sharp computational thresholds in stochastic block model},
  author={Ding, Jingqiu and Hua, Yiding and Slot, Lucas and Steurer, David},
  journal={arXiv preprint arXiv:2502.15024},
  year={2025}
}

@article{ld-survey,
  title={Computational Complexity of Statistics: New Insights from Low-Degree Polynomials},
  author={Wein, Alexander S},
  journal={arXiv preprint arXiv:2506.10748},
  year={2025}
}

@article{ld-false,
  title={The quasi-polynomial low-degree conjecture is false},
  author={Buhai, Rares-Darius and Hsieh, Jun-Ting and Jain, Aayush and Kothari, Pravesh K},
  journal={arXiv preprint arXiv:2505.17360},
  year={2025}
}

@article{fails-subspace,
  title={Low-degree method fails to predict robust subspace recovery},
  author={Jia, He and Vijayaraghavan, Aravindan},
  journal={arXiv preprint arXiv:2603.02594},
  year={2026}
}

@article{sbm-above-sqn,
  title={Phase Transition for Stochastic Block Model with more than $\sqrt{n}$ Communities},
  author={Carpentier, Alexandra and Giraud, Christophe and Verzelen, Nicolas},
  year={2025}
}

@article{alg-contig-2,
  title={Algorithmic Contiguity from Low-Degree Heuristic {II}: Predicting Detection-Recovery Gaps},
  author={Li, Zhangsong},
  journal={arXiv preprint arXiv:2604.17410},
  year={2026}
}

\end{document}